\DeclareMathOperator*{\argmax}{arg\,max}
\newcolumntype{M}[1]{>{\centering\arraybackslash}m{#1}}
\newcolumntype{N}{@{}m{0pt}@{}}
\def\liminf{\mathop{\rm lim\,inf}\limits}
\def\limsup{\mathop{\rm lim\,sup}\limits}
\def\one{\mathbf{1}}
\def\N{\mathbb{N}}
\def\Z{\mathbb{Z}}
\def\R{\mathbb{R}}
\def\E{\mathbb{E}}
\def\e{\mathbf{e}}
\def\P{\mathbb{P}}
\def\L{\mathcal{L}}
\def\eps{\varepsilon}
\def\L{\textup{L}}
\def\NA{\textup{NA}}
\newcommand{\wt}{{\mathrm{wt}}}
\newcommand{\ZZ}{\mathbb{Z}}
\newcommand{\Lnew}{L^{\textrm{new}}_{j}}
\def\x{\mathbf{x}}
\def\y{\mathbf{y}}
\def\w{\mathbf{w}}
\def\ee{\mathbf{e}}
\def\bmu{\boldsymbol{\mu}}
\newcommand{\p}{\mathbf{p}}
\DeclareMathOperator{\Var}{\textnormal{Var}}
\newtheorem{theorem}{Theorem}
\numberwithin{theorem}{section}
\newtheorem{lemma}[theorem]{Lemma}
\newtheorem{prop}[theorem]{Proposition}
\newtheorem{corollary}[theorem]{Corollary}
\theoremstyle{definition}
\newtheorem{definition}[theorem]{Definition}
\newtheorem{example}[theorem]{Example}
\newtheorem{remark}[theorem]{Remark}
\begin{document}
	
	\title[Scaling limit of soliton lengths in a multicolor box-ball system]{Scaling limit of soliton lengths \\ in a multicolor box-ball system}

	\author{Joel Lewis}
	\address{Joel Lewis, Department of Mathematics, The George Washington University, Washington, DC 20052.}
	\email{\texttt{jblewis@gwu.edu}}
	
	\author{Hanbaek Lyu}
	\address{Hanbaek Lyu, Department of Mathematics, University of Wisconsin - Madison, WI 53709}
	\email{\texttt{hlyu@math.wisc.edu}}
	
	\author{Pavlo Pylyavskyy}
	\address{Pavlo Pylyavskyy, Department of Mathematics, University of Minnesota, Minneapolis, MN 55455.}
	\email{\texttt{ppylyavs@umn.edu}}
	
	\author{Arnab Sen}
	\address{Arnab Sen, Department of Mathematics, University of Minnesota, Minneapolis, MN 55455.}
	\email{\texttt{arnab@umn.edu}}
	
	\keywords{Solitons, cellular automata, integrable systems, phase transition, carrier process, multi-dimensional Gambler's ruin, Skorokhod decomposition, SRBM}

	\begin{abstract}
		The box-ball systems are integrable cellular automata whose long-time behavior is characterized by soliton solutions, with rich connections to other integrable systems such as the Korteweg-de Vries equation. In this paper, we consider a multicolor box-ball system with two types of random initial configurations and obtain sharp scaling limits of the soliton lengths as the system size tends to infinity. We obtain a sharp scaling limit of soliton lengths that turns out to be different from the single color case as established in \cite{levine2020phase}. A large part of our analysis is devoted to studying the associated carrier process, which is a multi-dimensional Markov chain on the orthant, whose excursions and running maxima are closely related to soliton lengths. We establish the sharp scaling of its ruin probabilities, Skorokhod decomposition, strong law of large numbers, and weak diffusive scaling limit to a semimartingale reflecting Brownian motion with explicit parameters. We also establish and utilize complementary descriptions of the soliton lengths and numbers in terms of the modified Greene-Kleitman invariants for the box-ball systems and associated circular exclusion processes. 
	\end{abstract}
	

	\maketitle

	\let\cleardoublepage\clearpage
	\tableofcontents

	\section{Introduction}
	\label{Introduction}
	
	\subsection{The $\kappa$-color BBS}
	The box-ball systems (BBS) are integrable cellular automata in 1+1 dimension whose long-time behavior is characterized by soliton solutions. The \textit{$\kappa$-color BBS} is a cellular automaton on the half-integer lattice $\mathbb{N}$, which we think of as an array of boxes that can fit at most one ball of any of the $\kappa$ colors. At each discrete time $t\ge 0$, the system configuration is given by a coloring $\xi^{(t)}:\mathbb{N}\rightarrow \mathbb{Z}_{\kappa+1}:=\Z/(\kappa+1) \Z=\{0,1,\cdots, \kappa\}$ with finite support, that is, such that $\xi^{(t)}_{x}=0$ for all but finitely many sites $x$. When $\xi_{x}^{(t)}=i$, we say the site $x$ is \textit{empty} at time $t$ if $i=0$ and \textit{occupied with a ball of color $i$} at time $t$ if $1\le i \le \kappa$. To define the time evolution rule, for each $1\le a \le \kappa$, let $K_{a}$ be the operator on the subset $(\mathbb{Z}_{\kappa+1})^{\mathbb{N}}$ of all $(\kappa+1)$-colorings on $\mathbb{N}$ with finite support 
	defined as follows:
	\begin{description}
		\item{(i)} Label the balls of color $a$ from left as $a_{1},a_{2},\cdots,a_{m}$. 
		\item{(ii)} Starting from $k=1$ to $m$, successively move ball $a_{k}$ to the leftmost empty site to its right. 
	\end{description}  
	Then the time evolution $(X_{t})_{t\ge 0}$ of the basic $\kappa$-color BBS is given by 
	\begin{equation}\label{eq:BBS_def_nonlocal}
		\xi^{(t+1)} =  K_{1} \circ K_{2}\circ  \cdots \circ K_{\kappa}(\xi^{(t)}) \quad \forall t\ge 0.
	\end{equation}
	A typical 5-color BBS trajectory is shown below.
	\begin{align*}
		t=0: &&	\underline{3 2 1} 0 0 0 0 5 1 3 0 0 4 1 1 2 5 2 0 0 00000000  0000 000 0000 000 00 0000 \cdots \\
		t=1: &&	0 0 0 \underline{3 2 1} 0 0 0 1 5 3 0 0 0 1 4 1 {\color{red}5 2 2} 0000 000 0000 000 00000 0000 0000 \cdots \\
		t=2: &&	0 0 0 0 0 0 \underline{3 2 1} 0 1 0 5 3 0 0 1 0 4 1 0 {\color{red}5 2 2} 0000 000 0000 0000  0000 00000 \cdots \\
		t=3: &&	0 0 0 0 0 0 0 0 0 {\color{magenta}3} 0 2 1 1 5 3 0 1 0 0 4 1 0 0 {\color{red}5 2 2} 0000 000 0000  0000 000000 \cdots \\
		t=4: &&	0 0 0 0 0 0 0 0 0 0 {\color{magenta}3} 0 0 0 2 1 5 0 3 1 1 0 4 1 0 0 0 {\color{red}5 2 2}
		0000 000  000000 0  0000\cdots \\
		t=5: &&	0 0 0 0 0 0 0 0 0 0 0 {\color{magenta}3} 0 0 0 0 {\color{purple}2} {\color{orange}5 1} 0 0 {\color{green}3 1} 0 {\color{blue}4 1 1} 0 0 0 {\color{red}5 2 2}  0000  0000 000000 00 \cdots \\
		t = 6: && 0 0 0 0 0 0 0 0 0 0 0 0 {\color{magenta}3} 0 0 0 0 {\color{purple}2} 0 {\color{orange}5 1} 0 0 {\color{green}3 1} 0 0 {\color{blue}4 1 1} 0 0 0 {\color{red}5 2 2} 0 0 0 0 0 0 0 0 0 0000  \cdots \\
		t = 7: && 0 0 0 0 0 0 0 0 0 0 0 0 0 {\color{magenta}3} 0 0 0 0 {\color{purple}2} 0 0 {\color{orange}5 1} 0 0 {\color{green}3 1} 0 0 0 {\color{blue}4 1 1} 0 0 0 {\color{red}5
			2 2} 0000000000  \cdots 
	\end{align*}

	The grounding observation in the $\kappa$-color BBS with finitely many balls of positive colors is that the system eventually decomposes into \textit{solitons}, which are   sequences of consecutive balls of positive and non-increasing colors, whose length and content are preserved by the BBS dynamics in all future steps. For instance, all of the non-increasing consecutive sequences of balls in $\xi^{(6)}$ in the example  (specifically, {\color{magenta}3}, {\color{purple}2}, {\color{orange}51}, {\color{green}31}, {\color{blue}411}, {\color{red}522})  above are solitons and they are preserved in $\xi^{(7)}$ up to their location and will be so in all future configurations. Note that a soliton of length $k$ travels to the right with speed $k$. Therefore, the lengths of solitons in a soliton decomposition must be non-decreasing from left to right.  In the early dynamics, longer solitons can collide into shorter solitons (e.g., $\underline{321}$ during $t=0,1,2$) and undergo a nonlinear interaction. 
	
	The soliton decomposition of the BBS trajectory initialized at $\xi^{(0)}$  can be encoded in a Young diagram $\Lambda=\Lambda(\xi^{(0)})$ having $j^{\text{th}}$ column equal in length to the $j^{\text{th}}$-longest soliton. For instance, the Young diagram corresponding to the soliton decomposition of the instance of the 5-color BBS given before is
	\begin{equation} 
		\Lambda(\xi^{(0)})=\yng(8,5,2,1)
		\vspace{0.2cm}
	\end{equation}
	Note that the $i$th row of the Young diagram $\Lambda (\xi^{(0)})$ is precisely the number of solitons of length at least $i$.
	
	\subsection{Overview of main results}
	
	We consider the $\kappa$-color BBS initialized by a random BBS configuration of system size $n$, and analyze the limiting shape of the random Young diagrams as $n$ tends to infinity. We consider two models that we call the `permutation model' and `independence model'. For both models, we denote the $k$th row and column lengths of the Young diagram encoding the soliton decomposition by $\rho_{k}(n)$ and $\lambda_{k}(n)$, respectively,
	
	In the permutation model, the BBS is initialized by a uniformly chosen random permutation $\Sigma^{n}$ of colors $\{1,2,\cdots, n\}$. A classical way of associating a Young diagram to a permutation is via the Robinson-Schensted correspondence (see \cite[Ch. 3.1]{Sagan}). A famous result of Baik, Deift, and Johansson \cite{baik1999distribution} tells us that the row and column lengths of the random Young diagram constructed from $\Sigma^{n}$ via the RS correspondence scale as $\sqrt{n}$. In Theorem \ref{thm:permutation}, we show that for the random Young diagram constructed via BBS, the columns scale as $\sqrt{n}$ but the rows scale as $n$. Namely, 
	\begin{align}\label{eq:permutation_main_summary}
		\rho_{k}(n) \sim \frac{n}{k(k+1)}, \qquad  \lambda_{k}(n)  \sim   \frac{2\sqrt{n}}{\sqrt{k-1}+\sqrt{k}}.
	\end{align}
	While the row lengths in RS-constructed Young diagram are related to the longest increasing subsequences, we show that the row lengths in the BBS-constructed Young diagram are related to the number of \textit{ascents} (Lemma \ref{lemma:GK_invariants}). This will show that the majority of solitons have a length of order $O(1)$. Hence the row and column scalings in \eqref{eq:permutation_main_summary} are consistent.

	In the independence model, which we denote $\xi^{n,\mathbf{p}}$, the colors of the sites in the interval $[1,n]$ are independently drawn from a fixed distribution $\mathbf{p}=(p_{0},p_{1},\cdots,p_{\kappa})$ on $\mathbb{Z}_{\kappa+1}$. Recently, Lyu and Kuniba obtained sharp asymptotics for the row lengths as well as their large deviations principle in this independence model \cite{kuniba2020large}. In Theorems \ref{thm:iid_subcritical}-\ref{thm:iid_supercritical}, we establish a sharp scaling limit for the column lengths for the independence model, as summarized in Table \ref{table:scalings} and as bullet points below. Let $p^{*}:=\max(p_{1},\dots,p_{\kappa})$ denote the density of the maximum positive color and let $r$ denote the multiplicity of $p^{*}$ (i.e., number of $p_{i}$'s such that $p_{i}=p^{*}$ for $i=1,\dots,\kappa$).

	\begin{itemize}[itemsep=0.1cm]

		\item In the subcritical regime ($p_{0}>p^{*}$), top soliton lengths 
		have sharp scaling  $\log_{\theta} n + (r-1)\log_{\theta} \log n +O(1)$, where $\theta=p^{*}/p_{0}$. 
		
		\item In the critical regime ($p_{0}=p^{*}$), $n^{-1/2}\lambda_{1}(n)$ converges weakly to the maximum $L_{1}$-norm of a $\kappa$-dimensional semimartingale reflecting Brownian motion (SRBM).

		\item In the supercritical regime ($p_{0}<p^{*}$), $\lambda_{1}(n)= (p^{*}-p_{0})n+\Theta(\sqrt{n})$. If $r=1$, then all subsequent top solitons are of order $\log n$; If $r\ge 2$, they are of order $\sqrt{n}$. 
		
		\item The fluctuation of $\lambda_{1}(n)$ depends explicitly on a $\kappa$-dimensional SRBM, which arises as the diffusive scaling limit of the associated carrier process. 
	\end{itemize}

	\begin{table*}[h]
		\centering
		\includegraphics[width=1 \linewidth]{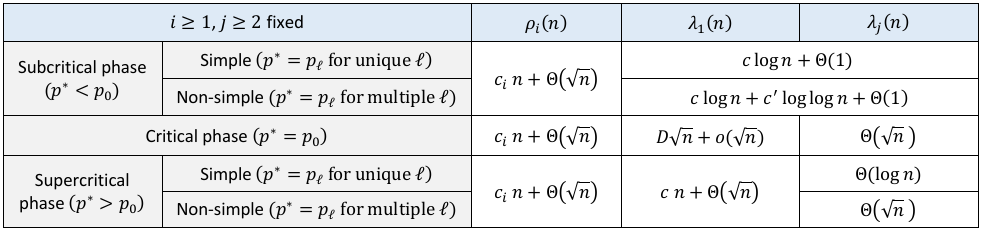}
		\vspace{0cm}
		\caption{ Asymptotic scaling of the $i$th row length $\rho_{i}$ and the $j$th column length $\lambda_{j}$ for the independence model with ball density $\mathbf{p}=(p_{0},p_{1},\cdots,p_{\kappa})$ and $p^{*}=\max(p_{1},\cdots,p_{\kappa})$. The asymptotic soliton lengths undergo a similar `double-jump' phase transition depending on $p^{*}-p_{0}$ as in the $\kappa=1$ case established in \cite{levine2020phase}, but the scaling inside the subcritical and supercritical regimes depends on the multiplicity of the maximum positive color $p^{*}$. 
			Sharp asymptotics for the row lengths have been obtained in \cite{kuniba2020large}. $c_{i}$'s are constants depending on $\mathbf{p}$ and $i$; Constnts $c,c'$ do not depend on $j$; $D$ is a nonnegative and non-degenerate random variable. 
		}
		\label{table:scalings}
	\end{table*}

	We establish a similar `double-jump' phase transition for the $\kappa=1$ case established by Levine, Lyu, and Pike \cite{levine2020phase}. We find that in the multicolor ($\kappa\ge 2$) case, the maximum positive ball density $p^{*}=\max(p_{1},\cdots,p_{\kappa})$ compared to the zero density $p_{0}$ dictates general phase transition structure. However, we find that the scaling inside the subcritical and supercritical regimes depends on the multiplicity $r$ of the maximum positive color $p^{*}$. Furthermore, the fluctuation of the top soliton length $\lambda_{1}(n)$ about its mean behavior is described by a $\kappa$-dimensional semimartingale reflecting Brownian motion (SRBM) lurking behind, whose covariance matrix depends on $\p$ explicitly. Such SRBM arises as the diffusive scaling limit of the associated carrier process.
	
	A large part of our analysis is devoted to studying the associated carrier process, which is a Markov chain on the $\kappa$-dimensional nonnegative integer orthant, whose excursions and running maxima are closely related to soliton lengths (see Lemmas \ref{lemma:queue_formula_soliton}-\ref{lemma:soliton_lengths_excursions}). We establish its sharp scaling of ruin probabilities, strong law of large numbers, and weak diffusive scaling limit to an SRBM with explicit parameters (Theorems \ref{thm:carrier_subcritical}-\ref{thm:SRBM_weak_convergence}). We also establish and utilize alternative descriptions of the soliton lengths and numbers in terms of the modified Greene-Kleitman invariants for the box-ball systems (Lemma \ref{lemma:GK_invariants}) and associated circular exclusion processes.

	\subsection{Background and related works}
	
	The $\kappa$-color BBS was introduced in \cite{takahashi1993some}, generalizing the original $\kappa=1$ BBS first invented by Takahashi and Satsuma in 1990 \cite{takahashi1990soliton}. In the most general form of the BBS, each site accommodates a semistandard tableau of rectangular shape with letters from  $\{0,1,\cdots,\kappa \}$ and the time evolution is defined by successive application of the combinatorial $R$ (cf. \cite{fukuda2000energy, hatayama2001, kuniba2006crystal,inoue2012integrable}). For a friendly introduction to the combinatorial $R$, see \cite[Sec. 3]{kuniba2020large}. The $\kappa$-color BBS treated in this paper corresponds to the case where the tableau shape is a single box, which was called the \textit{basic} $\kappa$-color BBS in \cite{kuniba2020large, kondo2020dynamics}. The BBS is known to arise both from the quantum and classical integrable systems by the procedures called crystallization and ultradiscretization, respectively. This double origin of the integrability of BBS lies behind its deep connections to quantum groups, crystal base theory, solvable lattice models, the Bethe ansatz, soliton equations, ultradiscretization of the Korteweg-de Vries equation, tropical geometry, and so forth; see for example the review \cite{inoue2012integrable} and the references therein. 
	
	BBS with random initial configuration is an emerging topic in the probability literature and has gained considerable attention with a number of recent works \cite{levine2020phase, croydon2018dynamics, kuniba2020large, ferrari2018bbs, kuniba2020large, croydon2019duality, croydon2019invariant}. There are roughly two central questions that the researchers are aiming to answer: 1) If the random initial configuration is one-sided, what is the limiting shape of the invariant random Young diagram as the system size tends to infinity? 2) If one considers the two-sided BBS (where the initial configuration is a bi-directional array of balls), what are the two-sided random initial configurations that are invariant under the BBS dynamics? Some of these questions have been addressed for the basic $1$-color BBS \cite{levine2020phase, ferrari2018soliton, ferrari2018bbs, croydon2018dynamics} as well as for the multicolor case \cite{kuniba2020large, kuniba2018randomized, kondo2020dynamics}. More recently, invariant measures of the discrete KdV and Toda-type systems have been investigated \cite{croydon2020detailed}.

	Three important works are strongly related to this paper. In \cite{levine2020phase}, Levine, Lyu, and Pike studied various soliton statistics of the basic $1$-color BBS when the system is initialized according to a Bernoulli product measure with ball density $p$ on the first $n$ boxes. One of their main results is that the length of the longest soliton is of order $\log n$  for $p<1/2$, order $\sqrt{n}$ for $p=1/2$, 
	and order $n$ for $p>1/2$. Additionally, there is a condensation toward the longest soliton in the supercritical $p>1/2$ regime in the sense that, for each fixed $j\geq 1$, the top $j$ soliton lengths have the same order as the longest for $p\leq 1/2$, whereas all but the 
	longest have order $\log n$ for $p>1/2$. Their analysis is based on geometric mappings from the associated simple random walks to the invariant Young diagrams, which enable a robust analysis of the scaling limit of the invariant Young diagram. However, this connection is not apparent in the general $\kappa\ge 1$ case. In fact, one of the main difficulties in analyzing the soliton lengths in the multicolor BBS is that within a single regime, there is a mixture of behaviors that we see from different regimes in the single-color case.
	
	The row lengths in the multicolor BBS are well-understood due to recent works by Kuniba, Lyu, and Okado \cite{kuniba2018randomized} and Kuniba and Lyu \cite{kuniba2020large}. The central observation is that, when the initial configuration is given by a product measure, then the sum of row lengths can be computed via some additive functional (called `energy') of carrier processes of various shapes, which are finite-state Markov chains whose time evolution is given by combinatorial $R$. In \cite{kuniba2018randomized}, the `stationary shape' of the Young diagram for the most general type of BBS is identified by the logarithmic derivative of a deformed character of the KR modules (or Schur polynomials in the basic case). In \cite{kuniba2020large}, for the (basic) $\kappa$-color BBS that we consider in the present paper, it was shown that the row lengths satisfy a large deviations principle and hence the Young diagram converges to the stationary shape at an exponential rate, in the sense of row scaling. 
	
	The central subject of this paper is the column lengths of the Young diagram for the basic $\kappa$-color BBS. We develop two main tools for our analysis, which are a modified version of Greene-Kleitman invariants for BBS (Section \ref{subsection:GK_invariants}) and the carrier process (see Def. \ref{def:carrier_process}). For the independence model, we obtain the scaling limit of the carrier process as an SRBM \cite{williams1995semimartingale} and it plays a central role in our analysis. For the permutation model, the carrier process gives rise to a `circular exclusion process', which can be regarded as a circular version of the well-known Totally Asymmetric Simple Exclusion Process (TASEP) on a line (see, e.g., \cite{ferrari2018tasep, borodin2007fluctuation, borodin2008transition}). For its rough description, consider the following process on the unit circle $S^{1}$. Starting from some finite number of points, at each time, a new point is added to $S^{1}$ independently from a fixed distribution, which then deletes the nearest counterclockwise point already on the circle. Equivalently, one can think of each point in the circle trying to jump in the clockwise direction. It turns out that this process is crucial in analyzing the permutation model (Section \ref{subsection:permutation_rows}), whereas for the independence model, the relevant circular exclusion process is defined on the integer ring $\mathbb{Z}_{\kappa+1}$ where points can stack up at the same location (Section \ref{subsection:carrier_process_infinite}).  Interestingly, a cylindric version of Schur functions has been used to study rigged configurations and BBS \cite{lam2014rigged}.  
	
	\subsection{Organization}

	In Section \ref{sec:statement}, we define the carrier process, state the permutation and the independence model for the $\kappa$-color BBS, and state our main results.  We also provide numerical simulation to validate our results empirically. In Section \ref{section:key_lemmas}, we introduce infinite and finite capacity carrier processes for the $\kappa$-color BBS and state the three key combinatorial lemmas (Lemmas \ref{lemma:queue_formula_soliton},  \ref{lemma:carrier_row_lengths}, \ref{lemma:GK_invariants}). In Section \ref{section:permutation}, we prove our main result for the permutation model (Theorem \ref{thm:permutation}) by using the modified GK invariants for BBS (Lemma \ref{lemma:GK_invariants}) and analyzing the associated circular exclusion process. In Section \ref{sec:carrier_subcritical}, we prove Theorem \ref{thm:carrier_subcritical} \textbf{(i)} about the stationary behavior of the subcritical carrier process. Next, in Section \ref{sec:decoupled_carrier_main}, we introduce the
	`decoupled carrier process' and develop the `Skorokhod decomposition' of the carrier process. These will play critical roles in the analysis in the following sections. In Section \ref{sec:prob_decoupled_carrier}, we analyze the decoupled carrier process over the i.i.d. ball configuration. In Section \ref{section:subcritical}, we prove Theorem \ref{thm:carrier_subcritical} \textbf{(ii)} and Theorem \ref{thm:iid_subcritical}. In Sections \ref{sec:linear_scaling_carrier} and \ref{sec:carrier_Skorokhod}, we establish a linear and diffusive scaling limit of the carrier process, which is stated in Theorem \ref{thm:SRBM_weak_convergence}. Background on SRBM and an invariance principle for SRBM are also provided in Section \ref{sec:carrier_Skorokhod}. In Section \ref{section:supercri}, we prove Theorems \ref{thm:iid_critical} and \ref{thm:iid_supercritical}.  Lastly, in Section  \ref{section:proof_combinatorial_lemmas} we provide postponed proofs for the combinatorial lemmas stated in Section \ref{section:key_lemmas}.

	\subsection{Notation}\label{section:notation}
	
	We use the convention that summation and product over the empty index set equal zero and one, respectively. For any probability space $(\Omega,\mathcal{F},\mathbb{P})$ and any event $A\in \mathcal{F}$, we let $\mathbf{1}(A)$ denote the indicator variable of $A$. 
	Let $C^{d}(0,\infty)$ denote the space of continuous functions $f:[0,\infty)\rightarrow \R^{d}$ endowed with the topology of uniform convergence on compact intervals.  We let $\textup{tridiagonal}_{d}(a,b,c)$ denote the $d\times d$ matrix which has $a$ on its subdiagonal, $b$ on its diagonal, and $c$ on its superdiagonal entries, and zeros elsewhere. 
	
	We adopt the notations $\R^{+}=[0,\infty)$, $\N=\{1,2,3,\ldots\}$, and $\Z_{\ge 0}=\N\cup\{0\}$ throughout. For a sequence of events $(A_{n})_{n\ge 1}$, we say $A_{n}$ occurs \textit{with high probability} if $\P(A_{n})\rightarrow 1$ as $n\rightarrow \infty$. We employ the Landau notations $O(\cdot),\, \Omega(\cdot),\, \Theta(\cdot)$ in the sense of stochastic boundedness. That is, given 
	$\{a_{n}\}_{n=1}^{\infty}\subset \R^{+}$ and a sequence $\{W_{n}\}_{n=1}^{\infty}$ of nonnegative random variables, we say that 
	$W_{n}=O(a_{n})$ with high probability if for each $\eps>0$, there is a constant $C\in(0,\infty)$ such that $\P( W_{n}<Ca_{n} )\ge 1-\eps$  for all sufficiently large $n$. We say that $W_{n}=\Omega(a_{n})$ if for each $\eps>0$, there is a $c\in(0,\infty)$ such that $\P(W_{n}>ca_{n})\ge 1-\eps$  for all sufficiently large $n$, and we say 
	$W_{n}=\Theta(a_{n})$ with high probability if $W_{n}=O(a_{n})$ and $W_{n}=\Omega(a_{n})$ both with high probability. 
	In all of these Landau notations, we require that the constants $c,C$ do not depend on $n$.

	\section{Statement of results}
	\label{sec:statement}

	Our main results concern the asymptotic behavior of top soliton lengths associated with the $\kappa$-color BBS trajectory for two models of random initial configuration $\xi$: (1) $\kappa=n$ and $\xi[1,n]$ is a random uniform permutation of length $n$; (2) $\kappa$ is fixed and $\xi_{x}=i$ independently with a fixed probability $p_{i}$, $i\in \mathbb{Z}_{\kappa+1}$ for each $x\in [1,n]$.
	
	\subsection{The permutation model}

	For the \textit{permutation model}, let $(U_{x})_{x\ge 1}$ be a sequence of i.i.d. $\textup{Uniform}([0,1])$ random variables. For each integer $n\ge 1$, we denote by $V_{1:n}<V_{2:n}<\cdots<V_{n:n}$ the order statistics of $U_{1},U_{2},\cdots, U_{n}$. Then it is easy to see that the random permutation $\Sigma^{n}$ on $[n]$ such that $V_{i:n}=U_{\Sigma^{n}(i)}$ for all $1\le i \le n$ is uniformly distributed among all permutations on $[n]$. Define 
	\begin{align}\label{eq:def_permutation_model}
		\xi^{n}_{x} := \Sigma^{n}(x) \cdot \mathbf{1}(1\le x \le n).
	\end{align}

	We now state our main result for the permutation model. We obtain a precise first-order asymptotic for the largest $k$ rows and columns, as stated in the following theorem. 
	
	\begin{theorem}[The permutation model]\label{thm:permutation}
		Let $\xi^{n}$ be the permutation model as above. For each $k\ge 1$, denote $\rho_{k}(n)=\rho_{k}(\xi^{n})$ and $\lambda_{k}(n)=\lambda_{k}(\xi^{n})$. Then for each fixed $k\ge 1$, almost surely,
		\begin{align}
			\lim_{n\rightarrow \infty} n^{-1}\rho_{k}(n) &= \frac{1}{k(k+1)}, \qquad \lim_{n\rightarrow\infty} n^{-1/2}\lambda_{k}(n)  =   \frac{2}{\sqrt{k-1}+\sqrt{k}}.
		\end{align}
	\end{theorem}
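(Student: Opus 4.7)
The plan is to exploit the modified Greene--Kleitman (GK) invariants for the $\kappa$-color BBS, which encode the row and column sums of the soliton Young diagram $\Lambda(X^n)$ as extremal combinatorial quantities directly computable from the input configuration. Since $\Lambda(X^n)$ is preserved by the BBS dynamics, the GK invariants yield closed-form expressions for $\rho_k(X^n)$ and $\lambda_k(X^n)$ in terms of the random permutation $\Sigma^n$ that can then be analyzed probabilistically. The rows and columns will be handled separately.

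\textbf{Row asymptotics.} First I would express the partial row sum $\rho_1(X^n)+\cdots+\rho_k(X^n)$ as an additive functional of the capacity-$k$ carrier Markov chain reading $\Sigma^n$ from left to right. Each transition of this chain depends on the next input letter only through its relative rank among the finitely many values currently stored, so via the natural coupling with the i.i.d.\ uniform-$[0,1]$ sequence on which $\Sigma^n$ is based, the chain is asymptotically driven by an i.i.d.\ continuous input. A standard Markov-chain law of large numbers then gives
\[
\frac{1}{n}\bigl(\rho_1(X^n)+\cdots+\rho_k(X^n)\bigr) \xrightarrow{\mathrm{a.s.}} \mu_k
\]
for a deterministic stationary mean $\mu_k$. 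An explicit computation on the (finite) carrier state space should produce $\mu_k = 1 - \tfrac{1}{k+1}$, whence $\rho_k(X^n)/n \to \tfrac{1}{k(k+1)}$ by telescoping.

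\textbf{Column asymptotics.} A naive inversion of the row asymptotic yields the right rate $\Theta(\sqrt n)$ but the wrong constant, so a separate argument is required. I would derive a dual GK identity expressing the partial column sum $\lambda_1(X^n)+\cdots+\lambda_k(X^n)$ as the maximum total size of a union of $k$ ``BBS-admissible'' non-ascending subsequences of $\Sigma^n$. For the uniform permutation this is a Baik--Deift--Johansson-type statistic; a subadditivity argument along the permutation length combined with Talagrand-type concentration for permutation statistics should give
\[
\frac{1}{\sqrt n}\bigl(\lambda_1(X^n)+\cdots+\lambda_k(X^n)\bigr) \xrightarrow{\mathrm{a.s.}} L_k
\]
for constants $L_k$ determined by a variational computation. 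Differencing $\lambda_k = (\lambda_1+\cdots+\lambda_k) - (\lambda_1+\cdots+\lambda_{k-1})$ then yields the claimed limit, corresponding to $L_k = 2(\sqrt{k+1}-1)$ so that $L_k - L_{k-1} = 2/(\sqrt k+\sqrt{k+1})$.

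\textbf{Main obstacle.} The row statement reduces to a routine ergodic-theorem calculation once the GK-to-carrier reformulation is set up and the stationary mean is computed on a finite state space. The column statement is considerably more delicate: it requires both the correct dual GK identity for the multicolor BBS (a genuinely combinatorial input with no direct analog in the single-color case) and the identification of the right permutation statistic together with a BDJ-style asymptotic. Pinning down the specific constant $2/(\sqrt k+\sqrt{k+1})$, as opposed to merely the $\Theta(\sqrt n)$ rate, is the step I would expect to require the most care, likely demanding either an explicit determinantal representation or a clean variational characterization of the limit shape.
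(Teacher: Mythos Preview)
Your row argument is essentially the paper's: the partial row sum is expressed via Lemma~\ref{lemma:carrier_row_lengths} as an additive functional of the capacity-$k$ carrier over the i.i.d.\ uniform sequence (the paper calls this the $k$-point \emph{circular exclusion process} on $[0,1]$), and the Markov-chain ergodic theorem gives the limit $\mu_k=k/(k+1)$. The one piece of work you gloss over is identifying the stationary law---it turns out to be the order statistics of $k$ i.i.d.\ uniforms, so the stationary mean of the indicator is $\PP(U_{k+1}>\min(U_1,\dots,U_k))=k/(k+1)$.

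For the columns, your framework is right but you are missing the simplification that makes the constant fall out immediately. The BBS column invariant in Lemma~\ref{lemma:GK_invariants} reads
\[
\lambda_1+\cdots+\lambda_k \;=\; \max_{A_1\prec\cdots\prec A_k}\ \sum_i \L(A_i,X^n),
\]
and the crucial point is the \emph{non-interlacing} constraint $A_1\prec\cdots\prec A_k$: the $A_i$ are supported on pairwise disjoint \emph{intervals}. Since $X^n$ is a permutation (no zeros, no repeats), the penalized length $\L(A_i,X^n)$ is just the length of a longest decreasing subsequence on that interval, and the restriction of a uniform permutation to an interval is again uniform. Hence
\[
\lambda_1+\cdots+\lambda_k \;\overset{d}{=}\; \max\bigl\{\,L(n_1)+\cdots+L(n_k)\ :\ n_1+\cdots+n_k=n\,\bigr\},
\]
where $L(m)$ is the longest decreasing subsequence length of a uniform $m$-permutation. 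Now the Baik--Deift--Johansson exponential tail bounds for $|L(m)-2\sqrt m|$ allow a union bound over all $O(n^k)$ compositions; the deterministic optimization $\max\sum 2\sqrt{n_i}$ subject to $\sum n_i=n$ is elementary by concavity (optimum at $n_i=n/k$, value $2\sqrt{kn}$), and Borel--Cantelli finishes. There is no need for subadditivity or Talagrand concentration---those would at best give existence of a limit, not its value, which is precisely the obstacle you flagged. The factorization over intervals, coming from the non-interlacing condition, is exactly the mechanism that delivers the explicit constant.
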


	Our proof of Theorem \ref{thm:permutation} proceed as follows. We first establish a combinatorial lemma (Lem. \ref{lemma:GK_invariants}) that associates the soliton lengths and numbers with a modified version of Greene-Kleitman invariants for BBS. We then utilize the tail bounds on longest increasing subsequences in uniformly random permutations in  
	Baik, Deift, and Johansson \cite{baik1999distribution} for establishing the scaling limit for the lengths of the columns. For the row lengths, we use the characterization of soliton numbers as an additive functional of finite-capacity carrier processes \cite{kuniba2020large}. Such a process becomes an exclusion process on the unit circle for the permutation model.

	\subsection{The independence model}

	To define the \textit{independence model}, fix integers $n,\kappa\ge 1$. Let $\mathbf{p}=(p_{0},p_{1},\cdots,p_{\kappa})$ be a probability distribution on $\{ 0,1,\cdots,\kappa \}$. Let $\xi=\xi^{\mathbf{p}}$ be the sequence $(\xi_{x})_{x\in \N}$ of i.i.d.  random variables $\xi_{x}$ where 
	\begin{align}
		\P(\xi_{x}=i) = p_{i} \qquad \textup{for $i=0,1,\dots,\kappa$}. 
	\end{align}
	For each integer $n\ge 1$, define $\kappa$-color BBS configuration $\xi^{n,\mathbf{p}}$ of size $n$  by 
	\begin{align}\label{eq:def_iid_model}
		\xi^{n,\mathbf{p}}_{x} &= \xi^{\mathbf{p}}_{x} \cdot \mathbf{1}(1\le x \le n).
	\end{align} 
	We may further assume, without loss of generality, that $p_{i}>0$ for all $1\le i\le \kappa$. Indeed, if $p_{i}=0$ for some $i$, then we can omit the color $i$ entirely and consider the system as a $(\kappa-1)$-color BBS by shifting the colors $\{i+1,\cdots, \kappa\}$ to $\{i,\cdots, \kappa-1 \}$.

	Through various combinatorial lemmas (see Section \ref{section:key_lemmas}), we will establish that the soliton lengths $\lambda_{j}(n)$ of for the i.i.d. model are closely related to the extreme behavior of a Markov chain $(W_{x})_{x\in \N}$ defined on the nonnegative integer orthant $\Z_{\ge 0}^{\kappa}$, which we call the `$\kappa$-color carrier process'. Denote $\e_{i}\in \Z^{\kappa}$ whose coordinates are all zero except the $i$th coordinate being 1.
	
	\begin{figure*}[h]
		\centering
		\includegraphics[width=0.7 \linewidth]{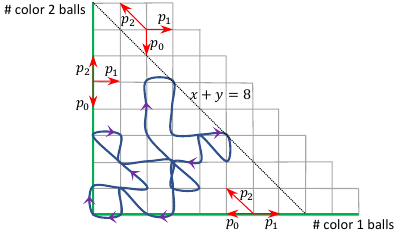}
		\caption{ State space diagram for the carrier process $W_{x}$ for $\kappa=2$. Red arrows illustrate the transition kernel at the `interior' (gray) and `boundary' (green) points in the state space. A single excursion (starting and ending at the origin) of `height' $8$ is shown in a blue path with arrows.
		}
		\label{fig:MC_diagram}
	\end{figure*}
	
	\begin{definition}[$\kappa$-color carrier process]\label{def:carrier_process}
		Let $\xi:=(\xi_{x})_{x\in \N}$ be $\kappa$-color ball configuration. 
		The  \textit{($\kappa$-color) carrier process} over $\xi$ is a process $(W_{x})_{x\in \N}$ on the state space $\Omega:=\Z^{\kappa}_{\ge 0}$ defined by the following evolution rule: Denoting $i:=\xi_{x+1}$ if $\xi_{x+1}\in \{1,\dots,\kappa\}$ and $i:=\kappa+1$ if $\xi_{x+1}=0$,
		\begin{align}\label{eq:W_x_recursion}
			W_{x+1} - W_{x} =	
			\begin{cases}
				\e_{i} - \mathbf{1}(i_{*}\ne 0) \, \e_{i_{*}} & \textup{if $1\le  i \le \kappa$} \\
				-\mathbf{1}(i_{*}\ne 0) \,  \e_{i_{*}} &  \textup{if $i=\kappa+1$},
			\end{cases}
		\end{align}
		where $	i_{*}:=\sup\{ 1\le j< i \,:\, W_{x}(j)\ge 1 \}$ with the convention $\sup \emptyset = 0$. Unless otherwise mentioned, we take $W_{0}=\mathbf{0}$ and $\xi=\xi^{\p}$ with density $\p=(p_{0},\dots,p_{\kappa})$. 
	\end{definition}

	In words, at location $x$, the carrier holds $W_{x}(i)$ balls of color $i$ for $i=1,\dots,\kappa$. When a new ball of color $1\le \xi_{x+1}\le \kappa$ is inserted into the carrier $W_{x}$, then a ball of the largest available color that is smaller than $\xi_{x}$ is excluded from $W_{x}$; if there is no such ball in $W_{x}$, then no ball is excluded. If $\xi_{x+1}=0$, then no new ball is inserted, and a ball of the largest available color that is smaller than $\xi_{x}$ is excluded from $W_{x}$. The resulting state of the carrier is $W_{x+1}$. We call the transition rule \eqref{eq:W_x_recursion} as the `circular exclusion' (since a ball in the carrier's possession is excluded from the carrier upon the insertion of a new ball according to the circular ordering). One can also view the carrier process as a multi-type queuing system, where $W_{x}$ denotes the state of the queue and $W_{x}(i)$ is the number of jobs of `cyclic hierarchy' $i$ to be processed.

	A large portion of this paper will be devoted to analyzing scaling limits of the carrier process $W_{x}$ over the i.i.d. configuration $\xi^{\p}$. In this case, $W_{x}$ is a Markov chain on the state space of the nonnegative integer orthant $\Omega$. See Figure \ref{fig:MC_diagram} for an illustration.

	Theorem \ref{thm:carrier_subcritical} states the behavior of the carrier process in the subcritical regime $p_0>\max(p_1, \cdots, p_{\kappa})$. Define a function $\pi:\Omega \rightarrow \R$  by 
	\begin{align}\label{eq:def_stationary_distribution_carrier_sub}
		\pi(n_{1},n_{2},\cdots, n_{\kappa}) =  \prod_{i=1}^{\kappa} \left(1-\frac{p_{i}}{p_{0}} \right)  \left(  \frac{p_{i}}{p_{0}} \right)^{n_{i}},
	\end{align}  
	This is a valid probability distribution on $\Omega$ when $p_{0}>\max(p_{1},\cdots,p_{\kappa})$ since 
	\begin{align}
		\sum_{n_{1}=0}^{\infty}\cdots \sum_{n_{\kappa}=0}^{\infty} \prod_{i=1}^{\kappa} \left( \frac{p_{i}}{p_{0}} \right)^{n_{i}} = \prod_{i=1}^{\kappa} \left(1-\frac{p_{i}}{p_{0}} \right)^{-1} \in (0,\infty).
	\end{align}
	Note that $\pi$ is the the product of geometric distributions of means $p_{i}/(p_{0}-p_{i})>0$ for $i=1,\dots,\kappa$.

	\begin{theorem}[The carrier process at the subcritical regime]\label{thm:carrier_subcritical}
		Let $p^{*}:=\max(p_{1},\cdots, p_{\kappa})$ and suppose $p_{0}>p^{*}$. Let $r$ denote the multiplicity of $p^{*}$ (i.e., number of $i$'s in $\{1,\dots,\kappa\}$ s.t. $p_{i}=p^{*}$). 
		
		\begin{description}
			\item[(i)] (Convergence) The carrier process $W_{x}$ is an irreducible,  aperiodic, and positive recurrent Markov chain on $\Z^{\kappa}_{\ge 0}$ with $\pi$ in \eqref{eq:def_stationary_distribution_carrier_sub} as its unique stationary distribution. Thus, writing $d_{TV}$ for the total variation distance and denoting the distribution of $W_{x}$ by $\pi_{x}$, then 
			\begin{align}\label{eq:carrier_process_mixing}
				\lim_{x\rightarrow \infty} d_{TV}(\pi_{x},\pi) = 0.
			\end{align}
			
			\item[(ii)] (Multi-dimensional Gambler's ruin) Let $T_{1}$ denote the first return time of $W_{x}$ to the origin and let $h_{1}:=\max_{0\le x \le T_{1}} \lVert W_{x} \rVert_{1}$. Then for all $N\ge 1$, there exists a constant $\delta>0$ such that 
			\begin{align}\label{eq:carrier_gambler_formula}
				\delta\,   \binom{N+r-1}{r-1}  \left( \frac{p^{*}}{p_{0}} \right)^{N} \, \le\,  	\P( h_{1}\ge N ) \, \le\,  C \binom{N+r-1}{r-1}  \left( \frac{p^{*}}{p_{0}} \right)^{N},
			\end{align}
			where $C=1$ if $r=\kappa$ and $C=\left( \frac{p^{*}}{p^{*}-p^{(2)}}  \right)^{\kappa-r}$ if $r<\kappa$ with $p^{(2)}$ being the second largest value among $p_{1},\dots,p_{\kappa}$.
		\end{description}
	\end{theorem}
	
	By using Theorem \ref{thm:carrier_subcritical}, we establish sharp scaling limit of soliton lengths for the independence model in the subcritical regime, which is stated in Theorem \ref{thm:iid_subcritical} below. (See Section \ref{section:notation} for a precise definition of Landau notations.)
	\begin{theorem}[The independence model -- Subcritical regime] \label{thm:iid_subcritical}
		Fix $\kappa\ge 1$ and let $\xi^{n,\mathbf{p}}$ be as the i.i.d. model above. Denote $\lambda_{j}(n)=\lambda_{j}(\xi^{n,\mathbf{p}})$, $p^{*} := \max_{1\le i \le \kappa} p_{i}$, and  $r:=| \{ 1\le i \le \kappa\,\colon \, p_{i}=p^{*}\}|$.  Suppose $p_{0}>p^{*}$ and denote $\theta:= p^{*}/p_{0}$. Then for each fixed $j\ge 1$, 
		\begin{align}\label{eq:iid_sub_thm0}
			\lambda_{j}(n) = \log_{\theta} n +  (r-1)\log_{\theta} \log n  + \Theta(1).
		\end{align}

		Furthermore, denote	$\nu_{n}:=(1+\delta_{n}) \log_{\theta}\left( \sigma n /(r-1)! \right)$, where $\sigma:= \prod_{i=1}^{\kappa}\left( 1-\frac{p_{i}}{p_{0}}\right)$ and $\delta_{n} := \frac{(r-1) \log  \log_{\theta}\left( \sigma n /(r-1)! \right) + \log(r-1)! }{\log \sigma n / (r-1)!}$. Then for all $x\in\R$, 
		\begin{align}\label{eq:iid_sub_thm1}
			\exp(-\delta \theta^{-x}) 
			&\leq \liminf_{n\rightarrow \infty}\P\left( \lambda_{j}(n) \leq x+\nu_{n} \right)  \\
			&\leq \limsup_{n\rightarrow \infty}\P\left( \lambda_{j}(n) \leq x+\nu_{n} \right)
			\leq \exp\left(-\frac{C}{(r-1)!} \theta^{-(x-1)} \right)\sum_{k=0}^{j-1} \frac{\theta^{-k(x-1)}}{k! (r-1)! },
		\end{align}
		where $\delta>0$, $C\ge 1$  are constants in Theorem \ref{thm:carrier_subcritical}. 
	\end{theorem}

	Next, we turn our attention to the critical and the supercritical regime, where $p_{0} \le  \max(p_{1},\cdots, p_{\kappa})$. In this regime, the carrier process does not have a stationary distribution and we are interested in identifying the limit of the carrier process in the linear and diffusive scales. A natural candidate for the diffusive scaling limit (if it exists) would be the semimartingale reflecting Brownian motion (SRBM) \cite{williams1995semimartingale}, whose definition we recall in Section \ref{sec:carrier_Skorokhod}. Roughly speaking, an SRBM on a domain $S\subseteq \R^{\kappa}$ is a stochastic process $\mathcal{W}$ that admits a Skorokhod-type decomposition 
	\begin{align}
		\mathcal{W} = X + RY,
	\end{align}
	where $X$ is a $\kappa$-dimensional Brownian motion with drift $\theta$, covariance matrix $\Sigma$, and initial distribution $\nu$. The `interior process' $X$ gives the behavior of $\mathcal{W}$ in the interior of $S$. When it is at the boundary of $S$, it is pushed instantaneously toward the interior of $S$ along the direction specified by the `reflection matrix' $R$ and an associated `pushing process' $Y$. We say such $\mathcal{W}$ a SRBM associated with $(S, \theta, \Sigma, R, \nu)$. If $R=I-Q$  for some nonnegative matrix $Q$ with spectral radius less than one, then such $\mathcal{W}$ is unique (pathwise) for possibly degenerate $\Sigma$ when $S=\R^{\kappa}_{\ge 0}$ 
	\cite{harrison1981reflected}. If $\Sigma$ is non-degenerate and $S$ is a polyhedron, a necessary and sufficient condition for the existence and uniqueness of such SRBM is that $R$ is `completely-$\mathcal{S}$' (see Def. \ref{def:completely_S}) \cite{williams1995semimartingale, kang2007invariance}.
	
	A crucial observation for analyzing the carrier process in the critical and supercritical regimes is the following. Of all the $\kappa$ coordinates of $W_{x}$, some have a negative drift and some others do not. We call an integer $1\le i\le \kappa$ an \textit{unstable color} if $p_{i}\ge \max(p_{i+1},\cdots,p_{\kappa},p_{0})$ and a \textit{stable color} otherwise. 
	Since balls of color $i$ can only be excluded by balls of colors in  $\{i+1,\dots,\kappa,0\}$, then the coordinate $W_{x}(i)$ is likely to diminish if the color $i$ is stable but not if $i$ is unstable. Denote the set of all unstable colors by $\mathcal{C}_{u}^{\mathbf{p}}=\{\alpha_{1},\cdots, \alpha_{r}\}$ with $\alpha_{1}<\cdots<\alpha_{r}$ and let  $\mathcal{C}_{s}^{\mathbf{p}}:=\{0,1,\cdots,\kappa\}\setminus \mathcal{C}_{u}^{\mathbf{p}}$ denote the set of stable colors. (See Figure \ref{fig:circular_exclusion_rules} for illustration.) By definition, we have
	\begin{align}\label{eq:ball_density_weakly_decreasing_unstable}
		p_{\alpha_{1}}\ge p_{\alpha_{2}} \ge \dots \ge p_{\alpha_{r}} \ge p_{\alpha_{r+1}}:=p_{0}. 
	\end{align}
	Now, we will construct a new process $X_{x}$, which we call the `decoupled carrier process' (see Section \ref{sec:decoupled_carrier}), that mimics the behavior of $W_{x}$ but the values of $X_{x}$ on the unstable colors are unconstrained and thus can be negative. Since $W_{x}$ is confined in the nonnegative orthant $\Z^{\kappa}_{\ge 0}$ but $X_{x}$ is not, we need to add some correction process to $X_{x}$ that `pushes' it toward the orthant $\Z^{\kappa}_{\ge 0}$ whenever $X_{x}$ has some of its coordinates going to negative. More precisely, in Lemma \ref{lemma:Skorokhod}, we identify a `reflection matrix' $R\in \R^{\kappa\times \kappa}$ and a  `pushing process' $Y_{x}$ on $\Z^{\kappa}$ such that 
	\begin{align}\label{eq:Skorokhod_decomp_identity}
		W_{x} = X_{x} + R Y_{x} \quad \textup{for $x\ge 0$},
	\end{align}
	where $Y_{0}=\mathbf{0}$ and for each $i\in \{1,\dots,\kappa\}$, the $i$th coordinate of $Y_{x}$  is non-decreasing in $x$ and can only increase when $W_{x}(i)=0$. We call the above as a \textit{Skorokhod decomposition of the carrier process}  (Our definition is motivated by the Skorokhod problem, see Def. \ref{def:SP}.) This and the classical invariance principle for SRBM \cite{reiman1988boundary} is the key to establishing the following result on the scaling limit of the carrier process.
	
	\begin{figure*}[h]
		\centering
		\includegraphics[width=1 \linewidth]{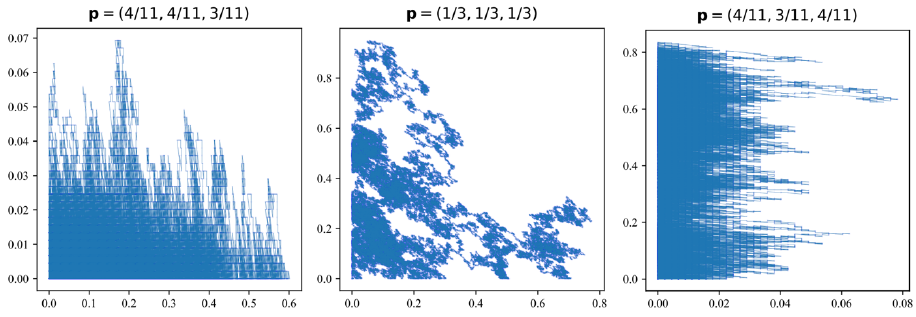}
		\caption{ Simulation of the carrier process $W_{x}$ in diffusive scaling for $\kappa=2$, $n=2\times 10^{5}$, at three critical ball densities  (left) $\mathbf{p}=(4/11,4/11,3/11)$,  (middle) $\mathbf{p}=(1/3,1/3,1/3)$ , and  (right) $\mathbf{p}=(4/11,3/11,4/11)$. In all cases, the process converges weakly to a semimartingale Reflecting Brownian motion on $\R^{2}_{\ge 0}$ whose covariance matrix is non-degenerate in the middle and degenerate in the other two cases.
		}
		\label{fig:SRBM_cri}
	\end{figure*}

	\vspace{-0.3cm}

	\begin{theorem}[Linear and diffusive scaling limit of the carrier process]\label{thm:SRBM_weak_convergence}
		Suppose $p_{0} \le  \max(p_{1},\cdots, p_{\kappa})$. Let $\alpha_{1}<\cdots<\alpha_{r}$ as before and define  
		\begin{align}\label{eq:bmu_def}
			\bmu=(\mu_{1},\dots,\mu_{\kappa}):=\sum_{j=1}^{r} \e_{\alpha_{j}} (p_{\alpha_{j}}- p_{\alpha_{j+1}}),
		\end{align}
		where we let $p_{\alpha_{r+1}}=p_{0}$. 
		\begin{description}
			\item[(i)] (Linear scaling) Almost surely, 
			\begin{align}
				\lim_{x\rightarrow\infty} \, x^{-1} W_{x} =\lim_{x\rightarrow\infty} \, x^{-1} \left( \max_{0\le t \le x}W_{t}(i)\,;\, i=1,\dots,\kappa  \right)= \bmu.
			\end{align}
			
			\item[(ii)] (Diffusive scaling) Let $(\overline{W}_{t})_{t\in \R_{\ge 0}}$ denote the linear interpolation of $(W_{x}-x \bmu)_{x\in \mathbb{N}}$. Then as $n\rightarrow\infty$, 
			\begin{align}
				(x^{-1/2}\overline{W}_{xt} \,;\,  0\leq t \leq 1 ) \Longrightarrow  \mathcal{W} \, \text{ in }\, C([0,1]),
			\end{align}
			where $\mathcal{W}$ is an SRBM associated with data $(S, \mathbf{0}, \Sigma, R, \delta_{\mathbf{0}})$ (see Def. \ref{def:SRBM}) with  $S:=\{ (x_{1},\dots,x_{\kappa})\in \R^{\kappa} \,:\, x_{i}\ge 0 \,\, \textup{if $\mu_{i}=0$}  \}$,  $\Sigma$  the limiting covariance matrix (possibly degenerate) in \eqref{eq:def_limiting_cov_mx}, $R := \textup{tridiag}_{\kappa}(0,1,-1)$, and $\delta_{\mathbf{0}}$ the point mass at $\mathbf{0}$. 
		\end{description}	
	\end{theorem}

	In Figures \ref{fig:SRBM_cri} and \ref{fig:SRBM_sup}, we provide simulations of the carrier process $W_{x}=(W_{x}(1), W_{x}(2))$ for $\kappa=2$ in various regimes, numerically verifying Theorem \ref{thm:SRBM_weak_convergence}. In Figure \ref{fig:SRBM_cri}, we show the carrier process in diffusive scaling ($n^{-1/2}$) at three different critical ball densities $\p$. The carrier process in diffusive scaling converges weakly to an SRBM in $\R^{2}_{\ge 0}$, whose covariance matrix depends on $\p$ and can be degenerate. For instance, at $\p=(4/11,4/11,3/11)$, $W_{x}(2)$ is subcritical (since $p_{2}=3/11<4/11=p_{0}$), and $W_{x}(1)$ is critical, so the SRBM degenerates in the second axes.

	In Figure \ref{fig:SRBM_sup}, we show the carrier process in diffusive scaling at three different supercritical ball densities $\p$. The carrier process has a nonzero drift $\bmu=(\mu_{1},\mu_{2})\in \R^{2}_{\ge 0}$. If $\mu_{1},\mu_{2}>0$, then the centered carrier process $W_{x}-x \bmu$ converges weakly to a 2-dimensional Brownian motion in diffusive scaling. If either $\mu_{1}$ or $\mu_{2}$ equals zero, then the diffusive scaling limit is an SRBM on $\R_{\ge 0} \times \R$ or $\R\times \R_{\ge 0}$, which is the domain $S$ in the statement of Theorem \ref{thm:SRBM_weak_convergence} \textbf{(ii)}. For instance, for $\p=(3/11, 6/11, 2/11)$ as in Figure \ref{fig:SRBM_sup} (d), the SRBM is on domain $S=\R\times \R_{\ge 0}$ and has a degenerate covariance matrix, since $W_{x}(2)$ is subcritical and vanishes in the diffusive scale.

	\begin{figure*}[h]
		\centering
		\includegraphics[width=1 \linewidth]{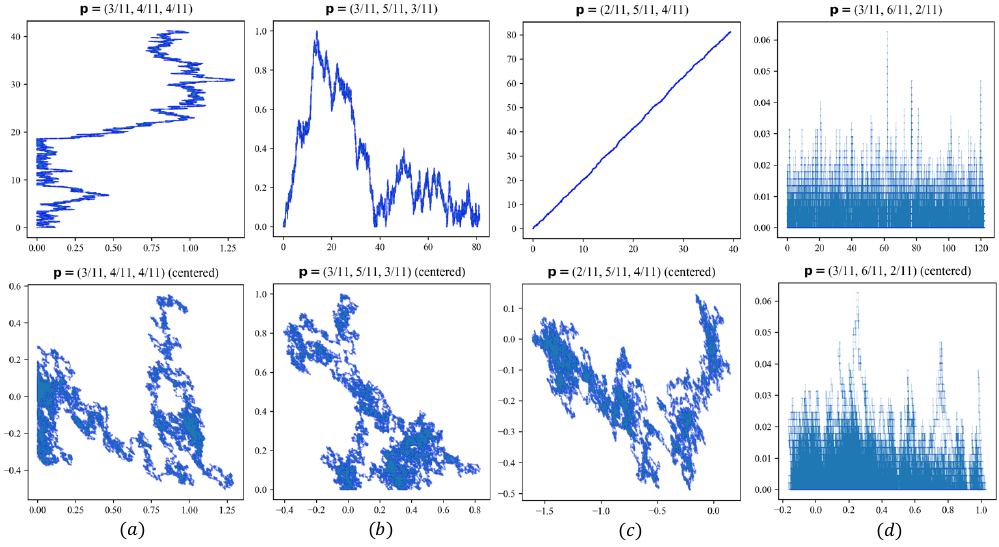}
		\caption{ Simulation of the carrier process $W_{x}$ in diffusive scaling for $\kappa=2$, $n=2\times 10^{5}$, at four supercritical ball densities  $(a)$ $\mathbf{p}=(3/11,4/11,4/11)$,  $(b)$ $\mathbf{p}=(3/11,5/11,3/11)$, $(c)$ $\mathbf{p}=(2/11,5/11,4/11)$, and $(d)$ $\mathbf{p}=(3/11,6/11,2/11)$. The processes grow linearly at least in one dimension (the top row shows uncentered processes in diffusive scaling). As shown in the second row, after centering by the mean drift $\bmu$, the processes converge weakly to semimartingale Reflecting Brownian motion on domains $(a)$ $\R_{\ge 0}\times \R$, $(b)$ $\R\times \R_{\ge 0}$, $(c)$ $\R^{2}$ (no reflection), and $(d)$ $\R\times \R_{\ge 0}$ (with a degenerate covariance matrix). 
		}
		\label{fig:SRBM_sup}
	\end{figure*}

	\vspace{-0.3cm}
	Using the linear and the diffusive scaling limit of the carrier process in Theorem \ref{thm:SRBM_weak_convergence}, we obtain a sharp scaling limit of soliton lengths for the independence model in the critical and subcritical regimes. These results are stated in Theorems \ref{thm:iid_critical} and \ref{thm:iid_supercritical} below. 
	
	\begin{theorem}[The independence model -- Critical regime] \label{thm:iid_critical}
		Suppose $p^{*}=p_{0}$. Then for each fixed $j\ge 1$,  $\lambda_{j}(n)=\Theta(\sqrt{n})$. Furthermore, let $\Sigma$ be a $\kappa\times \kappa$ covariance matrix defined explicitly in 	\eqref{eq:def_limiting_cov_mx} and $R=\textup{tridiag}_{\kappa\times \kappa}(0,1,-1)$. Let $\mathcal{W}$ be a semimartingale reflecting Brownian motion associated with data $(\R^{\kappa}_{\ge 0}, \mathbf{0}, \Sigma, R, \delta_{\mathbf{0}})$ (see Def. \ref{def:SRBM}). Then as $n\rightarrow\infty$, 
		\begin{align}\label{eq:thm_iid_cri_lambda1}
			n^{-1/2} \lambda_{1}(n) \Longrightarrow \sup\,  \lVert W \rVert_{1},
		\end{align}
		where $\Longrightarrow$ denotes weak convergence. 
	\end{theorem}

	\begin{theorem}[The independence model -- Supercritical regime] \label{thm:iid_supercritical}
		Suppose $p^{*}>p_{0}$.  
		\begin{description}
			\item[(i)] (Top soliton length in the supercritical regime) It holds that 
			\begin{align}
				\lim_{n\rightarrow \infty} n^{-1}\lambda_{1}(n) \overset{a.s.}{=} p^{*}-p_{0} \quad \textup{and} \quad  \lambda_{1}(n)=(p^{*}-p_{0})n + \Theta(\sqrt{n}). 
			\end{align}
			
			More precisely,  let $\alpha_{1}<\dots<\alpha_{r}$ denote the unstable colors and let $\alpha_{r+1}:=0$. Let $\bmu=(\mu_{1},\dots,\mu_{\kappa})$ be as in \eqref{eq:bmu_def} and $J:=\{ i \,:\, \mu_{i}>0 \}$. Let $\mathcal{W}=(\mathcal{W}^{1},\dots,\mathcal{W}^{\kappa})$ denote the SRBM in Theorem \ref{thm:SRBM_weak_convergence} \textup{\textbf{(ii)}}. Then 
			\begin{align}\label{eq:lambda_1_fluctuation_thm}
				\sum_{i=1}^{\kappa} \mathcal{W}^{i}(1) 	& \preceq	\liminf_{n\rightarrow\infty} 	\frac{\lambda_{1}(n)-(p_{*}-p_{0})n}{\sqrt{n }}  \\
				&\preceq	\limsup_{n\rightarrow\infty} 	\frac{\lambda_{1}(n)-(p_{*}-p_{0})n}{\sqrt{n }} \\
				& \preceq  \sum_{j\in J} B^{j}(1) + \sup_{0\le v \le 1} \sum_{j\in \{1,\dots,\kappa\}\setminus J }^{\kappa} \mathcal{W}^{i}(v), 
			\end{align}
			where $\preceq$ denotes stochastic dominance and $B=(B^{1},\dots,B^{\kappa})$ is a Brownian motion in $\R^{\kappa}$ with zero drift and the same covariance matrix with $\mathcal{W}$. 
			
			\medskip
			
			\item[(ii)] (Subsequent soliton lengths in the simple supercritical regime) Suppose  $r=1$. Then for any fixed $j\ge 2$, $\lambda_{j}(n)=\Theta(\log n)$ with high probability.

			\medskip
			
			\item[(iii)] (Subsequent soliton lengths in the non-simple supercritical regime) Suppose $r\ge 2$. Then for any fixed $j\ge 2$, $\lambda_{j}(n)=\Theta(\sqrt{n})$ with high probability,  that is, for each $\eps>0$, there exists constants $c_{1},c_{2}>0$ such that $\liminf_{n\rightarrow\infty} \P(\lambda_{j}(n)/\sqrt{n}\in [c_{1},c_{2}] ) \ge 1-\eps$.
		\end{description}
	\end{theorem}

	Multiple remarks on Theorems \ref{thm:iid_subcritical}-\ref{thm:iid_supercritical} are in order. These results extend the `double-jump' phase transition on soliton lengths for the $\kappa=1$ case established by Levine, Lyu, and Pike \cite{levine2020phase} to the multicolor case. As in the $\kappa=1$ case, we find that there exists three regimes -- subcritical ($\lambda_{1}(n)=\Theta(\log n)$), critical ($\lambda_{1}(n)=\Theta(\sqrt{n})$), and supercritical ($\lambda_{1}(n)=\Theta(n)$) -- depending whether the maximum ball density $p^{*}=\max(p_{1},\dots,p_{\kappa})$ exceeds the empty box density $p_{0}$. However, we find that the scaling behavior of the soliton lengths inside each regime is significantly more nuanced in the multicolor case than in the single-color case. 
	
	In the subcritical regime $p^{*}<p_{0}$, we find all top soliton lengths $\lambda_{j}(n)$ for $j\ge 1$ is concentrated around $\log_{\theta} n + (r-1)\log_{\theta} \log n$, where $\theta=p^{*}/p_{0}$ and $r$ denotes the multiplicity of the maximum positive color $p^{*}$, and the tail of $\lambda_{n}(n)$ has a Gumbel-type tail distribution. While this scaling coincides with that in the $\kappa=1$ case for $r=1$, if $r\ge 2$, then the top solitons are an asymptotically `a tad' longer by $(r-1)\log_{\theta} \log n$, which is caused by the competition between multiple maximal colors. 
	
	In the critical regime $p^{*}=p_{0}$, we find that $\lambda_{1}(n)/\sqrt{n} \Rightarrow D$, where the distirbution of the non-degenerate random variable $D$ depends on a SRBM on the orthant $\R^{\kappa}_{\ge 0}$ with zero drift and an explicit covariance matrix $\Sigma$. This is the same SRBM to which the entire carrier process converges weakly in diffusive scaling as in Theorem \ref{thm:SRBM_weak_convergence}. 
	For instance, if $p^{*}$ is uniquely achieved, then the SRBM $\mathcal{W}$ is degenerate in all but one dimension. In particular, for $\kappa=1$, our result recovers the corresponding result in \cite{levine2020phase}. In general, $\Sigma$ can depend on the entire $\p$, capturing the intertwined interaction between balls of all colors in the multicolor case.
	
	In the supercritical regime $p^{*}>p_{0}$, Theorem \ref{thm:iid_supercritical} shows that $\lambda_{1}(n)/n \rightarrow  p^{*}-p_{0}$ almost surely and the fluctuation of $\lambda_{1}(n)$ about its mean is of order $\sqrt{n}$. While a central limit theorem (CLT) for $\lambda_{1}(n)$ in the supercritical regime was shown in \cite{levine2020phase} for the $\kappa=1$ case, we find in the multicolor case that the distribution of the fluctuation of $\lambda_{1}(n)$ does not always satisfy CLT. More precisely, the following corollary shows that CLT holds for $\lambda_{1}(n)$ if and only if the ball density is strictly decreasing on the unstable colors. (Recall \eqref{eq:ball_density_weakly_decreasing_unstable}.)
	
	\begin{corollary}(Fluctuation of $\lambda_{1}$ in the supercritical regime)\label{cor:iid_supercritical_fluctuation}
		Keep the same setting as in Theorem \ref{thm:iid_subcritical}. Suppose supercritical regime $p^{*}>p_{0}$. Let $\alpha_{1}<\dots<\alpha_{r}$ denote the unstable colors. 
		\begin{description}[itemsep=0.1cm]
			\item[(i)]  Further assume $p_{\alpha_{1}}>\dots > p_{\alpha_{r}}$, Then $\lambda_{1}(n)$ satisfies the following central limit theorem 
			\begin{align}
				\frac{\lambda_{1}(n)-(p_{*}-p_{0})n}{\sqrt{n }} \Longrightarrow N(0, \lVert \Sigma \rVert_{1}),
			\end{align}
			where the limiting distribution is the normal distribution with mean zero and variance $\lVert \Sigma \rVert_{1}$ for $\Sigma$ the covariance matrix in Theorem \ref{thm:SRBM_weak_convergence}. 
			
			\item[(ii)] If $p_{\alpha_{j}}=p_{\alpha_{j+1}}$ for some $1\le j \le r-1$, then 
			\begin{align}\label{eq:super_fluctuation_cor}
				\E\left[ 	\liminf_{n\rightarrow\infty} \frac{\lambda_{1}(n)-(p_{*}-p_{0})n}{\sqrt{n }} \right] >0. 
			\end{align}
			In particular, $\lambda_{1}(n)$ does not satisfy the central limit theorem. 
		\end{description}
	\end{corollary}

	Indeed, suppose $p_{\alpha_{1}}>\dots>p_{\alpha_{r}}$ as in Corollary \ref{cor:iid_supercritical_fluctuation} \textbf{(i)}. Then Theorem \ref{thm:SRBM_weak_convergence} states that $x^{-1/2}(W_{x}-\bmu x)$ converges weakly to the (non-reflecting) Brownian motion in $\R^{\kappa}$ with covariance matrix $\Sigma$. Hence in this case  Theorem \ref{thm:iid_supercritical} \textbf{(i)} immediately implies that 
	\begin{align}
		\frac{\lambda_{1}(n)-(p_{*}-p_{0})n}{\sqrt{n }} \Longrightarrow \sum_{i=1}^{\kappa} B^{i}(1),
	\end{align}
	where $B=(B^{1},\dots,B^{\kappa})$ is a Brownian motion in $\R^{\kappa}$ with zero drift and covariance matrix $\Sigma$ in Theorem \ref{thm:SRBM_weak_convergence}. Since $B(1)$ is a standard normal vector with mean zero and covariance matrix $\Sigma$, the result in Corollary \ref{cor:iid_supercritical_fluctuation} \textbf{(i)} follows. 
	
	If we are in the situation as in Corollary \ref{cor:iid_supercritical_fluctuation} \textbf{(ii)}, then some of the consecutive unstable colors have the same ball density, i.e.,  $p_{\alpha_{j}}=p_{\alpha_{j+1}}$. For every such $\alpha_{j}$, the corresponding coordinate has to remain nonnegative in the limiting SRBM. So in this case, the fluctuation of $\lambda_{1}$ about its mean in the diffusive scaling has a positive expectation. As an example, consider the case $\p=(p_{0},p_{1},p_{2})$ with $p_{1}>p_{2}=p_{0}$ (see Figure \ref{fig:SRBM_sup} (b)). In this case, the limiting SRBM $W=(W^{1},W^{2})$ is on the domain $\R\times \R_{\ge 0}$, so the lower bound $W^{1}(1)+W^{2}(1)$ on the fluctuation in \eqref{eq:lambda_1_fluctuation_thm} has a positive expectation. This can be understood for the following reasons. Since $p_{1}>\max(p_{0},p_{2})$, the number of color $1$ balls in the carrier grows linearly and makes the dominant contribution (of order $n$) to $\lambda_{1}(n)$. However, the number of color $2$ balls in the carrier still contributes to $\lambda_{1}(n)$ by order $\sqrt{n}$ since $p_{2}=p_{0}$. While the fluctuation of the number of color 1 balls around its mean $(p_{1}-p_{0})n$ has mean zero, the contribution of color 2 balls of order $\sqrt{n}$ is only visible in the diffusive scaling and it is almost always of a positive amount. 
	
	Another interesting behavior of the multicolor BBS is the order of subsequent soliton lengths, $\lambda_{j}(n)$ for $j\ge 2$, in the supercritical regime, which depends drastically on the multiplicity $r$ of the maximal ball density $p^{*}$. That is, $\lambda_{j}(n)$ for all $j\ge 2$ is of order $\log n$ if  $r=1$, but  they are of order $\sqrt{n}$ if $r\ge 2$. The former case agrees with the results for the $\kappa=1$ case in \cite{levine2020phase}.  There, it was shown that $\lambda_{2}(n)$  comes from the subexcursions of the carrier process below its running maximum. The height of such subexcursions has exponential tails, so we have order $\log (n)$ as the order of the maximum of $n$ subexponential random variables. However, if $r\ge 2$ in the multicolor case, the discrepancy between the number of balls of two maximal colors is of order $\sqrt{n}$ and contributes to $\lambda_{2}(n)$ (see the proof of Theorem \ref{thm:iid_supercritical} \textbf{(iii)}). We remark that a duality between the subcritical and the supercritical regimes for $\kappa=1$ established in \cite{levine2020phase}, in the sense that $\lambda_{j+1}$ in the superciritcal regime corresponds to $\lambda_{j}$ in the subcritical regime for $j\ge 1$. Our results confirm a similar correspondence still holds asymptotically for the simple ($r=1$) supercritical regime; but $\lambda_{j+1}$ in the non-simple ($r\ge 2$) supercirital regime, corresponds to $\lambda_{j}$ in the critical regime.

	\vspace{-0.3cm}
	\section{Key combinatorial lemmas}
	\label{section:key_lemmas}

	\subsection{Infinite capacity carrier process and soliton lengths}
	\label{subsection:carrier_process_infinite}

	The definition of $\kappa$-color BBS dynamics we gave in the introduction involves the non-local movement of balls. It can instead be defined using a `carrier', which gives a localized characterization of the process and reveals a number of important invariants that fully determine the resulting solitons.  For the simplest case $\kappa=1$, imagine a carrier of infinite capacity sweeps through the time-$t$ configuration $\xi^{(t)}$ from the left, picking up each ball it encounters and depositing a ball into each empty box whenever it can. We will see that after we run this carrier over $\xi^{(t)}$, the resulting configuration is in fact $\xi^{(t+1)}$. Moreover, the maximum number of balls in the carrier during the sweep is in fact the first soliton length $\lambda_{1}$. 
	
	Now we introduce the \textit{infinite-capacity carrier process} and the carrier version of the $\kappa$-color BBS dynamic. Denote 
	\begin{align}
		\mathcal{B}_{\infty} := \left\{ \mathbf{x}\in \{0,1,\cdots ,\kappa\}^{\mathbb{N}} \mid \text{$\mathbf{x}$ is non-increasing and has finite support} \right\},
	\end{align}
	which is the set of `reversed'  semi-standard Young tableaux of shape $1\times \infty$ and letters from $\{0,\dots,\kappa\}$. Namely, an element in this set is an infinite string of letters consisting of finitely many non-increasing nonzero letters followed by an infinite string of zeros. An element $\x$ in $		\mathcal{B}_{\infty}$ describes the state of the infinite-capacity carrier. If the carrier at state $\x$ encounters a new ball of color $y$, it produces a new carrier state $\x'$ and a new ball color $y'$ according to the `circular exclusion rule': \textit{Inserting $y$ into $\x$, $y'$ is the largest letter in $\x$ with $y'<y$, and $\x'$ is obtained by replacing the leftmost letter $y'$ in $\x$ with $y$.} More precisely, define a map $\Psi:\mathcal{B}_{\infty}\times \{0,1,\cdots, \kappa \} \rightarrow \{0,1,\cdots, \kappa \} \times \mathcal{B}_{\infty}$, $(\mathbf{x},y)\mapsto (y',\mathbf{x}')$ by
	\begin{description}
		\item[(i)] Suppose $y\ge 1$ and denote $i^{*}=\min\{ i\ge 1 \mid \mathbf{x}(i)< y \}$. Then $y'=\mathbf{x}(i^{*})$ and 
		\begin{align}
			\mathbf{x}'(i) = \mathbf{x}(i)\mathbf{1}(i\ne i^{*}) +  y\mathbf{1}(i=i^{*}) \qquad \forall i\ge 1.
		\end{align}
		\item[(ii)] Suppose $y=0$. Then $y'=\mathbf{x}(1)=\max(\mathbf{x})$ and
		\begin{align}
			\mathbf{x}'(i) = \mathbf{x}(i+1) \qquad \forall i\ge 1.
		\end{align}
	\end{description}

	\begin{figure*}[h]
		\centering
		\includegraphics[width=0.9 \linewidth]{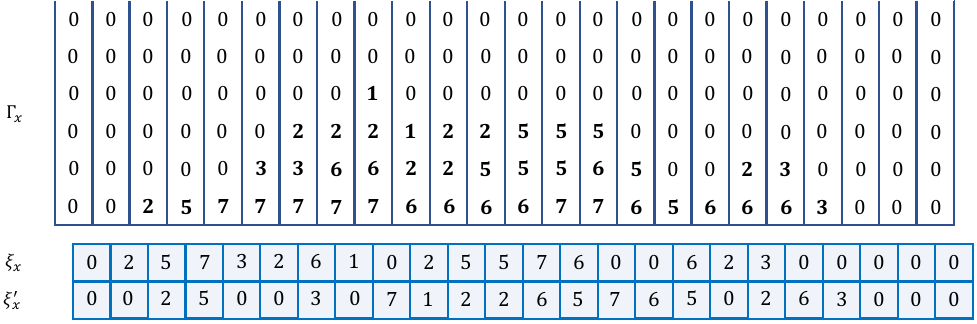}
		\caption{ Time evolution of the infinite capacity carrier process $(\Gamma_{x})_{x\ge 0}$ over the $7$-color initial configuration $\xi$, producing new configuration $\xi'$ consisting of exiting ball colors. For instance, $\xi_{2}=2$, $\Gamma_{2}=[2,0,0,\cdots]$, and $\xi'_{4}=5$. Notice that $\xi'$ can also be obtained by the time evolution of the 7-color BBS applied to $\xi$.
		}
		\label{fig:carrier_process}
	\end{figure*}

	Fix a $\kappa$-color BBS configuration $\xi:\mathbb{N}\rightarrow \{0,1,\cdots, \kappa \}$. Fix $\Gamma_{0} \in \mathcal{B}_{\infty}$, and recursively define a new $\kappa$-color BBS configuration $\xi'$ and a sequence $(\Gamma_{x})_{x\ge 0}$ of elements of $\mathcal{B}_{\infty}$ by  
	\begin{align}\label{eq:def_inf_carrier}
		(\xi'_{x+1},\Gamma_{x+1})  = \Psi(\Gamma_{x}, \xi_{x+1}) \qquad \forall x\in \N.
	\end{align}
	We call the sequence $(\Gamma_{x})_{x\ge 0}$ the \textit{infinite capacity carrier process over $\xi$}. The carrier state $\Gamma_{x}$ is determined by the balls in the interval $[1,x]$ (see Figure \ref{fig:carrier_process} for an illustration). Unless otherwise mentioned, we will assume $\Gamma_{0} =\mathbf{0}=[0,0,0,\cdots]\in \mathcal{B}_{\infty}$. The induced update map $\xi\mapsto \xi'$ turns out to coincide with the $\kappa$-color BBS evolution \eqref{eq:BBS_def_nonlocal}. See Remark \ref{rmk:carrier_BBS_evolution} for more details.

	It is important to note that the carrier process $(W_{x})_{n\in \N}$ we introduced in \eqref{eq:W_x_recursion} can be derived from the infinite-capacity carrier process $(\Gamma_{x})_{x\in \N}$ above by simply recording the number of balls of each color $i=1,\dots,\kappa$. That is, 
	\begin{align}
		W_{x} = (m_{1}(\Gamma_{x}),\dots,m_{\kappa}(\Gamma_{x})) \quad \textup{for all $x\ge 0$}, 
	\end{align}
	where $m_{i}(\Gamma_{x})$ denotes the number of balls of color (letter) $i$ in $\Gamma_{x}$ for $i=1,\dots,\kappa$.

	Lemma~\ref{lemma:queue_formula_soliton} below states that the first soliton length $\lambda_{1}$ equals the maximum number of balls of positive colors in the associated carrier process.

	\begin{lemma}\label{lemma:queue_formula_soliton}
		Suppose the initial $\kappa$-color BBS configuration $\xi$ has finite support. Let $(W_{x})_{x\ge 0}$ and  $(\Gamma_{x})_{x\ge 0}$ be as before. Then 
		\begin{align}
			\lambda_{1}(\xi)= \max_{x\ge 0}  \,\, \lVert W_{x} \rVert_{1} =  \max_{x\ge 0} \left( \text{$\#$ of positive letters in $\Gamma_{x}$} \right).
		\end{align} 	
	\end{lemma}

	For $\kappa=1$, it is possible to precisely characterize all subsequent soliton lengths $\lambda_{2},\lambda_{3},\dots$ by applying the `excursion operator' to the carrier process multiple times and taking maximum \cite{levine2020phase}. Roughly speaking, given the 1-dimensional carrier process $W=(W_{x})_{x\ge 0}$ for $\kappa=1$, which starts at 0 and takes value 0 for all large $x$, let  $\mathcal{E}(W)$ denote the new lattice path that describes the excursion heights above the record minimum of $W$ away from the rightmost global maximizer of $W$. Then $\lambda_{2}=\max(\mathcal{E}(W))$, and $\lambda_{3}=\max (\mathcal{E}^{2}(W))$, and so on. We currently do not have a similar $\kappa$-dimensional excursion operator for exactly describing the subsequent soliton lengths for the general multicolor case. However, we provide a lower bound on $\lambda_{j}$ in terms of the $j$th largest `excursion height' of the carrier process, which is enough to obtain sharp asymptotics for $\lambda_{j}$ in the subcritical regime. 
	
	We introduce some notation. Let $\mathbf{0}=(0,0,\cdots,0)\in (\mathbb{Z}_{\ge 0})^{\kappa}$ denote the origin, and write
	\begin{align}\label{eq:def_Mn}
		M_{n} := \sum_{x=1}^{n}\mathbf{1}(W_{x}=\mathbf{0})
	\end{align}
	for the number of visits of $W_{x}$ to $\mathbf{0}$ during $[1,n]$. For each $k\ge 1$, let $T_{k}$ denote the time of the $k$th visit of  $W_{x}$ to $\mathbf{0}$ and set $T_{0}=0$. We say that the trajectory of $W_{x}$ restricted to the time intervals $[T_{k-1},T_{k}]$ between consecutive visits to $\mathbf{0}$ are its \textit{excursions}. 
	Also note that $M_{n}$ defined at \eqref{eq:def_Mn} equals the number of complete excursions of the carrier process during $[1,n]$. We will define the \textit{height} of the carrier at site $x$ by 
	\begin{align}\label{eq:def_carrier_height_def}
		\lVert W_{x} \rVert_{1} = W_{x}(1) + \dots + W_{x}(\kappa), 
	\end{align}
	which equals the number of balls of positive color that the carrier possesses at site $x$. Define the \textit{$k$th excursion height $h_{k}$} and \textit{height of the final meander $r_{n}$} by 
	\begin{align}\label{eq:def_excursion_heights}
		h_{k} = \max_{T_{k-1}\le t \le T_{k}} \lVert W_{x} \rVert_{1}, \qquad r_{n} = \max_{T_{M_{n}}\le t \le n} \lVert W_{x} \rVert_{1}.
	\end{align}
	Denote by $\mathbf{h}_{1}(n)\ge  \mathbf{h}_{2}(n)\ge \cdots \ge \mathbf{h}_{M_{n}}(n)$ the order statistics of the excursion heights  $h_{1},\cdots,h_{M_{n}}$. We then have the following lemma.
	\begin{lemma}\label{lemma:soliton_lengths_excursions}
		Soliton decomposition of $\xi$ is obtained as the union of the soliton decomposition of the support of each excursion of the carrier process over $\xi$. In particular, for $j,n\ge 1$, $\lambda_{j}(n)\ge \mathbf{h}_{j}(n)$.
	\end{lemma}
	
	Proofs of Lemmas \ref{lemma:queue_formula_soliton} and \ref{lemma:soliton_lengths_excursions} are relagated to  Section~\ref{section:proof_combinatorial_lemmas}.

	\subsection{Finite capacity carrier processes and soliton numbers}
	\label{subsection:carrier_process_finite}

	In \cite{kuniba2020large}, it is shown that the row lengths of the invariant Young diagram of any $\kappa$-BBS trajectory can be extracted by running carrier processes of finite capacities, as we will summarize in this subsection. This will provide one of the key combinatorial lemmas in the present paper. 
	
	First, fix an integer parameter $c\ge 1$ that we call \textit{capacity}. Denote
	\begin{align}
		\mathcal{B}_{c} = \{ [x_{1},\cdots,x_{c}]\in \{0,1,\cdots ,\kappa\}^{c}\mid x_{1}\ge \cdots \ge x_{c} \},
	\end{align}
	which can also be identified as the set of all $(1\times c)$ semistandard tableaux with letters from $\{0,1,\cdots, \kappa \}$. Define a map $\Psi_{c}:\mathcal{B}_{c}\times \{0,1,\cdots, \kappa \} \rightarrow \{0,1,\cdots, \kappa \} \times \mathcal{B}_{c}$, $([x_{1},\cdots,x_{c}],y)\mapsto (y',[x_{1}',\cdots,x_{c}'])$ by the following `circular exclusion rule':
	\begin{description}
		\item[(i)] Suppose $y>x_{c}$ and denote $i^{*}=\min\{ i\ge 1\mid x_{i}< y \}$. Then $y'=x_{i^{*}}$ and 
		\begin{align}
			[x_{1}',\cdots,x'_{c}] = [x_{1},\cdots,x_{i^{*}-1},y,x_{i^{*}+1},\cdots,x_{c}].
		\end{align}
		\item[(ii)] Suppose $x_{c}\ge y$. Then $y'=x_{1}$ and
		\begin{align}
			[x_{1}',x_{2}',\cdots,x_{c}'] = [x_{2},\cdots,x_{c},y].
		\end{align}
	\end{description}
	
	Fix a $\kappa$-color BBS configuration $\xi:\mathbb{N}\rightarrow \{0,1,\cdots, \kappa \}$. Let $\Gamma_{0}=[0,\cdots,0]\in \mathcal{B}_{c}$, and recursively define a new $\kappa$-color BBS configuration $\xi'$ and a sequence $(\Gamma_{x})_{x\ge 0}$ of elements of $\mathcal{B}_{c}$ by  
	\begin{align}
		(\xi'_{x+1},\Gamma_{x+1})  = \Psi_{c}(\Gamma_{x}, \xi_{x+1}) \qquad \forall x\in \N.
	\end{align}
	We call the sequence $(\Gamma_{x})_{x\ge 0}$ the \textit{capacity-$c$ carrier process over $\xi$}. See Figure \ref{fig:carrier_process_finite} for an illustration.

	\begin{figure*}[h]
		\centering
		\includegraphics[width=0.9 \linewidth]{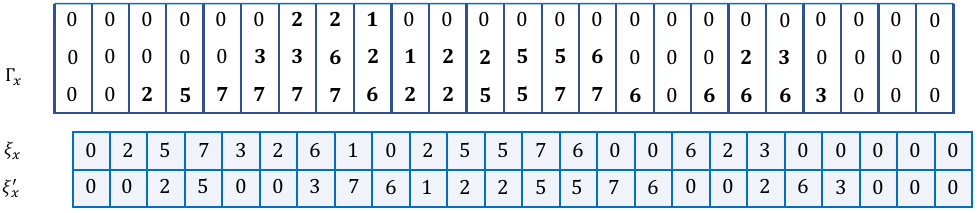}
		\caption{ Time evolution of the capacity-3 carrier process $(\Gamma_{x})_{x\ge 0}$ over the $7$-color initial configuration $\xi$, with new configuration $\xi'$ consisting of exiting ball colors. For instance, $\xi_{2}=2$, $\Gamma_{2}=[2,0,0]$, and $\xi'_{4}=5$. Notice that while $\xi$ is the same as in the example in Figure \ref{fig:carrier_process}, the new $7$-color BBS configuration $\xi'$ is different. In this case, the map $\xi\mapsto \xi'$ does \textit{not} agree with the $7$-color BBS time evolution. 
		}
		\label{fig:carrier_process_finite}
	\end{figure*}

	The following lemma, which is proven in \cite{kuniba2020large}, gives a closed-form expression of the row sums of the invariant Young diagram:
	
	\begin{lemma}\label{lemma:carrier_row_lengths}
		Let $(\xi^{(t)})_{t\ge 0}$ be a $\kappa$-color BBS trajectory such that $\xi^{(0)}$ has finite support. For each $c\ge 1$, let $(\Gamma_{x;c})_{x\ge 0}$ denote the capacity-$c$ carrier process over $\xi^{(t)}$. Then for all $k\ge 1$ and $t\ge 0$, we have 
		\begin{align}
			\rho_{1}(\xi^{(0)}) + \cdots+ \rho_{k}(\xi^{(0)}) \equiv \sum_{x=1}^{\infty} \mathbf{1}(\xi^{(t)}_{x}>\min \Gamma_{x-1;k}),
		\end{align}
		where $\min \Gamma_{x-1;k}$ denotes the smallest letter in $\Gamma_{x-1;k}$.
	\end{lemma}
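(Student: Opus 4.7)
The plan is to derive the lemma from the modified Greene-Kleitman invariant for row sums in Lemma \ref{lemma:GK_invariants}. That result already gives
\[
\rho_1(\Lambda(X_0)) + \cdots + \rho_k(\Lambda(X_0)) = \max_{A_1 \sqcup \cdots \sqcup A_k = \mathbb{N}} \sum_{i=1}^{k} \NA(A_i, X_t),
\]
so it suffices to show that this combinatorial maximum equals the number of Case (i) transitions of the capacity-$k$ carrier process, namely $\sum_{s\ge 1} \mathbf{1}(X_t(s) > \min \Gamma_{s-1;k})$. The two directions of this equality are handled separately.

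For the lower bound, I would construct an explicit partition $(A_1,\ldots,A_k)$ directly from the carrier trajectory. Viewing the $k$ carrier entries as the sorted tops of $k$ non-increasing piles, each Case (i) event at site $s$ inserts $X_t(s)$ at slot $i^*$ and strictly exceeds the previous entry there; assigning $s$ to the pile indexed by slot $i^*$ produces exactly one ascent, contributing $1$ to $\NA(A_{i^*}, X_t)$. Each Case (ii) event satisfies $X_t(s) \le \min \Gamma_{s-1;k}$, so assigning $s$ to the pile whose top is currently minimal extends it non-increasingly and contributes no ascent. An induction on $s$ then confirms that the piles remain non-increasing and that the total ascent count across $(A_1,\ldots,A_k)$ equals the number of Case (i) events.

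For the upper bound, I would show that any competing partition $A_1 \sqcup \cdots \sqcup A_k = \mathbb{N}$ satisfies $\sum_i \NA(A_i, X_t) \le \sum_s \mathbf{1}(X_t(s) > \min \Gamma_{s-1;k})$ via a patience-sorting-style exchange argument: every strict ascent in some $A_i$ at site $s$ forces $X_t(s)$ to exceed the most recent entry of $A_i$, which by a greedy comparison must itself be at least $\min \Gamma_{s-1;k}$. The comparison is justified by a sequence of local exchanges that rearrange the partition to match the carrier's greedy choices without decreasing $\sum_i \NA(A_i, X_t)$; alternatively one may induct on $|\mathrm{supp}(X_t)|$, removing the leftmost nonzero site and tracking the carrier state and the partition in lockstep.

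The principal technical obstacle is that Case (ii) transitions cyclically shuffle the carrier slots, so the identification between piles and slots must be done through a persistent labeling (for instance by the most recent site assigned to each pile) rather than by positional index in $\Gamma$. A careful inductive verification is then needed to show that this labeling is consistent with the exchange argument of the upper bound. Since Lemma \ref{lemma:carrier_row_lengths} is already established in \cite{kuniba2018large} via the combinatorial $R$-matrix and the local energy function, the above combinatorial route is intended only as an alternative, more elementary derivation that stays within the framework of Lemma \ref{lemma:GK_invariants}.
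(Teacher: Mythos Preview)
Your approach is essentially the one the paper itself carries out. The inline proof of Lemma~\ref{lemma:carrier_row_lengths} is just a citation to \cite{kuniba2018large}, but the paper also supplies an elementary self-contained argument in Section~\ref{section:proof_combinatorial_lemmas}: Proposition~\ref{prop:invariants_proof}(ii) proves $R_j(X)=E_j(X)$ by precisely the two steps you describe --- build sets $A_1,\ldots,A_j$ from the carrier trajectory to get $R_j\ge E_j$, then run a tail-swapping surgery (at the first site where a putative optimal partition disagrees with the carrier's partition, swap the tails of the two classes involved) to get $R_j\le E_j$. Combined with time-invariance of $E_j$ (Proposition~\ref{prop:invariants_proof}(i)) and direct evaluation at the soliton-decomposition time, this yields Lemma~\ref{lemma:GK_invariants} and Lemma~\ref{lemma:carrier_row_lengths} simultaneously. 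So your route (take Lemma~\ref{lemma:GK_invariants} as given, then prove $R_k=E_k$) and the paper's route (prove $R_k=E_k$ and time-invariance, then evaluate at soliton time) differ only in the order of packaging; the substantive step is identical.

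One genuine refinement in your write-up: you correctly flag that Case~(ii) transitions cyclically permute the carrier slots, so the pile assignment must follow a persistent label rather than the positional index $i^*$ in $\Gamma$. The paper's Proposition~\ref{prop:invariants_proof}(ii) indexes $A_i$ by positional slot and asserts that each Case~(i) insertion contributes an ascent to the corresponding $A_i$; this is not literally true after a Case~(ii) shift (e.g.\ $X=2,1,0,2$ with $k=2$ gives a counterexample), and your persistent-labeling fix --- tracking piles by the most recent site assigned --- is what is actually needed to make the construction yield $\sum_i \NA(A_i,X)=E_k(X)$. Your upper-bound exchange sketch should likewise be carried out relative to this persistent labeling rather than the positional one.
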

	
	\begin{proof}
		See eq. (13) and Prop. 4.5 in \cite{kuniba2020large}. We also provide a self-contained proof in Section~\ref{section:pf_lem_row_GK}.
	\end{proof}

	\begin{remark}\label{rmk:carrier_BBS_evolution}
		It is well-known that, if the capacity $c\ge 1$ is large enough compared to the number of balls of color $\ge 1$ in the system, then the induced update map $\xi\mapsto \xi'$ agrees with the $\kappa$-color BBS time evolution (see, e.g., \cite{hatayama2001factorization}). 
		Also, once the capacity $c$ is large enough, the capacity-$c$ carrier process is equivalent to the infinite capacity carrier process in the sense that they always contain the same number of each positive letter. Hence it follows that 
		the map $\xi\mapsto \xi'$ defined in \eqref{eq:def_inf_carrier} coincides with the $\kappa$-color BBS time evolution defined in the introduction. 
		In other words, the $\kappa$-color BBS dynamic can be equivalently defined by repeatedly applying the infinite-capacity carrier process to the current ball configuration, analogously as in the $\kappa=1$ case in \cite{levine2020phase}. 
	\end{remark}
	
	\subsection{Modified Greene-Kleitman invariants for BBS}
	\label{subsection:GK_invariants}
	One natural way to associate a Young diagram with a given permutation is to use the celebrated Robinson-Schensted correspondence (see \cite[Ch. 3.1]{Sagan}), which gives a bijection between permutations and pairs of standard Young tableaux of the same shape. For each permutation $\sigma$, record the common shape of the Young tableaux as $\Lambda_{\textup{RS}}(\sigma)$. Let $\rho_{i}^{\textup{RS}}(\sigma)$ and $\lambda_{j}^{\textup{RS}}(\sigma)$ denote its $i$th row length and its $j$th column lengths, respectively. According to Greene's theorem \cite{greene1982extension}, the sum of the lengths of the first $k$ columns (resp. rows) of $\Lambda_{RS}(\sigma)$ is equal to 
	the length of the longest subsequence in $\sigma$ that can be obtained by taking the union of $k$ decreasing (resp. increasing) subsequences. That is, for each $k\ge 1$, 
	\begin{align}
		\rho_{1}^{\textup{RS}}(\sigma)) + \cdots+ \rho_{k}^{\textup{RS}}(\sigma)) &= \max\left(\left| \text{$\bigsqcup$ $k$ increasing subsequences of $\sigma$} \right| \right), \\
		\lambda_{1}^{\textup{RS}}(\sigma)) + \cdots+ \lambda_{k}^{\textup{RS}}(\sigma)) &= \max\left(\left| \text{$\bigsqcup$ $k$ decreasing subsequences of $\sigma$} \right| \right).
	\end{align}
	The quantities on the right-hand sides are called the \textit{Greene-Kleitman invariants}.
	
	If we consider the $\kappa$-color BBS trajectory started at $\xi^{(0)}=\sigma \mathbf{1}([1,n])$, then we obtain another Young diagram  $\Lambda(\sigma) := \Lambda(\xi^{(0)})$, whose $j^{\text{th}}$ column length equals the $j^{\text{th}}$ longest soliton length. Then a natural question arises: Do the sums of the first $k$ rows and columns of $\Lambda(\sigma)$ relate to some type of Greene-Kleitman invariants? For the rows, we find that the correct modification is to \textit{localize} the length of an increasing sequence into the number of ascents in a subsequence. On the other hand, for the columns, it turns out that we just need to impose that the $k$ decreasing subsequences be \textit{non-interlacing}. In fact, in Lemma \ref{lemma:GK_invariants}, we establish these modified Greene-Kleitman invariants for BBS in the more general setting when $\sigma$ is an arbitrary $\kappa$-color BBS configuration with finite support, where having 0's and repetitions are both allowed. 
	
	Let $\xi:\mathbb{N}\rightarrow \{0,1,\cdots, \kappa\}$ be a $\kappa$-color BBS configuration with finite support. For subsets $A,B\subseteq \mathbb{N}$, denote $A\prec B$ if $\max(A)<\min(B)$. We say $A,B$ are \textit{non-interlacing} if $A\prec B$ or $B \prec A$. We say $\xi$ is \textit{non-increasing} on $A\subseteq \mathbb{N}$  if $\xi_{a_{1}}\ge \xi_{a_{2}}$ for all $a_{1},a_{2}\in A$ such that $a_{1}\le a_{2}$. Denoting the elements of $A$ by $a_{1}<a_{2}<\cdots$, define the \textit{number of ascents of $\xi$ in $A$} by 
	\begin{align}
		\NA(A,\xi):= 1+\sum_{i=2}^{|A|} \mathbf{1}(\xi_{a_{i-1}}<\xi_{a_{i}}).
	\end{align}
	Moreover, define the \textit{penalized length of $A$ with respect to $\xi$} by 
	\begin{align}\label{eq:def_penalized_length}
		\L(A,\xi):=   \left[ |A| - \sum_{i = \min A}^{\max A} \mathbf{1}(\xi_{i}=0) \right] \mathbf{1}(\text{$\xi$ is non-increasing on $A$}).
	\end{align}
	Note that the summation in \eqref{eq:def_penalized_length} is over the interval $[\min A, \max A]\cap \mathbb{Z}$, which may contain $A$ properly.

	\begin{lemma}\label{lemma:GK_invariants}
		Let $(\xi^{(t)})_{t\ge 0}$ be a $\kappa$-color BBS trajectory such that $\xi^{(0)}$ has finite support. Then for each $k,t\ge 0$, we have   
		\begin{align}
			\rho_{1}(\xi^{(0)}) + \cdots+ \rho_{k}(\xi^{(0)}) &\equiv  \max_{A_{1}\sqcup \cdots \sqcup A_{k} = \mathbb{N} } \sum_{i=1}^{k} \NA(A_{i}, \xi^{(t)}), \label{eq:GK_row}\\
			\lambda_{1}(\xi^{(0)}) + \cdots+ \lambda_{k}(\xi^{(0)}) &\equiv  \max_{A_{1}\prec\cdots \prec A_{k} \subseteq \mathbb{N} } \sum_{i=1}^{k} \L(A_{i},\xi^{(t)}).
		\end{align}
	\end{lemma}
	
	The proof of Lemma~\ref{lemma:GK_invariants} may be found in Section~\ref{section:pf_lem_row_GK}.

	\section{Proof of Theorem \ref{thm:permutation}}
	\label{section:permutation}
	
	In this subsection, we prove our first main result, Theorem \ref{thm:permutation}. Let $\Sigma^{n}$ be a uniformly chosen random permutation of the set $\{1,2,\cdots,n\}$, and let $\xi^{n}=\Sigma^{n}\mathbf{1}([1,n])$ be the random $n$-color BBS configuration induced from $\Sigma^{n}$. Let $\lambda_{k}(n)=\lambda_{k}(\xi^{n})$ denote the length of the $k^{\text{th}}$ longest soliton in $\xi^{n}$.

	\subsection{Proof of Theorem \ref{thm:permutation} for the columns}
	
	Our proof of Theorem \ref{thm:permutation} for the columns relies on Lemma \ref{lemma:GK_invariants} and the sharp asymptotic of longest decreasing subsequence of a uniform random permutation due to Baik, Deift, and Johansson \cite{baik1999distribution}.
	
	\begin{proof}[\textbf{Proof of Theorem \ref{thm:permutation} for the columns}]
		Fix an integer $k\ge 1$. It suffices to show that, almost surely, 
		\begin{align}
			\lim_{n\rightarrow\infty} n^{-1/2}\sum_{i=1}^{k} \lambda_{i}(n)  =  2\sqrt{k}.
		\end{align}
		For each integer $k\ge 1$, let $L(k)$ denote the length of the longest increasing subsequence in a uniformly random permutation of $k$ letters. By Lemma \ref{lemma:GK_invariants}, recall that 
		\begin{align}\label{eq:pf_thm_permutation_0}
			\lambda_{1}(n) + \cdots + \lambda_{k}(n) =  \max \left\{ \sum_{i=1}^{k} L(A_{i}, \xi^{n}) \,|\, A_{1}\prec \dots \prec A_{k} \subseteq [1,n]  \right\}.
		\end{align}
		We view a random permutation as a ranking among $n$ i.i.d. $\textup{Uniform}([0,1])$ random variables $U_{1},\cdots, U_{n}$. If $A\subseteq \{1,\cdots, n\}$, then the ranking of $U_{i}$ for $i\in A$ gives a uniform random permutation of $A$, which we call a random permutation of $[n]$ restricted on $A$. Moreover, one can also see that if we restrict a random permutation on multiple disjoint subsets, then these smaller permutations are independent. Hence,  if $A_{1}\prec\cdots \prec A_{k}$ are non-interlacing subsets of $[0,n]$, then the permutations restricted on these subsets are independent. Moreover, since the random permutation model $\xi^{n}$ does not assign color $0$ on any site in $[0,n]$, for any increasing subsequence $A\subseteq [0,n]$ and its supporting interval $I=[\min A, \max A]$, 
		\begin{align}
			\L(A,\xi^{n}) = |A| \le |I| = \L(I,\xi^{n}) \overset{d}{=} L(|I|). 
		\end{align}
		It follows that 
		\begin{align}\label{eq:pf_thm_permutation_1}
			\sum_{i=1}^{k}	\lambda_{i}(n) \overset{d}{=}  \max \left\{ \sum_{i=1}^{k} L(n_{i}) \bigg|\, \sum_{i=1}^{k} n_{i}=n,\, \textup{$L(n_{1}),\dots, L(n_{k})$ are indepenent}  \right\}.
		\end{align}

		Baik, Deift, and Johansson \cite{baik1999distribution} proved the following tail bounds for $L(n)$ (see also equations (1.7) and (1.8) in \cite{baik1999distribution} or p. 149 in \cite{romik2015surprising}):
		There exist positive constants $M, c, C$ such that  for all $m \ge 1$,
		\begin{align}
			\text{(Lower tail):} &\,\,  \P\left( m^{-1/6} (L(m)  - 2 \sqrt{m} ) \le  -  t\right) \le  C \exp(-c t^3) \,\, \text{ for all $t \in [M, 2m^{1/3}]$};\\
			\text{(Upper tail):} &\,\, \P\left( m^{-1/6} (L(m)  - 2 \sqrt{m} ) \ge   t\right) \le  C \exp(-c t^{3/5}) \,\, \text{for all $t \in [M, m^{5/6} -2m^{1/3}  ]$}.
		\end{align}
		Taking  $t = (\log m)^2$, we obtain
		\begin{align}
			\P\left(  |L(m)  - 2 \sqrt{m} | \ge  (\log m)^{2} m^{1/6} \right) \le 2C \exp(-c(\log m)^{6/5}).
		\end{align} 
		Fix $\eps>0$. Note that if $m \ge \eps \sqrt{n}$, then for any fixed $d>0$,
		\begin{align}\label{eq:BDJ_bd}
			\P\left(  |L(m)  - 2 \sqrt{m} | \ge  (\log m)^{2} m^{1/6} \right) =  O(n^{-d}).
		\end{align}
		
		Now, denote the random variable in the right-hand side of \eqref{eq:pf_thm_permutation_1} by $X$. We write $X=\max(Y,Z)$, where 
		\begin{align}
			Y &= \max \{ L(n_1) + \cdots + L(n_k) : n_1 +  \cdots + n_k =n, n_i \ge \eps \sqrt{n} \text{ for all $i$} \} ,\\
			Z &=  \max \{ L(n_1) + \cdots + L(n_k) : n_1  + \cdots + n_k =n, n_i < \eps \sqrt{n} \text{ for at least one $i$ }\}.
		\end{align} 
		Denote $\mathcal{A}:=\{ (n_{1},\dots,n_{k})\,:\, n_{1}+\dots+n_{k}=n,\, n_{i}\ge \eps\sqrt{n} \textup{ for all $i$}  \}$. For each $\eta=(n_{1},\dots,n_{k})\in \mathcal{A}$, denote $Y_{\eta}:=L(n_{1})+\dots+L(n_{k})$ and $M_{\eta}:=2(\sqrt{n_{1}}+\dots+\sqrt{n_{k}})$. Then by a union bound and \eqref{eq:BDJ_bd}, 
		\begin{align}
			\P(| Y_{\eta} - M_{\eta}| \ge k  (\log m)^{2} m^{1/6} ) = O(n^{-d}). 
		\end{align}
		Note that $Y=\max_{\eta\in \mathcal{A}} Y_{\eta}$ and 
		since there are at most $n^k$ partitions of $[n]$ into $k$ intervals, $|\mathcal{A}|\le n^{k}$. So by a union bound we have
		\begin{align}
			\P\left( \big|  Y - \max_{\eta\in \mathcal{A}} M_{\eta} \big|\right) \le \sum_{\eta\in \mathcal{A}} 	\P\left(| Y_{\eta} - M_{\eta}| \ge k  (\log m)^{2} m^{1/6} \right) = O(n^{-d}). 
		\end{align}
		for any fixed $d>0$. 
		The deterministic optimization problem 
		\begin{align}
			\max_{\eta\in \mathcal{A}} M_{\eta} = \max \{ 2 \sqrt{n_1} + \cdots + 2 \sqrt{n_k}  : n_1  + \cdots + n_k =n, n_i \ge \eps \sqrt{n} \text{ $\forall \, i$} \}
		\end{align}
		achieves its maximum when $\sum_{i=1}^{k} |n_{i}-(n/k)|$ is minimized, in which case we have $|n_{i}-(n/k)|\le 1$ for all $1\le i \le k$. Denoting the maximizer as $n_{1},\cdots,n_{k}$, it follows that, for all $1\le i \le k$,
		\begin{align}
			|\sqrt{n_{i}} - \sqrt{n/k}| \le \frac{1}{\sqrt{n_{i}} + \sqrt{n/k}} \le \frac{1}{2\sqrt{(n/k)-1}}.
		\end{align} 
		So this yields, for all sufficiently large $n\ge 1$,
		\begin{align}\label{eq:Y_bd}
			&\P\left(	| Y  -  2\sqrt{kn}| >  2k  (\log n)^{2} n^{1/6}  \right)  \\ \nonumber  
			&\qquad \le \P\left(	| Y  -  2\sqrt{kn}| >  k  (\log n)^{2} n^{1/6} + \frac{k}{\sqrt{(n/k)-1}}  \right) = O(n^{-d}) 
		\end{align}
		for any fixed $d>0$. 
		
		Next, if $n_i < \eps \sqrt{n}$, then we use the trivial upper bound $L(n_i) \le n_i \le \eps \sqrt{n}$, otherwise if $n_i > \eps \sqrt{n}$, we continue to use the tail bound for $|L(n_{i})-2\sqrt{n_{i}}|$ in \eqref{eq:BDJ_bd}. Hence 
		\begin{align}\label{eq:Z_bd}
			\P\left(  Z > 2\sqrt{(k-1)n} +  2 k  (\log n)^2 n^{1/6}  +  k \eps \sqrt{n} \right) = O(n^{-d}),
		\end{align}
		where the first term bounds the contribution from at most $k-1$ intervals of size $\ge \eps\sqrt{n}$, the second term is given by the BDJ tail bound in \eqref{eq:BDJ_bd}, and the last term gives a trivial bound for intervals of size $<\eps\sqrt{n}$. Hence if we choose $\eps<2/k(\sqrt{k-1}+\sqrt{k})$, then \eqref{eq:Y_bd} and \eqref{eq:Z_bd} give us
		\begin{align}\label{eq:YZ_bd}
			\P\left( Z>Y \right) &\le \P\left( Y< 2\sqrt{kn} +  2k  (\log n)^{2} n^{1/6} \right) \\ \nonumber  
			&\qquad + \P\left( Z>  2\sqrt{(k-1)n}+ 2k  (\log n)^{2} n^{1/6} + \frac{2\sqrt{n}}{\sqrt{k-1}+\sqrt{k}} \right) \\
			&= O(n^{-d}) \nonumber  
		\end{align} 
		for each fixed $d>0$.
		Now note that, for each $t>0$, 
		\begin{align}
			\P\left( \left| \left(\frac{1}{\sqrt{n}}\sum_{i=1}^{k} \lambda_{i}(n) \right) - 2\sqrt{k} \right| >  t  \right) &= \P\left( |\max(Y,Z)-2\sqrt{kn}|>t\sqrt{n} \right) \\
			&\le \P\left( |Y-2\sqrt{kn}|>t\sqrt{n} \right) + \P\left( Z> Y \right). \nonumber  
		\end{align}
		Hence by choosing $t=1/\log n$, for any fixed $d>0$, \eqref{eq:Y_bd} and \eqref{eq:YZ_bd} yield 
		\begin{align}
			\P\left( \left| \left(\frac{1}{\sqrt{n}}\sum_{i=1}^{k} \lambda_{i}(n) \right) - 2\sqrt{k} \right| >  \frac{1}{\log n}  \right) = O(n^{-d}).
		\end{align}  
		Then the assertion follows from the Borel-Cantelli lemma.
	\end{proof}

	\subsection{Circular exclusion process and the row lengths}
	\label{subsection:permutation_rows}
	In this subsection, we prove Theorem \ref{thm:permutation} for the rows. By Lemma \ref{lemma:carrier_row_lengths}, this can be done by analyzing the carrier process over the uniform random permutation $\xi^{n}$. Let $\mathbf{X}:=(U_{x})_{x\ge 1}$ be a sequence of i.i.d. $\textup{Uniform}([0,1])$ random variables. For each capacity $k\ge 1$, we may define the carrier process $(\boldsymbol{\Gamma}_{x})_{x\ge 0}$ over $\mathbf{X}$ using the same `circular exclusion rule' we used to define the map $\Psi$ in Section \ref{subsection:carrier_process_finite}. More precisely, denote $\mathcal{C}_{k}=\{ (x_{1},\cdots,x_{k})\in [0,1]^{k}\mid x_{1}\ge \cdots \ge x_{k} \}$. Define a map $\phi:\mathcal{C}_{k}\times [0,1]\rightarrow \mathcal{C}_{k}$, $[x_{1},\cdots,x_{k},y]\mapsto [x_{1}',\cdots,x_{k}']$ by 
	\begin{description}
		\item[(i)] If $y>x_{k}$, then denote $i^{*}=\min\{ i\ge 1\mid x_{i}< y \}$ and let
		\begin{align}
			[x_{1}',\cdots,x'_{k}] = [x_{1},\cdots,x_{i^{*}-1},y,x_{i^{*}+1},\cdots,x_{k}].
		\end{align}
		\item[(ii)] If $x_{k}\ge y$, then $[x_{1}',\cdots,x_{k}'] = [x_{2},\cdots,x_{k},y]$. 
	\end{description}
	Then the \textit{$k$-point circular exclusion process $(\boldsymbol{\Gamma}_{x})_{x\ge 0}$ over $\mathbf{X}$} is defined recursively by 
	\begin{align}
		\boldsymbol{\Gamma}_{x+1} = \phi(\boldsymbol{\Gamma}_{x}, U_{x+1}).
	\end{align}
	See Figure \ref{fig:circular_exclusion_ex} for an illustration. Note that $(\boldsymbol{\Gamma}_{x})_{x\ge 0}$ forms a Markov chain on state space  $\mathcal{C}_{k}$. When $\boldsymbol{\Gamma}_{0}=[0,0,\cdots,0]$, we call $(\boldsymbol{\Gamma}_{x})_{x\ge 0}$ the \textit{carrier process over $\mathbf{X}$ with capacity $k$}.

	\begin{figure*}[h]
		\centering
		\includegraphics[width=1 \linewidth]{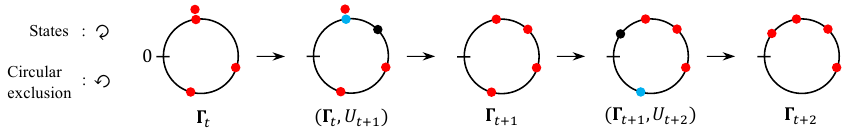}
		\caption{  Evolution of a 4-point circular exclusion process. The states in the unit circle are ordered clockwise. Each newly inserted point (black dot) annihilates the closest pre-existing point in the counterclockwise direction (light blue dot).  
		}
		\label{fig:circular_exclusion_ex}
	\end{figure*}

	In the following lemma, which will be proved in Section \ref{subsection:circular_exclusion_process_convergence}, we show that the $k$-point circular exclusion process converges to its unique stationary measure $\pi$, which is the distribution of the order statistics from $k$ i.i.d. $\textup{Uniform}([0,1])$ variables.

	\begin{lemma}\label{lemma:circular_exclusion}
		Fix an integer $k\ge 1$ and let $(\boldsymbol{\Gamma}_{x})_{x\ge 0}$ denote the  $k$-point circular exclusion process with an arbitrary initial configuration.
		\begin{description}
			\item[(i)] Let $\pi$ denote the distribution of the order statistics from $k$ i.i.d. uniform random variables on $[0,1]$. Then $\pi$ is the unique stationary distribution for the Markov chain $(\boldsymbol{\Gamma}_{x})_{x\ge 0}$.
			\item[(ii)]  For each $x \ge 0$, let $\pi_{x}$ denote the distribution of $\boldsymbol{\Gamma}_{x}$. Then $\pi_{x}$ converges to $\pi$ in total variation distance. More precisely, 
			\begin{align}
				d_{TV}(\pi_{x}, \pi):=\sup_{A\subseteq [0,1]^{k}} |\pi_{x}(A)-\pi(A)| \le \left(1 - \frac{1}{(2k)^{k-1}k!}\right)^{\lfloor x/k\rfloor},
			\end{align}
			where the supremum runs over all Lebesgue measurable subsets $A\subseteq [0,1]^{k}$.
		\end{description}
	\end{lemma} 
	
	Now we derive Theorem \ref{thm:permutation} for the row asymptotics.
	
	\begin{proof}[\textbf{Proof of Theorem \ref{thm:permutation} for the rows}]
		Let $\mathbf{X}=(U_{x})_{x\ge 1}$ denote an infinite  sequence of i.i.d. $\textup{Uniform}([0,1])$ random variables, $\Sigma^{n}$ be the random permutation on $[n]$ induced by $U_{1},\cdots,U_{n}$, and $\xi^{n}=\Sigma^{n}\mathbf{1}([1,n])$ be the random $n$-color BBS configuration as defined at \eqref{eq:def_permutation_model}. Fix an integer $k\ge 1$ and let $(\boldsymbol{\Gamma}_{x})_{x\ge 0}$ be the $k$-point circulr exclusion process over $\mathbf{X}$. Also, let $(\Gamma_{x})_{x\ge 0}$ be the capacity-$k$ carrier process over $\xi^{n}$ as defined in Section \ref{subsection:carrier_process_finite}. By construction, for each $1\le x \le n$, we have 
		\begin{align}
			\mathbf{1}(\xi^{n}(x)>\min \Gamma_{x-1}) = \mathbf{1}(U_{x}>\min \boldsymbol{\Gamma}_{x-1}). 
		\end{align}
		Thus according to Lemma \ref{lemma:carrier_row_lengths}, almost surely, 
		\begin{align}
			n^{-1}\left( \rho_{1}(\xi^{n}) + \cdots+ \rho_{k}(\xi^{n}) \right) = n^{-1}\sum_{x=1}^{n} \mathbf{1}(U_{x}> \min \boldsymbol{\Gamma}_{x-1}).
		\end{align}
		By Lemma \ref{lemma:circular_exclusion} and Markov chain ergodic theorem, almost surely, 
		\begin{align}
			\lim_{n\rightarrow \infty} n^{-1}\left( \rho_{1}(\xi^{n}) + \cdots+ \rho_{k}(\xi^{n}) \right) &= \P\left(U_{k+1}> \min(U_{1},\cdots,U_{k}) \right) = \frac{k}{k+1}.
		\end{align}
		Then the assertion follows.
	\end{proof}

	\subsection{Stationarity and convergence of the circular exclusion process}
	\label{subsection:circular_exclusion_process_convergence}
	We prove Lemma \ref{lemma:circular_exclusion} in this subsection. We will assume the stationarity of the circular exclusion process as asserted in the following proposition, which will be proved at the end of this section.
	
	\begin{prop}\label{prop:circular_exclusion_stationarity}
		Fix an integer $k\ge 1$ and let $\pi$ denote the distribution of the order statistics from $k$ i.i.d. uniform random variables on $[0,1]$. Then $\pi$ is a stationary distribution of the $k$-point circular exclusion process.
	\end{prop}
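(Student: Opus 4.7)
The plan is to exhibit the stationarity of $\pi$ via exchangeability of $k+1$ i.i.d.\ uniform samples, reading the update map $\phi$ as a deterministic deletion of one of those $k+1$ points. I would draw $X_{1},\ldots,X_{k}$ as i.i.d.\ $\textup{Uniform}([0,1])$, so that their decreasing order statistics $\bm{\Gamma}_{0}$ are distributed as $\pi$, and let $Y$ be an independent uniform representing the next input $U_{1}$. It then suffices to show that $\bm{\Gamma}_{1}=\phi(\bm{\Gamma}_{0},Y)$ has distribution $\pi$, or equivalently that the multiset of coordinates of $\bm{\Gamma}_{1}$ is distributed as $k$ i.i.d.\ uniforms.

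The core combinatorial observation is a direct unpacking of $\phi$ on the combined pool $\{X_{1},\ldots,X_{k},Y\}$. Let $z_{1}>z_{2}>\cdots>z_{k+1}$ be its (almost surely distinct) decreasing order statistics, and let $j\in\{1,\ldots,k+1\}$ be the unique rank with $Y=z_{j}$. A short case analysis shows: if $j\le k$ then $Y>x_{k}$ and case (i) of $\phi$ triggers with $i^{*}=j$, producing $\bm{\Gamma}_{1}=(z_{1},\ldots,z_{j-1},z_{j},z_{j+2},\ldots,z_{k+1})$, i.e.\ the element $z_{j+1}$ is dropped; if $j=k+1$ then case (ii) triggers and $\bm{\Gamma}_{1}=(z_{2},\ldots,z_{k+1})$, i.e.\ $z_{1}$ is dropped. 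In every case the rank of the dropped element is the cyclic shift $(j\bmod(k+1))+1$ of the rank of $Y$.

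The proof would then conclude by exchangeability: since $(X_{1},\ldots,X_{k},Y)$ are i.i.d.\ uniform, conditional on the unordered pool the rank $j$ of $Y$ is uniform on $\{1,\ldots,k+1\}$, and by the previous paragraph the rank of the dropped element is therefore also uniform on $\{1,\ldots,k+1\}$. Hence $\bm{\Gamma}_{1}$ is obtained by deleting a uniformly chosen one of $k+1$ i.i.d.\ uniforms, so its values are equidistributed with $k$ i.i.d.\ uniforms and $\bm{\Gamma}_{1}\sim\pi$. I do not foresee any serious obstacle; the one subtlety worth flagging is the wrap-around case $j=k+1$, which is precisely what closes the cyclic orbit on ranks and renders the deletion uniformly distributed across the full pool. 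Modulo handling this boundary case, the argument is a clean exchangeability/bijection computation.
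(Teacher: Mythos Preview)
Your proof is correct and is genuinely different from the paper's argument. The paper verifies stationarity by a direct integral computation: it fixes a bounded test function $f$, expands $\EE[f(\bm{\Gamma}_1)]$ by conditioning on which interval $(X_{(i-1)},X_{(i)})$ the new point $Y$ falls into, integrates out the displaced coordinate, and then observes that the resulting boundary terms telescope to $\EE[f(\bm{\Gamma}_0)]$. Your approach instead exploits the exchangeability of the pool $\{X_1,\ldots,X_k,Y\}$: you identify the deleted element as the one whose rank is the cyclic successor of the rank of $Y$, and since the rank of $Y$ is uniform on $\{1,\ldots,k+1\}$ conditional on the ordered pool, so is the rank of the deleted element, and removing a uniformly chosen member of $k+1$ i.i.d.\ uniforms leaves $k$ i.i.d.\ uniforms.

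Your argument is cleaner and more conceptual: it makes transparent \emph{why} the uniform measure is invariant (the circular exclusion rule acts on ranks as a cyclic shift, which is a bijection and hence preserves the uniform distribution on ranks). It also suggests how the result might extend to any exchangeable law on the circle. The paper's computation, while more pedestrian, has the mild advantage of being entirely self-contained and not relying on the reader parsing the independence between ranks and order statistics. The one place to be slightly more explicit in your write-up is the final step: state clearly that the rank of $Y$ is uniform \emph{and independent of} the order statistics $(z_1,\ldots,z_{k+1})$, so that the same holds for the deleted rank, and hence deleting at that rank has the same law as deleting any fixed one of the $k+1$ i.i.d.\ samples.
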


	\begin{proof}[\textbf{Proof of Lemma \ref{lemma:circular_exclusion}}]
		
		For convergence, we use a standard coupling argument. Namely, fix arbitrary distributions $\pi_{0}$ and $\bar{\pi}_{0}$ on $\mathcal{C}_{k}$ and let \( \mathbf{X} = (U_x)_{x \geq 1} \) denote a sequence of i.i.d. $\textup{Uniform}([0,1])$ variables. Let $(\boldsymbol{\Gamma}_{x})_{x\ge 0}$ be $k$-point circular exclusion processes over $\mathbf{X}$ with  initial distribution \( \pi_0 \) and let  $(\bar{\boldsymbol{\Gamma}}_{x})_{x\ge 0}$ be $k$-point circular exclusion processes over $\mathbf{X}$ with  initial distribution \( \bar{\pi}_0 \). These two processes are naturally coupled since they evolve simultaneously over the same environment $\mathbf{X}$.  Let $\tau=\inf\{ x\ge 0\mid \boldsymbol{\Gamma}_{x}=\bar{\boldsymbol{\Gamma}}_{x} \}$ denote the first meeting time of the two chains (see Figure \ref{fig:circular_exclusion_convergence}). By the coupling, $\boldsymbol{\Gamma}_{s}=\bar{\boldsymbol{\Gamma}}_{s}$ and $s\le x$ imply $\boldsymbol{\Gamma}_{x}=\bar{\boldsymbol{\Gamma}}_{x}$. A standard argument shows
		\begin{align}
			d_{TV}(\pi_{x},\bar{\pi}_{x}) \le \P(\boldsymbol{\Gamma}_{x} \ne \bar{\boldsymbol{\Gamma}}_{x}) = \P(\tau> x),
		\end{align} 
		where $\pi_{x}$ and $\bar{\pi}_{x}$ denote the distributions of $\boldsymbol{\Gamma}_{x}$ and $\bar{\boldsymbol{\Gamma}}_{x}$. We claim that 
		\begin{align}\label{lemma:circular_exc_conv_pf}
			\P(\tau> t) \le \P(\boldsymbol{\Gamma}_{0}\ne \bar{\boldsymbol{\Gamma}}_{0}) \left(1 - \frac{1}{(2k)^{k-1}k!} \right)^{\lfloor t/k \rfloor}.
		\end{align} 
		According to Proposition \ref{prop:circular_exclusion_stationarity}, this will imply Lemma \ref{lemma:circular_exclusion} by choosing $\bar{\pi}_{0}=\pi$.

		To bound the tail probability of meeting time $\tau$, we will show that two circular exclusion processes `synchronize' after $k$ steps with probability at least $1/k!$, in the sense that 
		\begin{align}
			\P(\boldsymbol{\Gamma}_{x+k}= \bar{\boldsymbol{\Gamma}}_{x+k}\mid \boldsymbol{\Gamma}_{x}\ne \bar{\boldsymbol{\Gamma}}_{x}) \ge  \frac{1}{(2k)^{k-1} k!} \qquad \text{for all $x\ge 0$}.
		\end{align}
		Then the claim \eqref{lemma:circular_exc_conv_pf} follows since 
		\begin{align}
			\P(\tau>Nk) &= \P(\boldsymbol{\Gamma}_{Nk}\ne \bar{\boldsymbol{\Gamma}}_{Nk}\mid \boldsymbol{\Gamma}_{0}\ne \bar{\boldsymbol{\Gamma}}_{0}) \P(\boldsymbol{\Gamma}_{0}\ne \bar{\boldsymbol{\Gamma}}_{0})  \\
			&\le \P(\boldsymbol{\Gamma}_{0}\ne \bar{\boldsymbol{\Gamma}}_{0})\prod_{i=1}^{N} \P(\boldsymbol{\Gamma}_{ik}\ne \bar{\boldsymbol{\Gamma}}_{ik}\mid \boldsymbol{\Gamma}_{(i-1)k}\ne \bar{\boldsymbol{\Gamma}}_{(i-1)k}) \\
			&\le \P(\boldsymbol{\Gamma}_{0}\ne \bar{\boldsymbol{\Gamma}}_{0}) \left(1-\frac{1}{(2k)^{k-1}k!} \right)^{N}.
		\end{align}

		We begin with the following simple observation for a sufficient condition of meeting. Let $\mathbf{X}=(U_{t})_{t\ge 1}$ be a sequence of i.i.d. $\textup{Uniform}([0,1])$ variables. Fix $t\ge 1$ and let $\boldsymbol{\Gamma}_{x}=[x_{1},\cdots,x_{k}]$ and $\bar{\boldsymbol{\Gamma}}_{x}=[\bar{x}_{1},\cdots,\bar{x}_{k}]$ be arbitrary elements of $\mathcal{C}_{k}$. Superpose the two $k$-point configurations into a one $2k$-point configuration $0\le y_{1}\le y_{2} \le \cdots \le y_{2k}\le 1$. For a special case, suppose $y_{2k}<1$. Observe that on the event $\{y_{2k} < U_{t+k}<\cdots <U_{t+1}\le 1 \}$, we have 
		\begin{align}
			\boldsymbol{\Gamma}_{x+k} = [U_{t+1},U_{t+2},\cdots, U_{t+k}] = \bar{\boldsymbol{\Gamma}}_{x+k},
		\end{align}
		as all of the $k$ points in $\boldsymbol{\Gamma}_{x}$ and $\boldsymbol{\Gamma}_{x}$ will be successively annihilated from the largest to the smallest by inserting $U_{t+1},\cdots, U_{t+k}$.

		\begin{figure*}[h]
			\centering
			\includegraphics[width=1 \linewidth]{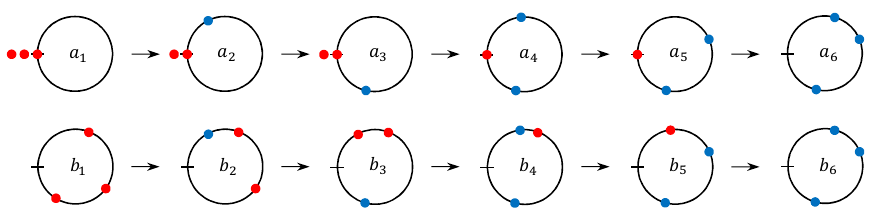}
			\caption{ Joint evolution of two 3-point circular exclusion processes. The states in the unit circle are ordered clockwise.  A newly inserted point annihilates one of the closest pre-existing points in the counterclockwise direction. Blue (resp., red) dots represent points that are shared (resp., not shared) in both processes. The two chains meet after the fifth transition. 
			}
			\label{fig:circular_exclusion_convergence}
		\end{figure*}

		For the general case, regard each $U_{s}$ as a uniformly chosen point from the unit circle $S^{1}$. Then the $2k$ points $y_{1},\cdots,y_{2k}$ will divide $S^{1}$ into disjoint arcs of lengths, say, $\ell_{1},\cdots,\ell_{m}$, for some $2\le m \le 2k$. If the points $U_{t+1},\cdots, U_{t+k}$ are strictly decreasing in the counterclockwise order within one of the $m$ arcs, then by circular symmetry and a similar observation, we will have $\boldsymbol{\Gamma}_{x+k} = \bar{\boldsymbol{\Gamma}}_{x+k}$. Noting that 
		\begin{align}
			\P\left( \begin{matrix}
				\text{$U_{t+1},\cdots, U_{t+k}$ are strictly  decreasing in the} \\
				\text{ counterclockwise order within an arc of length $\ell$}
			\end{matrix} \right) 
			= \frac{\ell^{k}}{k!}
		\end{align}
		and $\ell_{1}+\cdots+\ell_{m}=1$, H\"older's inequality yields 
		\begin{align}
			&\P\left(\boldsymbol{\Gamma}_{x+k}= \bar{\boldsymbol{\Gamma}}_{x+k}\mid  \boldsymbol{\Gamma}_{x}=[x_{1},\cdots,x_{k}],\,  \bar{\boldsymbol{\Gamma}}_{x}=[\bar{x}_{1},\cdots,\bar{x}_{k}]\right) \\
			&\qquad \ge  \sum_{i=1}^{m} \frac{\ell_{i}^{k}}{k!} \ge  \frac{1}{k!} \frac{\left(\ell_{1}+\cdots+\ell_{m} \right)^{k}}{m^{k-1}} = \frac{1}{m^{k-1} k!} \ge  \frac{1}{(2k)^{k-1} k!}. \nonumber  
		\end{align} 
		This shows the assertion. 
	\end{proof}
	
	Lastly in this section, we prove Proposition  \ref{prop:circular_exclusion_stationarity}.
	
	\begin{proof}[\textbf{Proof of Proposition \ref{prop:circular_exclusion_stationarity}}]
		
		We show $\pi$ is a stationary distribution for the Markov chain $(\boldsymbol{\Gamma}_{s})_{s\ge 0}$. Let $X_{(1)} < X_{(2)}< \cdots < X_{(k)}$ be the order statistics from $k$ i.i.d. uniform RVs on $[0,1]$. Let $Y$ be an independent $\textup{Uniform}([0,1])$ random variable. After a new point $Y$ is inserted to the preexisting list of $k$ points $X_{(1)} < X_{(2)} < \cdots < X_{(k)}$, the updated list of points will be 
		\begin{align}\label{eq:circular_exc_unif_pf_1}
			X_{(1)} < \cdots < X_{(I-1)} < Y < X_{(I+1)} < \cdots < X_{(k)},
		\end{align}
		where $I \in \{1, 2, \cdots, k\}$  is the random index such that $Y \in ( X_{(I)}, X_{(I+1)} )$. For $I = k$, the interval $( X_{(k)}, X_{(k+1)} )$ denotes the union of $(0, X_{(1)})$ and $(X_{(k)}, 1)$. In this case, the point $X_{(k)}$ is deleted and $Y$ is added as the smallest or largest point depending on which sub-intervals it falls. 
		
		We claim that \eqref{eq:circular_exc_unif_pf_1} is still the order statistics from $k$ i.i.d. uniforms on $[0,1]$, which would prove that the distribution of $k$ i.i.d. uniform points remains invariant under the transition rule. To show this, take a bounded test function $f:[0,1]^k \to \mathbb{R}$. First, we write  
		\begin{align}
			&\E \left[ f( X_{(1)},\cdots , X_{(I-1)}, Y, X_{(I+1)}, \cdots , X_{(k)})\right] \\
			&\quad = \sum_{ i=1}^k \E [ f( X_{(1)}, \cdots , X_{(i-1)}, Y, X_{(i+1)}, \cdots, X_{(k)}) \mathbf{1}_{ Y \in ( X_{(i)}, X_{(i+1)} ) } ] \\
			&\quad = \sum_{ i=1}^{k-1} \frac{1}{k!}   \int_{ z_{1} < \cdots < z_{i} <  y < z_{i+1}  < \cdots < z_{k} } f( z_{1}, \cdots, z_{i-1}, y, z_{i+1}, \cdots, z_{k}) \, dz_{1} \cdots dz_{k} dy  \\
			&\hspace{1cm} + \frac{1}{k!}   \int_{z_{1}< \cdots < z_{k}<y } f(z_{1}, \cdots, z_{k-1}, y) \, dz_{1} \cdots dz_{k} dy \\
			&\hspace{1.3cm} + \frac{1}{k!}   \int_{y<z_{1}< \cdots < z_{k} } f(y,z_{1}, \cdots, z_{k-1}) \, dz_{1} \cdots dz_{k} dy.
		\end{align}
		Integrating out $z_{i}$ and denoting $z_{0}:=0$, 
		\begin{align}
			&\quad = \sum_{ i=1}^{k-1} \frac{1}{k!}   \int_{ z_{1} < \cdots < z_{i-1} < y < z_{i+1} < \cdots < z_{k} } f( z_{1}, \cdots, z_{i-1}, y, z_{i+1}, \cdots, z_{k}) (y-z_{i-1}) \\
			&\hspace{9cm} dz_{1} \cdots z_{i-1} z_{i+1}\cdots dz_{k} dy  \\
			&\quad \qquad + \frac{1}{k!}   \int_{ z_{1}< \cdots < z_{k-1} < y } f( z_{1},\cdots, z_{k-1}, y) (y-z_{k-1}) \, dz_{1} \cdots dz_{k-1} dy \\ 
			&\quad\quad \qquad + \frac{1}{k!}   \int_{ y<z_{1} <\cdots < z_{k-1} } f( y,z_{1}, \cdots, z_{k-1}) (1-z_{k-1}) \, dz_{1} \cdots dz_{k-1} dy,
		\end{align}
		
		We then rename $y$ as $z_{i}$ for the first integral above and as $z_{k}$ for the second integral above. For the last integral, we rename $y$ as $z_{1}$ and $z_{i}$ as $z_{i+1}$ for $i=1,\dots,k-1$. This gives 
		\begin{align}
			&= \frac{1}{k!}   \int_{ z_{1} < \cdots  < z_{k} } f( z_{1}, \cdots, z_{k}) \left[ (1-z_{k})+\left(\sum_{i=1}^{k-1} z_{i}-z_{i-1} \right) + (z_{k}-z_{k-1})\right] \, dz_{1} \cdots dz_{k}  \\
			&=  \E\left[ f( X_{(1)},\cdots , X_{(I-1)}, X_{(I)}, X_{(I+1)}, \cdots , X_{(k)})\right].
		\end{align}
		This shows the assertion. 
	\end{proof}

	\section{Proof of Theorem \ref{thm:carrier_subcritical} \textbf{(i)}}
	\label{sec:carrier_subcritical}
	
	We prove Theorem \ref{thm:carrier_subcritical} \textbf{(i)} in this section. Recall the probability distribution $\pi$ in \eqref{eq:def_stationary_distribution_carrier_sub}. We assume $p_{0}>p^{*}:=\max(p_{1},\dots,p_{\kappa})$ in the following proof.

	\begin{proof}[\textbf{Proof of Theorem} \ref{thm:carrier_subcritical} \textup{\textbf{(i)}}]
		We first show the irreducibility and aperiodicity of the chain $W_{x}$. For its irreducibility, fix $\mathbf{x},\mathbf{y}\in \mathcal{B}_{\infty}$ and write $\mathbf{y}=[y_{1},y_{2},\cdots]$. Since all elements of $\mathcal{B}_{\infty}$ have finite support, there exists an integer $m\ge 1$ such that $\x(i)\equiv 0$ and $\mathbf{y}(i)\equiv 0$ for all $i\ge m$. Then note that 
		\begin{align}
			&\P( \Gamma_{x+2m}  = \mathbf{y}\mid \Gamma_{x}  = \mathbf{x}) \\
			&\quad \ge \P\big(\xi^{\mathbf{p}}(x+1)=0, \cdots,  \xi^{\mathbf{p}}(x+m)=0, \xi^{\mathbf{p}}(x+m+1)=y_{1}, \cdots, \xi^{\mathbf{p}}(x+2m)=y_{m}\big)\\
			& \quad = p_{0}^{m} p_{y_{1}} \cdots p_{y_{m}} >0.
		\end{align}
		Since $\mathbf{x},\mathbf{y}\in \mathcal{B}_{\infty}$ were arbitrary, this shows the Markov chain $W_{x}$ is also irreducible. Then for its  aperiodicity, it is enough to observe that 
		\begin{align}
			\P\big( \Gamma_{x+1}  = [0,0,\cdots]\mid \Gamma_{x}  = [0,0,\cdots]\big) =  p_{0}>0.
		\end{align}

		Next, we show that $\pi $ is a stationary distribution for $(W_{x} )_{t\ge 0}$. The uniqueness of stationary distribution and convergence in total variation distance will then follow from general results of countable state space Markov chain theory (see, e.g., \cite[Thm. 21.13 and Thm. 21.16]{levin2017markov}). We work with the original carrier process $\Gamma_{x}$. For each $\mathbf{x}\in \mathcal{B}_{\infty}$ and $i\in \{0,1,\cdots, \kappa \}$, denote 
		\begin{align}\label{eq:def_weight}
			\exp(\wt(\mathbf{x})) =  \prod_{i=1}^{\kappa} \left( \frac{p_{i}}{p_{0}} \right)^{m_{i}(\mathbf{x})}, \quad \exp(\wt(i))  =p_{i}.
		\end{align}	
		Recall the definition of the map $\Psi :\mathcal{B}_{\infty}\times \{0,1,\cdots, \kappa \} \rightarrow \{0,1,\cdots, \kappa \}\times \mathcal{B}_{\infty}$ given in Section \ref{subsection:carrier_process_infinite}. Note that for each pair $(\mathbf{x},y)\in \mathcal{B}_{\infty}\times \{0,1,\cdots, \kappa \}$ and $(y',\mathbf{x}')\in  \{0,1,\cdots, \kappa \}\times \mathcal{B}_{\infty}$ such that $\Psi (\mathbf{x},y) = (y',\mathbf{x}')$, $y'=y'(\x,y)$, we have 
		\begin{align}\label{eq:exp_wt_identity}
			\exp(\wt(\mathbf{x})) \exp(\wt(y)) &= p_{y} p_{0}^{-\lVert \x \rVert_{1}} \prod_{i=1}^{\kappa} p_{i}^{m_{i}(\x)}  \\
			&= p_{y'} p_{0}^{-\lVert \x' \rVert_{1}} \prod_{i=1}^{\kappa} p_{i}^{m_{i}(\x')}  =  \exp(\wt(y')) \exp(\wt(\mathbf{x}')).
		\end{align}
		Indeed, the total number of each letter $1\le i\le \kappa$ in both pairs $(\mathbf{x},y)$ and $(y',\mathbf{x}')$ is the same. So if $y'\ge 1$, then some ball of positive color in $\x$ is replaced by a ball of positive color $y'$, so $\lVert \x \Vert_{1}=\lVert \x' \Vert_{1}$ and the above identity holds; If $y'=0$ and $y\ge 1$, then $\x'$ has one more ball of color $y$ than $\x$ does so the above identity holds; If $y'=y=0$, then both $\x'$ and $\x$ do not contain any ball of positive color so the above identity holds. 
		
		Now, observe that for each fixed $\mathbf{x}'\in \mathcal{B}_{\infty} $, $\Psi $ gives a bijection between $\{0,1,\cdots,\kappa \} \times \{ \mathbf{x}' \}$ and its inverse image under $\Psi $. If we denote the second coordinate of $\Psi $ by $\Psi_{2} $, then this yields 
		\begin{align}
			\sum_{\substack{(\mathbf{x},y)\in \mathcal{B}_{\infty} \times \{0,1,\cdots, \kappa\} \\ \Psi_{2} (\mathbf{x},y)=\mathbf{x}' }} \exp(\wt(\mathbf{x})) \exp(\wt(y)) & =  \sum_{\substack{(\mathbf{x},y)\in \mathcal{B}_{\infty} \times \{0,1,\cdots, \kappa\} \\ \Psi_{2} (\mathbf{x},y)=\mathbf{x}' }} \exp(\wt(y'(\x,y)))\exp(\wt(\mathbf{x}')) \\
			&=  \exp(\wt(\mathbf{x}'))  \sum_{y'\in \{0,1,\cdots, \kappa \}} \exp(\wt(y'))\\
			& =  \exp(\wt(\mathbf{x}')).
		\end{align}
		Dividing both sides by   
		\begin{align}
			\sum_{\mathbf{x}\in \mathcal{B}_{\infty} } \exp(\wt(\mathbf{x})) = \sum_{n_{1}=0}^{\infty}\cdots \sum_{n_{\kappa}=0}^{\infty} \prod_{i=1}^{\kappa} \left( \frac{p_{i}}{p_{0}} \right)^{n_{i}} = \prod_{i=1}^{\kappa} \left(1-\frac{p_{i}}{p_{0}} \right)^{-1}>0,
		\end{align}
		we get 
		\begin{align}
			\sum_{\substack{(\mathbf{x},i)\in \mathcal{B}_{\infty} \times \{0,1,\cdots, \kappa\} \\ \Psi_{2} (\mathbf{x},i)=\mathbf{x}' }} \pi(m_{1}(\x), \cdots, m_{\kappa}(\x)) p_{i} = \pi(m_{1}(\x'), \cdots, m_{\kappa}(\x')).  
		\end{align}
		This shows that $\pi$ is a stationary distribution of the Markov chain $(W_{x})_{x\ge 0}$, as desired.
		
		Lastly, positive recurrence follows from the irreducibility and the existence of stationary distribution \cite[Thm. 21.13]{levin2017markov}. Convergence of the distribution of $W_{x}$ to the stationary distribution in total variation distance then follows from the irreducibility, aperiodicity, and positive recurrence (see \cite[Thm. 21.16]{levin2017markov}). 
	\end{proof}

	\begin{remark}
		The statement and the proof of Theorem \ref{thm:carrier_subcritical} \textbf{(i)} are reminiscent of \cite[Thm. 1]{kuniba2020large}, where the authors show that for all $\mathbf{p}=(p_{0},\cdots,p_{\kappa})$, the (finite) capacity-$c$ carrier process over $\xi^{\mathbf{p}}$ is irreducible with unique stationary distribution 
		\begin{align}\label{eq:stationary_measure_product_formula}
			\pi_{c}(\x) = \frac{1}{Z_{c}} \prod_{i=0}^{\kappa} p_{i}^{m_{i}(\x)}, \quad \x\in \mathcal{B}_{\infty},
		\end{align}
		where $Z_{c}$ denotes the partition function. In fact, their result applies to more general finite-capacity carriers whose state space is the set $B_{c}^{(a)}(\kappa)$ of all semistandard tableaux of rectangular shape $(c\times a)$ with letters from $\{0,1,\cdots, \kappa \}$. In this general case, the partition function $Z_{c}=Z_{c}^{(a)}(\kappa,\mathbf{p})$ is identified with the Schur polynomial associated with the $(a\times c)$ Young tableau with constant entries $c$ and parameters $p_{0},p_{1},\cdots,p_{\kappa}$.
	\end{remark}

	\vspace{-0.3cm}
	\section{The Skorohkod decomposition of the carrier process}
	\label{sec:decoupled_carrier_main}
	
	In this section, we develop the Skorohkod decomposition of the carrier process, which we briefly mentioned in the introduction. The idea is to write the carrier process, which is confined in the nonnegative integer orthant $\Z_{\ge 0}^{\kappa}$, as the sum of a less confined process and a boundary correction. Namely, let $(W_{x})_{x\ge 0}$ be the carrier process over an arbitrary ball configuration $\xi$ as in \eqref{eq:W_x_recursion}. We seek for the following decomposition 
	\vspace{-0.3cm}
	\begin{align}\label{eq:Skorokhod_decomp_identity1}
		W_{x} = X_{x} + R Y_{x} \quad \textup{for $x\ge 0$},
	\end{align}
	where 
	\begin{description}
		\item[1.] $(X_{x})_{x\ge 0}$ is the `decoupled carrier process', which is a version of the carrier process that allows the number of balls of certain `exceptional colors' to be negative; 
		
		\item[2.] $R=\textup{tridiag}_{\kappa}(0,1,-1)$ is the $\kappa\times \kappa$ `reflection matrix' (see \eqref{eq:def_reflection_mx}); 
		
		\item[3.] $(Y_{x})_{x\ge 0}$ is the `pushing process':  $Y_{0}=\mathbf{0}$ and for each $i\in \{1,\dots,\kappa\}$, the $i$th coordinate of $Y_{x}$  is non-decreasing in $x$ and can only increase when $W_{x}(i)=0$. 
	\end{description}
	
	We will first introduce the decoupled carrier process $(X_{x})_{x\ge 0}$ in Section \ref{sec:decoupled_carrier} and establish its basic properties in Proposition \ref{prop:carrier_comparison_localized}. In Section \ref{sec:carrier_Skorokhod_0}, we will introduce the reflection matrix $R$ and the pushing process $(Y_{x})_{x\ge 0}$ and verify the Skorohkod decomposition \eqref{eq:Skorokhod_decomp_identity1} in Lemma \ref{lemma:Skorokhod}. All results in this section are for a deterministic ball configuration $\xi$.

	\subsection{Definition of the decoupled carrier process}
	\label{sec:decoupled_carrier}

	In this section, we introduce a `decoupled version' of the carrier process $W_{x}$ in \eqref{eq:W_x_recursion}, which will be critical in proving Theorem \ref{thm:carrier_subcritical} \textbf{(ii)} as well as Theorems \ref{thm:iid_critical}-\ref{thm:iid_supercritical}.

	\begin{figure*}[h]
		\centering
		\includegraphics[width=0.6 \linewidth]{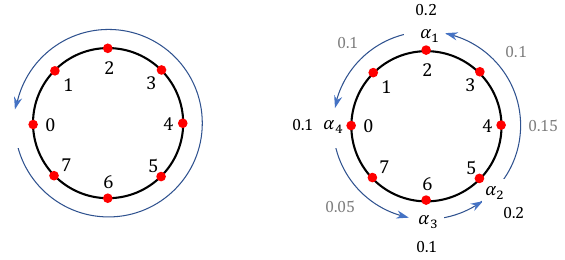}
		\vspace{-0.2cm}
		\caption{ Illustration of the original circular exclusion rule (left)  and its decoupled version (right) for $\kappa=7$ and ball density $\mathbf{p}=(.1,\,  .1,\,  .25,\,  .05,\,  .15,\,  .2,\,  .1,\,  .05)$. We take the set of exceptional colors $\mathcal{C}_{e}$ to be the set of unstable colors $\mathcal{C}_{u}^{\p}=\{2,5,6\}$. For instance, in the decoupled carrier process, inserting new balls of color $5$ into the carrier only excludes existing balls of colors $2,3$ and $4$.
		}
		\label{fig:circular_exclusion_rules}
	\end{figure*}
	
	\vspace{-0.7cm}
	To illustrate the idea, consider the carrier process $W_{x}$ with $\kappa=2$ as in Figure \ref{fig:MC_diagram}. While the transition kernel for this Markov chain depends on whether it is in the interior or at the boundary of the state space $\mathbb{Z}^{2}_{\ge 0}$, we may consider a similar Markov chain on the entire integer lattice $\mathbb{Z}^{2}$ that only uses the kernel in the interior, by allowing the counts of color 1 and 2 balls in $W_{x}$ to be negative. In the general construction of decoupled carrier processes, we will allow the freedom to choose positive colors $ \alpha_{1}<\dots<\alpha_{r}$ in $\{1,\dots,\kappa\}$ whose count can be negative. Recall that inserting a ball of color $i$ to the carrier $W_{n}$ will exclude the largest color $i_{*}$ in $W_{n}$ that is less than $i$. In the decoupled carrier process, the color wheel $\Z_{\kappa+1}$  is divided into intervals $[0,\alpha_{1}]$, $[\alpha_{1},\alpha_{2}],\dots, [\alpha_{r}, \kappa]$, and inserting a color $i$  in $(\alpha_{j},\alpha_{j+1}]$ can only exclude a color in the interval $[\alpha_{j},\alpha_{j+1}]$. Hence, the interaction between colors in distinct intervals is `decoupled'. See Figure \ref{fig:circular_exclusion_rules} for an illustration.

	\begin{definition}[Decoupled carrier process]\label{eq:def_decoupled_carrier}
		Let $\xi:=(\xi_{x})_{x\in \N}$ be $\kappa$-color ball configuration and fix a set $\mathcal{C}_{e}\subseteq\{ 1,\dots,\kappa\}$ of `exceptional colors'. Let 
		\begin{align}\label{eq:decoupled_carrier_state_space}
			\Omega:=\{ (x_{1},\dots,x_{\kappa})\in \Z^{\kappa}\,:\, \textup{$x_{i}\ge 0$ if $i\notin \mathcal{C}_{e}$} \}.
		\end{align}
		The  \textit{decoupled carrier process} over $\xi$ associated with $\mathcal{C}_{e}$ is a process $(X_{x})_{x\in \N}$ on the state space $\Omega$ defined as follows. If $\mathcal{C}_{e}=\emptyset$, the we take $X_{x}\equiv W_{x}$, where $W_{x}$ the carrier process in \eqref{eq:W_x_recursion}. Suppose $\mathcal{C}_{e}=\{ \alpha_{1},\dots,\alpha_{r} \}$ for some $r\ge 1$ with $\alpha_{1} < \dots < \alpha_{r}$. Denote $\alpha_{r+1}:=\kappa+1$. Having defined $X_{1},\dots,X_{x}$, denote $i:=\xi_{x+1}$ if $\xi_{x+1}\in \{1,\dots,\kappa\}$ and $i:=\kappa+1$ if $\xi_{x+1}=0$. Then 
		\begin{align}\label{eq:X_x_recursion}
			X_{x+1} - X_{x}:=
			\begin{cases}
				\e_{i}- \mathbf{1}(i_{*}\ne 0) \,  \e_{i_{*}} & \textup{if $1\le i \le \alpha_{1}$} \\
				\e_{i} - \e_{i'} & \textup{if $\alpha_{1}< i \le \kappa$} \\
				-\e_{i'} &  \textup{if $i=\kappa+1$},
			\end{cases}
		\end{align}
		where $i_{*}:=	\sup\{j\,:\, 1\le j< i ,\,\,  X_{x}(j)\ge 1 \}$ (with the convention $\sup \emptyset = 0$) and 
		\begin{align}\label{eq:def_i'}
			i':=\begin{cases}
				\alpha_{j} & \textup{if $\alpha_{j}< i\le \alpha_{j+1}$ and $X_{x}(\alpha_{j})=\dots=X_{x}(i-1)\le 0$}\\ 
				q & \textup{if $\alpha_{j}< q < i\le \alpha_{j+1}$ and $X_{x}(q)\ge 1$, $X_{x}(q+1)=\dots=X_{x}(i-1)=0$.}
			\end{cases}
		\end{align}
		Unless otherwise mentioned, we take $X_{0}=\mathbf{0}$ and $\xi=\xi^{\p}$ with density $\p=(p_{0},\dots,p_{\kappa})$. 
	\end{definition}

	It is helpful to compare the recursion \eqref{eq:X_x_recursion} for the decoupled carrier process to that of the carrier process in \eqref{eq:W_x_recursion}. Notice that in \eqref{eq:W_x_recursion}, inserting $i$ into $W_{x}$ can decrease by one at coordiante $i_{*}$ only when $W_{x}(i_{*})\ge 1$. Hence $W_{x}$ is confined in the nonnegative orthant $\Z^{\kappa}_{\ge 0}$. In comparison, when a ball of color $i$ is inserted to the decoupled carrier $X_{x}$, it decreases by one at coordinate, say $\ell\in \{ i', i_{*} \}$. If $\ell\notin \mathcal{C}_{e}$, then the above construction ensures that $X_{x}(\ell)\ge 1$. From this, one can observe that $X_{x}(j)\ge 0$ for all $x\ge 0$ whenever $j\notin \mathcal{C}_{e}$. In contrast, if $\ell\in \mathcal{C}_{e}$, then $X_{x+1}(\ell)=X_{x}(\ell)-1$ regardless of whether $X_{x}(\ell)\ge 1$. Hence $X_{x}$ can take negative values on the exceptional colors. We call the recursion in \eqref{eq:X_x_recursion} as the `decoupled circular exclusion'.

	In the proposition below, we establish a basic coupling result between the carrier and the decouple carrier processes. For its proof, we will introduce the following notation. Define the following function $f_{W}:\Z^{\kappa}_{\ge 0}\times \{0,\dots,\kappa\}\rightarrow \{0,\dots,\kappa\}$ as 
	\begin{align}\label{eq:excluded_color_W_def}
		f_{W}(\w, y)&:=
		\begin{cases}
			0 & \textup{if [$W_{0}=\w$ and $\xi_{1}=y$ $\,\, \Longrightarrow \,\,$ $W_{1}-W_{0}=\e_{y}$]} \\
			j & \textup{if [$W_{0}=\w$ and $\xi_{1}=y$ $\,\, \Longrightarrow \,\,$ $W_{1}-W_{0}=\e_{y}-\e_{j}$ or $-\e_{j}$]}.
		\end{cases}
	\end{align}
	Roughly speaking, if $f_{W}(\w, y)=j$, then $j$ is the color of the ball that is excluded when a ball of color $y$ is inserted into the carrier of state $\w$. The circular exclusion rule says $f_{W}(\w, y)=\sup\{i \,:\, 1\le i < y,\,  \w(i)\ge 1  \}$ with the convention $\kappa+1\equiv 0$ and $\sup \emptyset = 0$. Similarly, define a function $f_{X}:\Omega \times \{0,\dots,\kappa\}\rightarrow \{0,\dots,\kappa\}$ as 
	\begin{align}\label{eq:excluded_color_X_def}
		f_{X}(\w, y)&:=
		\begin{cases}
			0 & \textup{if [$X_{0}=\w$ and $\xi_{1}=y$ $\,\, \Longrightarrow \,\,$ $X_{1}-X_{0}=\e_{y}$]} \\
			j & \textup{if [$X_{0}=\w$ and $\xi_{1}=y$ $\,\, \Longrightarrow \,\,$ $X_{1}-X_{0}=\e_{y}-\e_{j}$ or $-\e_{j}$]}.
		\end{cases}	
	\end{align}
	Intuitively, if $f_{X}(\w, y)=j$, then $j$ is the color of the ball that is excluded when a ball of color $y$ is inserted into the decoupled carrier of state $\w$. 
	
	For each $x\in \N$, define $\hat{X}_{x}\in \Z^{\kappa}_{\ge 0}$ by 
	\begin{align}\label{eq:hat_X_def}
		\hat{X}_{x}(i):=X_{x}(i) - \min_{0\le s \le x} X_{s}(i)  \quad \textup{for all $i=1,\dots,\kappa$}. 
	\end{align}
	Note that $\hat{X}_{x}(i)\ge \max(0, X_{x}(i))$ for all $i$ by definition and $X_{0}=\mathbf{0}$. Also, $\hat{X}_{x}(i)\equiv X_{x}(i)$ for all $i\notin \mathcal{C}_{e}$ since $X_{x}(i)\ge 0$ for all $x\in \N$ and  all $i\notin \mathcal{C}_{e}$. 
	
	\begin{prop}[Basic coupling between the carrier and the decoupled carrier processes] \label{prop:carrier_comparison_localized}
		Let $(W_{x})_{x\ge 0}$ be the carrier process in \eqref{eq:W_x_recursion} and let $(X_{x})_{x\ge 0}$ be the decoupled carrier process in \eqref{eq:X_x_recursion} associated with $\mathcal{C}_{e}=\{\alpha_{1},\dots,\alpha_{r}\}$ for some $r\ge 1$. Suppose these two processes evolve over the same ball configuration $\xi$ and $W_{0}=X_{0}=\mathbf{0}\in \Z_{\ge 0}^{\kappa}$. Then the following hold. 
		\begin{description}[itemsep=0.1cm]

			\item[(i)] $W_{x}(i)=X_{x}(i)$ for all $\alpha_{r}< i \le \kappa$ and $x\ge 0$. Furthermore, 
			\begin{align}
				W_{x}(\alpha_{r}) =	X_{x}(\alpha_{r}) \quad \textup{if $X_{1}(\alpha_{r}),\dots, X_{x-1}(\alpha_{r})\ge 1$}. 
			\end{align}
			
			\item[(ii)] $W_{x}(i) \le \hat{X}_{x}(i)$ for all $1 \le i \le \kappa$ and $x\ge 0$. Furthermore, for each $x\ge 0$, denoting $y:=\xi_{x+1}$ if $\xi_{x+1}\in \{1,\dots,\kappa\}$ and $y:=\kappa+1$ if $\xi_{x+1}=0$,  	\begin{align}\label{eq:excluded_colors_inequality}
				f_{W}(W_{x}, \xi_{x+1})  \le 		f_{X}(X_{x}, \xi_{x+1}) <y. 
			\end{align}
		\end{description}
	\end{prop}
	
	\begin{proof}
		In this proof, we denote $y_{X}:=f_{X}(X_{x},\xi_{x+1})$ and $y_{W}:=f_{W}(W_{x}, y)$. Note that $y_{W},y_{X}\in [0, \xi_{x+1})$ (recall that $\sup\emptyset=0$). 
		
		The second part of \textbf{(i)} follows from the first part of \textbf{(i)} and definition.  Now we show the first part of \textbf{(i)} by induction on $x\ge 0$. For $x=0$ we have $W_{0}=X_{0}=\mathbf{0}$. Denote $\ell:=\alpha_{r}$ and suppose $W_{x}(i)= X_{x}(i)$ for all $\ell < i \le \kappa$ for some $x\ge 0$. If $y\le \ell$, then inserting a ball of color $y$ into the carrier $W_{x}$ and the decoupled carrier $X_{x}$ does not affect their state for colors strictly larger than $\ell$. Hence $W_{x+1}(i)=W_{x}(i)=X_{x}(i)=X_{x+1}(i)$ for all $\ell < i \le \kappa$. So suppose $y>\ell$. In this case, $y_{W}=\sup\{ 1\le j< y \,:\, W_{x}(j)\ge 1 \}$ and  $y_{X}=\max\{ \ell,  \sup\{ 1\le j < y\,:\, X_{x}(j)\ge 1 \}\}$. Note that $W_{x+1}$ is obtained from $W_{x}$ by increasing its value on color $y$ by one and decreasing its value on color $y_{W}$ by one. If $y_{W}>\ell$, then by the induction hypthesis, $y_{W}=y_{X}$, so $X_{x+1}$ is obtained from $X_{x}|_{(\ell,\kappa]}=W_{x}|_{(\ell,\kappa]}$ by the same way, so $X_{x+1}|_{(\ell,\kappa]}=W_{x+1}|_{(\ell,\kappa]}$. Otherwise, suppose $y_{W}\le \ell$. Then $X_{x+1}$ is obtained from $X_{x}|_{(\ell,\kappa]}=W_{x}|_{(\ell,\kappa]}$ by increasing its value on color $y$ by one and decreasing its value on color $\ell$ by one. Hence $W_{x+1}|_{(\ell,\kappa]}=X_{x+1}|_{(\ell,\kappa]}$,  as desired. 
		
		Now we prove \textbf{(ii)} by an induction on $x\ge 0$. The base step when $x=0$ follows by definition ($W_{0}=\hat{X}_{0}=\hat{X}=\mathbf{0}$ and $0=y_{W}\le y_{X}<y$). For the induction step, suppose $W_{x}  \le \hat{X}_{x}$ coordinatewise for some $x\ge 0$.  We first show that $y_{W}\le y_{X} < y$.  That $y_{X}<y$ follows from the definition \eqref{eq:X_x_recursion}. To show $y_{W}\le y_{X}$, we assume $y_{W}\ge 1$ since otherwise the claim holds trivially. Since a ball of color $y_{W}\ge 1$ is excluded from the carrier $W_{x}$, we have $W_{x}(y_{W})\ge 1$. If $y_{W} \notin \mathcal{C}_{e}$, then by the induction hypothesis, $1\le W_{x}(y_{W})\le X_{x}(y_{W})$, so it follows that $y_{W}\le y_{X}$. Otherwise, suppose $y_{W} \in \mathcal{C}_{e}$. Then since $y_{X}$ is at least the largest exceptional color that is $<y$, it follows that $y_{W}\le y_{X}$, as desired. 
		
		It remains to show $W_{x+1}\le \hat{X}_{x+1}$ coordinatewise. First suppose $y_{W}=0$. Then  $W_{x}(1)=\dots=W_{x}(y-1)=0$, so  $W_{x+1}(1)=\dots=W_{x+1}(y-1)=0$ and $W_{x+1}-W_{x} = \e_{y}$. Hence $W_{x+1}(i)=0\le \hat{X}_{x+1}(y)$ for all $1\le i < y$. Noting that $X_{x+1}(y)=X_{x}(y)+1$, by definition we have $\hat{X}_{x+1}(y)=\hat{X}_{x}(y)+1$. Then by the induction hypothesis, we have $W_{x+1}(y)=W_{x}(y)+1 \le \hat{X}_{x}(y)+1=\hat{X}_{x+1}(y)$. Furthermore, $W_{x+1}(i)=W_{x}(i)\le \hat{X}_{x}(i) 
		= \hat{X}_{x+1}(i)$ for all $y< i \le \kappa$, where the middle inequality is from the induction hypothesis and the equalities are from the definition. Thus we have shown that $W_{x+1}\le \hat{X}_{x+1}$ coordinatewise.
		
		Lastly, we suppose $y_{W}\ge 1$ and show $W_{x+1}\le \hat{X}_{x+1}$ coordinatewise. Then  $1\le y_{W}\le y_{X}<y$,  $W_{x+1}-W_{x}=\e_{y}-\e_{y_{W}}$, and $X_{x+1}-X_{x}=\e_{y}-\e_{y_{X}}$. By the induction hypothesis and the definition, we only need to verify $W_{x+1}(y_{X})\le \hat{X}_{x+1}(y_{X})$. This holds when $y_{W}=y_{X}$ since then $W_{x+1}(y_{X})=W_{x}(y_{X})-1\le  \hat{X}_{x}(y_{X})-1 \le \hat{X}_{x+1}(y_{X})$. So we may assume $y_{W}<y_{X}$. 
		By definition of $y_{W}$, we have $W_{x}(y_{W}+1)=\dots=W_{x}(y-1)=0$ and so $W_{x+1}(y_{W}+1)=\dots=W_{x+1}(y-1)=0$. Then 
		by definition $W_{x+1}(y_{X})=0\le \hat{X}_{x+1}(y_{X})$. This completes the induction. 
	\end{proof}

	\subsection{Proof of the Skorokhod decomposition of the carrier process}
	\label{sec:carrier_Skorokhod_0}

	Now we give an explicit construction of the Skorokhod decomposition of $(W_{x})_{x\ge 0}$. First, let $R$ be the $\kappa \times \kappa$ tridiagonal matrix with 0 on the subdiagonal, 1 on the main diagonal, and -1 on the superdiagonal entries:
	\vspace{-0.3cm}
	\begin{align}\label{eq:def_reflection_mx}
		R :=\textup{tridiag}_{\kappa}(0,1,-1) = \begin{bmatrix}
			1 & -1  & 0  \\
			0 & 1 & -1  & 0  \\
			\vdots   && \ddots \\
			0 &   \cdots &0& 1  &  -1  \\
			0 &  & \cdots & 0  & 1  
		\end{bmatrix}
		=	I - Q,
	\end{align}
	where $I$ is the $\kappa\times \kappa$ identity matrix and $Q=I-R$. Notice that the spectral radius of $Q$ is zero for all $\kappa\ge 2$ being an upper triangular matrix with zero diagonal entries. The above reflection matrix also has the property of being `completely-$\mathcal{S}$', see Def. \ref{def:completely_S} and the proof of Theorem \ref{thm:SRBM_weak_convergence} for justification.

	Next, we define the pushing process $(Y_{x})_{x\ge 0}$ on  $\Z^{\kappa}_{\ge 0}$ recursively as follows: Set $Y_{0}=\mathbf{0}$. Having defined $Y_{x}$, denoting $y_{W}:=f_{W}(W_{x}, \xi_{x+1})$ (see \eqref{eq:excluded_color_W_def}) and $y_{X}:=f_{X}(X_{x}, \xi_{x+1})$ (see \eqref{eq:excluded_color_X_def}), define 
	
	\begin{align}\label{eq:def_Yn_SRBM}
		Y_{x+1} - 	Y_{x}  := 
		\begin{cases} 
			\mathbf{0}			 & \textup{if $y_{W}=y_{X}$} \\
			\e_{y_{W}+1}+\dots+\e_{y_{X}} & \textup{if $y_{W}<y_{X}$}.
		\end{cases}
	\end{align}
	Note that \eqref{eq:def_Yn_SRBM} covers all cases since $y_{W}\le y_{X}$ due to Proposition \ref{prop:carrier_comparison_localized}. From the definition, it is clear that every coordinate of $Y_{x}$ is non-decreasing. Also, clearly, $Y_{x}$ is determined by the first $x$ ball colors $\xi_{1},\dots,\xi_{x}$.

	\begin{lemma}[Skorokhod decomposition of the carrier process]\label{lemma:Skorokhod}
		Let $W_{x}$, $X_{x}$, $R$, and $Y_{x}$ as before. Then
		\begin{description}[itemsep=0.1cm]
			\item[(i)] $W_{x} = X_{x} + R Y_{x}$ for all $x\ge 0$;
			\item[(ii)] $Y_{0}=\mathbf{0}$ and for each $i\in \{1,\dots,\kappa\}$, the $i$th coordinate of $Y_{x}$  is non-decreasing in $x$ and can only increase when $W_{x}(i)=0$, i.e., $\sum_{x\ge 0} \mathbf{1}(W_{x}(i)\ge 1) (Y_{x+1}(i)-Y_{x}(i)) = 0$.
		\end{description}
	\end{lemma}
	
	\begin{proof}
		Let $y:=\xi_{x+1}$ if $\xi_{x+1}\ne 0$ and $y:=\kappa+1$ if $\xi_{x+1}=0$. Also let $y_{W}:=f_{W}(W_{x}, \xi_{x+1})$ and $y_{X}:=f_{X}(X_{x}, \xi_{x+1})$ (see \eqref{eq:excluded_color_W_def} and \eqref{eq:excluded_color_X_def}). We first show \textbf{(ii)}. According to  \eqref{eq:excluded_colors_inequality} in Proposition \ref{prop:carrier_comparison_localized}, we have $y_{W}\le y_{X}< y$. Also, by the definition of $y_{W}$, we have $W_{x}(y_{W}+1)=\dots=W_{x}(y-1)=0$. Hence if $Y_{x+1}(i)-Y_{x}(i)>0$, then $i\in \{ y_{W}+1,\dots, y-1 \}$ and hence $W_{x}(i)=0$. This shows \textbf{(ii)}. 
		
		Next, we show \textbf{(i)} by induction on $x\ge 0$. It holds trivially when $x=0$, so suppose for the induction step that it holds for some $x\ge 0$. We wish to show that 
		\begin{align}\label{eq:lem_SK_pf0}
			W_{x+1} = X_{x+1} + R Y_{x+1}. 
		\end{align}
		From \eqref{eq:excluded_color_W_def}-\eqref{eq:excluded_color_X_def}, note that 
		\begin{align}\label{eq:Skorokhod_decomp_pf_difference}
			(W_{x+1}-W_{x}) - (X_{x+1}-X_{x})  = 
			\begin{cases} 
				\mathbf{0}			 & \textup{if $y_{W}=y_{X}$} \\
				\e_{y_{X}} - \e_{y_{W}}  & \textup{if $1\le y_{W}<y_{X}$} \\
				\e_{y_{X}}   & \textup{if $0= y_{W}<y_{X}$}.
			\end{cases}
		\end{align}
		If $y_{W}=y_{X}$, then $R(Y_{x+1}-Y_{x}) = \mathbf{0}$ so \eqref{eq:lem_SK_pf0} holds by the induction hypothesis. Next, suppose $1\le y_{W}<y_{X}$. Note that 
		\begin{align}
			R(Y_{x+1}-Y_{x}) & = R(\e_{y_{W}+1}+\dots+\e_{y_{X}})  \\
			&= (\e_{y_{W}+1}-\e_{y_{W}}) + (\e_{y_{W}+2}-\e_{y_{W}+1}) + \dots + (\e_{y_{X}}-\e_{y_{X}-1})  \\
			&= \e_{y_{X}} - \e_{y_{W}}.
		\end{align}
		Lastly, suppose $0= y_{W}<y_{X}$. Then 
		\begin{align}
			R(Y_{x+1}-Y_{x}) & = R(\e_{1}+\dots+\e_{y_{X}})  \\
			&=\e_{1} + (\e_{2}-\e_{1}) + (\e_{3}-\e_{2}) + \dots + (\e_{y_{X}}-\e_{y_{X}-1})  = \e_{y_{X}}.
		\end{align}
		Hence in all cases, the induction step holds by the induction hypothesis, \eqref{eq:Skorokhod_decomp_pf_difference}, and \eqref{eq:def_Yn_SRBM}.
	\end{proof}

	\section{Probabilistic analysis of the decoupled carrier process}
	\label{sec:prob_decoupled_carrier}
	
	In the previous section, we defined the decoupled carrier process $(X_{x})_{x\ge 0}$  associated with an arbitrary set $\mathcal{C}_{e}=\{\alpha_{1},\dots,\alpha_{r}\}\subseteq\{1,\dots,\kappa\}$ of exceptional colors over a deterministic ball configuration $\xi$. In this section, we establish various important probabilistic results for the decoupled carrier process $(X_{x})_{x\ge 0}$ over the i.i.d. ball configuration $\xi^{\p}$ with a particular choice of the associated set $\mathcal{C}_{e}$ of exceptional colors.

	\subsection{Decomposition of the decoupled carrier process}
	
	Let $\p=(p_{0},\dots,p_{\kappa})$ be the ball density at each site. We choose the set of exceptional colors $\mathcal{C}_{e}$ so that it satisfies the following `stability condition':
	\begin{align} \label{eq:stability_color_assumption}
		\textup{For all $1\le j \le r$,} \quad 	\max \{p_{i}\,:\, \alpha_{j}<i<\alpha_{j+1} \} < p_{\alpha_{j+1}},
	\end{align}
	where we set $\alpha_{0}=0=\alpha_{r+1}$. Since  balls of a non-exceptional color $i$ in $(\alpha_{j}, \alpha_{j+1})$ can be excluded by balls of color $\alpha_{j+1}$ in the decoupled carrier, the above condition ensures that $(X_{x}(i))_{x\ge 0}$ do not blow up. A canonical choice of such $\mathcal{C}_{e}$ is the set of unstable colors $\mathcal{C}^{\p}_{u}$ that we defined above the statement of Theorem \ref{thm:SRBM_weak_convergence}. 
	
	Define  the following processes
	\begin{align}
		\begin{cases}
			X_{x}:=\textup{The decoupled carrier process over $\xi=\xi^{p}$ associated with $\mathcal{C}_{e}$ satisfying \eqref{eq:stability_color_assumption}} \\
			X_{x}^{s} := \left(\mathbf{1}(i \notin \mathcal{C}_{e}) \, X_{x}(i) \,;\, i=1,\dots,\kappa  \right)  \hspace{2cm} (\textup{$\triangleright$ The `stable part' of $X_{x}$}) \\
			X_{x}^{u} := \left(\mathbf{1}(i \in \mathcal{C}_{e}) \, X_{x}(i) \,;\, i=1,\dots,\kappa  \right)   \hspace{1.97cm} (\textup{$\triangleright$ The `unstable part' of $X_{x}$}).
		\end{cases} \label{eq:decoupled_carrier_on_stable_colors_00}
	\end{align}
	Namely, $X^{s}_{x}$ (resp., $X^{u}_{x}$) agrees with $X_{x}$ on the non-exceptional (resp., exceptional) colors but its coordinates on exceptional (resp., non-exceptional) colors are zero. 
	Clearly, we have the following decomposition 
	\begin{align}
		X_{x} = X_{x}^{s} + X_{x}^{u} \qquad \textup{for all $x\ge 0$}.
	\end{align}
	
	In Lemma \ref{lemma:localized_carrier_convergence}, we will show that $(X^{s}_{x})_{x\ge 0}$ defines an irreducible Markov chain whose empirical distribution converges to its unique stationary distribution $\pi^{s}$ defined as 
	\begin{align}\label{eq:def_stationary_distribution_carrier_localized}
		\pi^{s} \big(n_{1}, \dots, n_{\kappa}\big) = \prod_{j\in \mathcal{C}^{\p}_{u}} \mathbf{1}(n_{j}=0)  \prod_{j=0}^{r}\left[  \prod_{\alpha_{j}<i<\alpha_{j+1}} \left(1-\frac{p_{i}}{p_{\alpha_{j+1}}} \right)  \left(  \frac{p_{i}}{p_{\alpha_{j+1}}} \right)^{n_{i}} \right],
	\end{align}  
	where we set $\alpha_{0}=0=\alpha_{r+1}$. 
	Hence the expression in the bracket above is a non-degenerate geometric distribution. Thus the above is the product of $\kappa-r$ geometric distributions, so it is indeed a probability distribution on $\Omega^{s}$. Comparing \eqref{eq:def_stationary_distribution_carrier_localized} with \eqref{eq:def_stationary_distribution_carrier_sub}, we see that the exceptional color $\alpha_{j+1}$ plays the role of color 0 for the non-exceptional colors in the interval $(\alpha_{j},\dots,\alpha_{j+1})$.

	\begin{lemma}\label{lemma:localized_carrier_convergence}
		Let $(X^{s}_{x})_{x\ge 0}$ be the process defined in \eqref{eq:decoupled_carrier_on_stable_colors_00}. Then it is an aperiodic Markov chain on the state space $\Z_{\ge 0}^{\kappa}$ and has a unique communicating class with unique stationary distribution $\pi^{s}$ defined in \eqref{eq:def_stationary_distribution_carrier_localized}. Furthermore, if we denote the distribution of $X^{s}_{x}$ by $\pi^{s}_{x}$, then 
		\begin{align}\label{eq:carrier_process_localized_markov}
			\lim_{x\rightarrow \infty} d_{TV}(\pi^{s}_{x}, \pi^{s} ) = 0.
		\end{align}
	\end{lemma}
	
	\begin{proof}
		First we show $(X^{s}_{x})_{x\ge 0}$ defines a Markov chain. Clearly the full decoupled carrier process $(X_{x})_{x\ge 0}$ over $\xi=\xi^{\p}$ defines a Markov chain on $\Z^{\kappa}$. Hence it is enough to show that $X_{x+1}^{s}$ is determined from $X_{x}^{s}$ and $\xi_{x+1}$ for each $x\ge 0$. Fix $x\ge 0$ and denote $y:=\xi_{x+1}$. Fix a non-exceptional color $i$. Let $j$ be such that $\alpha_{j}<i<\alpha_{j+1}$. If $y\notin [i,\alpha_{j+1}]$, then $X_{x+1}^{s}(i)=X_{x}^{s}(i)$. If $y=i$, then $X_{x+1}^{s}(i)=X_{x}^{s}(i)+1$. If $y\in (i,\alpha_{j+1}]$, then $X_{x+1}^{s}(i)-X_{x}^{s}(i)=-1$ if $X_{x}^{s}(i)\ge 1$ and $X_{x}^{s}(i+1)=\dots=X_{x}^{s}(\alpha_{j+1}-1)=0$; otherwise $X_{x+1}^{s}(i)-X_{x}^{s}(i)=0$. In all cases, $X_{x+1}^{s}(i)$ is determined by $X_{x}^{s}$ and $y$. Since $i$ was an arbitrary non-exceptional color, this verifies that $(X^{s}_{x})_{x\ge 0}$ is a Markov chain. 
		
		Next, let $\Omega^{s}$ denote the subset of $\Z^{\kappa}_{\ge 0}$ consisting of all points whose coordinates on exceptional colors are zeroed out. Clearly $(X^{s}_{x})_{x\ge 0}$ lives in $\Omega^{s}$. We show the irreducibility of the chain $(X^{s}_{x})_{x\ge 0}$ on $\Omega^{s}$. Aperiodicity will follow from irreducibility by noting that $\mathbf{0}\in \Omega^{s}$ is aperiodic. Observe that $X^{s}_{x}$ visits every state eventually in $\Omega^{s}$  with positive probability starting from the initial state $\mathbf{0}$. Hence it suffices to show the converse transition. Fix $\x=(x_{1},\dots,x_{\kappa})\in \Omega^{s}$. Denote $n_{1}=x_{1}+\cdots+ x_{\alpha_{1}-1}$, which is the number of balls of color in $[1,\alpha_{1})$. Observe that inserting $n_{1}$ balls of color $\alpha_{1}$ into the decoupled  carrier $X_{x}$ removes all balls of colors in $[1,\alpha_{1})$ and leaves with $x_{\alpha_{1}}+n_{1}$ balls of color $\alpha_{1}$. Next, we insert $x_{\alpha_{1}}+n_{1}+n_{2}$ balls of color $\alpha_{2}$ into the decoupled carrier, where $n_{2}=x_{\alpha_{1}+1}+\dots+x_{\alpha_{2}-1}$. This will remove all remaining balls of colors in $[1,\alpha_{2})$ and leave $x_{\alpha_{2}}+(x_{\alpha_{1}}+n_{1}+n_{2})$ balls of color $\alpha_{2}$. Repeating this process, we can remove all balls of stable colors in the decoupled carrier, so $X^{s}_{x}$ visits $\mathbf{0}$ with a positive probability.

		Next, we can verify that $\pi^{s}$ is a stationary distribution of $(X^{s}_{x})_{x\ge 0}$ by using a similar argument as in the proof of Theorem \ref{thm:carrier_subcritical} \textbf{(i)}. The key idea is the following: The evolution of balls of colors in $(\alpha_{j}, \alpha_{j+1})$ in the decoupled carrier $X_{x}$ depends only on balls of colors in $(\alpha_{j}, \alpha_{j+1}]$ and inserting balls of color $\alpha_{j+1}$ can exclude any color in that interval. Moreover, the `stable component' $X_{x}^{s}$ of $X_{x}$ does not count the number of balls of color $\alpha_{j+1}$ and recall the `stability condition' \eqref{eq:stability_color_assumption}. So one can treat $\alpha_{j+1}$ as color 0 in the subcritical carrier. We omit the details. 
		
		Lastly, the convergence of the empirical distribution in \eqref{eq:carrier_process_localized_markov} follows from the same soft argument given at the end of the proof of Theorem \ref{thm:carrier_subcritical} \textbf{(i)}. 
	\end{proof}

	Next, we introduce a representation of the decoupled carrier process as a (truncated) partial sums process. By Lemma \ref{lemma:localized_carrier_convergence},  $(X^{s}_{x},\xi_{x+1})_{x\ge 0}$ defines an aperiodic Markov chain on $\mathbb{Z}^{\kappa}_{\ge 0}\times \{0,\dots,\kappa\}$ with unique stationary distribution $\pi^{s}\otimes \p$. 
	For each $\ell \in \{1,\dots, \kappa\}$, define a functional $g^{\ell}:\Z^{\kappa} \times \{0,\dots,\kappa\} \rightarrow \Z$ by
	\begin{align} \label{eq:def_functional_g_j}
		g^{\ell}(\w, i) := 
		\begin{cases}
			1 & \textup{if $i=\ell$} \\
			-1 & \begin{matrix} \textup{if $\alpha_{j} \le \ell < i \le \alpha_{j+1}$ for some $j\in \{0,\dots,r-1\}$} \\ 
				\textup{and $\w(\ell+1)=\dots=\w(i-1)=0$} 
			\end{matrix}
			\\
			-1 & \begin{matrix} \textup{if $\alpha_{r} \le \ell $, $i=0$, and  $\w(\ell+1)=\dots=\w(i-1)=0$}  
			\end{matrix}
			\\
			0 & \textup{otherwise},
		\end{cases}
	\end{align}
	where we denoted $\alpha_{0}:=0$. It is easy to verify that, for each $\ell \in \{1,\dots, \kappa\}$ and $x\ge 0$, 
	\begin{align}\label{eq:increment_as_functional0}
		X_{x+1}(\ell) = 
		\begin{cases}
			X_{x}(\ell)   +	 g^{\ell}(X^{s}_{x}, \xi_{x+1})  & \textup{if $\ell\in \mathcal{C}_{e}$} \\
			\max(0, \, X_{x}(\ell)   +	 g^{\ell}(X^{s}_{x}, \xi_{x+1}))  & \textup{if $\ell\notin \mathcal{C}_{e}$}. 
		\end{cases}
	\end{align}
	In words, the random variable $g^{\ell}(X^{s}_{x}, \xi_{x+1})$ gives the increment of $X_{x+1}(\ell)$ for exceptional $\ell$; for non-exceptional $\ell$, the same holds but with additional truncation at 0 to ensure the value of $X_{x}(\ell)$ stays nonnegative. In particular, we can view $X_{x}(\ell)$ for non-exceptional $\ell$ as a Lindley process in queuing theory. 
	
	Another consequence of the observation in \eqref{eq:increment_as_functional0} is that  the decoupled carrier process $X_{x}^{u}$ on the exceptional colors (the unstable component of $X_{x}$) can be written as an additive function of the Markov chain $(X_{x}^{s}, \xi_{x+1})_{x\ge 0}$: 
	\begin{align}\label{eq:unstable_as_additive_functional}
		X^{u}_{x}= \sum_{z=1}^{x} \sum_{\ell\in \{ \alpha_{1},\dots,\alpha_{r} \}} g^{\ell}(X^{s}_{z}, \xi_{z+1})\,  \e_{\ell}. 
	\end{align}
	This representation will be used critically in Sections \ref{sec:prob_decoupled_carrier},  \ref{sec:linear_scaling_carrier}, and \ref{sec:carrier_Skorokhod}. 
	
	In the following proposition, we compute the stationary expectation of the increments $g^{\ell}(X^{s}_{x}, \xi_{x+1}) $ in \eqref{eq:increment_as_functional0}.

	\begin{prop}[Bias of the decoupled carrier] \label{prop:decoupled_carrier_bias}
		Let $g^{\ell}$ be the function in \eqref{eq:def_functional_g_j}. Then 
		\begin{align}
			\E_{\pi^{s}\otimes \p}[ g^{\ell}(X_{x}^{s},\, \xi_{x+1}) ] = p_{\ell} - p_{\ell^{+}},
		\end{align}
		where $\ell^{+}$ is the smallest exceptional color strictly larger than $\ell$. (If $\ell>\alpha_{r}$, then take $\ell^{+}=0$.)
	\end{prop}
	
	\begin{proof}
		Fix $j\in \{0,\dots,r\}$ and and $\alpha_{j}\le \ell < \alpha_{j+1}$. Denote $\ell^{+}:=\alpha_{j+1}$, where we take $\alpha_{0}=0$ and $\alpha_{r+1}=\kappa+1 \equiv 0 \, (\textup{mod} \, \kappa+1)$. Denote $\zeta_{x}:=g^{\ell}(X_{x}^{s},\, \xi_{x+1})$. It is clear from the definition that 
		\begin{align}
			\P_{\pi^{s}\otimes \p}\left( \zeta_{x}=1 \right)  = p_{\ell}. 
		\end{align}
		It remains to show 
		\begin{align}
			\P_{\pi^{s}\otimes \p}\left(\zeta_{x}=-1 \right)  = p_{\ell^{+}}. 
		\end{align}
		To this end, observe that  
		\begin{align}\label{eq:MC_functional_drift_pf}
			\mathbb{P}_{\pi^{s}\otimes \mathbf{p}}( \zeta_{x}  = -1)  &=  p_{\ell+1} +\sum_{i=\ell+2}^{\ell^{+}} \mathbb{P}_{\pi^{s}}( X^{s}_{x}(\ell+1) =\dots =X^{s}_{x}(i-1)=0) \, p_{i}.
		\end{align}
		Since $X^{s}_{x}$ is distributed as the stationary distribution $\pi^{s}$ for all $x\ge 0$, 
		\begin{align}\label{eq:stationary_increment_pf1}
			\E_{\pi^{s} }\left[ \sum_{\ell<i<\ell^{+}} X_{x+1}^{s}(i) -  \sum_{\ell<i<\ell^{+}} X_{x}^{s}(i) \right] =0.
		\end{align}  
		Let $T$ denote the random variable in the expectation above. Then 
		\begin{align}
			\mathbb{P}_{\pi^{s}\otimes \mathbf{p}}( T = -1) &=\left( 1- \mathbb{P}_{\pi^{s} }\left(  \sum_{\ell<i<\ell^{+}} X_{x}^{s}(i) =0 \right)  \right)p_{\ell^{+}}, \\
			\mathbb{P}_{\pi^{s}\otimes \mathbf{p}}( T = 1) &= p_{\ell+1} +  \sum_{i=\ell+2}^{\ell^{+}-1} \mathbb{P}_{\pi^{s}}( X^{s}_{x}(\ell+1) =\dots= X^{s}_{x}(i-1)=0) \, p_{i}.
		\end{align}
		Since $T\in \{-1,0,1\}$  and  \eqref{eq:stationary_increment_pf1} holds, this yields  
		\begin{align}
			p_{\ell^{+}} &=p_{\ell+1} +  \sum_{i=\ell+2}^{\ell^{+}} \mathbb{P}_{\pi^{s}}( X^{s}_{x}(\ell+1) =\dots =X^{s}_{x}(i-1)=0) \, p_{i}.
		\end{align}
		Note that the right-hand side equals $	\mathbb{P}_{\tilde{\pi}\otimes \mathbf{p}}( \zeta_{x}  = -1) $ in \eqref{eq:MC_functional_drift_pf}, as desired. This shows the assertion. 
	\end{proof}

	\subsection{Finite moments of return times of the decoupled carrier process}
	\label{sec:return_decoupled_carrier}
	
	The main goal of this section is to prove Theorem \ref{thm:excursion_length_MGF} below, which shows that the first return time to the origin of the stable part of the decoupled carrier process $(X^{s}_{x})_{x\ge 0}$  has finite moments of all orders. In fact, we prove this result in a more general setting that includes the excursions of $X_{x}(i)$ under the past maximum for exceptional colors $i$ with a positive drift. (Handling such a general setting will be useful in the proof of Proposition \ref{prop:supercritical_excursion_bd}.)
	Define a new process $(\widetilde{X}_{x})_{x\ge 0}$ on $\Z^{\kappa}_{\ge 0}$  by 
	\begin{align}\label{eq:decoupled_carrier_unstable_excursions}
		\widetilde{X}_{x}(i):= \begin{cases}
			X_{x}(i) & \textup{if $i\notin \mathcal{C}_{e}$} \\
			\max_{0\le t\le x} X^{u}_{t}(i) -  X^{u}_{x}(i) & \textup{if $i=\alpha_{j}\in \mathcal{C}_{e}$ for some $j$ and $p_{\alpha_{j}}>p_{\alpha_{j+1}}$} \\
			0 & \textup{otherwise}.
		\end{cases}
	\end{align}
	Notice that $(\widetilde{X}_{x})_{x\ge 0}$ defines a Markov chain on the nonnegative orthant $\Z^{\kappa}_{\ge 0}$. 
	
	\begin{theorem}\label{thm:excursion_length_MGF}
		Let $(\widetilde{X}_{x})_{x\ge 0}$ be the Markov chain on $\Z^{\kappa}_{\ge 0}$ in \eqref{eq:decoupled_carrier_unstable_excursions}. Assume \eqref{eq:decoupled_carrier_on_stable_colors_00} holds. Let $\tau$ denote its first return time to the origin. Then $\tau$ has finite moments of all orders. Furthermore, $(\widetilde{X}_{x})_{x\ge 0}$ is irreducible, aperiodic, positive recurrent and has a unique stationary distribution. 
	\end{theorem}

	We recall the following geometric ergodic theorem for Markov chains on a countable state space. It is an important tool for showing finite exponential moments of return times.

	\begin{theorem}[Geometric Ergodic theorem; Special case of Thm. 15.0.1 in  \cite{meyn2012markov}]\label{thm:geo_ergodic_thm}
		Let $(\mathtt{X}_{t})_{t\ge 0}$ be a Markov chain on a countable state space $\Omega$ with transition kernel $P$, which is irreducible and aperiodic. Then the following conditions are equivalent: 
		\begin{description}
			\item[(i)] There exists a state $x\in \Omega$ such that the return time of the chain to $x$ has a finite exponential moment; 
			
			\item[(ii)] The chain is geometrically ergodic, that is, there exists a function $V: \Omega \rightarrow [1, \infty)$, constant $\eps\in (0,1)$, and a finite set $\mathcal{C}$ such that 
			\begin{align}
				PV(x)  \le (1- \eps) V(x) \quad \textup{for all $x\in \Omega \setminus \mathcal{C}$}.
			\end{align} 
			
		\end{description}
	\end{theorem}

	In order to prove Theorem   \ref{thm:excursion_length_MGF}, 
	we will establish a general lemma on the first return time of Markov chains defined on the nonnegative integer orthant that abstracts important structure of the subcritical carrier process $W_{x}$. Its proof is relegated to the end of this section. 
	
	\begin{lemma}\label{lemma:gen_return_time}
		Let $Z_{x}=(Z_{x}(1),\dots,Z_{x}(d))$  be an aperiodic and irreducible Markov chain on $Z^{d}_{\ge 0}$. Suppose $Z_{0}=\mathbf{0}$ and assume the following three properties:
		\begin{description}
			\item[(A1)] (Geometric ergodicity of top coordinate) The return time of $Z_{x}(d)$ to zero has a finite exponential moment.

			\item[(A2)] (Hierarchical dependence) There is a sequence of i.i.d. random variables $(\xi_{x})_{x\in \N}$ with distribution $\p$ and functions  $f_{i}:\Z^{d-i-1}\times \R \rightarrow \{-1,0,1\}$ such that 
			\begin{align}
				\hspace{1cm} 	Z_{x+1}(i) =\max(0, Z_{x}(i) + f_{i}( Z_{x}^{>i}, \, \xi_{x+1} ))  \quad  \textup{for all $x\in \N$ and  $i\in \{0,\dots,d-1\}$}, 
			\end{align}
			where $Z_{x}^{> i}:=(Z_{x}(i+1),\dots,Z_{x}(d))$. Furthermore, $Z_{x}^{> i}$ has a unique stationary distribution, say  $\lambda^{>i}$.

			\item[(A3)] (Coordinatewise negative drift) For all $i=0,\dots,d-1$, 
			\begin{align}
				\E_{\lambda^{> i} \otimes \p} \left[ f_{i}( Z_{x}^{>i}, \, \xi_{x+1} )  \right] < 0.
			\end{align}
		\end{description}
		Now fix $i\in \{0,\dots,d-1\}$. For each $j\ge 1$, let $\tau_{j}$ be the $j$th return time of $(Z_{x}^{>i})_{x\ge 0}$ to the origin. Then $\tau_{1}$ has finite moments of all orders. Furthermore, denote $R_{j}:= Z_{\tau_{j}}(i)$ for $j\ge 0$. Then $(R_{j})_{j\ge 1}$ is a Markov chain on $\Z_{\ge 0}$ such that there exists constants $c,K>0$ for which 
		\begin{align}\label{eq:negative_conditional_drift_Q}
			\sup_{m\ge K}	\E [R_{1} - R_{0}  \,|\, R_{0}=m ] \le -c.
		\end{align}
		In addition, $(R_{j})_{j\ge 1}$ is geometrically ergodic (see Theorem \ref{thm:geo_ergodic_thm}).  
	\end{lemma}

	We now deduce Theorem \ref{thm:excursion_length_MGF} assuming Lemma \ref{lemma:gen_return_time}. 
	
	\begin{proof}[\textbf{Proof of Theorem \ref{thm:excursion_length_MGF}}]
		Let $\mathcal{C}_{u}^{\p}$ denote the set of unstable colors, which is empty in the subcritical regime $p_{0}>\max(p_{1},\dots,p_{\kappa})$ and non-empty in the critical and the supercritical regimes $p_{0}\le \max(p_{1},\dots,p_{\kappa})$. In the latter case, we let $ \alpha_{1}<\dots <\alpha_{r}$ denote the unstable colors. For each $x\ge 0$, we write $\widetilde{X}_{x}=(Y_{x}(0), Y_{x}(1),\dots, Y_{x}(r))$, where 
		\begin{align}
			Y_{x}(0):= \big( X_{x}(1) ,\cdots, \,X_{x}(\alpha_{1}-1) \big), 
		\end{align}	
		and for each $j\in \{1,\dots, r\}$ (setting $p_{\alpha_{r+1}}=p_{0}$),
		\begin{align}
			Y_{x}(j):= \begin{cases}
				\left( X_{x}(\alpha_{j}) , \, X_{x}(\alpha_{j}+1) ,\cdots, \,X_{x}(\alpha_{j+1}-1) \right)  & \textup{if $p_{\alpha_{j}} =  p_{\alpha_{j+1}}$} \\
				\left( \max_{1\le s \le x} X_{s}(\alpha_{j})- X_{x}(\alpha_{j}) , \, X_{x}(\alpha_{j}+1) ,\cdots, \,X_{x}(\alpha_{j+1}-1) \right) & \textup{if $p_{\alpha_{j}}> p_{\alpha_{j+1}}$}
			\end{cases}
		\end{align}	
		We will show that for each $j$, the return time to the origin of $(Y_{x}(j))_{x\ge 0}$ has finite moments of all orders. Then by an inductive argument (see the proof of Lemma \ref{lemma:gen_return_time}), it follows that the return time of $(\widetilde{X}_{x})_{x\ge 0}$ also has finite moments of all orders. 
		
		Denote $R_{x}:=Y_{x}(j)$.  Note that $R_{x}$ is a Markov chain on $\Z^{\ell^{+}-\ell}_{\ge 0}$ with $\ell=\alpha_{j}$.  We wish to show that the return time to the origin of $R_{x}$ has finite moments of all orders. We will only show this for the case of $p_{\alpha_{j}}>p_{\alpha_{j+1}}$, as a similar and simpler argument will show the desired statement for the case $p_{\alpha_{j}}=  p_{\alpha_{j+1}}$. 
		
		First, consider a partial sums process $S_{n}=\sum_{k=1}^{n}\eta_{k}$, $S_{0}=0$,  where the increments $\eta_{k}$ take values from $\{-1,0,1\}$ and they are not necessarily i.i.d.. Consider the new process $\overline{S}_{n}:=\max_{1\le k \le n} S_{k} - S_{n}$, which measures the height of the excursion of $(S_{k})_{1\le k \le n}$ below the running maximum. Note that $\overline{S}_{n}$  satisfies the following recursion: 
		\begin{align}
			\overline{S}_{n} - \overline{S}_{n-1} = 
			\begin{cases}
				-\eta_{n} & \textup{if $\eta_{n}=-1$ or $\overline{S}_{n-1}\ge 1$},\\
				0 & \textup{if $\overline{S}_{n-1}=0$ and $\eta_{n}\in \{0,1\}$}. 
			\end{cases}
		\end{align}
		Equivalently, we have 
		\begin{align}
			\overline{S}_{n}  = \max(0, \overline{S}_{n-1} - \eta_{n}).
		\end{align}

		Now suppose $\ell=\ell^{+}-1$ so that $R_{x}:= \max_{1\le s \le x} X_{s}(\ell)- X_{x}(\ell)$. In this case, $X_{x}(\ell)$ a simple random walk on $\Z$ with positive drift $p_{\ell}-p_{\ell^{+}}>0$, so $R_{x}$ is a birth-deatch chain on $\Z_{\ge 0}$ with negative drift $p_{\ell^{+}}-p_{\ell}<0$. In this case, the claim follows immediately. Hence we may assume $\ell<\ell^{+}-1$. Notice that $(X_{x}(\ell^{+}-1))_{x\in \N}$ is a birth-deach chain on $\Z_{\ge 0}$ which moves to the right with probability  $p_{\ell^{+}-1}$ and to the left with probability $p_{\ell^{+}}$. Since $\ell<\ell^{+}-1$,  by the choice of $\ell$ and $\ell^{+}$, we have $p_{\ell^{+}-1}<p_{\ell^{+}}$. Hence $X_{x}(\ell^{+}-1)$ has negative drift $p_{\ell^{+}-1}-p_{\ell^{+}}<0$ on $\Z_{>0}$. Thus the return time to the origin of $X_{x}(\ell^{+}-1)$ has a finite exponential moment. This verifies the hypothesis \textbf{(A1)} in Lemma \ref{lemma:gen_return_time}; \textbf{(A2)}  follows from the observation in the previous paragraph and \eqref{eq:increment_as_functional0}; \textbf{(A3)} follows from  Proposition \ref{prop:decoupled_carrier_bias}. Therefore, by Lemma \ref{lemma:gen_return_time} we deduce that the return time to the origin of $R_{x}$ has finite moments of all orders. 
		
		One can easily check the irreducibility of $\overline{X}_{x}$ by using a similar argument as in the proof of Lemma \ref{lemma:localized_carrier_convergence}. Aperiodicity is clear, as one can stay at the origin in one step when a color 0 is encountered. We have established that the return time to the origin of $\overline{X}_{x}$ has finite moments of all orders. This implies that the chain is positive recurrent. Hence the chain has a stationary distribution \cite[Thm. 21.13]{levine2020phase}, and it is unique from the irreducibility and Kac's theorem \cite[Lem. 21.12]{levine2020phase}. 
	\end{proof}

	We now prove Lemma \ref{lemma:gen_return_time}. The argument is soft and inductive in nature.

	\begin{proof}[\textbf{Proof of Lemma} \ref{lemma:gen_return_time}]
		We first claim the following:
		\begin{align}\label{eq:finite_exp_moment_claim1}
			\begin{matrix} 	\textup{For each $i\in \{0,\dots,d-1\}$, the first return time of $(Z_{x}^{>i})_{x\ge 0}$} \\ 
				\textup{to some state $\x$ has a finite exponential moment.}	
			\end{matrix}
		\end{align}
		We show the \eqref{eq:finite_exp_moment_claim1} by induction on $i=d-1,\dots,0$. 
		Fix $i\in \{0,\dots,d-1\}$. The base step for $i=d-1$ is given by the hypothesis \textbf{(A1)}. For the induction step, suppose the first return time of $(\Z_{x}^{> i})_{x\ge 0}$ to some state $\x'$ has a finite exponential moment. Let $\tau_{j}$ denote the $j$th return time of $(Z_{x}^{>i})_{x \ge 0}$ to $\x'$. 
		Consider a new process 
		\begin{align}\label{eq:geometric_ergodicity_pf0}
			(Q_{j}, \x') := (Z_{\tau_{j}}(i), Z_{\tau_{j}}(i+1), \dots, Z_{\tau_{j}}(d)).  
		\end{align}
		By the strong Markov property, this defines a Markov chain $(Q_{j})_{j\ge 1}$ on $\Z_{\ge 0}$. 
		
		\textbf{Step 1. \eqref{eq:negative_conditional_drift_Q} holds for $(Q_{j})_{j\ge 1}$. } We would like to show 
		\begin{align}\label{eq:negative_conditional_drift_QQ}
			\sup_{m\ge K}	\E [Q_{1} - Q_{0}  \,|\, Q_{0}=m ] \le -c
		\end{align}
		for some constants $c,K>0$.  Instead of $Z_{x}(i)$, we consider its `untruncated version' 
		\begin{align}
			\overline{Z}_{x}(i) := \sum_{\ell=1}^{j} f_{i}(Z_{x}^{>i}, \xi_{x+1})
		\end{align}
		with $\overline{Z}_{0}(i)=0$. (Note that $\overline{Z}_{x}(0)=Z_{x}(0)$ by the hypothesis.) Since $(Z^{>i}_{x})_{x\ge 0}$ is a Markov chain by the hypothesis \textbf{(A2)}, by the strong Markov property, excursions from $\x$ for the recurrent chain $Z^{>i}_{x}$ are i.i.d.. Hence $\overline{Q}_{j}:= 	\overline{Z}_{\tau_{j}}(i)$ for $j\ge 1$ forms a random walk, whose increments are i.i.d. and has the same distribution as $\overline{Q}_{1}$. We claim that this random walk has a negative drift: 
		\begin{align}\label{eq:geometric_ergodicity_pf1}
			\E[ \overline{Q}_{1} ]< 0.
		\end{align}
		To see this, first, note that 
		\begin{align}
			\lim_{x\rightarrow\infty} \, \frac{1}{x} \overline{Z}_{x}(i) =  	\E_{\lambda^{>i}\otimes \p}[    f_{i}(Z_{0}^{>i}, \xi_{1})   ]=:\alpha< 0 
		\end{align}
		by the hypothesis \textbf{(A3)}. Since $\tau_{1}, \tau_{2}-\tau_{1}, \tau_{3}-\tau_{2},\dots $ are i.i.d. by the strong Markov property and since $\tau_{1}$ has a finite exponential moment by the induction hypothesis, $\tau_{j}\rightarrow\infty$ almost surely. So  $\overline{Z}_{\tau_{j}}/\tau_{j}\rightarrow \alpha<0$ almost surely. Also, to the strong law of large numbers and the previous results, 
		\begin{align}
			\E[ \overline{Q}_{1} ] = 	\lim_{j\rightarrow \infty} \frac{\overline{Q}_{j}}{j} = 	\lim_{j\rightarrow \infty} \frac{\overline{Z}_{\tau_{j}}}{\tau_{j}} \frac{\tau_{j}}{j} = \alpha \E[\tau_{1}] < 0.
		\end{align}
		This shows the claim. 
		
		Now note that 
		\begin{align}
			\E[ Q_1  - Q_{0} \mid  W_{0} = m]  
			& = \E[ Z_{\tau_{1}}(i)  - Z_0(i) \mid  Z_0(i) = m]  \\
			&= \E[ (Z_{\tau_{1}}(i)  - Z_0(i))  \mathbf{1}_{ \tau_{1} \le m}  \mid  Z_0(i) = m] \\
			&\qquad + \E[ (Z_{\tau_{1}}(i)  - Z_0(i) ) \mathbf{1}_{ \tau_{1} > m}  \mid  Z_0(i) = m] \\
			& = \E[ \overline{Z}_{\tau_{1}}  \mathbf{1}_{ \tau_{1} \le m} ] + \E[ (Z_{\tau_{1}}(i) - Z_0(i)) \mathbf{1}_{ \tau_{1} > m}  \mid  Z_0(i) = m] \\
			& = \E[ \overline{Z}_{\tau_{1}}  ]  -   \E[ \overline{Z}_{\tau_{1}}  \mathbf{1}_{ \tau_{1} > m} ] + \E[ (Z_{\tau_{1}}(i)  - Z_0(i)) \mathbf{1}_{ \tau_{1} > m}  \mid  Z_0(i) = m].
		\end{align}
		For the third equality, we have used the fact that $\tau_{1}\le m$ and $Z_{0}(i)=m$ in conjunction with the hypothesis imply $Z_{x}(i)\ge 0$ for all $0\le x \le \tau_{1}$. Note that $|\overline{Z}_{\tau_{1}}|\le \tau_{1}$ and $\tau_{1}$ has a finite expectation by the induction hypothesis, so $\E[ \overline{Z}_{\tau_{1}}  \mathbf{1}_{ \tau_{1} > m} ]\rightarrow 0$ as $m\rightarrow \infty$ by the dominated convergence theorem. Also,  
		\begin{align}
			\E[ (Z_{\tau_{1}}(i)  - Z_0(i)) \mathbf{1}_{ \tau_{1} > m}  \mid  Z_0(i) = m] \le \E[ (\tau_{1} - m)^{+} \mathbf{1}_{ \tau_{1} > m}   ],
		\end{align}
		so again by the dominated convergence theorem, the above tends to zero as $m\rightarrow\infty$. Since $\E[\overline{Z}_{\tau_{1}}]<0$ by \eqref{eq:geometric_ergodicity_pf1}, we have shown \eqref{eq:negative_conditional_drift_QQ}. 
		
		\vspace{0.1cm}
		\textbf{Step 2. $(Q_{j})_{j\ge 1}$ is geometrically ergodic. }	Next, we show that the Markov chain $(Q_{j})_{j\ge 0}$ on $\Z_{\ge 0}$  is geometrically ergodic. 
		To this end, first note that  $|Q_{j+1} -  Q_j | \le \tau_{j+1}-\tau_{j}$, so it has finite exponential moment by the hypothesis. By the dominated convergence theorem, 
		\begin{align}
			\lim_{\beta\searrow 0} \E\left[    \frac{\exp ( \beta ( Q_{j+1} -  Q_j) ) -1}{\beta}   \,\bigg|\, Q_j=m \right] = \E[  Q_{j+1} -  Q_j \,|\,  Q_j = m].
		\end{align}
		Let $c, K>0$ be the constants in \eqref{eq:negative_conditional_drift_QQ}. Then by choosing sufficiently small $\beta > 0$,  we can find $\eps > 0$ such that  
		\begin{align}
			\E[ \exp ( \beta ( Q_{j+1} -  Q_j) )\mid Q_j = m] \le 1-\eps \qquad \text{$\forall m\ge K$}.
		\end{align}
		So, by taking $V(x)  = \exp( \beta x)$, we have $PV(x)  \le (1-\eps) V(x)$ for all $x$  outside the finite set $\{0, 1, \cdots, K\}$, verifying the geometric ergodicity condition for the chain $Q_{j}$.  
		
		\vspace{0.1cm}
		\textbf{Step 3. Completing the induction step.} By the geometric ergodic theorem (Theorem \ref{thm:geo_ergodic_thm}), the first return time $\sigma$ of the geometrically ergodic chain $(Q_{j})_{j\ge 1}$ to some sate $x'\in \Z_{\ge 0}$ has a finite exponential moment. Denote $\x=(x', \x')\in \Z_{\ge 0}^{d-i}$. We now show that the first return time $S$ of the chain $(Z_{x}^{\ge i})_{x\ge 0}$ to the state $\x$ has a finite exponential moment.  Note that $S=\tau_{\sigma}$. 
		Since $\sigma$ has a finite exponential moment, there exists a constant $c>0$ such that  $\P( \sigma=l)  \le  e^{-cl}$ for all $\ell\ge 1$. Also, by the induction hypothesis, $\tau_{1}$ has a finite exponential moment. Hence there exists $A > 1$ such that  $\E[ A^{\tau_{1}}] < \infty$. By choosing $A$ sufficiently close to 1, and applying dominated convergence, we can assume  $\E[ A^{2\tau_{1}}] \le e^{c/2}$. Now by Cauchy-Schwarz, 
		\begin{align}
			\E[ A^{S}] = \E[ A^{\tau_{\sigma}}] &=  \sum_{l=1}^\infty \E [ A^{ \tau_l}  \mathbf{1}_{ \sigma = l} ]  \le  \sum_{ l=1}^\infty \sqrt{\E [ A^{ 2\tau_l} ] }  \sqrt{\P(  \sigma = l)} \\
			&= \sum_{ l=1}^\infty \sqrt{ \E [ A^{ 2\tau_1} ]^l }  \sqrt{\P(  \sigma = l)} \le \sum_{ l=1}^\infty e^{cl/4}  e^{-cl/2} = \sum_{ l=1}^\infty e^{-cl/4}< \infty.
		\end{align}	
		This shows that $S$ has a finite exponential moment, as desired. Thus far, we have shown \eqref{eq:finite_exp_moment_claim1}. 
		
		\textbf{Step 4. Concluding for the return time to the origin.} 
		Fix $i\in \{0,1,\dots,d-1\}$. By \eqref{eq:finite_exp_moment_claim1}, there exists a state $\x\in \Z_{\ge 0}^{d-i}$  such that the first return time $\tau_{1}$ of $(\Z_{x}^{>i})_{x\ge 0}$ to $\x$ has a finite exponential moment. Thus, $\tau_{1}$ has finite moments of all orders. It is well-known that, for any recurrent and irreducible Markov chain on a countable state space, if for any state $i$ the first moment of the first return time is finite, then this also applies to any other state. This generalizes to moments all orders of the first return time \cite{hodges1953recurrence}. Therefore, we can conclude that the first return time of $(\Z_{x}^{>i})_{x\ge 0}$ to the origin has finite moments of all orders. 
		
		Lastly, let $\sigma_{j}$ denote the $j$th return time of $(Z_{x}^{>i})_{x\ge 0}$ to the origin and denote $R_{j}:=Z_{\tau_{j}}(0)$ for $j\ge 1$. We know that $\sigma_{1}$ has finite moments of all orders. We can repeat Steps 1-2 above for the chain $(R_{j})_{j\ge 1}$ to conclude \eqref{eq:negative_conditional_drift_Q} and its geometric ergodicity. This completes the proof. 
	\end{proof}

	\begin{remark}
		In \cite{aurzada2011moments},
		Aurzada, D\"{o}ring, Ortgiese, and Scheutzow show that having a finite exponential moment for first return times is actually \textit{not} a class property. Hence in the proof of Lemma \ref{lemma:gen_return_time}, knowing that the first return time to some state $\x$ has a finite exponential moment does not necessarily imply that the first return time to the origin also has a finite exponential moment. 
	\end{remark}

	\subsection{Linear and diffusive scaling limit of the decoupled carrier process}
	
	In this section, we establish linear and diffusive scaling limits of the decoupled carrier process. We start with an illustrating example.

	\begin{example}
		Suppose $\mathcal{C}_{e}=\{1,\dots,\kappa\}$ so that all positive colors are exceptional. Denote $\eta_{x}:=X_{x}-X_{x-1}$ for $x\ge 1$. Then  $(\eta_{k})_{k\ge 1}$ are i.i.d. random vectors in $\Z^{\kappa}$ with the following distribution: 
		\begin{align}
			\P\left(\eta_{i} = -\ee_{\kappa}  \right) = p_{0}, \quad	\P\left(\eta_{i} = \ee_{1}  \right) = p_{1},  \quad 	\P\left(\eta_{i} = \ee_{j}-\ee_{j-1}  \right) = p_{j} \,\, \text{for $j=2,\dots,\kappa$} .
		\end{align}
		Then note that 
		\begin{align}
			\bmu:=	\E[\eta_{i}] &= -p_{0} \ee_{\kappa} + p_{1} \ee_{1} + \sum_{j=2}^{\kappa} (\ee_{j}-\ee_{j-1}) p_{j}  \\
			&= \ee_{1} (p_{1}-p_{2}) + \ee_{2}(p_{2}-p_{3}) + \dots + \ee_{\kappa}(p_{\kappa}-p_{0}), \\
			\Sigma:= \E\left[ \eta_{i} \eta_{i}^{T} \right] &=   \ee_{\kappa} \ee_{\kappa}^{T} p_{0} + \ee_{1} \ee_{1}^{T} p_{1}  + \sum_{j=2}^{\kappa}  (\ee_{j}  - \ee_{j-1} ) (\ee_{j} - \ee_{j-1})^{T} p_{j} \\
			&= \begin{bmatrix} \label{eq:cov_mx_ex}
				p_{1} + p_{2} & - p_{2}  & 0  \\
				-p_{2}&  p_{2}+p_{3}  & -p_{3} & 0 \\  
				0 & -p_{3} & p_{3} + p_{4} & &  \\  
				&0&&\ddots& -p_{\kappa} \\	
				&&& - p_{\kappa}&  p_{\kappa} + p_{0} 
			\end{bmatrix}
			.
		\end{align}
		In this case, the decoupled carrier process $(X_{x})_{x\ge 0}$ is a Markov chain on $\Z^{\kappa}$ with the mean and the covariance matrix of the increments $\eta_{x}$ are given as above. Then the linear interpolation of the linear interpolation of the $d$-dimensional process $(\frac{1}{\sqrt{n}} (X_{ n }- n\bmu   )_{n\in \mathbb{N}}$ converges weakly to the $d$-dimensional Brownian motion with covariance matrix $\Sigma$ (see, e.g., \cite[Thm. 1]{doukhan1994functional} and the following remark). Note that $\bmu=\mathbf{0}$  if $p_{0}=p_{1}=\dots=p_{\kappa}=1/(\kappa+1)$, which is a special case of the critical regime for the multicolor BBS (i.e., $p_{0}=\max(p_{1},\dots,p_{\kappa})$). See the simulation in Figure \ref{fig:SRBM_cri} for $\kappa=2$ and uniform ball density. \hfill $\blacktriangle$. 
	\end{example}
	
	Next, we compute the mean and the variance of the increments of the unstable part of the decoupled carrier process.

	\begin{prop}[Mean and limiting covariance matrix]\label{prop:X_u_mean_cov_limiting}
		Let $(X_{x})_{x\ge 0}$ be the decoupled carrier process in \eqref{eq:decoupled_carrier_on_stable_colors_00}.    Denote $\zeta_{x}:=X_{x}^{u}-X_{x-1}^{u}$ for $x\ge 1$. Then the following hold:
		\begin{description}[itemsep=0.1cm]
			
			\item[(i)] We have 
			\begin{align}\label{eq:stat_mean_var_decoupled_carrier}
				&\bmu:=\E_{\pi^{s}\otimes \p}[\zeta_{1}] 
				\equiv \ee_{\alpha_{1}} (p_{\alpha_{1}}-p_{\alpha_{2}}) + \ee_{\alpha_{2}} (p_{\alpha_{2}}-p_{\alpha_{3}})+ \dots + \ee_{\alpha_{r}} (p_{\alpha_{r}}-p_{\alpha_{0}}), \\
				& \E_{\pi^{s}\otimes \p}\left[ \zeta_{1} \zeta_{1}^{T} \right]
				= \sum_{\ell\in \{\alpha_{1},\dots,\alpha_{r}\}} \e_{\ell} \e_{\ell}^{T} \left( p_{\ell} + \sum_{\ell< q \le \ell^{+}} p_{q} \prod_{\ell < j < q} \left( 1-\frac{p_{j}}{p_{\ell^{+}}} \right) \right) \\
				& \hspace{3.5cm} -  \sum_{\ell \in \{\alpha_{1},\dots,\alpha_{r-1} \}}  (\e_{\ell} \e_{\ell^{+}}^{T}  + \e_{\ell^{+}} \e_{\ell}^{T} ) \,  p_{\ell^{+}} \prod_{\ell < j < \ell^{+}} \left( 1-\frac{p_{j}}{p_{\ell^{+}}} \right).
			\end{align}
			\item[(ii)] Define the `limiting covariance matrix' $\Sigma\in \R^{\kappa\times \kappa}$ as 
			\begin{align}\label{eq:def_limiting_cov_mx}
				\Sigma 
				&:= \lim_{n\rightarrow\infty} n^{-1} \E_{\pi^{s}\otimes \p}\left[ (X^{u}_{n}- n \bmu) (X^{u}_{n}- n \bmu)^{T} \right]. 
			\end{align}
			Then $\Sigma$ is well-defined, nonzero, symmetric, and positive semidefinite. 
		\end{description}
	\end{prop}
	
	\begin{proof}

		We first show	\textbf{(i)}. The stationary expectation of $\zeta_{1}$ can be easily verified from Proposition \ref{prop:decoupled_carrier_bias}. Denote $m_{(a,b)}(X_{x}^{s}):=\sum_{a<i<b} X_{x}^{s}(i)$, which is set to zero if $b\le a+1$. From \eqref{eq:unstable_as_additive_functional}, we can write 
		\begin{align}\label{eq:G_unstable_X_increments}
			\zeta_{x}
			&= \sum_{\ell\in \{\alpha_{1},\dots,\alpha_{r} \}} \ee_{\ell} \left( \mathbf{1}(\xi_{x}=\ell) - \sum_{\ell+1\le  q\le \ell^{+}} \mathbf{1}(\xi_{x}=q) \mathbf{1}\left( m_{(\ell, q)}(X_{x-1}^{s}) =0\right) \right).
		\end{align}
		Then it is straightforward to compute
		\begin{align}
			\zeta_{x} \zeta_{x}^{T} 
			&= \sum_{\ell \in \{\alpha_{1},\dots,\alpha_{r} \}} \ee_{\ell} \e_{\ell}^{T} \left[  \mathbf{1}(\xi_{x}=\ell) + \sum_{\ell+1\le q\le \ell^{+}} \mathbf{1}(\xi_{x}=q) \mathbf{1}\left( m_{(\ell, q)}(X_{x-1}^{s}) =0\right) \right] \\
			&\qquad - \sum_{\ell \in \{\alpha_{1},\dots,\alpha_{r-1} \}}  (\e_{\ell} \e_{\ell^{+}}^{T}  + \e_{\ell^{+}} \e_{\ell}^{T} ) \mathbf{1}(\xi_{x} = \ell^{+}) \mathbf{1}\left( m_{(\ell, \ell^{+})}(X_{x-1}^{s}) =0\right).
		\end{align}
		Thus by taking the stationary expectation of  $\zeta_{x}\zeta_{x}^{T}$ in conjunction with  \eqref{eq:def_stationary_distribution_carrier_localized}, we obtain the second identity in \textbf{(i)}.

		Lastly, we show \textbf{(ii)}. Assuming $\Sigma$ is well-defined, that it is symmetric and positive semidefinite is clear from the definition. Next, we argue that $\Sigma$ is well-defined. 
		Let $\bar{\zeta}_{x}:=\zeta_{x} - \E[\zeta_{x}]$. For $i\ge 0$, let $\sigma_{i}$ denote the number of steps that the Markov chain $Z_{x}$ takes until it returns to the origin for the $i+1$st  time, By strong Markov property, $\sigma_{i}$'s are i.i.d.. Furthermore, the excursions of $Z_{x}$ from the origin (that is, $Z_{x}$ restricted on the time intervals $[0,\sigma_{0}]$, $[\sigma_{0},\sigma_{1}],\dots$) are i.i.d.. Furthermore, by Theorem \ref{thm:excursion_length_MGF} and the fact that $\xi_{x}$'s are i.i.d. with distribution $\p=(p_{0},\dots,p_{\kappa})$, $p_{0}>0$ (in fact, we assume $\min(p_{1},\dots,p_{\kappa})>0$), it follows that $\sigma_{0}$ has finite moments of all orders. Hence there exists some $\lambda>0$ such that $\E[\lambda^{\sigma_{0}}]<\infty$. Moreover, by
		Kac's theorem \cite[Lem. 21.12]{levine2020phase}, 
		\begin{align}
			\lim_{n\rightarrow\infty} \, \frac{1}{\E[\sigma_{1}]} = \pi^{s}\otimes \p (\mathbf{0},0) = \pi^{s}(\mathbf{0}) \, p_{0},
		\end{align}
		where the stationary distribution $\pi^{s}$ is explicitly given in \eqref{eq:def_stationary_distribution_carrier_localized}.

		Now consider decomposing the trajectory of $Z_{x}$ into excursions from the origin. Write $s_{i}:= \sum_{k=\sigma_{i}}^{\sigma_{i+1}-1} \bar{\zeta}_{k}\bar{\zeta}_{k}^{T}$. By the strong Markov property, $s_{1},s_{2},\dots$ are i.i.d. and also note that $\E[s_{i}]=\mathbf{0}$.  Denote $\Sigma_{n}:= \E\left[ (X^{u}_{n}- n \bmu) (X^{u}_{n}- n \bmu)^{T} \right]$. Observe that 
		\begin{align}
			\Sigma_{\sigma_{n}}=\E\left[ (X^{u}_{\sigma_{n}}- \sigma_{n} \bmu) (X^{u}_{\sigma_{n}}- \sigma_{n} \bmu)^{T} \right]  
			&=	\E\left[ \left( s_{1}+\dots + s_{n}   \right)\left( s_{1}+\dots + s_{n} \right)^{T} \right] = n \E[s_{1}s_{1}^{T}]. 
		\end{align}
		So by the elementary renewal theorem, almost surely, 
		\begin{align}		
			\lim_{n\rightarrow\infty} \frac{1}{\sigma_{n}}\Sigma_{\sigma_{n}} =		\lim_{n\rightarrow\infty} \frac{n}{\sigma_{n}} \E[s_{1}s_{1}^{T}]   =  \frac{\E[s_{1}s_{1}^{T}]}{\E[\sigma_{1}]} = p_{0} \pi^{s}(\mathbf{0}) \, \E[s_{1}s_{1}^{T}].
		\end{align}
		
		To show the convergence holds along the whole sequence, let $T(n)$ denote the total number of visits of $Z_{x}$ to the origin in the first $n$ steps.  Denote $r_{n}:=\sum_{k=\sigma_{T(n)}}^{n} \bar{\zeta}_{k}\bar{\zeta}_{k}^{T}$. Then since $s_{1},\dots,s_{T(n)},r_{n}$ are independent and $\E[s_{i}]=0$, 
		\begin{align}\label{eq:cov_renewal_residual_pf0}
			\Sigma_{n}	
			&=	\E\left[ \left( s_{1}+\dots + s_{T(n)} + r_{n}  \right)\left( s_{1}+\dots + s_{T(n)} + r_{n} \right)^{T} \right] = \Sigma_{\sigma_{T(n)}}+  \E[r_{n}r_{n}^{T}].
		\end{align}
		Denote $\Lambda_{n}:= \E\left[ \sum_{x=1}^{n} \lVert \bar{\zeta}_{x}\bar{\zeta}_{x}^{T} \rVert \right]$, which is non-decreasing in $n$. Then similar argument as before shows that $\frac{1}{\sigma_{n}}\Lambda_{\sigma_{n}}$ converges a.s., and by the monotonicity of $\Lambda_{n}$, an elementary renewal theory argument shows that $n^{-1}\Lambda_{n}$ converges as $n\rightarrow\infty$. Now by Jensen's inequality,  
		\begin{align}\label{eq:cov_renewal_residual_pf1}
			\lVert \E[r_{n}r_{n}^{T}] \rVert \le \Lambda_{n} - \Lambda_{\sigma_{T(n)}}. 
		\end{align}
		Since $\sigma_{T(n)}\le n \le \sigma_{T(n)+1}$ and $\E[\sigma_{1}]<\infty$, it follows that $\sigma_{T(n)}/n\rightarrow 1$ a.s.  as $n\rightarrow\infty$. Hence deviding both sides of \eqref{eq:cov_renewal_residual_pf1} by $n$ and letting $n\rightarrow\infty$ shows that $n^{-1}	\lVert \E[r_{n}r_{n}^{T}] \rVert\rightarrow0 $ as $n\rightarrow\infty$. Then from \eqref{eq:cov_renewal_residual_pf0}, we deduce 
		\begin{align}
			\lim_{n\rightarrow\infty} n^{-1} \Sigma_{n} &= \lim_{n\rightarrow\infty} \frac{\sigma_{T(n)}}{n} \frac{1}{\sigma_{T(n)}} \Sigma_{\sigma_{T(n)}} + \lim_{n\rightarrow\infty}  \E[r_{n}r_{n}^{T}] \\
			&= 		\lim_{n\rightarrow\infty} \sigma_{n}^{-1}  \Sigma_{\sigma_{n}}    \\
			&=p_{0} \pi^{s}(\mathbf{0}) \, \E[s_{1}s_{1}^{T}] \\
			&= \pi^{s}(\mathbf{0}) \, p_{0} \, \E\left[ (\bar{\zeta}_{1}+\dots+\bar{\zeta}_{\sigma_{1}}) (\bar{\zeta}_{1}+\dots+\bar{\zeta}_{\sigma_{1}})^{T}  \right]. 
		\end{align}
		Finally, since $\bar{\zeta}_{x}$'s are uniformly bounded and  $\sigma_{1}$ has a finite expectation, the last expression is a matrix with finite entries by Wald's identity. From this formula, it is also easy to verify that $\Sigma$ is nonzero. 
	\end{proof}

	Now we establish linear and diffusive scaling limits of the decoupled carrier process on unstable colors. This is the main outcome of this section.

	\begin{prop}[Limit theorems for the decoupled carrier process on unstable colors]\label{prop:limit_thms_unstable_colors}
		Let $(X_{x})_{x\ge 0}$ be the decoupled carrier process in \eqref{eq:decoupled_carrier_on_stable_colors_00}.    Denote $\zeta_{x}:=X_{x}^{u}-X_{x-1}^{u}$ for $x\ge 1$.  Then the following hold.
		\begin{description}
			\item[(i)] (SLLN) Almost surely,
			\begin{align}\label{eq:X_u_SLLN_mean}
				\lim_{n\rightarrow \infty} n^{-1} X_{n} = \ee_{\alpha_{1}} (p_{\alpha_{1}}-p_{\alpha_{2}}) + \ee_{\alpha_{2}} (p_{\alpha_{2}}-p_{\alpha_{3}})+ \dots + \ee_{\alpha_{r}} (p_{\alpha_{r}}-p_{\alpha_{0}}):=\bmu.
			\end{align}

			\vspace{0.1cm}
			\item[(ii)] (FCLT) Let $(\overline{X}_{v})_{v\in \R_{\ge 0}}$ denote the linear interpolation of the lattice path $(X_{x} - x \bmu )_{x\in \mathbb{N}}$.  Let $B=(B_{t}\,:\, 0\le t \le 1)$ denote the standard Brownian motion. Then as $n\rightarrow \infty$,  
			\begin{align}
				(n^{-1/2}\overline{X}_{nt} \,;\,  0\leq v \leq 1 ) \Longrightarrow  (B_{t} \,;\,  0\leq t \leq 1) \, \text{ in }\, C([0,1]),
			\end{align}
			where $B=(B_{v}\,:\, 0\le v \le 1)$ is the Brownian motion in $\R^{\kappa}$ with mean zero and covariance matrix $\Sigma$ defined in \eqref{eq:def_limiting_cov_mx}. Here $\Longrightarrow$ denotes weak convergence in $C([0,1])$.
		\end{description}
	\end{prop}
	
	\begin{proof}
		Recall the decomposition $X_{x} = X_{x}^{u} + X_{x}^{s}$. From Lemma \ref{lemma:localized_carrier_convergence} and Theorem \ref{thm:excursion_length_MGF}, we know that $X_{x}^{s}$ is a geometrically mixing Markov chain on a subset of $\mathbb{Z}^{\kappa}_{\ge 0}$ with unique stationary distribution $\pi^{s}$ in \eqref{eq:def_stationary_distribution_carrier_localized}. Hence $n^{-1} X_{n}^{s}$ converges to zero almost surely. Also, the linear interpolation of $(X_{x }^{s})_{x\in \N}$ in diffusive scaling converges almost surely to zero in $C([0,1])$. Thus it is enough to verify \textbf{(i)} and \textbf{(ii)} with $X_{n}$ replaced by $X_{n}^{u}$.

		Recall the Markov additive function representation \eqref{eq:unstable_as_additive_functional} of $X^{u}_{x}$, where the underlying Markov chain $(X^{s}_{x}, \xi_{x})_{x\ge 0}$ has the unique stationary distribution $\pi^{u}\otimes \p$ and is geometrically ergodic (see Theorem \ref{thm:excursion_length_MGF}). Thus \textbf{(i)} follows from the standard Markov chain ergodic theorem for positive Harris chains (see, e.g., \cite[Thm. 17.1.7]{meyn2012markov}). Recall that the limiting covariance matrix $\Sigma$ defined in \eqref{eq:def_limiting_cov_mx} is well-defined and nontrivial by Proposition \ref{prop:X_u_mean_cov_limiting}. Then \textbf{(ii)} follows from the functional CLT for multivariate strongly mixing processes (see, e.g., \cite[Thm. 1]{doukhan1994functional} and the following remark). See also \cite[Thm. 3.1]{rohde2010uniform}. For a 
		functional central limit theorem for additive functionals (univariate) of a positive Harris chain, see \cite[Thm. 17.4.4 and eq. (17.38)]{meyn2012markov}.
	\end{proof}

	\vspace{-0.5cm}
	\section{Proofs of Theorem \ref{thm:carrier_subcritical} \textbf{(ii)} and  Theorem \ref{thm:iid_subcritical}}
	\label{section:subcritical}
	
	We prove Theorem \ref{thm:carrier_subcritical} \textbf{(ii)} and Theorem \ref{thm:iid_subcritical} in this section. Throughout this section, we fix a probability distribution $\mathbf{p}=(p_{0},p_{1},\cdots,p_{\kappa})$ on $\{0,1,\cdots, \kappa\}$, and let $(W_{x})_{x\ge 0}$ be the carrier process in \eqref{eq:W_x_recursion} over the i.i.d. configuration $\xi=\xi^{\mathbf{p}}$.

	\subsection{Strong stability of the subcritical carrier process }
	\label{section:carrier_excursions}

	In order to prove Theorem \ref{thm:carrier_subcritical} \textbf{(ii)}, we need stronger stability properties of the carrier process than what is stated in Theorem \ref{thm:carrier_subcritical}. More specifically, (1) if $W_{0}=\mathbf{0}$, then its first return time to the origin has finite moments of all orders; and (2) if $W_{0}\sim \pi$ and conditional on $\lVert W_{0} \rVert_{1}=N$, it has a uniformly positive probability to visit the origin before it visits `level' $N+1$. These results are established in the following proposition. In the remainder of this section, we will denote $W^{\ge a}_{x}:=(W_{x}(a), \dots, W_{x}(\kappa))$ and $W^{< a}_{x}:=(W_{x}(1), \dots, W_{x}(a-1))$ and use similar notation for $X^{\ge a}_{x}$ and $X^{<a}_{x}$. This is the content of Proposition \ref{prop:drift_sub_total} below, and proving this result is the main goal of this section. 
	
	\begin{prop}\label{prop:drift_sub_total}
		Suppose $p_{0}>p^{*}:=\max(p_{1},\cdots,p_{\kappa})$ and let  $(W_{x})_{x\ge 0}$ be the carrier process over $\xi^{\mathbf{p}}$. The following hold.
		\begin{description}[itemsep=0.1cm]
			\item[(i)] The first return time of $W_{x}$ to the origin has finite moments of all orders.
			
			\item[(ii)] For each $m\in \N$, let $\tau_{m}=\inf\{ x \ge 1\,:\, \lVert W_{x} \rVert_{1} = m \}$. There exists constants $L_{0},N_{0}\ge 1$  and $c_{0}>0$ such that 
			\begin{align}\label{eq:drift_sub_total2}
				\inf_{N\ge N_{0}}	\P_{\pi} \left( \tau_{0}<\min(\tau_{N}, c_{0}N^{2} + L_{0} )  \,|\, \lVert W_{0} \rVert_{1}=N  \right)  > 0. 
			\end{align}
		\end{description}
	\end{prop}

	We prove a series of lemmas in order to prepare for the proof of Proposition \ref{prop:drift_sub_total}.

	\begin{lemma}[Birth-deach chain domination of excursions of the carrier]
		\label{lemma:SRW_coupling_W}
		Let $(W_{x})_{x\ge 0}$ be the carrier process in \eqref{eq:W_x_recursion} and suppose $p_{0}>p^{*}:=\max(p_{1},\dots,p_{\kappa})$.  Fix $a\in \{1,\dots,\kappa\}$ and define a birth-deach chain $(S_{x})_{x\ge 0}$ on $\Z_{\ge 0}$ by $S_{0}:=W_{x}(a)$ and 
		\begin{align}
			S_{x+1} - S_{x}   = \begin{cases}
				1 & \textup{if $\xi_{x+1}=a$} \\
				-1 & \textup{if $\xi_{x+1}=0$ and $S_{x}\ge 1$} \\
				0 & \textup{otherwise}.
			\end{cases}
		\end{align}
		Note that $(S_{x})_{x\ge 0}$ is a birth-death chain on $\Z_{\ge 0}$ with negative drift $p_{a}-p_{0}<0$. 
		For all $x\ge 0$, 
		\begin{align}\label{eq:W_S_domination_pf}
			\lVert W_{x}^{\ge a} \rVert_{1}  \le S_{x} \quad \textup{if $\min_{0\le t \le x}\, W_{t}(a) \ge 1$}.
		\end{align}
	\end{lemma}
	
	\begin{proof}
		The proposition says that as long as the carrier has at least one ball of color $a$, then the total load $\lVert W_{x}^{\ge a} \rVert_{1} $ is dominated by $S_{x}$. This is easy to verify by induction. The inequality could be violated when $W_{x}(a)=0$, since then the total load can increase by inserting balls of color $> a$ while $S_{x}$ does not. 
	\end{proof}

	In the statement and proofs below, we  denote $\P_{\x}(\cdot) = \P(\cdot \,|\, W_{0}=\mathbf{x})$.

	\begin{lemma}[Quadratic first hitting time of the origin of the subcritical carrier]
		\label{lemma:1st_hitting_time_W_poly}
		Let $(W_{x})_{x\ge 0}$ be the carrier process in \eqref{eq:W_x_recursion} and suppose $p_{0}>p^{*}:=\max(p_{1},\dots,p_{\kappa})$. There exists a constant $c>0$ such that 
		\begin{align}
			\inf_{\x\in \Z_{\ge 0}^{\kappa}}	\P_{\x}( \textup{$\lVert W_{x} \rVert_{1}=0$ for some $x\le c \lVert \x \rVert_{1}^{2}$})  > 0.
		\end{align}
	\end{lemma}

	\begin{proof}
		We prove the assertion by induction on $\kappa$. If $\kappa=1$, then the assertion follows easily since $W_{x}$ then is a birth-deach chain on $\Z_{\ge 0}$ with negative bias $p_{1}-p_{0}<0$ (e.g., see Lemma \ref{lemma:drift_hitting_time}). For the induction step, note that $W^{\ge 2}_{x}$ behaves  as the subcritical carrier process with ball colors $\{0,2,3,\dots,\kappa\}$. That is, it evolves by the circular exclusion restricted on colors $\{0, 2,\dots,\kappa\}$ while ignoring balls of color 1. Thus $W^{\ge 2}_{x}$ is a lazy version of a carrier process with subcritical ball density as $\max(p_{2},\dots,p_{\kappa}) < p_{0}$.  Let $\tau_{i}$ for $i=1,2,\cdots$ denote the $i$th  time that $W^{\ge 2}_{x}$ returns to the origin. By the strong Markov property, $\tau_{i+1}-\tau_{i}$ for $i\ge 1$ are i.i.d. and they have finite moments of all orders by Lemma \ref{lemma:gen_return_time}. Also, by the induction hypothesis, there exists a constant $c_{1}>0$ such that 
		\begin{align}\label{eq:W_1st_hitting_pf1}
			\inf_{ \x \in \Z_{\ge 0}^{\kappa-1} }	\P_{\x}( \tau_{1}\le c_{1}\lVert \x \rVert^{2}_{1} )  > 0.
		\end{align}
		
		Denote $Q_{i}:=W_{\tau_{i}}(1)$ for $i\ge 1$. Then $(Q_{i})_{i\ge 1}$ is a Markov chain on $\Z_{\ge 0}$. Denote $\sigma:=\inf\{ i\ge 1\,:\, Q_{i} \le L \}$ where $L\ge 1$ is a  constant. Let $M:=\lVert W_{0} \rVert_{1}$ and let $c_{2}>0$ be a constant to be determined. Introduce the following events: 
		\begin{align}
			E_{1} &:= \{ \tau_{1} \le c_{1}M^{2} \}, \\
			E_{2} &:= \big\{  \max_{1\le k \le \lfloor 2c_{2}M \rfloor} |\tau_{i+1}-\tau_{i}| <M  \big\}, \\
			E_{3} &:=  \{ W_{\tau_{1}}(1) \le 2M \}, \\
			E_{4} &:= \{ \sigma \le c_{2} W_{\tau_{1}}(1) \}.
		\end{align}
		These events depend on constants $M,L,c_{2}>0$ that we will subsequently choose below. Note that 
		\begin{align}
			\sigma_{\tau} = \tau_{1} + \sum_{i=1}^{\sigma-1} (\tau_{i+1} -\tau_{i} ) \le \tau_{1} + \sigma \max_{1\le i \le \sigma} (\tau_{i+1}-\tau_{i}),
		\end{align}
		so $\sigma\le 2c_{2}M$ on  $E_{3}\cap E_{4}$. Hence  $\tau_{\sigma}\le (c_{1}+2c_{2}) M^{2}$ on $E:= \bigcap_{i=1}^{4} E_{i}$. Also note that  $\lVert W_{\tau_{\sigma}} \rVert_{1} = W_{\tau_{\sigma}}(1)=Q_{\sigma} \le L$. Hence  denoting $c:= (c_{1}+2c_{2})\lor 1$,  
		\begin{align}
			\left\{  \textup{$\lVert W_{x} \rVert_{1}\le L$ for some $x\le c M^{2}$}\right\} \supseteq E.
		\end{align}
		Moreover, 
		\begin{align}
			&	\P_{\x}( \textup{$\lVert W_{x} \rVert_{1}=0$ for some $x\le c M^{2} + L$})   \ge p_{0}^{L}	\, \P_{\x}( \textup{$\lVert W_{x} \rVert_{1}\le L$ for some $x\le c M^{2}$} ).
		\end{align}
		Furthermore,  since  $c\ge 1$, 
		\begin{align}
			\inf_{\lVert  \x \rVert_{1}< M}	\P_{\x}( \textup{$\lVert W_{x} \rVert_{1}=0$ for some $x\le c \lVert \x\rVert_{1}^{2} + L$}) \ge p_{0}^{M}.
		\end{align}
		Therefore,  it suffices to show that for some constant $M_{0}\ge 1$, 
		\begin{align}\label{eq:W_first_hitting_zero_pf1}
			\inf_{M\ge M_{0}}	\inf_{\lVert \x \rVert_{1}=M}  \P_{\x}(E) > 0.
		\end{align}
		Since $E_{1}$ has a uniformly positive probability by the induction hypothesis \eqref{eq:W_1st_hitting_pf1}, it is enough to show that $E_{2},E_{3},E_{4}$ have high probaiblity to occur.

		For $E_{2}$, since $\tau_{i+1}-\tau_{i}$ for $i\ge 1$ are i.i.d. and have finite moments of all orders, it follows that $E_{2}$ occurs with a high probability if $M$ is sufficiently large. To see this, note that 
		\begin{align}
			\P(E_{2}) = \left( 1- \P( \tau_{2}-\tau_{1} \ge M ) \right)^{ \lfloor 2c_{2}M \rfloor }  \ge  \left( 1-  \frac{\E[(\tau_{2}-\tau_{1})^{2}]}{M^{2}} \right)^{ \lfloor 2c_{2}M \rfloor }\rightarrow 1 \quad \textup{as $M\rightarrow\infty$.}
		\end{align}

		For $E_{3}$, by Lemma \ref{lemma:SRW_coupling_W}, on the event that $W_{\tau_{1}}(1)>2M $, a negatively biased birth-death chain $(S_{x})_{x\ge 0}$ on $\Z_{\ge 0}$ makes an up-crossing of height at least $M$ in $c_{1} M^{2}$ steps, so 
		\begin{align}
			1-	\P_{\x}(E_{3}) &\le  \P\left( \max_{ 0\le x \le \lfloor c_{1}M^{2} \rfloor } S_{x} > M \,\bigg|\, S_{0}=0 \right).
		\end{align}
		Since $S_{x}$ is a negatively biased simple random walk, the probability in the last expression is exponentially small in $M$. 
		
		For $E_{4}$, by Lemma \ref{lemma:gen_return_time} 
		there are constants $K,c_{3}>0$ such that 
		\begin{align} 
			\sup_{m\ge K}	\E[Q_{1}-Q_{0} \,|\, Q_{0}=m]  \le -c_{3}.
		\end{align}
		By Lemma \ref{lemma:drift_hitting_time}, $\sigma\le c_{4} Q_{1}=c_{4} W_{\tau_{1}}(1)$ occurs with probability at least $1-\frac{1}{c_{3}c_{4}}$  for some constant $c_{4}>0$. Hence by choosing $L\ge K$, $c_{2}\ge c_{4}$, and letting $c_{4}$ sufficiently large,  $E_{4}$ occurs with a high probability. This shows the assertion. 
	\end{proof}

	\begin{lemma}[Growth of (sub-)critical carrier]\label{lemma:growth_critical_carrier}
		Let $(W_{x})_{x\ge 0}$ be the carrier process in \eqref{eq:W_x_recursion} with arbitrary initial state $W_{0}$ and suppose $p_{0}\ge p^{*}:=\max(p_{1},\dots,p_{\kappa})$. Then for each $\eps>0$, almost surely, 
		\begin{align}\label{def:critical_decoupled_carrier_growth}
			\limsup_{n\rightarrow\infty} \, n^{-1}	\max_{0\le x \le n}	\lVert W_{x} \rVert_{1} \le \eps.
		\end{align}
	\end{lemma}
	
	\begin{proof}
		Suppose $W_{0}=(W_{0}(1),\dots, W_{0}(\kappa))$ is arbitrary and write $M:=\lVert W_{0} \rVert_{1}$. 	
		We may prepend to the ball configuration $\xi$ the following sequence: 
		\begin{align}
			(\underbrace{\kappa,\dots,\kappa}_{W_{0}(\kappa)},\, \underbrace{\kappa-1,\dots,\kappa-1}_{W_{0}(\kappa-1)},\dots, \underbrace{1,\dots,1}_{W_{0}(1)}) 
		\end{align}
		and denote the extended configuration $\tilde{\xi}=(\tilde{\xi}_{1},\dots,\tilde{\xi}_{M},\xi_{1},\xi_{2},\dots)$. 
		Let $\tilde{W}$ denote the carrier process with zero initial state run on  $\tilde{\xi}$. Then after scanning the first $M$ in the extended configuration, the new carrier $\tilde{W}$ attains exactly the same state $W_{0}$ (i.e., $\tilde{W}_{M}=W_{0}$) and thereafter it undergoes the same dynamics as $W$ (i.e., $\tilde{W}_{x+M}=W_{x}$ for all $x\ge 0$). Furthermore, $\max_{0\le x \le n} \lVert W_{x} \rVert_{1}\le \max_{0\le x \le n+M} \lVert \tilde{W}_{x} \rVert_{1}$, so it is enough to show the assertion for $\tilde{W}$. For simplicity, below we will denote $\tilde{W}$ and $\tilde{\xi}$ as $W$ and $\xi$, respectively, and assume that the first $M$ entries of $\xi$ may be deterministic.

		Fix $\eps>0$. By Lemmas \ref{lemma:queue_formula_soliton} and \ref{lemma:GK_invariants}, 
		\begin{align}
			\max_{0\le x \le n}	\lVert W_{x} \rVert_{1} = \lambda_{1}(n) = \max_{A_{1}\subseteq [0,n]} L(A_{1}, \xi),
		\end{align}
		where the right-hand side equals the penalized length of the longest non-increasing subsequence in $\xi(n):=(\xi_{0},\xi_{1},\dots,\xi_{n})$. Let $D_{i}(x_{1},x_{2})$ denote the number of $i$'s minus the number of $0$'s in $(\xi_{x_{1}},\xi_{x_{1}+1},\dots, \xi_{x_{2}} )$. If $\lambda_{1}(n)\ge \eps n + M$, then $D_{i}(x_{1},x_{2})\ge \eps n /\kappa$ for some $i$ and $M< x_{1}\le x_{2} \le n$. Note that $D_{i}(x_{1},x_{2})$ is the sum of $x_{2}-x_{1}$ i.i.d. Bernoulli variables with success probability $p_{i}-p_{0}\le 0$. Hence by union bound and Hoeffding's inequality, 
		\begin{align}
			\P(\lambda_{1}(n)\ge \eps n+M) &\le \sum_{i=1}^{\kappa}\sum_{M< x_{1} \le x_{2} \le n} \P(D_{i}(x_{1},x_{2}) \ge \eps n /\kappa ) \\
			&\le \kappa n^{2} \exp(-cn)
		\end{align}
		for some constant $c>0$. By Borel-Cantelli lemma, it follows that $\limsup_{n\rightarrow \infty} \lambda_{1}(n)/n \le  \eps $ almost surely. Then the assertion follows. 
	\end{proof}
	
	We remark Theorem \ref{thm:SRBM_weak_convergence}, which will be proved in Section \ref{sec:carrier_Skorokhod}, establishes the exact asymptotic $\max_{0\le x \le n}\lVert W_{x} \rVert_{1} \sim C\sqrt{n}$ for some constant $C>0$.

	\begin{lemma}[Drift and bound on hitting time]\label{lemma:drift_hitting_time}
		Let $(Y_{t})_{t\ge 0}$ be a Markov chain on $\Z_{\ge 0}$ with transition kernel $P$. Suppose $\E_{x}[|Y_{t}|]<\infty$ for all $x,t\ge 0$ and there exists constants $c,L>0$ such that 
		\begin{align}\label{eq:drift_lem_absorption_time1}
			\E_{x}[Y_{1}-x] \le -c \qquad \textup{for all $x\ge L$}.  
		\end{align}
		Let $\tau:=\inf\{ t\ge 0\,:\, Y_{t} \le L \}$. Then 
		\begin{align}
			\P_{x}(\tau \ge Cx)  \le \frac{1}{c C} \qquad \textup{for all $x\ge 0$ and $C>0$}. 
		\end{align}
	\end{lemma}
	
	\begin{proof}
		For any function $g:\Z_{\ge 0}\rightarrow \R$, denote $Pg(x):=\sum_{y} g(y)P(x,y)$ and $PY:=P\, \textup{id}(Y)$.
		Note that the condition \eqref{eq:drift_lem_absorption_time1} reads 
		\begin{align}\label{eq:drift_lem_absorption_time2}
			Px - x \le -c  \qquad \textup{for all $x\notin [0,L]$}. 
		\end{align}
		Define the compensator $(K_{t})_{t\ge 0}$ of $(Y_{t})_{t \ge 0}$  as $K_{0}=0$ and 
		\begin{align}
			K_{n}:=\sum_{k=0}^{n-1} (PY_{k} - Y_{k}). 
		\end{align}
		Then $Y_{n}-K_{n}$ is a martingale with respect to the natural filtration $(\mathcal{F}_{t})_{t\ge 0}$, $\mathcal{F}_{t}:=\sigma(Y_{0},\dots,Y_{t})$. Also note that by \eqref{eq:drift_lem_absorption_time2}, $K_{n\land \tau} \le -c(n\land \tau)$, for if $k<n\land \tau$, then $PY_{k} - Y_{k}\le -c$. Now using the martingale condition, 
		\begin{align}
			x = \E_{x}[Y_{0}-K_{0}] = \E_{x}[Y_{n\land \tau} - K_{n\land \tau}] \ge  c\, \E_{x}[n\land \tau]. 
		\end{align}
		Now if $n\ge Cx$, then $\{ \tau \ge Cx \} = \{ n\land  \tau \ge Cx \}$. Hence by Markov's inequality, by choosing $n\ge Cx$, we can conclude as  
		\begin{align}
			\P_{x}( \tau \ge Cx)= \P_{x}(n\land \tau \ge Cx) \le \frac{\E_{x}[n\land \tau]}{Cx} \le \frac{1}{cC}.
		\end{align}
	\end{proof}

	We now prove Proposition \ref{prop:drift_sub_total}.

	\begin{proof}[\textbf{Proof of Proposition  \ref{prop:drift_sub_total}}] 
		Part \textbf{(i)} follows immediately from  Theorem \ref{thm:excursion_length_MGF} with $\mathcal{C}_{e}=\emptyset$. Such choice of the set $\mathcal{C}_{e}$ of the exceptional colors satisfy the stability condition \eqref{eq:stability_color_assumption} in the subcritical regime $p_{0}>p^{*}$. 
		
		Next, we show \textbf{(ii)}.  Suppose the maxium ball density $p^{*}$ is achieved at positive colors $i_{1}\le i_{2} \le \dots \le i_{r}$.  That is, 
		\begin{align}\label{eq:carrier_return_pf0}
			p_{0}>p_{i_{1}}=\dots=p_{i_{r}} > \max\{ p_{j}\,:\ 1\le j \le \kappa,\, , j\notin \{i_{1},\dots,i_{r} \}  \}.
		\end{align} 
		Denote $\mathcal{C}^{*}:=\{ i_{1},\dots,i_{r} \}$. Fix $\lambda\in (0,1)$ and define a  set
		\begin{align}\label{eq:X_set_def}
			\mathcal{X}_{\lambda, M}&:= \left\{ \x=(x_{1},\dots,x_{\kappa})\in \Z_{\ge 0}^{\kappa} \,:\, \lVert \x \rVert_{1}=M,\, x_{i_{1}}\ge \lambda M \right\}.
		\end{align}
		We will omit $\lambda$ from the subscript of the above sets unless otherwise mentioned. 
		By Proposition \ref{prop:level_N_stationary_prob_partition_ft}, 
		\begin{align}\label{eq:partition_ft_pf}
			\P_{\pi} ( \lVert W_{0} \rVert_{1}=N) 
			= \Theta\left( \binom{N+r-1}{r-1} \left( \frac{p^{*}}{p_{0}} \right)^{N} \right). 
		\end{align}
		Noting that
		\begin{align}
			\P_{\pi}( W_{0}\in \mathcal{X}_{N}  \,|\, \lVert W_{0} \rVert_{1}= N) &= \frac{	\P_{\pi}( W_{0}\in \mathcal{X}_{N}  )}{	\P_{\pi}(  \lVert W_{0} \rVert_{1}= N)} \\
			&\ge \frac{	\P_{\pi}(  \lVert W_{0} \rVert_{1}= N-\lceil \lambda N \rceil  )}{	\P_{\pi}(  \lVert W_{0} \rVert_{1}= N)} \left( \frac{p^{*}}{p_{0}} \right)^{\lceil \lambda N\rceil },
		\end{align}
		it follows that 
		\begin{align}\label{eq:Q_positive_prob0}
			\inf_{N\ge 1}	\P_{\pi}( W_{0}\in \mathcal{X}_{N}  \,|\, \lVert W_{0} \rVert_{1}= N) \ge c_{*}>0
		\end{align}
		for some constant $c_{*}=c_{*}(\lambda)>0$. 

		For each $\x\in \Z_{\ge 0}^{\kappa}$, let $\P_{\x}$ denote the law of $(W_{x})_{x\ge 0}$ with $W_{0}=\x$. We claim that there exists constants $L_{0},M_{0}\ge 1$  and $\lambda, c_{0}>0$ such that 
		\begin{align}\label{eq:claim0}
			\inf_{M\ge M_{0}}   \inf_{\x\in \mathcal{X}_{M}}	\P_{\x}\left(  \tau_{0}<\min(\tau_{M+L_{0}}, c_{0}M^{2} ) \right) > 0.
		\end{align}
		Due to \eqref{eq:Q_positive_prob0}, this is enough to conclude \eqref{eq:drift_sub_total2}. Indeed, since $W_{0}\in \mathcal{X}_{N}$ implies $\lVert W_{0} \rVert_{1}=N$, \eqref{eq:Q_positive_prob0} implies 
		\begin{align}
			\P_{\pi}(\cdot \,|\, \lVert W_{0} \rVert_{1} = N ) \ge c_{*} 	\P_{\pi}(\cdot \,|\,  W_{0} \in \mathcal{X}_{N} ) \ge c_{*} \inf_{\x\in \mathcal{X}_{N}} \P_{\pi}(\cdot ). 
		\end{align}
		Also note that, for any integer $L_{0}\ge 1$,  
		\begin{align}
			\P_{\x} \left( 	\tau_{0}<\min(\tau_{N}, c_{0}N^{2} + L_{0} ) \right) 
			&\ge  p_{0}^{L_{0}} \P_{\y} \left( 	\tau_{0}<\min(\tau_{N+L_{0}}, c_{0}N^{2}  ) \right),
		\end{align}
		where $\y\in \mathcal{X}_{N-L_{0}}$ is the carrier state obtained by inserting $L_{0}$ 0's into the carrier with state $\mathbf{x}$. 
		This yields 
		\begin{align*}
			\inf_{N\ge N_{0}}	\P_{\pi} \left( 	\tau_{0}<\min(\tau_{N}, c_{0}N^{2} + L_{0} )  \,|\, \lVert W_{0} \rVert_{1}=N  \right)  &\ge c_{*} \inf_{N \ge N_{0}} \inf_{\x\in \mathcal{X}_{N-L_{0}}} \P_{\x} \left( 	\tau_{0}<\min(\tau_{N+L_{0}}, c_{0}N^{2}  ) \right),
		\end{align*}
		where the right-hand side is positive due to \eqref{eq:claim0} by choosing $N_{0}= M_{0}+L_{0}$. 
		
		For the rest of the proof, we will show \eqref{eq:claim0}. 
		Let $a:=i_{1}$,  $\rho:=\inf\{ x\ge 0\,:\, W_{x}(i_{1})=0 \}$,  and $\tau_{0}:=\inf\{x\ge 0\,:\, \lVert W_{x} \rVert_{1}=0   \}$. According to Lemma \ref{lemma:1st_hitting_time_W_poly}, there exists a constant $c_{0}>0$  such that $\tau_{0}\le c_{0} \lVert W_{0} \rVert_{1}^{2}$ with a positive probability. Denote $M:=\lVert W_{0} \rVert_{1}$ and fix $\eps,L>0$.	Define the following events 
		\begin{align}
			A_{1}&:= \{ \tau_{0} \le  c_{0}M^{2}  \}, \\
			A_{2}&:= \left\{ \textup{$\lVert W_{x}^{\ge a} \rVert_{1}\le M+\frac{L}{2}-2\eps x$ for all $x\in [0,\rho]$}  \right\}, \\
			A_{3}&:=  \left\{ \textup{$\lVert W_{x}^{<a} \rVert_{1}\le \frac{L}{2}+\eps x$ for all $x\ge 0$}  \right\}, \\
			A_{4}&:= \left\{ \textup{$\lVert W_{x} \rVert_{1}\le M$ for all $x\in [\rho, \tau_{0}]$}  \right\}.
		\end{align}
		Note that  
		\begin{align}
			\left\{  \textup{$\lVert W_{x} \rVert_{1}$ hits $0$ before it hits $M+L$ for some $x\le c_{0} M^{2}$}\right\} \supseteq A:= \bigcap_{i=1}^{4} A_{i}. 
		\end{align}
		Thus it suffices to show that, for $M_{0}, L$ sufficiently large and $\eps>0$ sufficiently small, 
		\begin{align}\label{eq:claim_000}
			\inf_{M\ge M_{0}}  \inf_{\x\in \mathcal{X}_{M}}	\P_{\x}\left( A \right) > 0.
		\end{align}
		
		To this effect, first note that $A_{1}$ occurs with a uniformly positive probability by  Lemma \ref{lemma:1st_hitting_time_W_poly}. Next, we observe that $A_{2}$ and $A_{3}$ occur with high probability. For $A_{2}$,  
		according to Lemma \ref{lemma:SRW_coupling_W}, $\lVert W_{x}^{\ge a} \rVert_{1} \le S_{x}$  for all $x\in [0,\rho)$, where $(S_{x})_{s\ge 0}$ is a biased random walk on $\Z$ with a negative drift $p_{a}-p_{0}<0$. Let $\rho'$ denote the first time that $(S_{x})_{s\ge 0}$ hits the origin. Then $\rho\le \rho'$ by the coupling, so 
		\begin{align}
			\P_{\x}(A_{2}^{c})  \le  \P\left( S_{x} > S_{0} +\frac{L}{2} -2\eps x\,\, \textup{for some $x\ge 0$} \right).
		\end{align}
		The right-hand side above is the probability that a biased simple random walk on $\Z$ with mean increment $p_{a}-p_{0}+2\eps$ starts at zero and ever reaches height $L/2$. We choose $\eps>0$ small so that $p_{a}-p_{0}+2\eps<0$. Then by gambler's ruin for a negatively biased simple random walk on $\Z$, this probability is exponentially small in $L$. Thus by choosing $L$ large and $\eps>0$  small, we can make $\inf_{M\ge 1, \x\in \mathcal{X}_{M}} \P_{\x}(A_{2})$ 	arbitrarily close to one. 
		
		For $A_{3}$, let $X_{x}$ denote the decoupled carrier process with exceptional colors $\mathcal{C}_{e}=\{a\}$. Then by Proposition \ref{prop:carrier_comparison_localized}, $\lVert W_{x}^{<a}\rVert_{1}\le \lVert \hat{X}_{x}^{<a}\rVert_{1} = \lVert X_{x}^{<a}\rVert_{1}$ for all $x\ge 0$. Note that $\lVert W_{0}^{<a} \rVert_{1}\le (1-\lambda) M$ since $W_{0}\in \mathcal{X}_{M}$. Moreover, note that $X_{x}^{<a}$ behaves exactly as the subcritical carrier process with ball colors in $\{1,\dots,a\}$ and balls of color $a$ acting as the empty box. That is, $X_{x}^{<a}$ evolves by the circular exclusion restricted on colors $\{1,\dots,a\}$ while ignoring balls of colors in $\{a+1,\dots,\kappa,0\}$. Thus $X_{x}^{<a}$ is a lazy version of a carrier process with subcritical ball density as $\max(p_{1},\dots,p_{i_{1}-1}) < p_{i_{1}}$. Thus by Lemma \ref{lemma:growth_critical_carrier}, $\limsup_{n\rightarrow\infty} \max_{0\le x \le n} n^{-1}\lVert X_{x}^{<a} \rVert_{1} \le  \eps $ almost surely. Hence $A_{3}$ occurs with high probability for any fixed $\eps>0$ if $L$ is large enough.

		Next, we show that $\bigcap_{i=1}^{4}A_{i} $ occur with a uniformly positive probability. By definition, $\rho<\tau_{0}$. By the definition of the set $\mathcal{X}_{M}$ in \eqref{eq:X_set_def}, we get $	W_{0}(a) \ge \lambda M$. Since $W_{x}(a)$ can decrease at most by one, it follows that $\rho\ge \lambda M$ almost surely. 
		On $A_{2}\cap A_{3}$, $\lVert W_{\rho} \rVert_{1}\le M(1-\lambda\eps)+L$. Thus
		\begin{align}
			&A_{1} \cap A_{2}\cap A_{3}\cap A_{4}^{c} \\ &\quad \subseteq \{ \textup{$\lVert W_{x} \rVert_{1}$ makes an up-crossing from $M(1-\lambda\eps)+L$ to $M+L$ in $ c_{0} M^{2}$ steps} \} \\
			&\quad \subseteq \bigcup_{1\le i \le \kappa} \{ \textup{$W_{x}(i)$ makes an up-crossing of length $M\lambda\eps/\kappa$ in $ c_{0} M^{2}$ steps} \}.
		\end{align}
		By the coupling \eqref{eq:W_S_domination_pf} in Lemma \ref{lemma:SRW_coupling_W}, the last up-crossing probability is exponentially small in $M$. 
		This shows
		\begin{align}
			\P_{\x}\left(	\bigcap_{i=1}^{4}A_{i}  \right) \ge  	\P_{\x}\left(	\bigcap_{i=1}^{3}A_{i}   \right)  - e^{-O(M)}. 
		\end{align}
		Since $A_{1}$ has uniformly positive probability and $A_{2}\cap A_{3}$ has a high probability, by union bound the above is uniformly positive for $M$ sufficiently large. This finishes the proof.
	\end{proof}

	\subsection{Order statistics of the excursion heights and multi-dimensional Gambler's ruin}

	According to Theorem \ref{thm:carrier_subcritical} \textbf{(i)}, the carrier process $(W_{x})_{x\ge 0}$ in the subcritical regime $p_{0}>\max(p_{1},\cdots,p_{\kappa})$ will visit the origin $\mathbf{0}:=(0,0,\cdots,0)\in (\mathbb{Z}_{\ge 0})^{\kappa}$ infinitely often with finite mean excursion time $\pi(\mathbf{0})^{-1}$. Namely, the number $M_{n}$ of visits of $W_{x}$ to $\mathbf{0}$ during $[1,n]$ (defined in \eqref{eq:def_Mn}) satisfies 
	\begin{align}\label{eq:SLLN_Mn}
		\frac{M_{n}}{n} \rightarrow \pi(\mathbf{0}) = \prod_{i=1}^{\kappa} \left(1-\frac{p_{i}}{p_{0}} \right)  \quad  \text{ a.s.} \quad \text{as $n\rightarrow \infty$}
	\end{align}
	by Theorem \ref{thm:carrier_subcritical} \textbf{(i)} and the Markov chain ergodic theorem.
	
	According to Lemma \ref{lemma:queue_formula_soliton}, the first soliton length $\lambda_{1}(n)$ is essentially the same as the maximum of the first $M_{n}$ excursion heights of the carrier process. Roughly speaking, each excursion height is $O(1)$ with an exponential tail. Since there are $M_n \sim \pi(\mathbf{0}) n$ i.i.d. excursions, their maximum height behaves as $O(\log n)$.
	
	To make this estimate more precise, we analyze the order statistics of the excursion heights of the carrier process during $[1,n]$. For this, let $h_{1:m}\ge h_{2:m}\ge \cdots \ge h_{m:m}$ denote the order statistics of the first $m$ excursion heights $h_{1},\cdots,h_{m}$. The strong Markov property ensures that these excursion heights are i.i.d., so we have 
	\begin{align}\label{eq:CDF_order_statistics}
		\P\{h_{j:m}\leq N\} = \sum_{\ell=0}^{j-1} \binom{m}{\ell} \P(h_{1}\le N)^{m-\ell}\, \P(h_{1}>N)^{\ell}, \quad j=1,\cdots,m.
	\end{align}
	In the simplest case $\kappa=1$, the distribution function of the excursion height $h_{1}$ follows from the standard gambler's ruin probability and is given by
	\begin{equation}
		\label{eq:CDF_height_sub_excursion}
		\P(h_{1}\le N) = \left( 1-\frac{1-2p}{\theta^{N +1}-1}\right)\one(N\ge 0),
	\end{equation}
	where $\theta=p_{0}/p_{1}$ (see \cite[Sec. 4]{levine2020phase}). In order to obtain sharp asymptotics for top soliton lengths in the multicolor case, we need a similar result for a generalized gambler's ruin problem. That is, we need an asymptotic expression of the probability that the  subcritical carrier process reaches `height' $N$ (see \eqref{eq:def_carrier_height_def}) before coming back to the origin.

	However, solving the `carrier's ruin' problem asymptotically for $N\rightarrow \infty$ seems to be a nontrivial problem. 
	The essential issue is that the subcritical carrier process for $\kappa\ge 2$ may have a positive drift on a boundary of its state space. For instance, consider the $\kappa=2$ carrier process as in Figure \ref{fig:MC_diagram}. Assuming $p_{0}>\max(p_{1},p_{2})$, 
	the carrier process has a drift toward the origin in the interior and the right boundary of the state space $\Z_{\ge 0}^{2}$, but this is not necessarily true when there is no ball of color 1 (e.g., consider $\mathbf{p}=(0.4, 0.3, 0.3)$). A standard martingale argument for the gambler's ruin problem for $\kappa=1$ does not seem to readily apply for the general $\kappa\ge 2$ dimensional case for this reason. Another standard approach is the one-step analysis, which is computationally challenging since it involves inverting a large matrix (with blocks of expanding sizes) at every $N$, and one needs to obtain an asymptotic expression of the solution of a $N^{\kappa}  \times N^{\kappa}$ linear equation as $N\rightarrow \infty$. 
	
	Despite the technical difficulties we mentioned above, as stated in Theorem \ref{thm:carrier_subcritical} \textbf{(ii)}, we are able to obtain exact asymptotic expression on the probability that an excursion reaches height $N$ as $N\rightarrow \infty$. Our analysis uses a novel idea of `stationary balancing', which we believe to be useful for solving other multi-dimensional ruin problems.   A major technical component we will use in the proof is Proposition \ref{prop:drift_sub_total}\textbf{(ii)}.

	The following combinatorial observation will be used in the proof of Theorem \ref{thm:carrier_subcritical} \textbf{(ii)} below. It states that if we have $k$ independent geometric random variables of parameters $p_{1}/p_{0},\dots,p_{\kappa}/p_{0}$, and if we condition on their sum being $N$, then the total mass should be concentrated on the most probable colors. We note that in the statement, the $1-\frac{p_{i}}{p_{0}}$ terms are omitted from the product since they are all between $1-\frac{p^{*}}{p_{0}}$ and 1. The proof is given at the end of this section.

	\begin{prop}\label{prop:level_N_stationary_prob_partition_ft}
		Let $p_{0}>p^{*}=\max(p_{1},\dots,p_{\kappa})$. Let $r$ denote the number of $i$s in $\{1,\dots,\kappa\}$  such that $p_{i}=p^{*}$. If $p_{1}=\dots=p_{\kappa}$, then 
		\begin{align}\label{eq:partition_ft_Z_1}
			\sum_{x_{1}+\dots+x_{\kappa}= N} \prod_{i=1}^{\kappa} \left( \frac{p_{i}}{p_{0}} \right)^{x_{i}} = \left( \frac{p^{*}}{p_{0}} \right)^{N}  \binom{N+\kappa-1}{\kappa-1}.
		\end{align}
		Suppose $r<\kappa$ and let $p^{(2)}$ denote the second largest value among $p_{1},\dots,p_{\kappa}$. Then
		\begin{align}\label{eq:partition_ft_Z_2}
			\left( \frac{p^{*}}{p_{0}} \right)^{N}  \binom{N+r-1}{r-1} & \le 	\sum_{x_{1}+\dots+x_{\kappa}= N} \prod_{i=1}^{\kappa} \left( \frac{p_{i}}{p_{0}} \right)^{x_{i}} \le  \left( \frac{p^{*}}{p_{0}} \right)^{N} \binom{N+r-1}{r-1}  \left( \frac{p^{*}}{p^{*}-p^{(2)}}  \right)^{\kappa-r}.
		\end{align}

	\end{prop}
	
	We are now ready to prove Theorem \ref{thm:carrier_subcritical} \textup{\textbf{(ii)}}.
	
	\begin{proof}[\textbf{Proof of Theorem} \ref{thm:carrier_subcritical} \textup{\textbf{(ii)}}]
		Fix two disjoint subsets $A,B\subseteq \Z^{\kappa}_{\ge 0}$. Let $\tau_{i}$ for $i\ge 1$ denote the $i$th time that the Markov chain $(W_{x})_{x\ge 0}$ hits the union $A\cup B$. Then by strong Markov property, the subsequential process $\widetilde{W}_{i}:=W_{\tau_{i}}$ for  $i\ge 1$ is a Markov chain on the state space $A\cup B$. Since $(W_{x})_{x\ge 0}$ is irreducible and aperiodic, so is the restricted chain $(\widetilde{W}_{i})_{i\ge 1}$. So if the restricted chain has a stationary distribution, it has to be unique.  Note that that the following probability distribution $\pi_{A\cup B}$ on $A\cup B$ is a stationary distribution for $(\widetilde{W}_{i})_{i\ge 1}$: 
		\begin{align}\label{eq:carrier_restricted_chain_stationary}
			\pi_{A\cup B}(\x) = \pi(\x) / \pi(A\cup B) \qquad \textup{for $\x\in A\cup B$},
		\end{align}
		where for each subset $R\subseteq \Z_{\ge 0}^{\kappa}$,  we denote $\pi(R ):=\sum_{\y\in R} \pi(\y)$. Here $\pi$ is the stationary distribution for the subcritical carrier process defined in \eqref{eq:def_stationary_distribution_carrier_sub}. This can be justified by using the Markov chain ergodic theorem (see, e.g., \cite[Sec. 2.7.1]{aldous2002reversible}).

		Let $(W'_{x})_{x\ge 0}$ be a carrier process on the ball configuration $\xi^{\p}$ but initialized as $W'_{0}\sim \pi_{A\cup B}$. If we restrict this chain at hitting times of $A\cup B$, then the restricted chain is stationary with distribution $\pi_{A\cup B}$. That is, if we denote the $i$th time that $W'_{t}$ visits $A\cup B$ as $\tau_{i}'$, then $W_{0}'$ and $W'_{\tau'_{1}}$ has the same distribution $\pi_{A\cup B}$. The key idea is to treat the restricted stationary process $(W'_{\tau_{i}'})_{i\ge 0}$ as if it is a two-state process on $\{ A, B \}$ and then derive a `balance equation' for the mass transport between $A$ and $B$. 
		
		By using \eqref{eq:carrier_restricted_chain_stationary}, 
		\begin{align}
			\P\left( \textup{$W'_{x}$ visits $B$ before $A$} \right) = 	\P\left( \textup{$W'_{\tau_{1}'}\in B$} \right) =\pi_{A\cup B} (B) = \frac{\pi(B)}{\pi(A\cup B)}. 
		\end{align}
		This gives 
		\begin{align}
			\pi_{A\cup B} (B) &=	\P\left( \textup{$W'_{x}$ visits $B$ before $A$} \right) \\
			&=	\P\left( \textup{$W'_{x}$ visits $B$ before $A$},\,  W'_{0}\in A \right)  + \P\left( \textup{$W'_{t}$ visits $B$ before $A$},\,  W'_{0}\in B \right) \\
			&= \P\left( \textup{$W'_{x}$ visits $B$ before $A$} \,\bigg|\,   W'_{0}\in A \right)  \, \pi_{A\cup B}(A) \\
			&\qquad + \P\left( \textup{$W'_{x}$ visits $B$ before $A$} \,\bigg| \,   W'_{0}\in B \right) \, \pi_{A\cup B}(B).
		\end{align} 
		Simplifying using \eqref{eq:carrier_restricted_chain_stationary}, we obtain the following `balance equation' 
		\begin{align}\label{eq:gambler_gen_stationary_magic}
			&	\P\left( \textup{$( W'_{x} )_{x\ge 1}$ visits $B$ before $A$} \,\bigg|\,   W'_{0}\in A \right)  \\
			&\qquad = \P\left( \textup{$( W'_{x} )_{x\ge 1}$ visits $A$ before $B$} \,\bigg| \,   W'_{0}\in B \right)  \frac{\pi(B)}{\pi(A)}.
		\end{align}
		
		Now we specialize in the above result. Take $A=\{ \mathbf{0} \}$ and $B=\{ \x\in \Z_{\ge 0}^{\kappa}\,:\, \lVert \x \rVert_{1} = N \}$.  Note that 
		\begin{align}
			\P\left( \textup{$( W'_{x} )_{x\ge 1}$ visits $B$ before $\mathbf{0}$} \,|\,  W'_{0}=\mathbf{0} \right)  = \P(h_{1}\ge N).
		\end{align}
		Recalling the the formula for $\pi$ in \eqref{eq:def_stationary_distribution_carrier_sub},  it follows that 
		\begin{align}\label{eq:carrier_process_height_formula}
			\P(h_{1}\ge N) 
			&=   \P\left( \textup{$W'_{x}$ visits $\mathbf{0}$ before $B$} \,|\,  W'_{0}\in B \right) \sum_{x_{1}+\dots+x_{\kappa}= N} \prod_{i=1}^{\kappa} \left( \frac{p_{i}}{p_{0}} \right)^{x_{i}},
		\end{align}
		where the sum is over all  integers $x_{1},\dots,x_{\kappa}\ge 0$ that sum to $N$. The above along with Proposition \ref{prop:level_N_stationary_prob_partition_ft} is enough to deduce the upper bound in \eqref{eq:carrier_gambler_formula}.

		To obtain a lower bound of matching order, we need to show that the probability in the right-hand side of \eqref{eq:carrier_process_height_formula} is uniformly positive for all sufficiently large $N$. This requires a substantial analysis, which we have done in proving Proposition \ref{prop:drift_sub_total}. By this result, there exists a constant $\delta>0$ such that 
		\begin{align}\label{eq:carrier_process_height_formula2}
			\liminf_{N\ge 1} \,  \P\left( \textup{$( W'_{x} )_{x\ge 1}$ visits $\mathbf{0}$ before $B$} \,|\,  W'_{0}\in B \right) > \delta >0.
		\end{align}
		Then the assertion follows from \eqref{eq:carrier_process_height_formula},  \eqref{eq:carrier_process_height_formula2}, and Proposition \ref{prop:level_N_stationary_prob_partition_ft}.
	\end{proof}

	\begin{proof}[\textbf{Proof of Proposition \ref{prop:level_N_stationary_prob_partition_ft}}]
		Suppose we have real numbers $a_{1}=a_{2}=\dots=a_{r}\ge a_{r+1} \ge \dots \ge a_{\kappa}>0$. Note that 
		\begin{align}
			\sum_{x_{1}+\dots+x_{\kappa}=N}  a_{1}^{x_{1}}\cdots a_{\kappa}^{x_{\kappa}} &= a_{1}^{N}	\sum_{x_{1}+\dots+x_{\kappa}=N}  \left( \frac{a_{r+1}}{a_{1}}\right)^{x_{r+1}} \cdots \left( \frac{a_{\kappa}}{a_{1}}\right)^{x_{\kappa}}  \nonumber \\
			&= a_{1}^{N} \sum_{q=0}^{N}  \binom{q+r-1}{r-1}   \sum_{x_{r+1}+\dots+x_{\kappa}=N-q}  \left( \frac{a_{r+1}}{a_{1}}\right)^{x_{r+1}} \cdots \left( \frac{a_{\kappa}}{a_{1}}\right)^{x_{\kappa}}. \label{eq:sum_product_estimate1}
		\end{align}
		If $a_{1}=\dots = a_{\kappa}$, then the above expression equals to $a_{1}^{N}  \binom{N+\kappa-1}{\kappa-1}$. Hence, if $p_{1}=\dots=p_{\kappa}$, we get \eqref{eq:partition_ft_Z_1}.
		
		We now assume $a_{1}=\dots=a_{r}>a_{r+1}\ge \dots \ge a_{\kappa}$ for some $r\in \{1,\dots,\kappa\}$. Then  the last expression in \eqref{eq:sum_product_estimate1} is at most 
		\begin{align}
			&a_{1}^{N} \sum_{q=0}^{N}	 \binom{q+r-1}{r-1}  \sum_{x_{r+1}+\dots+x_{\kappa}=N-q} \left(\frac{a_{r+1}}{a_{1}} \right)^{N-q}\\ 
			&\quad =	a_{1}^{N}  \sum_{q=0}^{N} \binom{q+r-1}{r-1}\binom{N-q+\kappa-r-1}{ \kappa-r-1}  \left( \frac{a_{r+1}}{a_{1}}\right)^{N-q}   \\
			&\quad \le a_{1}^{N} \binom{N+r-1}{r-1}   \left[ \sum_{n\ge 0} \binom{n}{\kappa-r-1} \left( \frac{a_{r+1}}{a_{1}} \right)^{n-(\kappa-r-1)} \right].
		\end{align}
		Note that the sum in the bracket above equals 
		\begin{align}
			\left( \frac{a_{1}}{a_{r+1}}\right)^{(\kappa-r-1)} \sum_{n\ge 0} \binom{n}{\kappa-r-1} \left( \frac{a_{r+1}}{a_{1}} \right)^{n} 
			&= \left( \frac{a_{1}}{a_{r+1}} \right)^{(\kappa-r-1)} \frac{\left( \frac{a_{r+1}}{a_{1}} \right)^{(\kappa-r-1)}}{ \left( 1-\frac{a_{r+1}}{a_{1}} \right)^{\kappa-r} } = \left( \frac{a_{1}}{a_{1}-a_{r+1}} \right)^{\kappa-r},
		\end{align}
		where we used the generating function $\sum_{n\ge 0} \binom{n}{k} y^{n} = \frac{y^{k}}{(1-y)^{k+1}}$ (with $\binom{n}{k}=0$ for $n<k$). 
		Hence it follows that 
		\begin{align}
			\sum_{x_{1}+\dots+x_{\kappa}= N} \prod_{i=1}^{\kappa} \left( \frac{p_{i}}{p_{0}} \right)^{x_{i}} \le \left( \frac{p^{*}}{p_{0}} \right)^{N} \binom{N+r-1}{r-1} \left( \frac{p^{*}}{p^{*}-p^{(2)}}  \right)^{\kappa-r}.
		\end{align}
		For the lower bound, note that  the last expression in \eqref{eq:sum_product_estimate1} is at least  
		\begin{align}
			&a_{1}^{N} \sum_{q=0}^{N}	 \binom{q+r-1}{r-1}  \sum_{x_{r+1}+\dots+x_{\kappa}=N-q} \left(\frac{a_{\kappa}}{a_{1}} \right)^{N-q}  \\
			&\qquad =	a_{1}^{N}  \sum_{q=0}^{N} \binom{q+r-1}{r-1}\binom{N-q+\kappa-r-1}{ \kappa-r-1}  \left( \frac{a_{\kappa}}{a_{1}}\right)^{N-q}   \ge a_{1}^{N} \binom{N+r-1}{r-1} .
		\end{align}
		Hence we get 
		\begin{align}
			\sum_{x_{1}+\dots+x_{\kappa}= N} \prod_{i=1}^{\kappa} \left( \frac{p_{i}}{p_{0}} \right)^{x_{i}} \ge  \left( \frac{p^{*}}{p_{0}} \right)^{N} \binom{N+r-1}{r-1}.
		\end{align}
		This shows the assertion.
	\end{proof}

	\subsection{Proof of Theorem \ref{thm:iid_subcritical}}

	Now that we have the asymptotic soliton to the `carrier's ruin' problem (Theorem \ref{thm:carrier_subcritical}\textbf{(ii)}), we are ready to obtain sharp scaling limit for the top soliton lengths in the subcritical regime, as stated in Theorem \ref{thm:iid_subcritical}. To do so, we first obtain the following scaling limit of $\mathbf{h}_{j}(n)$ using a similar argument to that developed in \cite{levine2020phase}. For instance, the maximum excursion height $\mathbf{h}_{1}(n)$ of the subcritical carrier process during $[0,n]$ scales like $(1+o(1))\log n$, where its tail follows the Gumbel distribution up to a constant shift. The tail cannot have a tight scaling limit due to a rounding error even in the $\kappa=1$ case, see \cite[Remark 5.5]{levine2020phase}.

	\begin{prop} 
		\label{prop:limit_excursion_heights}
		Suppose $p_{0}>p^{*}:=\max(p_{1},\cdots,p_{\kappa})$. Let $r$ denote the multiplicity of $p^{*}$, $\theta:= p_{0}/p^{*}$, and $\sigma:=\pi(\mathbf{0})>0$ (see \eqref{eq:SLLN_Mn}). Let $\nu_{n}:= (1+\delta_{n}) \log_{\theta}\left( \sigma n /(r-1)! \right)$, where we set  $\delta_{n} := \frac{(r-1) \log  \log_{\theta}\left( \sigma n /(r-1)! \right) + \log(r-1)! }{\log \sigma n / (r-1)!}$. 
		Fix $j\ge 1$ and  $x\in \R$. Then 
		\begin{align}
			& \limsup_{n\rightarrow \infty}  \left[ 	\exp\big(-\frac{C}{(r-1)!} \theta^{-(x-1)} \big)\left( \sum_{\ell=0}^{j-1} \frac{\theta^{-\ell x}}{\ell! (r-1)!} \right)\right]^{-1} \left( \P\left\{ \mathbf{h}_j(n) \leq x+\nu_{n} \right\} + o(1) \right)  \le 1, \\
			&\liminf_{n\rightarrow \infty}  \left[ 	\exp\big(-\frac{\delta}{(r-1)!} \theta^{-(x-1)} \big)	\left( \sum_{\ell=0}^{j-1} \frac{\theta^{-\ell (x-1)}}{\ell! (r-1)!} \right)\right]^{-1} \left( \P\left\{ \mathbf{h}_j(n) \leq x+\nu_{n} \right\} +o(1)\right) \ge 1, 
		\end{align}
		where constants  $\delta>0$ and $C\ge 1$ are as in the  Theorem \ref{thm:carrier_subcritical} \textbf{\textup{(ii)}}.
	\end{prop}
	
	\begin{proof}
		Fix $\eps\in (0,\sigma)$ and let $b_{n}=\lfloor (\sigma-\eps)n \rfloor$. As $M_{n}/n\rightarrow \sigma$ a.s. (see \eqref{eq:SLLN_Mn}), we have that $M_{n}\ge b_{n}$ for all sufficiently large $n$ almost surely. Hence 	for each fixed $x\in \R$, 
		\begin{align}\label{eq:excursion_height_order_stats0}
			\mathbb{P}\left( \mathbf{h}_{j}(n)\le x+\nu_{n} \right) \le \mathbb{P}\left( h_{j:b_{n} }\le x+\nu_{n} \right) + o(1).
		\end{align}
		Furthermore, according to Theorem \ref{thm:carrier_subcritical} \textbf{(ii)}  and \eqref{eq:CDF_order_statistics}, 	
		\begin{align}\label{eq:excursion_height_order_stats}
			& \mathbb{P}\left( h_{j:b_{n} }\le x+\nu_{n} \right)  \\
			&\,\, \le  \sum_{\ell=0}^{j-1} \binom{b_{n}}{\ell} \left( 1- \frac{ C \binom{\lfloor x + \nu_{n} \rfloor+r-1}{r-1}  }{\theta^{\lfloor x+\nu_{n} \rfloor}} \right)^{b_{n}-\ell} \left(  \frac{ \binom{\lfloor x + \nu_{n} \rfloor+r-1}{r-1} }{\theta^{\lfloor x+\nu_{n} \rfloor}} \right)^{\ell}  \\
			&\,\,  = \left( 1- \frac{C \binom{\lfloor x + \nu_{n} \rfloor+r-1}{r-1} }{\theta^{\lfloor  x+\nu_{n} \rfloor+1}} \right)^{b_{n}} \sum_{\ell=0}^{j-1} b_{n}^{-\ell} \binom{b_{n}}{\ell} \left(1- \frac{C \binom{\lfloor x + \nu_{n} \rfloor+r-1}{r-1} }{\theta^{\lfloor x+\nu_{n} \rfloor}}\right)^{-\ell}
			\left(\frac{\binom{\lfloor x+\nu_{n} \rfloor+r-1}{r-1} b_{n} }{\theta^{ \lfloor x+\nu_{n} \rfloor }}\right)^{\ell}.	
		\end{align}
		Since $\nu_{n}= (1+\delta_{n}) \log_{\theta}\left( \sigma n /(r-1)! \right)$, note that 
		\begin{align}
			\log \left( \frac{ \sigma \nu_{n}^{r-1} n }{\theta^{\nu_{n}}}  \right) 
			&= (r-1) \log (1+\delta_{n}) + (r-1) \log  \log_{\theta}\left( \sigma n /(r-1)! \right) + \log(r-1)! \\
			&\qquad + \delta_{n}\left( \log (r-1)! - \log \sigma n \right) \\
			&= (r-1) \log (1+\delta_{n})  \rightarrow 0 \qquad \textup{as $n\rightarrow\infty$},
		\end{align}
		where the second equality uses the definition of $\delta_{n}$ and the limit follows since $\delta_{n}=o(1)$. 
		Using Stirling's approximation, $\binom{N+r-1}{r-1}=(1+o(1)) N^{r-1}/(r-1)!$ as $N\rightarrow \infty$, we get 
		\begin{align}
			\lim_{n\rightarrow\infty} \frac{ \theta^{\lfloor x+\nu_{n} \rfloor} }{\theta^{\nu_{n}}}	\log \left( 1- \frac{C \binom{  \lfloor x+\nu_{n} \rfloor  + r-1 }{r-1} }{\theta^{  \lfloor x+\nu_{n} \rfloor  }}  \right)^{b_{n}}  	&= - \lim_{n\rightarrow\infty}	 \frac{ b_{n} C \binom{  \lfloor x+\nu_{n} \rfloor + r-1 }{r-1} }{\theta^{ \nu_{n} }}  \\
			& =	 - \lim_{n\rightarrow\infty}  \frac{1}{(r-1)!} \left(\frac{\lfloor x + \nu_{n} \rfloor }{\nu_{n}} \right)^{r-1}	 \frac{ b_{n} C \nu_{n}^{r-1} }{  \theta^{\nu_{n} }  }   \\
			& =	 -\frac{C(1-\frac{\eps}{\sigma}) }{ (r-1)! }.
		\end{align}
		Similarly, 
		\begin{align}
			\lim_{n\rightarrow\infty} \frac{ \theta^{\lfloor x+\nu_{n} \rfloor} }{\theta^{\nu_{n}}}	\frac{\binom{\lfloor x+\nu_{n} \rfloor+r-1}{r-1} b_{n} }{\theta^{ \lfloor x+\nu_{n} \rfloor }} = \frac{(1-\frac{\eps}{\sigma}) }{ (r-1)! }.
		\end{align}
		Writing $\eta_{n}=(x+\nu_{n})-\lfloor x + \nu_{n} \rfloor \in [0,1)$, since $\theta>1$, 
		\begin{align}\label{eq:subcritical_tale_limits0}
			\theta^{x-1}	\le	\frac{ \theta^{\lfloor x+\nu_{n} \rfloor} }{\theta^{\nu_{n}}} = \theta^{x}	\theta^{-\eta_{n}}  \le \theta^{x}. 
		\end{align}
		Also note that $\lim_{n\rightarrow \infty}  b_{n}^{-\ell} \binom{b_{n}}{\ell} = \frac{1}{\ell !}$. From the above computations, we deduce 
		\begin{align}\label{eq:subcritical_tale_limits}
			&\limsup_{n\rightarrow \infty} \exp\left( \frac{C}{(r-1)!}\left( 1-\frac{\eps}{\sigma}\right)\theta^{-x+1 }\right)\left( 1- \frac{C \binom{  \lfloor x+\nu_{n} \rfloor  + r-1 }{r-1} }{\theta^{  \lfloor x+\nu_{n} \rfloor  }}  \right)^{b_{n}}   \le 1,\\
			&\limsup_{n\rightarrow\infty}  \frac{(r-1)!}{1-\frac{\eps}{\sigma}} \theta^{x-1}	\frac{\binom{\lfloor x+\nu_{n} \rfloor+r-1}{r-1} b_{n} }{\theta^{ \lfloor x+\nu_{n} \rfloor }} \le 1. 
		\end{align} 
		Then we obtain
		\begin{align}
			&\limsup_{n\rightarrow \infty} \left[ \exp\left(-\frac{C}{(r-1)!}\left( 1-\frac{\eps}{\sigma}\right)\theta^{-(x-1)}\right) \left( \sum_{\ell=0}^{j-1} (1+\eps)\left( 1-\frac{\eps}{\sigma}\right)^{\ell} \frac{\theta^{-\ell (x-1)}}{\ell ! (r-1)!} \right) \right]^{-1}	\\
			&\hspace{6cm} \times \mathbb{P}\left( h_{j:b_{n}}\le x+\nu_{n} \right)\\
			&\qquad \le 1.
		\end{align}
		Therefore letting $\eps\searrow 0$ and using \eqref{eq:excursion_height_order_stats0} give the limsup in the statement. A similar argument using $b_{n}=\lceil (\sigma + \eps )n\rceil$ shows the liminf in the statement. 
	\end{proof}

	Now we are ready to establish sharp scaling for the top soliton lengths in the subcritical regime.

	\begin{proof}[\textbf{Proof of Theorem} 
		\ref{thm:iid_subcritical}]
		
		Let $\nu_{n}$ be as in Theorem \ref{thm:iid_subcritical}. Note that 
		\begin{align}
			\nu_{n}=  \log_{\theta} n +  (r-1)\log_{\theta} \log n + c + o(1)
		\end{align}
		for some constant $c$. Hence the asymptotic \eqref{eq:iid_sub_thm0} for $\lambda_{j}(n)$ follows from \eqref{eq:iid_sub_thm1}.
		
		Now we derive \eqref{eq:iid_sub_thm1}. Fix $j\geq 1$ and $x\in\R$.  Then by Proposition  \ref{prop:limit_excursion_heights}, 
		\begin{align}
			&	\liminf_{n\rightarrow \infty}\P\left( \mathbf{h}_{1}(n) \leq x+\nu_{n} \right)
			\ge \exp\left(-\delta \theta^{-x} \right), \label{eq:iid_sub_pf0} \\
			&	\limsup_{n\rightarrow \infty}\P\left( \mathbf{h}_{j}(n) \leq x+\nu_{n} \right)
			\leq  \exp\left(-\frac{C}{(r-1)!} \theta^{-(x-1)} \right)\sum_{k=0}^{j-1} \frac{\theta^{-k(x-1)}}{k! (r-1)! }. \label{eq:iid_sub_pf1}
		\end{align}
		Moreover, recall the quantities $M_{n}$ and $r_{n}$ in \eqref{eq:def_Mn} and \eqref{eq:def_excursion_heights}, respectively. By Lemma \ref{lemma:queue_formula_soliton}, 
		\begin{align}
			\mathbf{h}_{1}(n) = 	\max\{ h_{1},\cdots, h_{M_{n}} \}	\le \lambda_{1}(n) \le \max\{ h_{1},\cdots, h_{M_{n}+1} \}. 
		\end{align}
		Also, note that 
		\begin{align}
			0\le 	\P( \mathbf{h}_{1}(n) \le x + \nu_{n})  - 			\P( \max\{ h_{1},\cdots, h_{M_{n}+1} \}  \le x + \nu_{n})  \le \P(h_{M_{n}+1} > \mathbf{h}_{1}(n)) = o(1). 
		\end{align}
		It follows that 
		\begin{align}
			\P(\lambda_{1}(n) \le x + \nu_{n}) = \P( \mathbf{h}_{1}(n) \le x + \nu_{n}) + o(1). 
		\end{align}
		Moreover, since $\lambda_{1}(n) \ge \lambda_{j}(n) \ge \mathbf{h}_{j}(n)$ by Lemma \ref{lemma:queue_formula_soliton}, 
		\begin{align}
			\P\left( \lambda_{1}(n) \leq x +\nu_{n} \right) \le \P\left( \lambda_{j}(n) \leq x +\nu_{n} \right) \leq \P\left( \mathbf{h}_{j}(n) \leq x +\nu_{n} \right). 
		\end{align}
		Then \eqref{eq:iid_sub_pf0}-\eqref{eq:iid_sub_pf1} show \eqref{eq:iid_sub_thm1}, as desired. 
	\end{proof}

	\section{The linear scaling limit of the carrier process}
	\label{sec:linear_scaling_carrier}

	In this section, we prove Theorem \ref{thm:SRBM_weak_convergence} \textbf{(i)}, concerning the linear scaling limit of the carrier process $W_{x}$ in \eqref{eq:W_x_recursion}.

	Throughout this section, we assume $p_{0}\le p^{*}=\max(p_{1},\dots,p_{\kappa})$. In this case the set $\mathcal{C}^{\p}_{s}$ of unstable colors (defined above the statement of Theorem \ref{thm:SRBM_weak_convergence}) is nonempty. Let $\alpha_{1}<\dots<\alpha_{r}$ denote the unstable colors. Let $(X_{x})_{x\ge 0}$ be the decoupled carrier process  in \eqref{eq:decoupled_carrier_on_stable_colors_00} with  $\mathcal{C}_{e}=\mathcal{C}_{u}^{\p}$. Recall the process $(\hat{X}_{x})_{x\ge 0}$ in  \eqref{eq:hat_X_def}.

	We first show that the coordinate $X_{x}(\ell)$ for $\ell$ an unstable color of supercritical density behaves like a random walk with a positive drift. 
	
	\begin{prop}\label{prop:supercritical_excursion_no_return}
		Fix $j\in \{1,\dots,r\}$ and denote  $\ell:=\alpha_{j}$, $\ell^{+}:=\alpha_{j+1}$. If $p_{\ell}>p_{\ell^{+}}$, then  $\bar{M}:=   -\inf_{k\in \N}  X_{k}(\ell)$ has a finite exponential moment. 
	\end{prop}
	
	\begin{proof}
		Recall that $(X_{x}(\ell))_{x\ge 0}$ is a Markov additive functional with increments $g^{\ell}(X_{x}^{s},\, \xi_{x+1})$  (see \eqref{eq:increment_as_functional0}). Under the hypothesis, it has a positive bias $\E_{\pi^{s}\otimes \p}[g^{\ell}(X_{x}^{s},\, \xi_{x+1})] = \alpha:=p_{\ell}-p_{\ell^{+}}>0$ (see Proposition \ref{prop:decoupled_carrier_bias}). Hence one can expect that $X_{x}(\ell)$ will essentially behave as a simple random walk on $\Z$ with a positive bias. Since $\bar{M}$ measures the height of the excursion of $X_{x}(\ell)$ below the $x$-axis, it should have a finite exponential moment. Below we give a rigorous justification. 
		
		Consider the Markov chain 	
		\begin{align}
			Y_{x}:=\big( X_{x}(\ell+1) ,\cdots, \,X_{x}(\ell^{+}-1) \big).
		\end{align}	
		Let $\tau_{j}:=j$ for $j\ge 0$ if $\ell+1=\ell^{+}$; Otherwise, let $\tau_{j}$ be the $j$th return time of $\big(X_{x}(\ell+1) ,\cdots, \,X_{x}(\ell^{+}-1) \big)$ to the origin. By strong Markov property, $\tau_{1}, \tau_{2}-\tau_{1},\tau_{3}-\tau_{2},\dots$ are i.i.d., and they have finite moments of all orders by Lemma \ref{lemma:gen_return_time}. 	Let $R_{j}:= X_{\tau_{j}}(\ell)$ for $j\ge 1$. Then $(R_{j})_{j\ge 1}$ is a random walk. Let $\eta_{i}:=R_{i}-R_{i-1}$ denote the increments.  It has a positive drift as 
		\begin{align}
			\E[\eta_{1}] =	\lim_{j\rightarrow \infty} \frac{R_{j}}{j} = 	\lim_{j\rightarrow \infty} \frac{X_{\tau_{j}}(\ell)}{\tau_{j}} \frac{\tau_{j}}{j} = \alpha\,  \E[\tau_{1}] > 0,
		\end{align}
		where the first two equalities use the strong law of large numbers and the Markov chain ergodic theorem. 
		
		Next, we claim that $X_{x}(\ell)$ returns to the origin only finitely many times almost surely. First note that by the strong law of large numbers $n^{-1}R_{n}\rightarrow \alpha>0$ almost surely. Hence $n^{-1} R_{n}>\alpha/2$ infinitely often almost surely. 	Note that for each $j\ge 1$, since $\tau_{j+1}-\tau_{j}$ is independent from $R_{j}$ and has the same distribution as $\tau_{1}$, by Chebyshev's inequality, 
		\begin{align}\label{eq:excursion_regurn_pf1}
			\P\left( \textup{$X_{x}(\ell)=0$ for some $x\in [\tau_{j}, \tau_{j+1})$} \right) &\le \P(R_{j} \le \tau_{j+1}-\tau_{j} ) \\
			&	\le \E[ \P( \tau_{j+1}-\tau_{j}  \ge R_{j} \,|\, R_{j}) ] \\ 
			&	\le \E[ R_{j}^{-2} \, \E[\tau_{1}^{2}] ] \\ 
			&	\le \E[\tau_{1}^{2}]\left(  (\alpha j/2)^{-2}  + c \P(R_{j}\le (\alpha/2)j) \right), 
		\end{align}
		where the last inequality follows by partitioning on two cases depending on $R_{j}\le (\alpha/2)j$ or $R_{j}> (\alpha/2)j$. If we denote $\overline{\eta}_{i}:=\E[\eta_{i}]-\eta_{i}$, then $\overline{\eta}_{i}$'s are mean zero i.i.d., so, noting that $\E[R_{n}]=\alpha n$, 
		\begin{align}
			\P(R_{n}\le (\alpha/2)n) &\le  \P( \E[R_{n}]-R_{n} \ge (\alpha/2)n) \\
			&\le   \P\left( \sum_{i=1}^{n} -\overline{\eta}_{i} \ge (\alpha/2)n \right) \\
			&\le   \P\left( \left( \sum_{i=1}^{n} -\overline{\eta}_{i} \right)^{4} \ge (\alpha/2)^{4}n^{4} \right) \\ 
			&\le \frac{ C (\E[\bar{\eta}_{1}^{2}]^{2} + \E[\bar{\eta}_{1}^{4}] ) }{ n^{2}}
		\end{align}
		for some constant $C>0$. Note that for the last inequality, we have used Chebyshev's inequality along with the fact that only the $O(n^{2})$ terms of the form $\bar{\eta}_{i}^{2}\bar{\eta}_{j}^{2}$ for $i\ne j$ and $\bar{\eta}_{i}^{4}$ have nonzero expectations. Sincet $|\eta_{1}|\le \tau_{1}$ has a finite moments of all orders, so does $\bar{\eta}_{1}$. Thus \eqref{eq:excursion_regurn_pf1} implies 
		\begin{align}
			\sum_{j\ge 1}  	\P\left( \textup{$Z_{x}(0)=0$ for some $x\in [\tau_{j}, \tau_{j+1})$} \right) <\infty. 
		\end{align}
		By the  Borel-Cantelli lemma, it follows that $X_{x}(\ell)$ visits the origin only finitely many times almost surely. This shows the claim. 
		
		Now we conclude that $\bar{M}$ has a finite exponential moment.  For this, we use the general result by Hansen  \cite{hansen2006maximum} about the running maximum of a random walk with negative drift, that if the running maximum is uniformly bounded almost surely, then the supremum of the running maximum has a finite exponential moment. We apply this result to the random walk $(-R_{j})_{j\ge 1}$. According to the claim, it follows that $\sup_{x\ge 0} -X_{x}(\ell)=-\inf_{x\ge 0} X_{x}(\ell)$ is almost surely finite. Hence $\sup_{j\ge 1} -R_{j}$ is almost surely finite, so by \cite[Thm. 2.1]{hansen2006maximum}, $\sup_{j\ge 1} -R_{j}=-\inf_{j\ge 1} R_{j}$ has a finite exponential moment. Since the increments of $R_{j}$ have finite exponential moments, we can conclude that $-\inf_{x\ge 0} X_{x}(\ell)$ also has a finite exponential moment. 
	\end{proof}

	\begin{prop}\label{prop:supercritical_excursion_bd}
		Let $j\in \{1,\dots,r\}$ be arbitrary with $\ell:=\alpha_{j}$, $\ell^{+}:=\alpha_{j+1}$, and $p_{\ell}>p_{\ell^{+}}$.  Then for each integer $d\ge 1$, there exists a  constant $c>0$ such that for all $n\ge 1$ and $s>0$, 
		\begin{align}\label{eq:X_excursion_bd_1}
			&\P\left(  \max_{0\le t \le n}X_{t}(\ell) - X_{n}(\ell)  \geq   s \right)\le \exp(-c s ),\\
			& \P\left( \left| \max_{0\le t\le n} \hat{X}_{t}(\ell)- X_{n}(\ell)  \right| \ge  \eps \right) \le \exp(-c s ). \nonumber
		\end{align}
	\end{prop}
	
	\begin{proof}
		Consider the following Markov chain 
		\begin{align}\label{eq:decoupled_carrier_flipped}
			Y_{x}:=\big( \max_{1\le s \le x} X_{s}(\ell)- X_{x}(\ell), \, X_{x}(\ell+1) ,\cdots, \,X_{x}(\ell^{+}-1) \big)
		\end{align}	
		on $\Z_{\ge 0}^{\ell^{+}-\ell}$. Note that $Y_{0}=\mathbf{0}$.	Let $\tau$ denote the first return time of $Y_{x}$ to the origin. In  Theorem \ref{thm:excursion_length_MGF}, we have shown that $\tau$ has finite moments of all orders. 
		Let $L_{1},L_{2},\dots$ denote the lengths of excursions of $Y_{x}$ to the origin. Since $L_{k}\ge 1$ for all $k\ge 1$, $M_{n}\le n$. 
		Let $h_{1},h_{2},\cdots$ denote its subsequent excursion heights of $Y_{x}$. Since $h_{1}\le L_{1}=\tau$ and using the elementary inequality 
		\begin{align}\label{eq:elementary_ineq}
			1-(1-a)^{n}\le na \quad \textup{for $a\in (0,1)$}, 
		\end{align}
		for each $s>0$, 
		\begin{align}
			\P\left( \left| \max_{1\le t \le n} X_{t}(\ell) - X_{n}(\ell) \right| \geq s \right) &\le \P\left( \lVert Y_{n}\rVert_{1} \geq s \right) \\
			&\le \P\left( \max(h_{1},\dots,h_{M_{n}}) \ge s \right) \\
			&\le \P\left( \max(h_{1},\dots,h_{n}) \ge s \right)  \\
			& \le 1-(1-\P(h_{1} \ge s))^{n}  \\
			& \le n \, \P(h_{1} \ge s) \\
			& \le n \, \P(\tau \ge s)  
		\end{align}	
		Note that $\P(\tau\ge s)$ is exponentially small in $s$. 
		Hence the first inequality in \eqref{eq:X_excursion_bd_1} follows.

		Next, we show the second inequality in \eqref{eq:X_excursion_bd_1}.  By definition of $\hat{X}_{x}$, we have 
		\begin{align}
			\P\left(  \left|  \max_{1\le s\le x} \hat{X}_{s}(\ell) - X_{x}(\ell) \right| \ge s \right) &= \P\left(  \left|  \max_{0\le s\le x} \left( X_{s}(\ell) - \min_{0\le t \le s} X_{t}(\ell) \right) - X_{x}(\ell) \right| \ge s \right)  \\
			&\le \P\left(   \max_{0 \le s\le x}  X_{s}(\ell) - X_{x}(\ell)  \ge s/2 \right)  \\
			&\qquad 	+ \P\left(  \max_{0\le s \le x} \left( -\min_{0\le t \le s} X_{t}(\ell) \right)   \ge s/2 \right) .
		\end{align}
		The second term in the last expression is exponentially small in $s$ due to Proposition \ref{prop:supercritical_excursion_no_return}. Hence the second inequality in \eqref{eq:X_excursion_bd_1} follows from the above and the first equality in  \eqref{eq:X_excursion_bd_1}. 
	\end{proof}

	The following lemma shows half of Theorem \ref{thm:SRBM_weak_convergence} \textbf{(i)}.

	\begin{lemma}\label{lemma:max_W_upper_bd}
		Let $\bmu=(\mu_{1},\dots,\mu_{\kappa}):=\sum_{j=1}^{r} \e_{\alpha_{j}} (p_{\alpha_{j}}- p_{\alpha_{j+1}})$. For $i=1,\dots,\kappa$, almost surely, 
		\begin{align}\label{eq:max_W_bd}
			\limsup_{n\rightarrow \infty} n^{-1} \left( \max_{0\le t \le n} W_{t}(i) \right) \le \mu_{i}. 
		\end{align}
	\end{lemma}

	\begin{proof}
		By Proposition \ref{prop:carrier_comparison_localized}, 
		\begin{align}\label{eq:W_bd_hat_X}
			\max_{0\le t \le x} W_{x}(i) \le  \max_{0\le t \le x}  \hat{X}_{x}(i) \quad \textup{$i=1,\dots,\kappa$},
		\end{align}
		where $\hat{X}_{x}(i)=X_{x}(i)-\min_{0\le s\le x} X_{s}(i)$. Let $\tau_{0}:=0$ and let $\tau_{j}$ for $j\ge 1$ denote the $j$th return time of $X_{x}^{s}$ to the origin, and let $h_{j}$ denote the maximum value of $\lVert X_{s}^{s} \rVert_{1}$ during the  interval $[\tau_{j-1},\tau_{j}]$. By the strong Markov property, $h_{j}$'s are i.i.d.. By Lemma \ref{lemma:localized_carrier_convergence}, $(X_{x}^{s})_{x\ge 0}$ is a Markov chain on $\Z^{\kappa}_{\ge 0}$ with a unique stationary distribution and its return time to the origin, say $\tau$, has finite  moments of all order by Theorem \ref{thm:excursion_length_MGF}. 
		
		Now note that, for each $s>0$, 
		\begin{align}
			\P\left( \max_{0\le t \le n} \lVert X_{t}^{s} \rVert_{1} \ge s  \right) \le 	\P\left( \max(h_{1},\dots,h_{n}) \ge s  \right) &= 1-(1-\P(h_{1} \ge s))^{n} \\
			& \le n \, \P(h_{1} \ge s) \le   n \, \P(\tau \ge s). 
		\end{align}
		Now choosing $s=n^{1/4}$, it follows that $	\P\left( n^{-1/2} \max_{0\le t \le n} \lVert X_{t}^{s} \rVert_{1} \ge n^{-1/4}  \right)$ is summable, so by the Borel-Cantelli lemma, 
		\begin{align}
			\lim_{n\rightarrow\infty}	n^{-1/2} \max_{0\le t \le n} \lVert X_{t}^{s} \rVert_{1}  = 0 \quad \textup{a.s..}
		\end{align}
		Combining with \eqref{eq:W_bd_hat_X} and recalling $\hat{X}_{x}(i)=X_{x}(i)$ for $i\in \mathcal{C}^{\p}_{s}$, we deduce \eqref{eq:max_W_bd} for all $i\in \mathcal{C}^{\p}_{s}$.

		By the argument in the previous paragraph, we may assume the set $\mathcal{C}^{\p}_{u}$ of unstable colors is nonempty and it remains to show the statement for unstable colors. Fix $j \in \{1,\dots,r \}$ and let $\ell=\alpha_{j}$, $\ell^{+}=\alpha_{j+1}$ (with $\alpha_{r+1}=0$). Since $\ell$ is an unstable color, $p_{\ell}\ge p_{\ell^{+}}$. First, suppose $p_{\ell}>p_{\ell^{+}}$. Then Propositions \ref{prop:supercritical_excursion_bd} and \ref{prop:limit_thms_unstable_colors} imply 
		\begin{align}
			\lim_{n\rightarrow\infty} n^{-1} \max_{1\le t\le n} \hat{X}_{t}(\ell)= 	\lim_{n\rightarrow\infty} n^{-1}  X_{n}(\ell) = p_{\ell} - p_{\ell^{+}}
		\end{align}
		almost surely. Then the assertion follows from \eqref{eq:W_bd_hat_X}.
		
		It remains to consider the case  $p_{\ell}=p_{\ell^{+}}$. In this case, we wish to show 
		\begin{align}
			\limsup_{n\rightarrow \infty} n^{-1} \left( \max_{0\le t \le n} W_{t}(\ell) \right) = 0.
		\end{align}
		Rewrite \eqref{eq:W_bd_hat_X} as 
		\begin{align}\label{eq:W_bd_hat_X1}
			\max_{0\le t\le x}	W_{x}(\ell)\le 		\max_{0\le t\le x}\left(  X_{t}(\ell)-\min_{0\le k \le t} X_{k}(\ell) \right)  \le 	\max_{0\le t\le x}X_{t}(\ell) + \max_{0\le t\le x} (-X_{t}(\ell)). 
		\end{align}
		Hence it suffices to show 
		\begin{align}
			\limsup_{n\rightarrow \infty} n^{-1} \left( \max_{0\le t \le n} X_{t}(\ell) \right)  = 			\limsup_{n\rightarrow \infty} n^{-1} \left( \max_{0\le t \le n} -X_{t}(\ell) \right)  = 0.
		\end{align}
		
		First assume $\ell+1=\ell^{+}$. In this case $X_{x}(\ell)$ is a lazy simple random walk on $\Z$.  Hence by the reflection principle, 
		\begin{align}
			\P\left( \max_{0\le t\le n} (-X_{t}(\ell)) \ge a  \right) \le 2 \P\left( -X_{n}(\ell) \ge a \right) \le \exp(-\frac{a^{2}}{n}). 
		\end{align}
		The right-hand side is exponentially small in $a$ by the bounded difference inequality. So taking $a=n^{2/3}$ and applying the Borel-Cantelli lemma show that $n^{-1} \max_{0\le t\le n} (-X_{t}(\ell))$ converges to zero almost surely. By a symmetric argument, the same conclusion holds for $n^{-1} \max_{0\le t\le n} X_{t}(\ell)$. Hence this verifies the assertion. 
		
		Lastly, suppose $\ell+1<\ell^{+}$. In this case, $X_{x}(\ell)$ is not a random walk. 
		Instead, from \eqref{eq:increment_as_functional0}, we can write it as a Markov additive functional:
		\begin{align}
			X_{x}(\ell) = \sum_{t=0}^{x} g^{\ell}(X_{t}^{s},\, \xi_{t+1}). 
		\end{align}
		Moreover, the increment $g^{\ell}(X_{t}^{s},\, \xi_{t+1})$ does not depend on the whole $X_{t}^{s}$, but only on 
		\begin{align}\label{eq:linear_scailing_W_pf2}
			Y_{t}:=\big(X_{t}(\ell+1) ,\cdots, \,X_{t}(\ell^{+}-1) \big).
		\end{align}	
		Let $\tau_{0}=0$ and $\tau_{i}$ for $i\ge 1$ denote the $i$th return time of $Y_{x}$ to the origin. According to Theorem \ref{thm:excursion_length_MGF}, $\tau_{1}$ (and hence all $\tau_{i}$'s) have a finite moments of all orders. 
		
		Consider the process $R_{i}:=-X_{\tau_{i}}(\ell)$. By the strong Markov property, the sequence $R_{i}$ for $i\ge 1$ is a random walk. Denote its increment $\eta_{i}:=R_{i}-R_{i-1}$. Then $\eta_{i}$  has finite moments of all orders since each $\tau_{i}-\tau_{i-1}$ does so and $X_{x}(\ell)$ changes at most by one in $x$. Moreover, by the strong law of large numbers and the Markov chain ergodic theorem, 
		\begin{align}
			\E[\eta_{1}] =\lim_{n\rightarrow\infty} \frac{R_{n}}{n}  = \lim_{n\rightarrow\infty} \frac{\tau_{n}}{n}  \frac{(-X_{\tau_{n}(\ell)})}{\tau_{n}} = \E[\tau_{1}]\,  \E_{\pi^{s} \otimes \p}[ g^{\ell}(X_{x}^{s} , \xi_{x+1}) ] = 0.
		\end{align}
		Hence $R_{i}$ is a mean-zero random walk. 
		
		Denote $M:=\max(\tau_{1}, \tau_{2}-\tau_{1},\dots,\tau_{n}-\tau_{n-1})$. Since this is the maximum of i.i.d. random variables of finite moments of all orders, union bound and Chebyshev's inequality and \eqref{eq:elementary_ineq} give 
		\begin{align}
			\P(M\ge a)  = 1 - (1-\P(\tau_{1}\ge a))^{n} \le n \, \P(\tau_{1}\ge a) = O(n \, a^{-d})
		\end{align}
		for any integer $d\ge 1$. 
		Also, since the increments $X_{s+1}(\ell)-X_{s}(\ell)$ are bounded by 1, 
		\begin{align}
			\max_{0\le s\le n} R_{s}  &\ge 	\max_{0\le s\le \tau_{n}} -X_{s}(\ell)  -M \ge  \max_{0\le s\le n} -X_{s}(\ell)  - M.
		\end{align}
		Hence combining the above inequalities and using Kolmogorov's maximal inequality, for any $b>1/\sqrt{n}$, 
		\begin{align}
			\P\left( n^{-1} \max_{0\le s\le n} -X_{s}(\ell) \ge b  \right) &\le  	\P\left( \max_{0\le s\le n} R_{s}(\ell) \ge nb-M  \right) \\
			&\le \P\left( \max_{0\le s\le n} R_{s}(\ell) \ge n b-\sqrt{n} \right) + \P\left( M > \sqrt{n}  \right)  \\
			&=  \frac{n\Var(\eta_{1})}{ (n b-\sqrt{n})^{2}}  + O(n^{-2}). 
		\end{align}
		Then taking $b=n^{-1/6}$ and denoting $T_{n}:=\max_{0\le s\le n} -X_{s}(\ell)$, we get 
		\begin{align}\label{eq:T_n_tail_prob}
			\P\left( n^{-1} T_{n} \ge n^{-1/3} \right)  \le \frac{c}{n^{2/3}}
		\end{align}
		for some constant $c>0$.   Notice that $T_{n}$ is non-decreasing in $n$. By Borel-Cantelli Lemma and \eqref{eq:T_n_tail_prob}, we have that $n^{-2}T_{n^{2}}\rightarrow 0$ almost surely. Fix $k\ge 1$ and let $n=n(k)$ denote the largest inetger such that $n^{2}\le k < (n+1)^{2}$. Then using monotonicity, 
		\begin{align}
			\frac{n^{2}}{(n+1)^{2}}	\frac{T_{n^{2}}}{n^{2}} \le \frac{T_{k}}{k} \le \frac{T_{(n+1)^{2}}}{(n+1)^{2}} \frac{(n+1)^{2}}{n^{2}}. 
		\end{align}
		Taking $k\rightarrow\infty$, we deduce that $k^{-1}T_{k}\rightarrow 0$ almost surely as $k\rightarrow\infty$. Therefore, it follows that $n^{-1} \max_{0\le t\le n} (-X_{t}(\ell))$ converges to zero almost surely. By a symmetric argument, the same conclusion holds for $n^{-1} \max_{0\le t\le n} X_{t}(\ell)$. This completes the proof. 
	\end{proof}

	Now we are ready to prove Theorem \ref{thm:SRBM_weak_convergence} \textbf{(i)}. 
	
	\begin{proof}[\textbf{Proof of Theorem \ref{thm:SRBM_weak_convergence} \textup{(i)}}]
		
		We wish to show that 
		\begin{align}\label{eq:W_lim_claim1}
			\lim_{n\rightarrow\infty} n^{-1}W_{n} = \bmu \quad \textup{a.s..}
		\end{align}
		Note that by Lemma \ref{lemma:max_W_upper_bd}, 
		\begin{align}\label{eq:W_lim_claim2}
			\limsup_{n\rightarrow\infty} n^{-1}W_{n} \le \bmu \quad \textup{a.s.,}
		\end{align}
		where we interpret the inequality componentwise. Recall the Skorokhod decomposition $W_{x} = X_{x} + RY_{n}$ in Lemma \ref{lemma:Skorokhod}. We first consider the case when $\kappa\ge 3$. Then writing $R=I-Q$ with $Q=\textup{tridiag}_{\kappa}(0,0,1)$ and using the identity $(I-Q)(I+Q+Q^{2}+\dots)=I$, we see that $R^{-1}$ is the following upper diagonal matrix whose nonzero entries equal to one:
		\begin{align}
			R^{-1} = I + Q + \dots + Q^{\kappa-1}.  
		\end{align}
		Write 
		\begin{align}
			n^{-1}Y_{n} = R^{-1}(n^{-1}W_{n} - n^{-1}X_{n}). 
		\end{align}
		Then by using \eqref{eq:W_lim_claim2} and the fact that $\lim_{n\rightarrow\infty} n^{-1}X_{n}=\bmu$ a.s. (see Prop. \ref{prop:limit_thms_unstable_colors}), 
		\begin{align}\label{eq:W_lim_claim3}
			\mathbf{a}:= \limsup_{n\rightarrow\infty} (n^{-1}W_{n} - n^{-1}X_{n}) \le \mathbf{0}, 
		\end{align}
		where we applied limsup as well as inequality componentwise. It is crucial to note that $R^{-1}$ has nonnegative entries. Hence 
		\begin{align}
			\limsup_{n\rightarrow\infty} n^{-1}Y_{n} = R^{-1} \mathbf{a} \le \mathbf{0}. 
		\end{align}
		But since each $Y_{n}$ is a nonnegative vector by definition, it follows that $\lim_{n\rightarrow\infty} n^{-1} Y_{n} =\mathbf{0}$ almost surely. Then using the Skorokhod decomposition once more, we get 
		\begin{align} 
			\lim_{n\rightarrow\infty} n^{-1} W_{n} = \bmu + R \lim_{n\rightarrow\infty} n^{-1}Y_{n} = \bmu
		\end{align}
		almost surely, as desired. 
		
		It remains to verify \eqref{eq:W_lim_claim1} for the case when $\kappa=1,2$. Denote $\mathbf{y}:=\limsup_{n\rightarrow\infty} n^{-1}Y_{n}$. Suppose $\kappa=2$. Then the Skorokhod decomposition and \eqref{eq:W_lim_claim3} yield
		\begin{align}
			\begin{bmatrix}
				1 & -1 \\ 
				0 & 1
			\end{bmatrix}
			\mathbf{y} = \mathbf{a} \le \mathbf{0}. 
		\end{align}
		Note that $\mathbf{y}\ge \mathbf{0}$ since $Y_{n}\ge \mathbf{0}$ for all $n\ge 1$. Then it is easy to see that $\mathbf{y}$ must equal $\mathbf{0}$. The case for $\kappa=1$ can be argued similarly. 
	\end{proof}

	\vspace{0.2cm}
	\section{The diffusive scaling limit of the  carrier process}
	\label{sec:carrier_Skorokhod} 
	
	In this section, we prove Theorem \ref{thm:SRBM_weak_convergence} \textbf{(ii)} on the diffusive scaling limit of the carrier process in the critical and the supercritical regime. 	The definition of SRBM below is adapted from \cite[Def. 3.1]{williams1998invariance}.

	\begin{definition}[Semimartingale reflecting Brownian motion]\label{def:SRBM}
		Fix an integer $\kappa\ge 1$ and a subset $J\subseteq \{1,\dots, \kappa\}$. Let $S:=\{(x_{1},\dots,x_{\kappa})\in \R^{\kappa}\,:\, x_{i}\ge 0  \,\, \textup{for all $i\in J$} \}$ and let $\mathcal{B}$ denotes the Borel $\sigma$-algebra on $S$, $\nu$ is a probability measure on $(S,\mathcal{B})$, $\theta$ is a constant vector in $\R^{\kappa}$, $\Sigma$ is a $\kappa\times \kappa$  covariance matrix (symmetric and positive semidefinite\footnote{We allow the covariance matrix to be degenerate.}), and $R$ is a $\kappa\times \kappa$ matrix. A \textit{semimartingale reflecting Brownian motion} (SRBM) associated with the data $(S,\theta,\Sigma, R,\nu)$ is an $\{ \mathcal{F}_{t} \}$-adapted, $\kappa$-dimensional process $W$ defined on some probability space $(\Omega, \mathcal{F}, \P)$ and filtration $\{\mathcal{F}_{t}; t\ge 0\}$ (an increasing family of sub-$\sigma$-algebras of $\mathcal{F}$) such that 
		\begin{description}
			\item[(i)] $W = X + RY$, $\P$-a.s.;
			\item[(ii)] $\P$-a.s., $W$ has continuous paths and $W(t)\in S$ for all $t\ge 0$;
			\item[(iii)] Under $\P$, 
			\begin{description}[labelindent=-0.7cm]
				\item[(a)] $X$ is a $\kappa$-dimensional Brownian motion with drift vector $\theta$, covariance matrix $\Sigma$  and $X(0)\sim \nu$;
				\item[(b)] $\{  X(t) - X(0) - \theta t , \mathcal{F}_{t}; t\ge 0 \}$ is a martingale; 
			\end{description}
			\item[(iv)] $Y$ is an $\{ \mathcal{F}_{t} \}$-adapted, $\kappa$-dimensional process such that $\P$-a.s. for $i=1,\dots, \kappa$, 
			\begin{description}[labelindent=-0.7cm]
				\item[(a)] $Y_{i}(0)=0$; 
				\item[(b)] $Y_{i}$ is continuous and non-decreasing; 
				\item[(a)] $Y_{i}$ can increase only when $W$ is on the face $F_{i}:=\{ x\in S\,:\, x_{i}=0 \}$, i.e., $\int_{0}^{\infty} \mathbf{1}(W_{i}(s)> 0)\, dY_{i}(s) = 0$.  
			\end{description}
		\end{description}
	\end{definition}
	
	Roughly speaking, an SRBM $W=X+RY$ behaves like the Brownian motion $X$ in the interior of the domain $S$ and it is confined to the domain by the instantaneous ``reflection'' (or ``pushing'') at the boundary, where the direction of such ``reflection'' on the $i$th face $F_{i}$ is given by the $i$th column of the \textit{reflection matrix} $R$. Note that in Def. \ref{def:SRBM}, the domain $S$ only requires coordinates in the set $J$ be nonnegative, while it is standard to take $S$ to be the nonnegative orthant $\R^{\kappa}_{\ge 0}$. We take this slightly more general domain to analyze the diffusive scaling limit of the centered carrier process $\overline{W}_{t}$ in Theorem \ref{thm:SRBM_weak_convergence}, which can take negative values in coordinates corresponding to unstable colors.

	A classical result of Reiman and Williams \cite{reiman1988boundary} (see also \cite[Thm. 3.1]{williams1998invariance}) shows that an SRBM associated with $(S,\theta, \Sigma, R, \nu)$ with $S=\R^{\kappa}_{\ge 0}$ and $\Sigma$ non-degenerate uniquely exists if and only if the reflection matrix $R$ is \textit{completely-$\mathcal{S}$} (see Def. \ref{def:completely_S}). Roughly speaking, this condition means that at any boundary point of $S$, there exists a nonnegative linear combination of the reflection directions (i.e., columns of $R$) that points to the interior of $S$. 
	When $\Sigma$ is degenerate, then SRBM still exists but may not be unique. 
	\begin{definition}[Completely-$\mathcal{S}$]\label{def:completely_S}
		A matrix $R\in \R^{d\times d}$ is \textit{completely-$\mathcal{S}$} if for every principal submatrix $R_{0}$ of $R$, there is a nonnegative vector $x_{0}$ such that $R_{0}x_{0}$ has strictly positive coordinates. Here a principal submatrix of $R$ is a matrix obtained by deleting all rows and columns of $R$ with indices in some proper subset set $\mathcal{I}\subsetneq \{1,\dots,d\}$ (possibly empty).  
	\end{definition}

	It is critical to notice that the reflection matrix $R$ in \eqref{eq:def_reflection_mx} that gives a Skorokhod decomposition of the carrier process $W_{x}$  as in Lemma \ref{lemma:Skorokhod} has the following property: For $\kappa\ge 3$, $R=I-Q$ where $Q$ has a spectral radius less than one. In this case, we can say a lot about SRBM with a more direct argument. The first step is to recall that the problem that defines SRBM in Def. \ref{def:SRBM} is a particular instance of the classical  Skorokhod problem stated below.

	\begin{definition}[Skorokod Problem]\label{def:SP}
		Fix a subset $J\subseteq \{1,\dots, \kappa\}$ and let $S:=\{(x_{1},\dots,x_{\kappa})\in \R^{\kappa}\,:\, x_{i}\ge 0  \,\, \textup{for all $i\in J$} \}$.  Let $C_{S}$ denote the subspace of $C^{\kappa}(0,\infty)$ consisting of paths $x$ with $x(0)\in S$.	Fix matrix $R\in \R^{\kappa\times \kappa}$ and $x\in C_{S}$. A pair $(z,y)\in C^{\kappa}(0,\infty)\times C^{\kappa}(0,\infty)$ is a solution of the Skorohod problem for $x$ w.r.t. $R$ if the following conditions hold: 
		\begin{description}[itemsep=0.1cm]
			\item[(i)] $z(t) = x(t) + R\, y(t)$ for all $t\ge 0$.
			
			\item[(ii)] $z(t)\in S$ for all $t\ge 0$.
			
			\item[(iii)] For $i=1,\dots, \kappa$, $y_{i}(0)=0$, $y_{i}(t)$ is non-decreasing, and $\int_{0}^{\infty} \mathbf{1}( i \in J) \mathbf{1}(z_{i}(t) \ge 0 ) \, dy_{i}(t) = 0$ .
		\end{description}
	\end{definition}
	
	When the reflection matrix $R$ can be written as $R=I-Q$ where $Q$ is nonnegative and has a spectral radius less than one, then there is a unique solution $(z,y)$ to the Skorokhod problem for each path $x$ and the map $x\mapsto (z,y)$ (the Skorohod map) is continuous. This result is stated and proved in Theorem \ref{thm:SK_strong_sol}.

	\begin{theorem}[Harrison and Reiman '81]\label{thm:SK_strong_sol}
		Let $S=\R^{d}\times \R^{\kappa-d}_{\ge 0}$ and $C_{S}$ be as in Def. \ref{def:SP}. Suppose the reflection matrix $R$ can be written as $R=I-Q$ where $Q$ is nonnegative and has a spectral radius of less than one. Then for each path $x\in C_{S}$, there exists a unique pair of functions $(z,y) \in C^{\kappa}(0,\infty)\times C^{\kappa}(0,\infty)$ that solves the Skorokhod problem in Def. \ref{def:SP}. Furthermore, denoting $z=\phi(x)$ and $y=\psi(x)$, both $\phi$ and $\psi$ are continuous mappings $C_{S} \rightarrow C^{\kappa}(0,\infty)$. 
	\end{theorem}

	\begin{proof}
		The original result \cite[Thm. 1]{harrison1981reflected} is stated for $S=\R^{\kappa}_{\ge 0}$, where in our setting we allow $S$ to be the intersection of axes-parallel half-spaces in $\R^{\kappa}$. A minor modification of the proof of \cite[Thm. 1]{harrison1981reflected} will show the minor extension as stated above. We sketch the argument for completeness. 
		
		Without loss of generality, assume $S=\R^{d}\times \R^{\kappa-d}_{\ge 0}$ for some $d\in \{0,\dots,\kappa\}$. Denote $C=C^{\kappa}(0,\infty)$ and fix $x\in C_{S}$. Let $C_{0}$ be the set of paths $y\in C$ such that $y(0)=\mathbf{0}$ and  and non-decreasing componentwise. Define a map $\pi=\pi_{x}:C_{0}\rightarrow C_{0}$ such that 
		\begin{align}
			\pi(y)_{i}(t) = \begin{cases} 
				0 & \textup{if $i=1,\dots,d$} \\
				\sup_{0\le s \le t} \left[ y(s) Q - x(s) \right]^{+} & \textup{if $i=d+1,\dots,\kappa$}.
			\end{cases}
		\end{align}
		Then one can check that $(z,y)$ is a solution to the Skorokhod problem if and only if 
		\begin{align}\label{eq:SK_continuity_pf1}
			y\in C_{0}, \qquad y=\pi(y),\qquad z = x + (I-Q)y.
		\end{align}
		One can then argue that there is a unique solution $y\in C_{0}$ such that $y=\pi(y)$.


		To this end, for each square matrix $Q$, we let $\lVert Q \rVert_{\infty}$  denote its maximum absolute row sum. Since $Q$ is nonnegative and has spectral radius $<1$, there exists a positive diagonal matrix $\Lambda$ such that $\tilde{Q}:=\Lambda^{-1}Q\Lambda$ satisfies $\lVert \tilde{Q} \rVert_{\infty}<1$ \cite[Lem. 3]{veinott1969discrete}. Observe that $(z,y)$ satisfies \eqref{eq:SK_continuity_pf1} if and only if $(z \Lambda, y \Lambda )$ satisfies \eqref{eq:SK_continuity_pf1} with $x$ and $Q$ replaced by $\Lambda x$ and $\tilde{Q}$. Thus, without loss of generality, we may assume $\lVert Q \rVert_{\infty}<1$.

		Now fix $T\ge 0$ and define $C_{0}[0,T]$ and $C_{S}[0,T]$ in the obvious way. These are complete metric spaces endowed with the norm 
		\begin{align}
			\lVert y \rVert:= \max_{1\le j \le \kappa} \sup_{0\le t \le T}  |y_{j}(t)|. 
		\end{align}
		Then one can show that the map $\pi$ is a contraction on $C_{0}[0,T]$: 
		\begin{align}
			\lVert \pi(y) - \pi(y') \rVert \le \lVert Q \rVert_{\infty} \,\, \lVert y-y' \rVert.
		\end{align}
		Since $\lVert Q \rVert_{\infty}<1$, it follows that $\pi$ is a contraction mapping, implying that there is a unique fixed point $y\in C_{0}$. 
		
		Now to show the continuity of the mapping $x\mapsto \phi(x)$, we observe that $\phi(x)$, being the unique fixed point of $y=\pi_{x}(y)$ of the contraction mapping $\pi$, can be explicitly constructed as the limit of $y^{n}(x):=\pi_{x}^{n}(y^{0})$ with $y^{0}\equiv 0$. 
		Then note that for $x,x'\in C_{0}[0,T]$, 
		\begin{align}
			\lVert y^{n+1}(x) - y^{n+1}(x') \rVert \le 		\lVert  x - x'  \rVert + 	\lVert Q \rVert_{\infty} \, 	\lVert  y^{n}(x) - y^{n}(x')  \rVert. 
		\end{align}
		By an induction and taking $n\rightarrow \infty$, we get $	\lVert  \phi(x) - \phi(x')  \rVert \le 	\frac{1}{1-\lVert Q \rVert_{\infty}}	\lVert  x - x'  \rVert$. Thus $\phi$ is $	\frac{1}{1-\lVert Q \rVert_{\infty}}$-Lipschitz continuous on $C_{0}[0,T]$. Since $T$ was arbitrary, this implies continuity of $\phi$ on $C_{0}(0,\infty)$. Thus $\phi$ is continuous on $C^{\kappa}(0,\infty)$ in the topology of uniform convergence on compact intervals. The continuity of the mapping $x\mapsto \psi(x)$ is clear from the last identity in \eqref{eq:SK_continuity_pf1}. 
	\end{proof}

	In the proof of Theorem \ref{thm:SK_strong_sol}, we have used the fact that of $Q$ is a matrix of spectral radius less than one, then there exists a positive diagonal matrix $\Lambda$ such that $\Lambda^{-1}Q\Lambda$ has maximum absolute row sum strictly less than one, appealing to \cite[Lem. 3]{veinott1969discrete}. In our case, $Q=\textup{tridiag}_{\kappa}(0,0,1)$ and we can directly take $\Lambda$ to have diagonal entries $\Lambda(i,i)=\kappa-i+1$ for $i=1,\dots,\kappa$, in which case the maximum absolute row sum equals $\frac{\kappa-1}{\kappa}<1$.

	\begin{proof}[\textbf{Proof of Theorem} \ref{thm:SRBM_weak_convergence} \textup{\textbf{(ii)}}]
		For this proof, we will appeal to the continuity of the Skorokod map $x\mapsto (y,z)$ we established in Theorem \ref{thm:SK_strong_sol}. 
		Let $J=\{1,\dots,\kappa\}\setminus \{ \alpha_{j}\,;\, j=1,\dots, r,\, p_{\alpha_{j}}>p_{\alpha_{j+1}} \}$ and $S:=\{ (x_{1},\dots,x_{\kappa})\in \R^{\kappa} \,:\, x_{i}\ge 0 \,\, \textup{for all $i\in J$}  \}$. Let $\bmu=(\mu_{1},\dots,\mu_{\kappa}):=\sum_{j=1}^{r} \e_{\alpha_{j}} (p_{\alpha_{j}}- p_{\alpha_{j+1}})$. Then $\bmu$ is nonzero in its $j$th coordinate if and only if $j\in J$. Recall the Skorokhod decomposition of the carrier process $W_{x}$ in Lemma \ref{lemma:Skorokhod}:
		\begin{align}\label{eq:Skorokhod_decomp_centered_pf0}
			\overline{W}_{x}= \overline{X}_{x} + RY_{x} \quad \textup{for $x\in N$}, 
		\end{align}
		where we denoted $\overline{W}_{x}=W_{x}-x\bmu$ and $\overline{X}_{x}=X_{x}-x\bmu$. Since $W_{x}\in \R^{\kappa}_{\ge 0}$, we have $\overline{W}_{s}\in S$ for all $s\in \R_{\ge 0}$. Note that \eqref{eq:Skorokhod_decomp_centered_pf0} gives a Skorokhod decomposition of the centered carrier process $(\overline{W}_{x})_{x\in \N}$. Namely, for each $i\in J$, $Y_{x}(i)$ can increase only if $\overline{W}_{x}(i)=0$. This is because for $i\in J$, $\overline{W}_{x}(i)=W_{x}(i)$, and by Lemma \ref{lemma:Skorokhod}, we know that $Y_{x}(i)$ increases only if $W_{x}(i)=0$. From \eqref{eq:Skorokhod_decomp_centered_pf0}, we deduce 
		\begin{align}\label{eq:Skorokhod_decomp_centered_pf1}
			\widetilde{W}^{n}(t)=\widetilde{X}^{n}(t) + R\widetilde{Y}^{n}(t) \quad \textup{for $t\in \R_{\ge 0}$}, 
		\end{align}
		where $\widetilde{W}^{n}$, $\widetilde{X}^{n}$, and $\widetilde{Y}^{n}$ are the linear interpolations of $\frac{1}{\sqrt{n}}(W_{x}-x\bmu)$,  $\frac{1}{\sqrt{n}}(X_{x}-x\bmu)$, and  $\frac{1}{\sqrt{n}} Y_{x}$.

		Since $R=\textup{tridiag}_{\kappa}(0,1,-1)$, we can write $R=I-Q$ where $Q=\textup{tridiag}_{\kappa}(0,0,1)$, so $Q$ has spectral radius zero for all $\kappa\ge 1$ since $Q^{\kappa}$ is zero. Denoting $x\mapsto (\phi(x), \psi(x))$ by the Skorohod mapping as in Theorem \ref{thm:SK_strong_sol}, according to \eqref{eq:Skorokhod_decomp_centered_pf1}, for each $n\ge 1$, we have 
		\begin{align}
			&\phi( \widetilde{X}^{n} ) =  \widetilde{W}^{n} \quad \textup{and} \quad \psi( \overline{X}^{n}) = \widetilde{Y}^{n} .
		\end{align}
		That is, the pair $(\widetilde{W}^{n}, \widetilde{Y}^{n})$ is the unique solution of the Skorokhod problem for $\widetilde{X}^{n}$ with respect to the reflection matrix $R$. Recall that by Proposition \ref{prop:limit_thms_unstable_colors},  $\widetilde{X}^{n}$ converges weakly to the Brownian motion $B$ in $\R^{\kappa}$ with zero drift and covariance matrix $\Sigma$.  By continuity of the Skorohod mapping, it follows that 
		\begin{align}
			\widetilde{W}^{n} \Longrightarrow \lim_{n\rightarrow\infty} \phi(\widetilde{X}^{n}) = \phi(B),  \\
			\widetilde{Y}^{n} \Longrightarrow \lim_{n\rightarrow\infty} \psi(\widetilde{X}^{n}) = \psi(B).
		\end{align}
		In particular, $\widetilde{W}^{n}$ converges weakly to the SRBM associated with data $(S, \mathbf{0}, \Sigma, R, \delta_{\mathbf{0}})$, as desired. 
	\end{proof}

	\section{Proofs of Theorems \ref{thm:iid_supercritical} and \ref{thm:iid_supercritical}}
	\label{section:supercri}
	
	In this section, we establish scaling limits of the top soliton lengths for the i.i.d. model in the critical and the supercritical regimes.  
	
	By now, it is easy to deduce Theorem \ref{thm:iid_critical}.
	
	\begin{proof}[\textbf{Proof of Theorem \ref{thm:iid_critical}}]
		
		Suppose $p_{0}=\max(p_{1},\cdots, p_{\kappa})$. Then $\mathcal{C}_{u}^{\mathbf{p}}=\{0\le i \le \kappa\,\colon \, p_{i}=p_{0} \}$ and we may write $\mathcal{C}_{u}^{\mathbf{p}}=\{ \alpha_{0},\cdots,\alpha_{r} \}$ with $0=\alpha_{0}<\alpha_{1}<\cdots<\alpha_{r}$. Then the weak convergence of the diffusively scaled first soliton length in \eqref{eq:thm_iid_cri_lambda1} follows from  Lemma \ref{lemma:queue_formula_soliton}, Theorem \ref{thm:SRBM_weak_convergence}, and the continuous mapping theorem. 
		
		Next, we justify that $\lambda_{j}(n) = \Theta(n)$ with high probability for all $j\ge 1$. The upper bound follows since $\lambda_{j}(n)\le \lambda_{1}(n)=O(\sqrt{n})$ with high probability. For the lower bound, we use the fact that the carrier process in the critical regime converges weakly to an SRBM as in Theorem \ref{thm:SRBM_weak_convergence}. In particular, there are  excursions of the carrier process of height (i.e., the $L_{1}$-norm) at least $c\sqrt{n}$ with high probability if $c>0$ is small enough. Then the lower bound $\lambda_{1}(n)=\Omega(\sqrt{n})$ with high probability  follows from Lemma \ref{lemma:soliton_lengths_excursions}. 
	\end{proof}

	In the rest of this section, we prove  Theorem \ref{thm:iid_supercritical}. Throughout we will assume  $p^{*}=\max(p_{1},\cdots,p_{\kappa})>p_{0}$. Let $\alpha_{1}<\cdots<\alpha_{r}$ denote the unstable colors. Under the hypothesis it holds that $p_{\alpha_{1}}=p^{*}$.

	\begin{proof}[\textbf{Proof of Theorem \ref{thm:iid_supercritical} \textup{(i)}}]
		Let $\bmu=(\mu_{1},\dots,\mu_{\kappa})$ be as in Theorem \ref{thm:SRBM_weak_convergence}. By Lemma \ref{lemma:queue_formula_soliton} and Theorem \ref{thm:SRBM_weak_convergence}, almost surely, 
		\begin{align}
			\lim_{n\rightarrow\infty} 	n^{-1}\lambda_{1}(n)  &= \lim_{n\rightarrow\infty} n^{-1} \lVert W_{n} \rVert_{1} = \lVert \bmu \rVert_{1} \\
			&= (p_{\alpha_{1}} - p_{\alpha_{2}}) + (p_{\alpha_{2}} - p_{\alpha_{3}}) + \dots+ (p_{\alpha_{r}} - p_{0})  = p^{*}-p_{0}.
		\end{align}
		
		Next, recall the Skorokhod decomposition $W_{x}=X_{x} + RY_{x}$ in Lemma \ref{lemma:Skorokhod}. Define $t(n):=\argmax_{0\le t \le n} \lVert W_{t} \rVert_{1}$. Let $J$ denote the set of indices $i\in \{1,\dots,\kappa\}$ such that $\mu_{i}>0$. Then  $\mu_{i}=0$ if $i\notin J$, so 
		\begin{align}
			\lambda_{1}(n) - n \lVert \bmu \rVert_{1} &= \sum_{i=1}^{\kappa} W_{t(n)}(i)  - n \mu_{i} \\
			&= \sum_{i\in J} W_{t(n)}(i) - n\mu_{i} + \max_{0\le t \le n} \sum_{i\notin J}  W_{t}(i).
		\end{align}
		By Proposition \ref{prop:carrier_comparison_localized}, it follows that 
		\begin{align}\label{eq:lambda_1_fluctuation_pf1}
			\sum_{i\in J} W_{n}(i) - n\mu_{i} + \sum_{i\notin J}  W_{n}(i) \le 	\lambda_{1}(n) - n\bmu  \le \sum_{i\in J} \hat{X}_{t(n)}(i) - n\mu_{i} + \max_{0\le t \le n} \sum_{i\notin J}  W_{t}(i).
		\end{align}
		Recall that the linear interpolation of $n^{-1/2}(W_{n} - n\bmu)$ converges weakly to the SRBM with specified data as in Theorem \ref{thm:SRBM_weak_convergence}. Hence the lower bound in Theorem \ref{thm:iid_supercritical} \textbf{(i)} follows from above. For the upper bound, we use Proposition \ref{prop:supercritical_excursion_bd} to note that, almost surely, 
		\begin{align}
			\lim_{n\rightarrow\infty}		n^{-1/2} \left| 	\sum_{i\in J} \hat{X}_{t(n)}(i) - \sum_{i\in J} X_{t(n)}(i)   \right|   = 0.
		\end{align}
		Hence, almost surely, 
		\begin{align}
			& \limsup_{n\rightarrow\infty} n^{-1/2}	\sum_{i\in J}( \hat{X}_{t(n)}(i) - n\mu_{i}) +n^{-1/2} \max_{0\le t \le n} \sum_{i\notin J}  W_{t}(i) \\
			&\qquad =   \limsup_{n\rightarrow\infty} n^{-1/2}	\sum_{i\in J}( X_{n}(i) - n\mu_{i}) +n^{-1/2}\max_{0\le t \le n} \sum_{i\notin J}  W_{t}(i).
		\end{align}
		Recall that by Proposition \ref{prop:limit_thms_unstable_colors}, the linear interpolation of $n^{-1/2}(X_{n}-n\bmu)$ converges to a Brownian motion on $\R^{\kappa}$ with mean zero and an explicit covariance matrix $\Sigma$. 
		Also, by Theorem \ref{thm:SRBM_weak_convergence} and the continuous mapping theorem, 
		\begin{align}
			n^{-1/2}\max_{0\le t \le n} \sum_{i\notin J}  W_{t}(i) \Longrightarrow \sup_{0\le v \le 1} \sum_{i\notin J} W^{v}(v), 
		\end{align}
		where $W=(W^{1},\dots,W^{\kappa})$ is the SRBM in Theorem \ref{thm:SRBM_weak_convergence}. Thus the upper bound in \eqref{eq:lambda_1_fluctuation_thm} follows by the continuous mapping theorem. 
	\end{proof}

	Next, we complete the proof of Theorem \ref{thm:iid_supercritical} \textbf{(ii)}-\textbf{(iii)}. To this effect, it suffices to show the following statement. 
	
	\begin{theorem}\label{thm:subsequent_solitions_supercritical}
		Suppose $p^{*}>p_{0}$ and fix $j\ge 2$. Then the following hold. 
		\begin{description}
			\item[(i)] Suppose $p_{i}=p^{*}$ for a unique $1\le i \le \kappa$. Then $\lambda_{j}(n)=\Theta(\log n)$ with high probability. 
			
			\vspace{0.1cm}
			\item[(ii)] Suppose $p_{i}=p^{*}$ at least two distinct colors $1\le i \le \kappa$. Then $\lambda_{j}(n)=\Theta(\sqrt{n})$ with high probability. 
		\end{description} 
	\end{theorem}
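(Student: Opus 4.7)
The plan is to establish matching upper and lower bounds in each regime by combining three tools developed in the paper: the modified Greene-Kleitman invariants (Lemma \ref{lemma:GK_invariants}), the excursion-based lower bound of Lemma \ref{lemma:soliton_lengths_excursions}(ii), and the Markov additive-functional decomposition of the decoupled carrier from Section \ref{decoupling}.

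For the lower bounds, in (i) the plan is to exploit a ``pre-takeoff'' phase of length $\Theta(\log n)$: before the uniquely dominant color $\alpha_{1}$ permanently pushes the original carrier $S\Gamma_{t}$ into the supercritical regime, the chain returns to $\mathbf{0}$ many times and performs subcritical-type excursions. A gambler's ruin estimate analogous to Proposition \ref{prop:excursion_height_gambler_ruin} then shows the top $j$ excursion heights within this phase are each $\Omega(\log n)$, and Lemma \ref{lemma:soliton_lengths_excursions}(ii) yields $\lambda_{j}(n) = \Omega(\log n)$ with high probability. In (ii), the smallest positive color $\alpha_{1}$ with density $p^{*}$ has additive functional $\widetilde{S}^{\alpha_{1}}$ with zero drift by Proposition \ref{prop:drift_sub}, so $m_{\alpha_{1}}(\widetilde{\Gamma}_{t})$ exhibits reflecting-Brownian-type fluctuations at scale $\sqrt{n}$ by Proposition \ref{prop:critical_brownian_convergence}. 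This chain visits $0$ infinitely often, and its top $j$ excursion heights over $[0,n]$ are each $\Omega(\sqrt{n})$ with probability at least $1-\varepsilon$. To transfer these bounds from the decoupled carrier to $\lambda_{j}(n)$, I would adapt the coupling proof of Lemma \ref{lemma:soliton_lengths_excursions}(ii) to finite-capacity carriers that track color $\alpha_{1}$ in isolation, using Proposition \ref{prop:carrier_comparison_localized}(iii) to compare with the decoupled carrier; this transfer is the main technical obstacle for the lower bound in (ii).

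For the upper bounds, I would apply Lemma \ref{lemma:GK_invariants} to obtain $\lambda_{1}(n) + \cdots + \lambda_{j}(n) = \max_{A_{1} \prec \cdots \prec A_{j}} \sum_{i=1}^{j} \L(A_{i}, X^{n,\mathbf{p}})$ and show that this maximum is at most $(p^{*} - p_{0}) n + E_{j,n}$ for an error $E_{j,n}$ at the appropriate scale. Since each $A_{i}$ is non-increasing, $|A_{i}|$ is bounded by the longest non-increasing subsequence of $X^{n,\mathbf{p}}$ on $[\min A_{i}, \max A_{i}]$, which a direct color-split optimization shows concentrates at $p^{*}$ times the interval length with Gaussian fluctuations; together with the pairwise disjointness of the $j$ intervals and Cauchy-Schwarz, this yields $E_{j,n} = O(\sqrt{n})$ with probability at least $1-\varepsilon$. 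Combined with the CLT lower bound $\lambda_{1}(n) \geq (p^{*} - p_{0})n - O(\sqrt{n})$ from Theorem \ref{thm:top_soliton_supercritical}, this forces $\sum_{i=2}^{j} \lambda_{i}(n) = O(\sqrt{n})$, hence $\lambda_{j}(n) = O(\sqrt{n})$, proving (ii). The chief obstacle is the upper bound in (i), where $E_{j,n}$ must be sharpened from $O(\sqrt{n})$ to $O(\log n)$: this requires showing that under the strict uniqueness of $p^{*}$, the Gaussian fluctuation of $\lambda_{1}(n) + \cdots + \lambda_{j}(n)$ is almost perfectly synchronized with that of $\lambda_{1}(n)$ through their shared dependence on the Bernoulli sum $\sum_{x \leq n} \mathbf{1}(X^{\mathbf{p}}(x) = \alpha_{1})$, leaving only a logarithmic residual from the stable subcritical colors. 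Carrying this out should require a coupling between the optimal single-interval $A^{*}$ for $\lambda_{1}(n)$ and the optimal $j$-interval family, together with the exponential-moment bounds of Lemma \ref{lemma:excursion_length_MGF}.
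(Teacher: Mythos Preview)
Your carrier-based approach to the lower bounds has a fundamental flaw that breaks part (i) entirely. In the supercritical regime $p^{*}>p_{0}$, the full carrier height $\lVert S\Gamma_{t}\rVert_{1}$ drifts to $+\infty$ linearly, so the number $M_{n}$ of complete excursions is $O(1)$ in distribution (it is essentially a geometric random variable with a fixed success probability, independent of $n$). There is no ``pre-takeoff phase of length $\Theta(\log n)$ with many returns to $\mathbf{0}$'': with probability bounded away from zero the carrier never returns at all, so $\mathbf{h}_{j}(n)=0$ for every $j\ge 1$. Lemma~\ref{lemma:soliton_lengths_excursions}(ii) therefore cannot deliver $\lambda_{j}(n)=\Omega(\log n)$ with high probability. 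The same obstruction hits your lower bound for (ii): the excursions you need are of $m_{\alpha_{1}}(\widetilde{\Gamma}_{t})$, not of $S\Gamma_{t}$, and the proof of Lemma~\ref{lemma:soliton_lengths_excursions}(ii) genuinely requires the \emph{full} carrier to reset at $\mathbf{0}$ so that the finite-capacity carriers $\Gamma_{t;c}$ are synchronized across capacities. Adapting that argument to a single-color projection is not a routine modification.

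The paper avoids the carrier entirely for both lower bounds and works purely through the Greene--Kleitman formula. For $\Omega(\log n)$: take an optimal family of $j-1$ non-interlacing decreasing subsequences; at least one has support of linear length, and inside any linear interval one finds a run of $c\log n$ consecutive zeros with high probability. Splitting the subsequence at this run into two pieces removes those zeros from the penalty and thus increases the total penalized length by $c\log n$, forcing $\lambda_{j}(n)\ge c\log n$. For $\Omega(\sqrt{n})$ in (ii), the same splitting idea is used, but instead of a run of zeros one finds, inside a linear-length sub-interval where the optimal subsequence uses color $i$, a point $m$ where $D_{i',i}([{\cdot},m])\ge \alpha\sqrt{n}$ for a second maximal color $i'$; switching from color $i$ to $i'$ on one side of $m$ gains $\alpha\sqrt{n}$.

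Your upper bound for (i) is too vague, and the ``synchronization'' idea is not what is needed. The paper's argument is direct: writing $\lambda_{2}(n)\le \L(I)+\L(J)-\L(A^{(i)})$ where $A^{(i)}$ is the (suboptimal) set of all color-$i$ balls, and decomposing $I,J$ into color-constant blocks, the right-hand side becomes a sum over $O(\kappa)$ intervals $H$ of $D_{j,i}(H)$ with $j\ne i$. In the simple supercritical case every such $D_{j,i}$ has strictly negative mean, so Hoeffding plus a union bound over the $O(n^{2})$ possible intervals gives each term $\le C\log n$ with high probability. Your $O(\sqrt{n})$ upper bound for (ii) is essentially the paper's argument and is fine.
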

	
	We begin with the following definition. For $0\le i,j\le \kappa$ and a finite subset $H\subseteq \mathbb{N}$, define a random variable $D_{i,j}(H)$ by  
	\begin{align}
		D_{i,j}(H) = \sum_{x\in H} \left[  \mathbf{1}(\xi^{\mathbf{p}}(x)=i) - \mathbf{1}(\xi^{\mathbf{p}}(x)=j) \right],
	\end{align}	 
	which equals the difference of the number of color $i$ and color $j$ balls in $H$ given by $\xi^{\mathbf{p}}$.
	
	\begin{prop}\label{prop:interval_hoeffding}
		Fix $1\le i,j\le \kappa$ and suppose $p_{i}>p_{j}$. Fix a finite subset $H\subseteq \mathbb{N}$. Then for any constant $C>0$, 
		\begin{align}
			\P\left(  D_{j,i}(H)  \ge 2C \log n \right) \le  \exp(  - C (p_{i}-p_{j}) \log n)
		\end{align}
		for all $n\ge 1$. 
	\end{prop}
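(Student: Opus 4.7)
The proof plan is a direct Chernoff bound. First observe that, under the assumption that $D_{i,j}(H)$ is defined so that each summand is $Y_x := \mathbf{1}(X^{\mathbf{p}}(x)=j) - \mathbf{1}(X^{\mathbf{p}}(x)=i)$, the random variable $D_{j,i}(H)$ is a sum of $|H|$ i.i.d.\ random variables with values in $\{-1,0,1\}$, with $\PP(Y_x=1)=p_j$, $\PP(Y_x=-1)=p_i$, and $\PP(Y_x=0)=1-p_i-p_j$. By independence, for every $\lambda > 0$,
\[
\PP\left(D_{j,i}(H) \ge 2C\log n\right) \le e^{-2\lambda C \log n}\,\EE[e^{\lambda Y_x}]^{|H|}.
\]

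The clever choice is to pick $\lambda$ so that the MGF factor is at most $1$, which will make the bound independent of $|H|$. A direct computation gives
\[
\EE[e^{\lambda Y_x}] = p_j e^{\lambda} + p_i e^{-\lambda} + (1-p_i-p_j),
\]
and the choice $\lambda = \tfrac{1}{2}\log(p_i/p_j)$ (which is positive since $p_i > p_j$) balances the first two terms to give $\EE[e^{\lambda Y_x}] = 2\sqrt{p_i p_j} + (1-p_i-p_j) \le 1$ by the AM--GM inequality. Hence $\EE[e^{\lambda Y_x}]^{|H|} \le 1$ uniformly in $H$, and plugging $\lambda=\tfrac{1}{2}\log(p_i/p_j)$ into the Chernoff bound yields
\[
\PP\left(D_{j,i}(H) \ge 2C\log n\right) \le \exp\!\bigl(-C \log(p_i/p_j)\, \log n\bigr).
\]

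To convert this to the form stated in the proposition, I would compare $\log(p_i/p_j)$ with $p_i - p_j$. By the mean value theorem, $\log p_i - \log p_j = (p_i-p_j)/c$ for some $c \in (p_j,p_i) \subseteq (0,1]$, so $\log(p_i/p_j) \ge p_i - p_j$. Substituting this inequality into the exponent gives exactly the claimed bound
\[
\PP\left(D_{j,i}(H) \ge 2C\log n\right) \le \exp\!\bigl(-C(p_i-p_j)\log n\bigr).
\]

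This is a very standard argument with no real obstacle; the only point worth highlighting is the choice of $\lambda$ that symmetrizes the MGF so that the bound holds uniformly over all finite subsets $H$, which is essential for the intended application to a union bound over many subsets indexed by $j \ge 2$.
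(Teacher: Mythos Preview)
Your proof is correct. The paper takes a slightly different but equally elementary route: it applies Hoeffding's inequality directly, using that $\EE[D_{j,i}(H)]=-(p_i-p_j)m$ with $m=|H|$, and then chooses the deviation $t=(p_i-p_j)m+2C\log n$; since $t/m\ge p_i-p_j$, the Hoeffding exponent $t^{2}/(2m)$ is bounded below by $\tfrac{1}{2}(p_i-p_j)t\ge C(p_i-p_j)\log n$, which is how the dependence on $|H|$ disappears there. Your Chernoff argument removes $|H|$ instead by picking $\lambda=\tfrac{1}{2}\log(p_i/p_j)$ so that the moment generating factor is at most $1$; this actually yields the marginally sharper exponent $C\log(p_i/p_j)\log n$ before you relax it via $\log(p_i/p_j)\ge p_i-p_j$. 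The paper's version avoids that last comparison step, while yours makes the uniformity in $H$ more transparent; for the intended union bound over intervals, the two are interchangeable.
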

	
	\begin{proof}
		Let $\eps=p_{i}-p_{j}>0$ and denote $|H|=m$. Note that $\E[D_{j,i}(H)] = - \eps m$. Since $D_{j,i}(H)$ is a sum of i.i.d.  increments with absolute value at most one, by Hoeffding's inequality, 
		\begin{align}
			\P( D_{j,i}(H)  - \E[D_{j,i}(H)] \ge t ) \le e^{ -t^2/ (2 m)}
		\end{align}
		for any $t > 0$. Let $t = \eps m + 2C \log n$. Then $t / m  \ge \eps$, so 
		\begin{align}
			\P( D_{j,i}(H)  \ge 2C\log n ) = \P( D_{j,i}(H)  - \E[D_{j,i}(H)] \ge t ) \le e^{ - (\eps /2) t }  \le e^{ - \eps C \log n }.
		\end{align}
		This shows the assertion. 
	\end{proof}

	\begin{proof}[\textbf{Proof of Theorem \ref{thm:subsequent_solitions_supercritical}}]
		Denote $\xi:=\xi^{n,\p}$. 	Our argument is based on  Lemma \ref{lemma:GK_invariants}. In this proof, for integers $a<b$,  an `interval' $[a,b]$ will refer to the set $\{a,a+1,\dots,b\}$. We say a subset $A\subseteq \mathbb{N}$ is a non-increasing subsequence if $\xi$ is non-increasing on $A$. The `support' of $A$ is the interval of integers  $[\min(A),\max(A)]$.  
		
		We first show the upper bounds in \textbf{(i)} and \textbf{(ii)}. It suffices to obtain bounds on $\lambda_{2}(n)$ in the corresponding regimes. Recall the formula for $\lambda_{1}(n)+\lambda_{2}(n)$ given by Lemma \ref{lemma:GK_invariants}:
		\begin{align}\label{eq:lambda1+lambda2_GK}
			\lambda_{1}(n) + \lambda_{2}(n) &=  \max_{A_{1} \prec A_{2} \subseteq [1,n] } \,\L(A_{1},\xi) + \L(A_{2},\xi). 
		\end{align}
		Let $A_{1}\prec A_{2}$ be an optimal choice of subsequences that achieves $\lambda_{1}(n)+\lambda_{2}(n)$ according to \eqref{eq:lambda1+lambda2_GK}. Let $I=[a,b]$ and $J=[c,d]$ denote the supporting intervals of $A_{1}$ and $A_{2}$, respectively.  We split $A_{1}$ into successive disjoint sub-subsequences $A_{\kappa}', A_{\kappa-1}', \cdots , A_1'$ where in each $A_\ell'$ we only pick the balls of color $\ell$ in $A_{1}$. Let $I_{j}:=[\min A_{j}', \max A_{j}']$. This gives a non-interlacing partition of $I=I_{\kappa}\sqcup\dots \sqcup I_{1}$.  We split $A_{2}$ similarly and obtain a non-interlacing partition $J=J_{\kappa}\sqcup \dots \sqcup J_{1}$ similarly. This gives us a partition of the whole interval $[1, n]$ into the following collection of disjoint sub-intervals 
		\begin{align}\label{eq:pf_subsequent_soliton_length_partition}
			\mathcal{H}=\{ 	[1, a-1], I_{\kappa} , I_{\kappa-1}, \cdots , I_1, [b+1, c-1], J_{\kappa} , J_{\kappa-1}, \cdots , J_1, [d+1, n] \},
		\end{align}
		ordered from left to right.

		For $\lambda_1(n)$, we choose a sub-optimal non-increasing subsequence $A^{(i)}$ by choosing all balls of color $i$ in $[1, n]$. Then $\lambda_{1}(n) \ge \L(A^{(i)}, \xi)$ by  Lemma \ref{lemma:GK_invariants}, so \eqref{eq:lambda1+lambda2_GK} yields
		\begin{align}\label{eq:lambda1+lambda2_GK1}
			\lambda_{2}(n) \le \L(A_{1},\xi) + \L(A_{2},\xi) - \L(A^{(i)},\xi ).
		\end{align}
		Then breaking the right-hand side of \eqref{eq:lambda1+lambda2_GK1} into sub-intervals given by the partition in \eqref{eq:pf_subsequent_soliton_length_partition}, we may write 
		\begin{align}\label{eq:lambda1+lambda2_GK2}
			\L(A_{1},\xi) + \L(A_{2},\xi) - \L(A^{(i)},\xi ) = \sum_{H\in \mathcal{H}} f(H),
		\end{align}
		where if $H = I_j$ or $J_j$  ($1 \le j \le k$), 
		\begin{align}
			f(H)  &:=  (\text{number of balls of color $j$  in $H$}  -  \text{number of balls of color 0  in $H$)} \\
			&\qquad -  (\text{number of balls of color $i$  in $H$}  - \text{ number of balls of color 0  in $H$}) \\
			&=   D_{j,i}(H),
		\end{align}
		else if  $H =  [1, a-1], \,[b+1, c-1]$ or $[d+1, n]$,
		\begin{align}
			f(H)  &:=  (\text{number of balls of color $0$  in $H$ } -  \text{number of balls of color $i$  in $H$})\\
			&=  D_{0,i}(H).
		\end{align}

		Now suppose that $p_{i}$ is the unique maximum among $p_{1},\cdots,p_{\kappa}$ and assume $p_{i}>p_{0}$. Note that $\mathcal{H}$ contains $2\kappa+3$ intervals. Noting that $D_{i,i}(H)=0$, a union bound and Proposition \ref{prop:interval_hoeffding} give 
		\begin{align}
			\P\left( \sum_{H\in \mathcal{H}} f(H) \ge 2 (2\kappa+3) C\log n  \right) 
			&\le \sum_{[s,t]\subseteq [1,n]} \sum_{\substack{0\le \ell \le \kappa \\ \ell\ne i }} \P\left( D_{\ell, i}([s,t]) \ge 2 C\log n \right) \\
			&\le 3n^{2} \sum_{\substack{0\le \ell \le \kappa \\ \ell\ne i }} \exp(  - C (p_{i}-p_{\ell}) \log n)
		\end{align}
		for any fixed constant $C>0$. For sufficiently large constant $C>0$, the last expression tends to zero as $n\rightarrow \infty$, so this shows $\lambda_2 = O(\log n)$ with high probability.
		
		Next, suppose $p_{i}=p^{*}$ at least two distinct colors $1\le i \le \kappa$. If we compare the number of balls of color $j$ in $H\in \mathcal{H}$  minus the number of balls of color $i$ in $H$. By using a similar argument, $D_{j,i}(H)$ is $O(\log n)$ with high probability as long as $p_j<p^{*}$. If $p_j = p^{*}$, then by the triangle inequality, 
		\begin{align}\label{eq:D_bound}
			D_{j,i}(H) \le \max_{1\le s \le t \le n} \left|D_{j,i}([s,t])\right| \le 2 \max_{1\le t \le n} \left| D_{j,i}([1,t]) \right|. 
		\end{align}
		In this case $D_{j,i}([1,t])$ is a  symmetric random walk with $t$ increments. Hence for some large enough constant $C>0$, the right-hand side of \eqref{eq:D_bound} is at most $C\sqrt{n}$ with probability at least $1- \eps$ by the functional central limit theorem. This shows that $\lambda_{2}(n)=O(\sqrt{n})$ with probability at least $1-\eps$.
		
		Now we prove the lower bounds in \textbf{(i)} and \textbf{(ii)}. Fix $j\ge 2$.  Let $A_{1},\dots,A_{j-1}$ denote an optimal choice of non-interlacing subsets of $[1,n]$ such that 
		\begin{align} 
			\lambda_1(n) + \cdots + \lambda_{j-1}(n) = \sum_{i=1}^{j-1} \L(A_{i}, \xi). 
		\end{align}
		Denote $I_{i}:=[\min A_{i}, \max A_{i}]$ for $i=1,\dots, r-1$, so that $I_{1},\dots,I_{j-1}$ are non-interlaving supporting intervals for $A_{1},\dots,A_{j-1}$.  For each interval $J=[s,t]$, let $N_{0}(J)$ denote the maximum number of \textit{consecutive} $0$'s in the sequence $\xi_{s}, \xi_{s+1},\dots,\xi_{t}$. For each integer $1\le \ell \le \kappa$, let $M_{\ell}(J)$ denote the maximum number of $\ell$'s (not necessarily consecutive) in the sequence $\xi_{s}, \xi_{s+1},\dots,\xi_{t}$. We will use these notations for the rest of the proof.

		Fix a constant $0<c_{1}<1/(3\log p_{0}^{-1})$. We first show that $\P(\lambda_{j}(n)/\log n  \ge c_{1}) = 1-o(1)$. To this end, we claim that 
		\begin{align}\label{eq:thm_iid_sup_pf0}
			&	\P\left( N_{0}(I_{i}) \ge c_{1}\log n \,\, \textup{for some $i=1,\dots,r-1$} \right) = 1-o(1). 
		\end{align}
		Note that if $N_{0}(I_{i}) \ge c_{1}\log n$, then we can split the non-increasing subsequence $A_{i}$ into two non-increasing subsequences $A_{i}'$ and $A_{i}''$ by removing the $c_{1}\log n$ consecutive zeros in the supporting interval $I_{i}$. Then $A_{1}\prec \dots A_{i-1} \prec A_{i}' \prec A_{i}'' \prec \dots \prec A_{j-1}$ is a non-interlacing collection of non-increasing subsequences, whose total penalized length has now increased by at least $c_{1} \log n$. Thus by Lemma \ref{lemma:GK_invariants}, $\lambda_{j}(n) \ge c_{1} \log n$ with high probability if the claim \eqref{eq:thm_iid_sup_pf0} holds.

		Now we show \eqref{eq:thm_iid_sup_pf0}. Fix a constant $0<c_{2}<p^{*}-p_{0}$. Since $\L(A_{i}, \xi)\le |I_{i}|$, 
		\begin{align}\label{eq:thm_iid_sup_pf1}
			\P\left(\sum_{i=1}^{j-1} |I_{i}| <c_{2}n \right)  \le 			\P\left( \lambda_{1}(n)+\dots+\lambda_{j-1}(n) <c_{2}n \right)  \le \P(\lambda_{1}(n)<c_{2}n). 
		\end{align}
		Since $\lambda_{1}(n)/n\rightarrow p^{*}-p_{0}>c_{2}$ a.s. by Theorem \ref{thm:iid_supercritical} \textbf{(i)}, the above probability is of order $o(1)$.

		Next, by using a union bound,
		\begin{align}
			&	\P\left( \sum_{i=1}^{j-1} |I_{i}| \ge c_{2}n,\, N_{0}(I_{i}) < c_{1}\log n \,\, \textup{for all $i=1,\dots, j-1$} \right)  \\
			& \quad \le \P\left( \bigcup_{J_{1} \prec \dots \prec J_{j-1} \subseteq [1,n] } \left\{ \sum_{i=1}^{j-1} |J_{i}| \ge c_{2}n,\, N_{0}(J_{i}) < c_{1}\log n \,\, \textup{for all $i=1,\dots,j-1$} \right\} \right)  \\
			& \quad \le \P\left( \bigcup_{J_{1} \prec \dots \prec J_{j-1} \subseteq [1,n] }  \bigcup_{i=1}^{j-1} \left\{  |J_{i}| \ge \frac{c_{2}n}{r-1},\, N_{0}(J_{i}) < c_{1}\log n \right\} \right)  \\
			& \quad \le (r-1) n^{2(r-2)} \sum_{J \subseteq [1,n],\, |J|\ge \frac{c_{2}n}{r-1}} \P\left( N_{0}(J) < c_{1} \log n \right) \\ 
			& \quad \le (r-1) n^{2(r-1)}  \P\left( N_{0}([1,n]) < c_{1} \log n \right), \label{eq:thm_super_pf_1}
		\end{align}
		where $J_{i}$s and $J$ above denote deterministic intervals. We can subdivide the interval $[1,n]$ into consecutive subintervals $K_{1},K_{2},\dots$ of length $\lceil c_{1} \log n \rceil$. There are at least $\lfloor \frac{n}{c_{1}\log n} \rfloor$ such subintervals, and they can be fully occupied with balls of color 0 independently with probability $p_{0}^{\lceil c_{1} \log n \rceil}$. Hence, recalling $0<c_{1}<1/(3\log p_{0}^{-1})$, 
		\begin{align}
			\P\left( N_{0}([1,n]) < c_{1} \log n \right) &\le \left( 1-p_{0}^{ c_{1} \log n} \right)^{ \lfloor \frac{n}{c_{1}\log n} \rfloor} \\
			& \le \exp \left( - p_{0}^{ c_{1} \log n}  \lfloor \frac{n}{c_{1}\log n} \rfloor \right) \le \exp(-n^{1/3}). 
		\end{align}
		Therefore, \eqref{eq:thm_super_pf_1} is of order $o(1)$. Now \eqref{eq:thm_iid_sup_pf0} follows by a union bound.  In particular, this completes the proof of \textbf{(i)}.

		Finally, suppose $p_{\alpha_{1}}= p_{\alpha_{2}} =p^{*}$ for some $1\le \alpha_{1}<  \alpha_{2} \le \kappa$.  Fix $\eps>0$. We will show that there exists a constant $c=c(\eps,j)>0$ such that 
		\begin{align}
			\liminf_{n\rightarrow\infty} 	\P\left( n^{-1/2} \lambda_{j}(n) \ge c \right) \ge 1-\eps.
		\end{align}	
		To this end, we split each $A_{i}$ into  successive disjoint sub-subsequences $A_{i,\kappa}, \cdots , A_{i,2}, A_{i,1}$ where in each $A_{i,\ell}$ we only pick the balls of color $\ell$ in $A_{i}$. Denote $I_{i,\ell}:=[\min A_{i,\ell}, \max A_{i,\ell}]$. By \eqref{eq:thm_iid_sup_pf1} and a union bound, 
		\begin{align}
			\P\left(  | I_{i, \ell}|  \ge \frac{c_{2} n}{\kappa(j-1)} \,\, \textup{for some $1\le i \le j-1$ and $1\le \ell \le \kappa$} \right)=1-o(1).
		\end{align}
		Fix $\delta>0$. Partition $[0,n]$ into intervals $J_{k}:=[ k\delta n, (k+1) \delta n]$ of equal length $\lfloor \delta n \rfloor$. We can choose $\delta$ small enough so that any fixed interval of length $\frac{c_{2} n}{\kappa(j-1)} $ in $[1,n]$  contains $J_{k}$ for some $1\le k\le \lfloor \delta^{-1} \rfloor$. 
		
		For each $1\le \ell \le \kappa$, choose $\ell_{*}\in \{ i_{1}, i_{2} \} \setminus \{\ell\}$. Fix a constant $\alpha>0$ and define the following event 
		\begin{align}
			E_{k,\ell}:= \left\{  \max_{ t \le \delta n}  n^{-1/2} \left| D_{\ell, \ell_{*}} \big([ (k-1) \lfloor \delta n \rfloor, k \lfloor \delta n \rfloor + t] \big) \right|  \ge \alpha \right\}. 
		\end{align}
		Since $D_{i,i_{*}}$ on these disjoint intervals are i.i.d., by the functional central limit theorem, we have 
		\begin{align}
			\liminf_{n\rightarrow \infty}\, \P\left(  \bigcap_{k=1}^{\lfloor 1/\delta \rfloor }  \bigcap_{\ell=1}^{\kappa}  E_{k,\ell}  \right) \ge 1-\frac{\eps}{2}
		\end{align}
		as long as the constant $\alpha>0$ is small enough.  By a union bound, for all $n\ge 1$ sufficiently large, 
		\begin{align}
			&	\P\left( \left\{  \textup{$J_{k}\subseteq I_{i,\ell}$}  \,\, \textup{for some $k,i,\ell$}\right\} \cap E_{k,\ell} \right)  \ge 1-\eps. 
		\end{align}
		We now claim that 
		\begin{align}
			\left\{  \textup{$J_{k}\subseteq I_{i,\ell}$}  \,\, \textup{for some $k,i,\ell$} \right\} \cap E_{k,\ell}  \subseteq \{ \lambda_{j}(n) \ge \alpha\sqrt{n}  \},
		\end{align}
		which is enough to conclude the desired lower bound $\lambda_{j}(n) = \Omega(\sqrt{n})$. To show this claim, suppose the event on the left-hand side above holds. Denote $I_{i,\ell}=[e,f]$. The maximum of $D_{\ell, \ell_{*}}$ in the event $E_{k,\ell}$  occurs at site $m$ in $J_{k}$, so we may split the interval $[e, f]$ into $[e, m]$ and $[m+1, f]$.  Supppose  $D_{\ell, \ell_{*}}([e,m]) \ge \alpha \sqrt{n}$. Let $A_{i,\ell}^{-}$ and $A_{i,\ell}^{+}$ denote the subsequences formed by picking up all $\ell$'s in $[e,m]$ and all $\ell_{*}$'s in $[m+1, f]$, respectively. Now define two non-increasing subsequences $A_{i}',A_{i}''$ by 
		\begin{align}
			\begin{cases}			
				A_{i}':=[A_{i,\kappa},\dots,A_{i,\ell+1}, A_{i,\ell}^{-}, A_{i,\ell}^{+}],\quad A_{i}'':=[A_{i,\ell-1},\dots,A_{i,1}] & \textup{if $\ell>\ell_{*}$} \\
				A_{i}':=[A_{i,\kappa},\dots,A_{i,\ell+1}, A_{i,\ell}^{-}],\quad A_{i}'':=[A_{i,\ell}^{+}, A_{i,\ell-1},\dots,A_{i,1}] & \textup{if $\ell<\ell_{*}$}.
			\end{cases}
		\end{align}
		Together with the other $j-2$ subsequences $A_{1},\dots,A_{i-1}, A_{i+1},\dots,A_{j-1}$, these $j$ non-interlacing  and non-increasing subsequences achieve total penalized lengths at least $\lambda_{1}(n)+\dots+\lambda_{j-1}(n)+\alpha\sqrt{n}$. By Lemma \ref{lemma:GK_invariants}, this implies $\lambda_{j}(n) \ge \alpha\sqrt{n}$. If  $D_{\ell, \ell_{*}}([e,m]) \le -\alpha \sqrt{n}$, then let $A_{i,\ell}^{-}$ and $A_{i,\ell}^{+}$ denote the subsequences formed by picking up all $\ell_{*}$'s in $[e,m]$ and all $\ell$'s in $[m+1, f]$, respectively, and define 
		\begin{align}
			\begin{cases}			
				A_{i}':=[A_{i,\kappa},\dots,A_{i,\ell+1}, A_{i,\ell}^{-}],\quad A_{i}'':=[A_{i,\ell}^{+}, A_{i,\ell-1},\dots,A_{i,1}] & \textup{if $\ell>\ell_{*}$}, \\
				A_{i}':=[A_{i,\kappa},\dots,A_{i,\ell+1}, A_{i,\ell}^{-}, A_{i,\ell}^{+}],\quad A_{i}'':=[A_{i,\ell-1},\dots,A_{i,1}] & \textup{if $\ell<\ell_{*}$}.
			\end{cases}
		\end{align}
		In this case, we can also conclude $\lambda_{j}(n) \ge \alpha\sqrt{n}$ similarly. This completes the proof. 
	\end{proof}

	\vspace{0.3cm}
	\section{Proofs of combinatorial lemmas}
	
	\label{section:proof_combinatorial_lemmas}
	
	In this section, we establish various combinatorial statements about the $\kappa$-color BBS dynamics and the associated carrier processes. Our main goal is to show Lemmas \ref{lemma:queue_formula_soliton},  \ref{lemma:soliton_lengths_excursions}, and \ref{lemma:GK_invariants}. We also provide an elementary and self-contained proof of Lemma \ref{lemma:carrier_row_lengths}, which has been proved in the more general form in \cite[Prop. 4.5]{kuniba2020large} using connections with combinatorial $R$.

	\subsection{Proof of Lemmas \ref{lemma:queue_formula_soliton} and \ref{lemma:soliton_lengths_excursions}}
	
	In this subsection, we prove Lemmas \ref{lemma:queue_formula_soliton} and \ref{lemma:soliton_lengths_excursions}. We rely on the finite-capacity carriers (see Section \ref{subsection:carrier_process_finite}) and Lemma \ref{lemma:carrier_row_lengths}. We need an additional combinatorial observation about the `coupling' between the carrier processes of capacity $c$ and $c+1$ over the same BBS configuration, which is stated below.

	\begin{prop}\label{prop:finite_capacity_carrier_coupling}
		Let $\xi:\mathbb{N}\rightarrow \mathbb{Z}_{\kappa+1}$ be any $\kappa$-color BBS configuration with finite support. Denote by  $(\Gamma_{x;c})_{x\ge 0}$ and $(\Gamma_{x;c+1})_{t\ge 0}$ the carrier processes over $\xi$ with finite capacities $c$ and $c+1$, respectively. Then for any $t\ge 0$, $\Gamma_{x;c}$ viewed as a $c$-dimensional vector is obtained by omitting a single coordinate in $\Gamma_{x;c+1}$ viewed as a $c+1$-dimensional vector.
	\end{prop}
	
	\begin{proof}
		Fix a $\kappa$-color BBS configuration $\xi:\mathbb{N}\rightarrow \mathbb{Z}_{\kappa+1}$. Let $(\Gamma_{x;c})_{x\ge 0}$ and $(\Gamma_{x;c+1})_{x\ge 1}$ denote the carrier processes over $\xi$ with finite capacities $c$ and $c+1$, respectively. We will show the assertion by induction on $x\ge 0$. For $x=0$, both carriers are filled with zeros so omitting any entry of $\Gamma_{0;c+1}$ gives $\Gamma_{0;c}$. For the induction step, suppose the assertion holds for some $x\ge 0$. Denote $S=\Gamma_{x;c}, T=\Gamma_{x+1;c}\in \mathcal{B}_{c}$ and $S'=\Gamma_{x;c+1}, T'=\Gamma_{x+1;c+1}\in \mathcal{B}_{c+1}$. Recall that the entries in carrier states are non-increasing from left, which is the opposite of the convention for semistandard Young tableaux (as used in \cite{kuniba2020large} and \cite{kuniba2018randomized}).  
		
		By the induction hypothesis, we may assume that $S$ can be obtained from $T$ by omitting its $j_{*}$th entry $T(j_{*})=r$. Let $B$ and $A$ be the blocks to the left and right of the entry $T(j_{*})$ of $T$. Hence $S$ is the concatenation of the blocks $B$ and $A$ (see Figure \ref{fig:carrier_coupling_pf} left). Let $q:=\xi_{x+1}$.    
		
		\begin{figure*}[h]
			\centering
			\includegraphics[width=0.7 \linewidth]{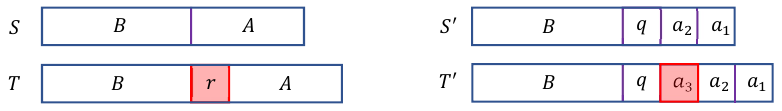}
			\caption{ (Left) $S\in \mathcal{B}_{c}$ is obtained from $T\in \mathcal{B}_{c+1}$ by omitting an entry $r$. (Right) After inserting $q$ into $T$ and $S$ according to the circular exclusion rule, one can still omit a single entry from the larger tableau to get the smaller one.  
			}
			\label{fig:carrier_coupling_pf}
		\end{figure*}
		
		First, suppose that $q$ does not exceed the smallest entry of $T$. In this case inserting $q$ into $T$ replaces the largest entry of $T$, so  $T'$ is given by $T'(j)=T(j+1)$ for $1\le j \le c$ and $T'(c+1) = q$. We also have $S'(j) = S(j+1)$ for $1\le j < c$ and $S'(c)=q$. It follows that $S'$ is obtained by omitting the same entry $r=T'(j_{*}-1)$ from $T'$. 
		
		Second, suppose that $q$ exceeds the smallest entry of $T$. so that $T'$ is computed from the pair $(T,q)$ using the reverse bumping. If $q$ replaces some entry of $A$ or $B$ in $T$ to get $T'$, then the same replacement occurs to compute $S'$ from the pair $(S,q)$. Hence in this case $S'$ is obtained by omitting $r=T'(j_{*})$ from $T'$. Otherwise, $q$ replaces $r$ in $T$ to get $T'$ (see in Figure \ref{fig:carrier_coupling_pf} right). Then $q$ must replace the largest entry of $A$ in $S$ to get $S'$. Then $S'$ is obtained from $T'$ by deleting the largest entry in $A$. This shows the assertion. 
	\end{proof}

	\begin{proof}[\textbf{Proof of Lemma} \ref{lemma:queue_formula_soliton}]
		Fix a $\kappa$-color BBS configuration $\xi:\mathbb{N}\rightarrow \mathbb{Z}_{\kappa+1}$. For each integer $c\ge 1$, let   $(\Gamma_{x;c})_{x\ge 0}$ denote the capacity-$c$ carrier process over $\xi$. Let $(\Gamma_{x})_{x\ge 0}$ denote the infinite capacity carrier process over $\xi$. We also write 
		\begin{align}
			M=\max_{s\ge 0} \left( \text{$\#$ of nonzero entries in $\Gamma_{s}$} \right)
		\end{align}
		Note that from Lemma \ref{lemma:carrier_row_lengths}, we can deduce that  for any $1\le j\le \rho_{1}(\xi)$, 
		\begin{align}\label{eq:lambda_energt_formula}
			\lambda_{j}(\xi) = | \{ k\ge 1\,:\, \rho_{k}(\xi)\ge j \} | = \max\left\{k\ge 1\,\bigg|\, E_{k}(\xi)\ge E_{k-1}(\xi)+j\right\},
		\end{align}
		where $E_{k}(\xi)$ is defined in \eqref{def:energy_X}.
		
		Let $\tau_{c}$ be the first time $t$ that the carrier $\Gamma_{x;c}$ is completely full with nonzero entries and $X_{0}(x+1)>0$ does not exceed the smallest entry of $\Gamma_{x;c}$. More precisely, let 
		\begin{align}
			\tau_{c}:= \inf \big\{x\ge 0\mid \text{$\Gamma_{x;c}$ contains all positive entries and  $0<\xi_{x+1}\le \min \Gamma_{x;c}(x)$}\big\}.
		\end{align}
		We let $\tau_{c}=\infty$ if the set on the right-hand side is empty. Note that if we consider two carrier processes $\Gamma_{x;c}$  and $\Gamma_{x;c+1}$, then $\tau_{c}+1$ is the first time that they contain distinct sets of nonzero entries. Moreover, $\Gamma_{\tau_{c}+1;c+1}$ has $c+1$ nonzero entries. Hence if $c\ge M$, then $\tau_{c}=\infty$ and the two carrier processes have the same set of nonzero entries for all times. It follows that 
		\begin{align}
			E_{c} = Const. \qquad \forall c\ge M.
		\end{align}
		Hence $\lambda_{1}(\xi)\le M$ by \eqref{eq:lambda_energt_formula}.

		On the other hand, note that $x^{*}:=\tau_{M-1}<\infty$ and $\xi_{x^{*}+1}$ does not exceed the smallest entry in $\Gamma_{x^{*};M-1}$ by definition of $\tau_{M-1}$. So $\mathbf{1}(\xi_{x^{*}+1} >\min \Gamma_{x^{*};M-1})=0$. Also, since $\Gamma_{x^{*};M-1}$ and $\Gamma_{x^{*};M}$ share the same positive entries, $\Gamma_{x^{*};M}$ is obtained from $\Gamma_{x^{*};M-1}$ by augmenting $0$ to its right. Since $\xi_{x^{*}+1}>0$ by definition of $x^{*}$, we have $\mathbf{1}(\xi_{x^{*}+1}>\min \Gamma_{x^{*};M})=1$. Moreover, by Proposition \ref{prop:finite_capacity_carrier_coupling}, 
		\begin{align}
			\mathbf{1}(\xi_{x+1}>\min \Gamma_{x;c}) \ge \mathbf{1}(\xi_{x+1}>\min \Gamma_{x;c-1})
		\end{align}
		for all $c\ge 1$ and $x\ge 0$. It follows that $E_{M}\ge E_{M-1}+1$. Hence by \eqref{eq:lambda_energt_formula}, we deduce $\lambda_{1}(\xi) \ge M$. This shows $\lambda_{1}(\xi)=M$, as desired.
	\end{proof}

	\begin{proof}[\textbf{Proof of Lemma \ref{lemma:soliton_lengths_excursions}}]
		Fix a $\kappa$-color BBS configuration $\xi$ with finitely many balls of positive colors. Let $W:=(W_{x})_{x\ge 0}$ be the carrier process over $\xi$. Let $T_{0}:=0$ and let $T_{k}$ for $k\ge 1$ denote the $k$th site that the carrier returns to the origin. Define sub-configurations $\xi^{(1)}:=(\xi_{0},\xi_{1},\dots,\xi_{T_{1}-1})$, $\xi^{(2)}:=(\xi_{T_{1}}, \xi_{T_{1}+1}, \dots, \xi_{T_{2}-1} )$, and so on. Let $N$ denote the number of nontrivial excursions of the carrier process $W$. Then $\xi$ is the concatenation of $\xi^{(1)},\dots,\xi^{(N)}$. We wish to show that the soliton decomposition of $\xi$ is the union of the soliton decomposition of $\xi^{(i)}$'s. Equivalently, we wish to show that 
		\begin{align}\label{eq:soliton_decomp_claim1}
			\rho_{c}(\xi) = \sum_{k=1}^{N} \rho_{c}(\xi^{(k)})\quad \textup{for all $c\ge 1$}.
		\end{align}

		To show the claim \eqref{eq:soliton_decomp_claim1} above, let $(\Gamma_{x;c})_{x\ge 0}$ denote the capacity-$c$ carrier process over $\xi$. By Proposition \ref{prop:finite_capacity_carrier_coupling}, we have $\Gamma_{T_{k};c}=\mathbf{0}$ for all $k\ge 0$. In words, the capacity-$c$ carrier resets to empty at each site $T_{k}$. Hence, if we let $(\Gamma_{x;c}^{(k)})_{T_{k-1}\le x < T_{k}}$ denote the capacity-$c$ carrier process over $\xi^{(k)}$, then 
		\begin{align}
			(\Gamma_{x;c}^{(k)})_{T_{k-1}\le x < T_{k}} = (\Gamma_{x;c})_{T_{k-1}\le x < T_{k}}. 
		\end{align}
		It follows that 
		\begin{align}
			\sum_{x=1}^{N} \mathbf{1}(\xi_{s}>\min \Gamma_{x-1;c}) = \sum_{k=1}^{N}  \sum_{T_{k-1}< x \le T_{k}} \mathbf{1}(\xi_{s}^{(k)}>\min \Gamma^{(k)}_{x-1;c}).
		\end{align}
		By Lemma \ref{lemma:carrier_row_lengths}, the above yields 
		\begin{align}
			\rho_{1}(\xi)+\dots+\rho_{c}(\xi) = \sum_{k=1}^{N}  	\rho_{1}(\xi^{(k)})+\dots+\rho_{c}(\xi^{(k)}). 
		\end{align}
		The above holds for all $c\ge 1$. By using induction in $c$, one can then deduce  \eqref{eq:soliton_decomp_claim1}. 
		
		The second part of the assertion that $\lambda_{j}(n)\ge \mathbf{h}_{j}(n)$ is immediate from the first part we have just shown above and Lemma \ref{lemma:queue_formula_soliton}. 
	\end{proof}

	\subsection{Proof of Lemmas  \ref{lemma:carrier_row_lengths} and \ref{lemma:GK_invariants}}
	\label{section:pf_lem_row_GK}
	
	Recall the notations introduced in Section \ref{subsection:GK_invariants}. 
	For any $\kappa$-color BBS configuration $X:\mathbb{N}\rightarrow \mathbb{Z}_{\kappa+1}$ with finite support and integer $k\ge 1$, we denote 
	\begin{align}
		R_{k}(\xi):=\max_{A_{1}\sqcup \cdots \sqcup A_{k}}\sum_{i=1}^{k} \NA(A_{i},\xi) ,\qquad L_{k}(\xi):=\max_{A_{1}\prec\cdots \prec A_{k} \subseteq \mathbb{N} } \sum_{i=1}^{k} \L(A_{i},\xi).
	\end{align}
	Lastly, we also denote 
	\begin{align}\label{def:energy_X}
		E_{k}(\xi) := \sum_{s=1}^{\infty} \mathbf{1}(\xi_{s}>\min \Gamma_{s-1;k})
	\end{align}
	where $(\Gamma_{x;i})_{t\ge 0}$ is the capacity-$i$ carrier process over $\xi$. We set $R_{0}(\xi)=L_{0}(\xi)=E_{0}(\xi)=0$ for convenience. In this subsection, we will show with an elementary argument that the above quantities associated with a $\kappa$-color BBS configuration are invariant under time evolution. This will lead to the proof of Lemmas \ref{lemma:GK_invariants} and \ref{lemma:carrier_row_lengths}.
	
	We remark that the invariants $E_{k}(\xi)$ are called the \textit{energy}.  They were first introduced in \cite{fukuda2000energy} for the $\kappa=1$ BBS and were recently used to define an \textit{energy matrix} for the general $\kappa$-color BBS that characterizes the full set of invariants. Time invariance of the energy (and also the energy matrix) in the literature is usually shown by using the alternative characterization of the BBS dynamics in terms of combinatorial $R$ and connections to the Yang-Baxter equation \cite{fukuda2000energy, inoue2012integrable, kuniba2020large, kuniba2018randomized}.

	Recall the BBS evolution rule defined in the introduction: For $i = \kappa, \kappa - 1, \cdots, 1$, the balls of color $i$ each make one jump to the right, into the first available empty box (site with color 0), with balls that start to the left jumping before balls that start to their right. (This is the map $K_{i}$ defined in the introduction.) A single step of $\kappa$-color BBS evolution $X\mapsto X'$ is defined by 
	\begin{align}
		\xi' := K_{1}\circ K_{2}\circ \cdots \circ K_{\kappa} (\xi). 
	\end{align}
	
	We propose two ways to simplify the $\kappa$-color BBS dynamics. First, using the cyclic symmetry of the system, we can reformulate the update of a $\kappa$-color BBS configuration in terms of $\kappa$ applications of a single rule. Namely, let $\mathcal{T}_{\kappa}$ denote the following update rule for BBS configurations with finite support: all the balls of color $\kappa$ jump according to the rule $K_{\kappa}$, and we relabel each of them with color $1$ and increase the positive colors of all other balls by $1$. Then we have 
	\begin{align}\label{eq:cyclic_transition_rule_equiv}
		K_{1}\circ K_{2}\circ \cdots \circ K_{\kappa} (\xi) = (\mathcal{T}_{\kappa})^{\kappa}(\xi).
	\end{align}
	
	Second, we introduce ``standardization'' of BBS dynamics, which allows us to only consider BBS configurations with no repeated use of any positive color. Namely, given a $\kappa$-color BBS configuration $\xi:\mathbb{N}\rightarrow \mathbb{Z}_{\kappa+1}$ of finite support, we define its \textit{standardization} to be the following map $\hat{\xi}:\mathbb{N}\rightarrow \mathbb{Z}_{\ge 0}$: For each $1\le i \le \kappa$, let $m_{i}$ denote the number of balls in $X$ of color $i$. Then to produce $\hat{\xi}$, we relabel first the color 1 balls from 1 to $m_{1}$ from right to left (so that the leftmost ball that was previously colored $1$ is now colored $m_1$), and then the original color 2 balls are relabeled with colors $m_{1}+1$ to $m_{1}+m_{2}$ from right to left, and so on. Thus, if $N=\sum_{i=1}^{\kappa}m_{i}$ is the total number of balls of positive color then $\hat{\xi}$ is an $N$-color BBS configuration with each color in $\{1,\cdots, N\}$ used for exactly one ball.

	\begin{prop}\label{prop:BBS_standardization}
		Let $\xi$ and $\hat{\xi}$ denote a $\kappa$-color BBS configuration with finite support and its standardization, respectively. Then the following hold. 
		\begin{description}	
			\item[(i)] Standardization preserves the number of ascents, non-interlacing non-increasing sequences, and their penalized lengths. In particular, for each $k\ge 1$, 
			\begin{align}
				R_{k}(\xi) = R_{k}(\hat{\xi}),\qquad L_{k}(\xi) = L_{k}(\hat{\xi}). 
			\end{align}
			
			\item[(ii)] $\xi$ and $\hat{\xi}$ give the same soliton partition, i.e., $\Lambda(\xi)=\Lambda(\hat{\xi})$.
		\end{description}
	\end{prop}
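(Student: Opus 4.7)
Our plan for part (i) rests on a pairwise order comparison. For any two positions $a<b$ in $\mathbb{N}$, we will verify that either $X(a)$ and $X(b)$ share the same strict comparison with $\hat X(a)$ and $\hat X(b)$ (that is, $X(a)<X(b)\iff \hat X(a)<\hat X(b)$ and $X(a)>X(b)\iff \hat X(a)>\hat X(b)$), or $X(a)=X(b)=0$ and $\hat X(a)=\hat X(b)=0$, or $X(a)=X(b)=c>0$ and $\hat X(a)>\hat X(b)$; the last case holds because standardization labels the color-$c$ balls from right to left within the interval $[N_{c-1}+1,N_c]$, where $N_j:=m_1+\cdots+m_j$. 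From this we will deduce that strict ascents along any $A\subseteq\mathbb{N}$ are preserved (giving $\NA(A,X)=\NA(A,\hat X)$), that non-increasingness along $A$ is preserved, and that the set of zero positions is unchanged (so the penalty appearing in $\L(A,\cdot)$ is unchanged). Since the non-interlacing condition $A_1\prec\cdots\prec A_k$ depends only on positions, taking maxima yields $R_k(X)=R_k(\hat X)$ and $L_k(X)=L_k(\hat X)$.

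For part (ii) we plan to prove by induction on $t\ge 0$ that $\hat X_t$ is the standardization of $X_t$, and in particular that the ball-position supports of $X_t$ and $\hat X_t$ agree at every time. The induction step rests on two observations about the basic update $K_i$. First, within $K_i$ the balls of color $i$ are moved one at a time from left to right, each to the leftmost empty site to its right; in $\hat X$, the balls originally of color $i$ carry the distinct labels $N_{i-1}+1,\ldots,N_i$ arranged right to left, so the leftmost carries the largest. It follows that applying $K_i$ to $X$ and applying $K_{N_i}\circ K_{N_i-1}\circ\cdots\circ K_{N_{i-1}+1}$ to $\hat X$ execute the same sequence of ball movements. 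Composing over $i=\kappa,\kappa-1,\ldots,1$ shows that one $\kappa$-color BBS step on $X$ and one $N$-color BBS step on $\hat X$ have identical positional effects, where $N=N_\kappa$. Second, $K_i$ preserves the left-to-right order of color-$i$ balls, since the leftmost such ball moves first into the leftmost empty site ahead of it and therefore remains the leftmost afterwards. Together, these two observations preserve the right-to-left labeling within each color block across a single BBS step, completing the induction.

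To conclude (ii), we will run both trajectories long enough that $X_t$ has decomposed into solitons separated by runs of zeros. On each non-increasing packet of $X_t$ the comparison from (i) forces $\hat X_t$ to be strictly decreasing on the same positions, producing a soliton of $\hat X_t$ of the same length. Because zeros are preserved by standardization, no two distinct solitons of $X_t$ can merge in $\hat X_t$; hence the column-length sequences of $\Lambda(X)$ and $\Lambda(\hat X)$ coincide.

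We expect the main obstacle to be the first observation in part (ii): one must verify carefully that the intermediate configurations produced during $K_{N_i}\circ\cdots\circ K_{N_{i-1}+1}$ on $\hat X$ have exactly the same empty-site sets as those produced during the single sweep $K_i$ on $X$, so that the "leftmost empty site to the right" chosen at each micro-step is identical. This is essentially a bookkeeping verification, but carrying it out precisely is what legitimizes the compositional identity on which the rest of the argument rests.
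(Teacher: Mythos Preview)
Your proposal is correct and follows essentially the same approach as the paper: part (i) via the order-preservation property of standardization, and part (ii) by showing that standardization commutes with the BBS update (the paper phrases this as $\widehat{X'}=(\hat X)'$, while you structure it as an explicit induction plus the decomposition of $K_i$ into the composition $K_{N_i}\circ\cdots\circ K_{N_{i-1}+1}$).

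One small correction: your one-line justification of observation 2 (``the leftmost such ball moves first \ldots\ and therefore remains the leftmost afterwards'') is not quite right as stated, since after its own move the leftmost color-$i$ ball may temporarily sit to the right of the next one. The paper's argument is what you actually need: if the ball at position $x$ jumps over the ball at position $y$ to land at $z$, then every site in $(y,z)$ was occupied, so when the ball at $y$ jumps it must land beyond $z$. This is the ``bookkeeping'' that deserves care---observation 1, which you flagged as the main obstacle, is in fact the routine micro-step induction you describe.
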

	
	\begin{proof}
		By construction, standardization preserves ordering in the following sense: for $y < z$, one has $\xi_{y} < \xi_{z}$ if and only if $\hat{\xi}(y) < \hat{\xi}(z)$.  Thus, a given sequence of balls has an ascent in $X$ if and only if it has an ascent in $\hat{\xi}$, and likewise, a given sequence of balls is non-increasing in $\xi$ if and only if it is non-increasing in $\hat{\xi}$.  Part \textbf{(i)} follows immediately.
		
		To show \textbf{(ii)}, denote by $\xi'$ and $(\hat{\xi})'$ the BBS configurations obtained by applying one step of the BBS evolution rule to $\xi$ and $\hat{\xi}$, respectively. Since standardization does not change the location of balls, it suffices to show that standardization commutes with BBS time evolution rules, i.e., 
		\begin{align}\label{eq:BBS_standardization_pf}
			\hat{\xi'} = (\hat{\xi})'.
		\end{align}
		To see this, observe that for the evolution $\xi\mapsto \xi'$, after all, balls of color $\kappa$ have jumped, they return to the same left-right order as before: if some ball of color $\kappa$, say in position $x$, jumped over some other ball of color $\kappa$, say in position $y$, to land in position $z$ (so $x < y < z$), it must be the case that sites between $y$ and $z$ were occupied.  Therefore, when it is time for the ball in position $y$ to jump, it jumps over all sites in $(y, z]$. Hence in the first step, the balls of color $\kappa$ in the previous step are triggered one by one from left, and since they restore the same left-right order, they will continue to be triggered in this order in all future steps. This exactly agrees with the time evolution $\hat{\xi}\mapsto \hat{\xi}'$. This shows \eqref{eq:BBS_standardization_pf}, as desired. 
	\end{proof}
	
	In the following proposition, we show the time-invariance of the three quantities associated with a given BBS configuration. This will show most of Lemma \ref{lemma:GK_invariants}.
	
	\begin{prop}\label{prop:invariants_proof}
		Let $\xi$ be an arbitrary $\kappa$-color BBS configuration of finite support. Fix $j\ge 1$. The following hold. 
		\begin{description}
			\item[(i)] $E_{j}(\xi)=E_{j}(\mathcal{T}_{\kappa}(\xi))$.  
			
			\vspace{0.1cm}
			\item[(ii)] $R_{j}(\xi)=E_{j}(\xi)$.
			
			\vspace{0.1cm}
			\item[(iii)] $L_{j}(\xi)=L_{j}(\mathcal{T}_{\kappa}(\xi))$.
			
			\vspace{0.1cm}
			\item[(iv)] If $(\xi^{(t)})_{t\ge 0}$ denotes the $\kappa$-color BBS trajectory with $\xi=\xi_{0}$, then for all $t\ge 1$, 
			\begin{align}
				E_{j}(\xi^{(t)})=R_{j}(\xi^{(t)})\equiv E_{j}(\xi), \qquad L_{j}(\xi^{(t)})\equiv L_{j}(\xi). 
			\end{align} 
		\end{description}
	\end{prop}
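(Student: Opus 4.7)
\textbf{Proof proposal for Proposition \ref{prop:invariants_proof}.}

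The plan is to deduce (iv) from (i)--(iii), and then to attack (i)--(iii) in turn, handling (ii) first since it is a static identity while (i) and (iii) are per-step invariance claims.

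\emph{Reducing (iv) to (i)--(iii).} By \eqref{eq:cyclic_transition_rule_equiv}, one BBS step factors as $\kappa$ successive applications of $\mathcal{T}_{\kappa}$. Hence (i) and (iii) immediately give that $E_{j}(X_{t})$ and $L_{j}(X_{t})$ are constant in $t$. For $R_{j}$, part (ii) upgrades this: $R_{j}(X_{t})=E_{j}(X_{t})=E_{j}(X_{0})=R_{j}(X_{0})$.

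\emph{Part (ii): $R_{j}(X)=E_{j}(X)$.} This is a Greene--Kleitman-type identity, and the natural approach is a pair of matching bounds built from the capacity-$j$ carrier $(\Gamma_{s;j})$ itself.
For the \emph{lower bound} $R_{j}(X)\geq E_{j}(X)$, I would construct an explicit partition $A_{1}\sqcup\cdots\sqcup A_{j}=\mathbb{N}$ whose total number of ascents equals $E_{j}(X)$. The construction assigns each time $s\geq 1$ to one of $j$ tracks according to which slot of $\Gamma_{s-1;j}$ the incoming ball $X(s)$ perturbs under the circular exclusion rule: each slot $1,\ldots,j$ gives one track, and the trajectory of colors seen by that slot produces precisely one ascent per insertion into that slot. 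For the \emph{upper bound} $R_{j}(X)\leq E_{j}(X)$, I would fix an arbitrary $j$-partition and inject the ascents in $\bigsqcup_{i}A_{i}$ into carrier insertion events, using the capacity constraint (each of the $j$ slots can contribute at most one ascent per incoming ball).

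\emph{Part (i): invariance of $E_{j}$ under $\mathcal{T}_{\kappa}$.} By Proposition \ref{prop:BBS_standardization} (the carrier energy is preserved by standardization, which one verifies along the way), it suffices to treat the case in which every positive color is used at most once. I would then couple the capacity-$j$ carriers $(\Gamma_{s;j}^{X})$ and $(\Gamma_{s;j}^{Y})$ for $X$ and $Y:=\mathcal{T}_{\kappa}(X)$, tracking their states through each site. The core observation is that the jump of a color-$\kappa$ ball under $K_{\kappa}$, followed by the cyclic relabeling $i\mapsto i+1\ (\mathrm{mod}\ \kappa+1)$, induces a bijection between the insertion events in $X$ and those in $Y$: a color-$\kappa$ ball either vacates the carrier in $X$ (contributing to $E_{j}$) or passes through in a way that exactly mirrors an insertion in $Y$. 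A case analysis over whether the carrier is at capacity, whether the incoming ball has color $\kappa$, and whether a color-$\kappa$ ball is currently held in the carrier yields the desired equality position-by-position.

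\emph{Part (iii): invariance of $L_{j}$ under $\mathcal{T}_{\kappa}$.} I would give a bijection-of-witnesses argument. Given a $j$-tuple $A_{1}\prec\cdots\prec A_{j}$ realizing $L_{j}(X)$, produce $B_{1}\prec\cdots\prec B_{j}$ for $Y=\mathcal{T}_{\kappa}(X)$ with the same total penalized length, and vice versa. The construction follows the trajectories of the color-$\kappa$ balls under $K_{\kappa}$: if a color-$\kappa$ ball in $A_{i}$ jumps from position $x$ to position $z$, shift the membership so that $B_{i}$ contains the new position $z$ (after relabeling to color $1$). The penalty $|A_{i}|-\#\{\text{zeros in }[\min A_{i},\max A_{i}]\}$ is preserved because each jump trades a zero (in the interior of $[\min B_{i},\max B_{i}]$) against one that was interior to $[\min A_{i},\max A_{i}]$; the non-increasing condition is preserved because the pre-jump pattern $\cdots\kappa \underbrace{0\cdots 0}_{\geq 1}\cdots$ maps to $\cdots\underbrace{0\cdots 0}_{\geq 1}\kappa\cdots$, which after the shift $\kappa\mapsto 1$ still respects monotonicity relative to the neighboring entries.

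The main obstacle I anticipate is the case analysis in (i): verifying the step-by-step coupling with a full carrier and simultaneous color-$\kappa$ movement is the delicate point, since the cyclic relabeling reshuffles which colors are ``large''. A clean way around this would be to introduce the ``rotated'' carrier (treating colors on the cycle $\mathbb{Z}_{\kappa+1}$) and observe that $\mathcal{T}_{\kappa}$ is precisely the intertwiner; this is the elementary incarnation of the combinatorial $R$ invariance underlying the BBS literature.
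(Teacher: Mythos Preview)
Your reduction of (iv) to (i)--(iii) and your lower bound in (ii) (constructing the $j$ tracks from the carrier slots) match the paper exactly. However, there are real gaps in the remaining parts.

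For (iii), your bijection-of-witnesses argument does not work as stated. The claim that the penalized length is \emph{exactly} preserved under the map $A_i \mapsto B_i$ is false: if the $\kappa$-ball lies in the span of some $A_i$ but is not a member of $A_i$, deleting it inserts a new zero and \emph{decreases} the penalized length by~1, with no compensating gain unless the landing site of the new $1$-ball happens to fall inside or just after some $A_\ell$. Your monotonicity claim is also wrong: the $\kappa$-ball was the largest entry, hence leftmost in its $A_i$, while after relabeling it becomes $1$, the smallest entry; its new position $z$ need not lie to the right of $\max A_i$, so simply transplanting it to $z$ can destroy the non-increasing property. The paper avoids these problems by proving only the one-sided inequality $L_j(X)\le L_j(\mathcal{T}_\kappa(X))$ via a careful case analysis (whether the $\kappa$-ball belongs to, is spanned by, or is disjoint from the witnesses; and separately whether the landing site $b$ is spanned by or abuts a witness), and then obtains the reverse inequality for free by applying the reverse-complement symmetry of the system.

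For (ii), your upper bound is not a proof: ``inject the ascents into carrier insertion events using the capacity constraint'' does not explain why an arbitrary partition cannot beat the carrier partition. The paper's argument is a surgery: given any optimal $A_1',\ldots,A_j'$, find the first site $x$ where the track assignment differs from the carrier's, and swap the tails of the two disagreeing tracks at $x$; this does not decrease the ascent count and pushes the first disagreement strictly to the right, terminating at the carrier partition. For (i), your coupling idea is correct in spirit, but the key mechanism you are missing is that once the $\kappa$-ball enters $\Gamma_{t;j}$ it never leaves (being maximal), so the carrier over $X$ effectively runs at capacity $j-1$ thereafter; the energy discrepancy of~1 created at the $\kappa$-site is then repaid either at the first time this reduced carrier overflows or at the site where the new $1$-ball enters the carrier over $\mathcal{T}_\kappa(X)$.
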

	
	We first derive Lemmas \ref{lemma:GK_invariants} and \ref{lemma:carrier_row_lengths} assuming Proposition \ref{prop:invariants_proof}.

	\begin{proof}[\textbf{Proof of Lemma  \ref{lemma:carrier_row_lengths} and \ref{lemma:GK_invariants}}]
		
		Let $(\xi^{(t)})_{t\ge 0}$ be a $\kappa$-color BBS trajectory such that $\xi_{0}$ has finite support. We take $T\ge 1$ large enough so that at time $T$ the system decomposes into non-interacting solitons whose lengths are non-decreasing from left. We can reformulate the condition that a $\kappa$-color BBS configuration has reached its soliton decomposition as follows: Suppose two consecutive solitons are separated by $g$ 0's, where the left and right solitons have length $l$ and $r$, where `length' of a soliton is its number of balls of positive colors. Suppose the gap is small, i.e., $g<l$. In order for the left soliton to be preserved during the update $\xi^{(T)}\mapsto \xi^{(T+1)}$, all balls in the left soliton must be dropped by the carrier before any balls in the right soliton are dropped. It follows that for each $i\ge 1$, the following `separation condition' must hold at time $T$:
		\begin{align}\label{eq:separation_condition}
			\begin{matrix}
				\text{The $i$th largest entry of the right soliton is  strictly larger} \\
				\text{than the $i + g$th largest entry of the left soliton.}
			\end{matrix}
		\end{align}
		When $\kappa = 1$, this simply asserts that each soliton of length $l$ must be followed by at least $l$ empty sites. This is not the case for $\kappa > 1$, as illustrated in the example
		\begin{align}
			\cdots 00433200431100\cdots.
		\end{align}
		For each $k\ge 1$, let $\lambda_{k}$ denote the length of the $k$th-longest soliton and let $\rho_{k}$ denote the number of solitons of length $\ge k$. They both form the same Young diagram, whose $k$th column and row lengths are given by $\lambda_{k}$ and $\rho_{k}$, respectively. 
		
		For each $j\ge 1$, let $(\Gamma_{s;j})_{s\ge 0}$ denote the capacity-$j$ carrier process on $\xi^{(t)}$. As the carrier process over $\xi^{(t)}$ runs over a soliton of length $k$, the carrier obtains $\min(k,j)$ contribution to the energy. When the carrier was empty at the beginning of the soliton, this is clear, and otherwise, it is still true due to the separation condition \eqref{eq:separation_condition}. Hence we have 
		\begin{align}
			E_{j}(\xi^{(T)}) = \sum_{k=1}^{\infty} \min(\lambda_{k}, j) = \sum_{k=1}^{j}\rho_{k}.
		\end{align}
		Then by Proposition \ref{prop:invariants_proof}, we deduce 
		\begin{align}
			R_{j}(\xi^{(t)})=E_{j}(\xi^{(t)}) = E_{j}(\xi^{(T)}) =  \sum_{k=1}^{j}\rho_{k}
		\end{align}
		for all $t\ge 0$, as desired. In the general case, the above equations hold due to the separation condition \eqref{eq:separation_condition}. This shows Lemma \ref{lemma:carrier_row_lengths} as well as the first equation in Lemma \ref{lemma:GK_invariants}. 
		
		Similarly, for the second equation in Lemma \ref{lemma:GK_invariants}, it suffices to show $L_{j}(\xi^{(T)})=\lambda_{1}+\cdots+\lambda_{j}$.  
		It is easy to see $L_{j}(\xi^{(T)})\ge \lambda_{1}+\cdots+\lambda_{j}$ by choosing the $j$ longest non-increasing sequences given by the top $j$ solitons. It remains to show the converse inequality, choose a collection of non-interlacing non-increasing subsequences on supports $A_{1},A_{2},\cdots,A_{j}$ that achieves $L_{j}(\xi^{(T)})$. We may assume that $|A_{1}|+\cdots+|A_{j}|$ is as small as possible, where $|\cdot|$ means (non-penalized) cardinality. We claim that every $A_{i}$ is contained in the support of a single soliton (where it has positive colors). Then clearly the maximum sum of penalized lengths is achieved when $A_{i}$'s are the support of the $j$ longest non-increasing sequences given by the solitons, which shows the assertion. 
		
		To show the claim, for each $i\ge 1$, let $\mathbf{u}_{i}$ denote the maximal non-increasing subsequence of positive colors in the $i$th longest soliton in $\xi^{(T)}$. Schematically, we can write $\xi^{(T)}$ as 
		\begin{align}
			\xi^{(T)} : \quad \cdots \mathbf{u}_{3}  0\cdots 0\mathbf{u}_{2} 0\cdots 0\mathbf{u}_{1} 00 \cdots.
		\end{align} 
		Let $l_{i}$ denote the number of 0's between $\mathbf{u}_{i+1}$ and $\mathbf{u}_{i}$. 
		
		Suppose for contradiction that some $A_{k}$ intersects with two $\mathbf{u}_{i}$'s. Let $i$ be as small as possible so that $A_{k}$ intersects with $\mathbf{u}_{i+1}$ and $\mathbf{u}_{i}$. We first suppose the case when the two solitons have a sufficient gap, i.e., $l_{i+1}\ge \lambda_{i+1}$. Let $A_{k}' = A_{k} \setminus \mathbf{u}_{i+1}$. Then $A_{1},\cdots, A_{k-1},A_{k}',A_{k+1},\cdots,A_{j}$ is a sequence of non-interlacing non-increasing subsequences in $\xi^{(t)}$ with a strictly smaller total number of elements than the original sequence. Moreover, this new sequence achieves the optimum $L_{j}(\xi^{(T)})$ since   
		\begin{align}
			\L(A_{k}',\xi^{(T)}) \ge \L(A_{k}, \xi^{(T)}) - \mathbf{u}_{i+1} + l_{i} \ge \L(A_{k}, \xi^{(T)}).
		\end{align}
		Namely, omitting all elements of $\mathbf{u}_{i+1}$ from $A_{k}$ deletes at most $|\mathbf{u}_{i+1}|$ positive numbers but at least $l_{i}\ge |\mathbf{u}_{i+1}|$ zeros. This contradicts the minimality of the original sequence $A_{1},\cdots,A_{j}$. This shows the claim. Lastly, when the gap between the solitons is small, i.e., $l_{i+1}<\lambda_{i+1}$, one can argue similarly by using the separation condition \eqref{eq:separation_condition}. This shows the claim, as desired.  
	\end{proof} 
	
	Lastly in this subsection, we prove Proposition \ref{prop:invariants_proof}.

	\begin{proof}[\textbf{Proof of Proposition \ref{prop:invariants_proof}}]
		\textbf{(iv)} immediately follows from \textbf{(i)}-\textbf{(iii)}. According to Proposition \ref{prop:BBS_standardization}, the assertion is valid for arbitrary BBS if and only if it is true for the standardized system with initial configuration $\hat{\xi}$, where each positive color is used exactly once. Hence, without loss of generality, we may assume that each positive color in $\xi$ is used exactly once. Furthermore, in proving (i)-(iii), we may assume that there is a ball of color $\kappa$ in $\xi$, since otherwise the cyclic update rule $\mathcal{T}_{\kappa}$ simply increases all positive colors by $1$. Since all the invariants depend only on the relative ordering between ball colors, the assertion holds trivially. We will also denote $\xi'=\mathcal{T}_{\kappa}(\xi)$.  For any string $\mathbf{u}$ of integers in $\{0,1,\dots,\kappa-1\}$, we let $\mathbf{u}'$ denote the string obtained by incrementing the positive integers in $\mathbf{u}$ by one. 
		
		\begin{description}
			\item[(i)] Suppose $\xi_{x}=\kappa$ and the ball of color $\kappa$ is in a contiguous block of balls whose labels are $\mathbf{u}\kappa \mathbf{v}0\mathbf{w}$ for some words $\mathbf{u}, \mathbf{v}$. Note that $\mathbf{u}$ and $\mathbf{w}$ consist of integers in $\{0,\dots,\kappa-1\}$, while $\mathbf{v}$ is either empty or only has positive integers $<\kappa$. After the update $\xi\mapsto \xi':=\mathcal{T}_{\kappa}(\xi)$, we reach an arrangement in which $\mathbf{u}, \mathbf{v}$, and $\mathbf{w}$ have had their labels incremented, the space between them is empty ($\xi'_{x}=0$), and $1$ follows $\mathbf{v}$. Let $y$ be the site such that $\xi'_{y}=1$. Here is a schematic:
			\[
			\begin{array}{c|ccccccc}
				\textrm{configuration } & &&& \textrm{arrangement} &&& \\
				\hline
				\xi &  &  & [\,\; \cdots \;\; \mathbf{u} \;\; \cdots \,\;] & \kappa & [\,\; \cdots \;\; \mathbf{v} \;\; \cdots \,\;] & 0 &\mathbf{w} \\[24pt]
				\xi'=\mathcal{T}_{\kappa}(\xi) &  &  &  [\,\; \cdots \;\; \mathbf{u}' \;\; \cdots \,\;]  & 0 &[\,\; \cdots \;\; \mathbf{v}' \;\; \cdots \,\;] & 1 & \mathbf{w}' 
			\end{array}
			\]
			
			Consider running the capacity-$j$ carrier over $\xi$ and $\mathcal{T}_{\kappa}(\xi)$ and computing their energies $E_{j}(\xi)$ and $E_{j}(\xi')$. Let the corresponding carrier processes be denoted by $\Gamma:=(\Gamma_{x})_{x\ge 0}$ and $\Gamma':=(\Gamma_{x}')_{t\ge 0}$, respectively. Observe that up to `time' $x-1$, the two carriers go through the equivalent environments $\mathbf{u}$ and $\mathbf{u}'$, so $\Gamma_{x-1}'$ can be obtained from $\Gamma_{x-1}$ by adding 1 to all positive colors in the latter carrier. It follows that the contributions to the energies of both carry up to this point are the same. 
			
			Next, after inserting $\xi_{x}=\kappa$ and $\xi'_{x}=0$ into these carriers, we get carrier states $\Gamma_{x}=[\kappa, A, 0\cdots 0]$ and  $\Gamma_{x}'=[A', 0\cdots 0]$ for some (possibly empty) positive decreasing sequence $A$ (see Figure \ref{fig:energy_invairance} left). This only adds 1 to the energy for the carrier $\Gamma$. Also note that, since $\kappa$ is the unique largest color in the system, it sits in the carrier $\Gamma$ and does not interact with any other incoming balls thereafter. We can think of this as the capacity of the carrier $\Gamma$ being decreased to $j-1$ after time $x$. Then over the interval $(x,\infty)$, the carriers go through the input $[\mathbf{v}0\mathbf{w}]$ and $[\mathbf{v}'1\mathbf{w}']$, respectively.
			
			\begin{figure*}[h]
				\centering
				\includegraphics[width=0.8 \linewidth]{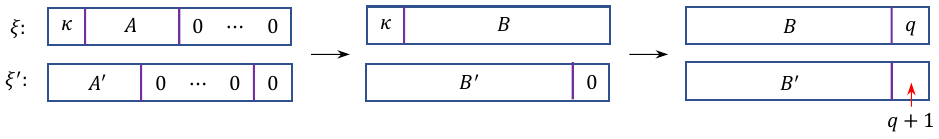}
				\caption{ Two capacity-$j$ carriers over $\xi$ and $\xi'=\mathcal{T}_{\kappa}(\xi)$. They end up with the same energy. 
				}
				\label{fig:energy_invairance}
			\end{figure*}
			
			Ignoring $\kappa$ in the carrier $\Gamma$ and shift by 1, they both have the same dynamics (and hence the same contribution to the energy) until the first time $x^{*}$ that $\Gamma_{x^{*}}$ is full and a new ball of color $\xi_{x^{*}+1}=q>\min \Gamma_{x^{*}}$. In this case, $q+1$ replaces 0 in $\Gamma'_{x^{*}}$ but it replaces $\kappa$ in $\Gamma_{x^{*}}$. If such $x^{*}$ is not encountered up to the location $y$ of $1$ in $\xi'$, then at site $y$, 0 replaces the maximum entry in $\Gamma_{y}$ but $1$ replaces 0 in $\Gamma_{y}'$, so this makes up the energy gap of 1 between the two carriers. Otherwise, suppose there exists such $x^{*}$ between $x$ and $y$. Then we can write the carrier states as $\Gamma_{x^{*}}=[\kappa, B]$ and $\Gamma_{x^{*}}'=[B+1, 0]$ for some positive decreasing sequence $B$ of length $j-1$. Then since $\xi_{x^{*}+1}=q>\min \Gamma_{x^{*}}$, inserting $q$ (resp., $q+1$) into $\Gamma_{x^{*}}$ (resp., $\Gamma_{x^{*}}'$) replaces $\kappa$ (resp., 0), only adding 1 to the energy for $\Gamma'$. Then $\Gamma_{x^{*}+1}=[B,q]$ and $\Gamma_{x^{*}+1}'=[B+1,q+1]$ and all colors in $\Gamma'$ are at least 2, so inserting 0 and 1 at site $y$ does not increment energies of both carriers. Hence they end up with the same energy. This shows the assertion. 
			
			\vspace{0.1cm}
			\item[(ii)] Let $(\Gamma_{x})_{x\ge 0}$ denote the capacity-$j$ carrier process over $\xi$. We will partition the sites that contain balls of positive colors into $j$ disjoint sets $A_{1},\dots,A_{j}$ such that if $x\in A_{i}$ and the energy $E_{j}$ increases when inserting the ball $\xi_{x}$ into the carrier $\Gamma_{x-1}$, then either $x$ is the rightmost (smallest) element of $A_{i}$ or there exists a unique $y\in A_{i}$ such that $(y,x)$ counts as an ascent in $A_{i}$. The existence of such subsets $A_{1},\dots,A_{j}$ implies that 
			\begin{align}
				R_{j}(\xi)\ge \sum_{i=1}^{j} \NA(A_{i},\xi) \ge E_{j}(\xi).
			\end{align}

			For this proof, we will consider sites with color zero as having a ball of color zero. We will recursively construct sets $A_{1}(x),\dots,A_{j}(x)$ for $x\ge 0$ as follows. Initially, make all $j$ sets to be empty. Consider the ball at site $x$ with color $\xi_{x}$ (we may simply call it the `ball $\xi_{x}$') is inserted into the carrier $\Gamma_{x-1}$. There are $j$ positions in $\Gamma_{x-1}$ at which $\xi_{x}$ can be placed after the insersion, and let $r(x)\in \{1,\dots,j\}$ denote that position. Note that $r(x)<j$ if and only if $\xi_{x}>\min \Gamma_{x}$ if and only if $E_{j}$ increase by one. Now define $A_{1}(x),\dots,A_{j}(x)$ as follows: For $i=1,\dots,j$,
			\begin{align}
				&	\textup{If $r(x)<j$:} 	
				\hspace{1cm} A_{i}(x) = 
				\begin{cases}
					A_{i}(x-1) \cup \{ x \} & \textup{if $r(x)=i$} \\ 
					A_{i}(x-1) & \textup{if $r(x)\ne i$},
				\end{cases}
				\\
				&	\textup{If $r(x)=j$:} 	
				\hspace{1cm} A_{i}(x) = 
				\begin{cases}
					A_{i-1 \, (\textup{mod $j$})}(x-1) \cup \{ x \} & \textup{if $i=j$} \\ 
					A_{i-1 \, (\textup{mod $j$})}(x-1) & \textup{if $i\ne j$},
				\end{cases}
			\end{align}
			That is, if the energy $E_{j}$ increases by inserting the ball $\xi_{x}$ into the carrier $\Gamma_{x-1}$, which occurs exaclty when $r(x)<j$, we append $x$ to the set $A_{i}(x-1)$ where the new ball $\xi_{x}$ is placed at in $\Gamma_{x-1}$. Otherwise, the new ball $\xi_{x}$ is inserted in position $j$, and all the other balls are shifted to the left by one, while the ball at position $1$ is dropped out. In this case, we first shift the indices of all sets $A_{1}(x-1),\dots, A_{j}(x-1)$ by $-1$ modulo $j$, and then append $x$ to the set with index $j$ (previously of index $1$).

			Then clearly $A_{i}$'s are disjoint and partitions $\mathbb{N}$. Moreover, we claim that it has the required properties. Indeed, suppose that the energy $E_{j}$ increases when inserting the ball $\xi_{x}$ into the carrier $\Gamma_{x-1}$, i.e., $\xi_{x}>\min \Gamma_{x-1}$. Then $\xi_{x}$ replaces some ball $\xi_{y}$ (possibly 0) in $\Gamma_{x-1}$. Then necessarily $\xi_{y}<\xi_{x}$. Moreover, if $\xi_{x}$  is inserted in the $i$th position in $\Gamma_{x-1}$, then the ball $\xi_{y}$ it is replacing should also be in the $i$th position in $\Gamma_{x-1}$. By construction, we have $y,x\in A_{i}$. So $(y,x)$ is an ascent in $A_{i}$,  as desired.

			For the other direction, suppose that $R_{j}(\xi)$ is achieved by a collection of disjoint sets $A'_1,\cdots, A'_{j}$ that is different from the sets $A_{1},\cdots,A_{j}$ computed by the carrier process. Find the first place that they differ, say that $x$ belongs to $A_i$ but to $A'_{i^{*}}$ for $i^{*} \neq i$. Then perform the following surgery: let
			\begin{align}
				A''_\ell = 
				\begin{cases}
					( [1, x] \cap A_i ) \cup ( (x, \infty) \cap A'_{i^{*}} )  & \text{if $\ell=i$}\\
					( [1, x] \cap A_{i^{*}}) \cup ( (x, \infty) \cap A'_i ) & \text{if $\ell=i^{*}$} \\
					A'_{\ell} &\text{otherwise}.
				\end{cases}
			\end{align}
			Then by construction, this new collection of sets $A''_1, \cdots, A''_{j}$ has at least as many ascents as the $A'$-sequences do, and the point of disagreement with the $A$'s is moved later. Therefore repeating this process eventually produces the sets $A_1, \cdots, A_k$, and does not decrease the number of ascents. This shows $R_{j}(\xi)\le E_{j}(\xi)$, as desired.

			\vspace{0.1cm}
			\item[(iii)] Let $\Lnew:=L_{j}(\xi')$. We wish to show $L_{j}=\Lnew$. We begin by showing that $L_{j} \leq \Lnew$. In the original system $\xi$, fix a set of $k$ non-interlacing decreasing subsequences whose sum of penalized lengths is the maximum value $L_{j}$. We will produce a set of non-interlacing decreasing subsequences in $\xi'$ that have the same sum of penalized lengths. We call the unique ball of color $\kappa$ in $\xi'$ by simply $\kappa$. Suppose $\kappa$ is in position $a$, and that positions $a + 1, a + 2, \ldots, b - 1$ have balls in them, but that position $b$ is empty; let $I = \{a, \cdots, b - 1\}$.  There are cases, depending on two different questions: whether $\kappa$ is part of a decreasing subsequence, or is in the interval spanned by a decreasing subsequence, or neither; and whether there is a decreasing subsequence whose interval spans $b$, or one that ends in $I$ with no other sequence that spans $b$, or neither.
			
			If $\kappa$ belongs to a decreasing subsequence, it is the largest entry.  Therefore removing it decreases the length by $1$ and does not add a penalty (because the gap created is not in the interior of any remaining sequence).  If $\kappa$ is in the interval spanned by a decreasing subsequence but doesn't belong to it, removing $\kappa$ introduces a gap and so penalizes the length of that sequence by $1$.  If neither holds, removing $\kappa$ does not change the penalized lengths of any subsequences.  Adding $1$ to every ball label does not change the penalized lengths of any subsequences.  If a sequence spans $b$ then inserting the new ball $1$ removes a gap from that sequence, so increases its penalized length by $1$.  If a sequence ends in $I$ and no subsequence spans $b$, then the $1$ inserted in position $b$ can be appended to this sequence; there are no gaps in $I$, so this increases the penalized length by $1$.  And if neither holds, then inserting $1$ does not change the penalized lengths of any of the subsequences.  Then, it is enough to observe that in either of the cases that result in a decrease of $1$, it is necessarily the case that some sequence ends in $I$ or spans $b$.  Thus, $\Lnew \geq L_{j}$, as claimed.
			
			Finally, to show that actually $\Lnew = L_{j}$, we apply the ``reverse-complement'' operation, reversing the order of $\ZZ$ and the order of the labels.  This preserves decreasing subsequences, the non-interlacing relation between them, and their penalized lengths; moreover, one time-step in the reverse-complement is exactly the reverse-complement of one inverse time-step in the original. Thus also $\Lnew \leq L_{j}$.  This shows $L_{j}=\Lnew$, as desired. 
		\end{description} 
	\end{proof}

	
	\section{Open questions and final remarks}
	In this section, we discuss some open problems and future directions.

	\vspace{0.1cm}
	\noindent \textbf{Two-sided limiting shape of the Young diagrams.} Many of the known results in scaling limits of invariant Young diagrams of randomized BBS (\cite{levine2020phase, kuniba2020large, kuniba2018randomized} and the present paper) concern rescaling of the first finite rows or columns. Is it possible to jointly scale the rows and columns and obtain the proper two-sided limiting shape of the Young diagram as in the case of the Plancherel measure \cite{kerov1988combinatorics} \cite{ivanov2002kerov}? This question is not entirely obvious since the top rows (soliton numbers) obey the laws of large numbers, whereas the top columns (soliton lengths) obey extreme value statistics.

	\vspace{0.2cm}
	\noindent \textbf{Column length scaling of higher order invariant Young diagrams.} The $\kappa$-color BBS is known to have $\kappa$-tuple of invariant Young diagrams, where the `higher order' Young diagrams describe the internal degrees of the freedom of the solitons \cite{kuniba2020large}. It is our future work to extend the methods and results in the present paper for the first-order Young diagram of the $\kappa$-color BBS into higher-order Young diagrams.

	\vspace{0.2cm}
	\noindent \textbf{Generalization to discrete KdV.} One of the most well-known integrable nonlinear partial differential equations is the Korteweg-de Vries (KdV) equation: 
	\begin{align}
		u_{t} + 6uu_{t} + u_{xxx} = 0,
	\end{align}
	where $u = u(x, t)$ is a function of two continuous parameters $x$ and $t$, and the lower indexes denote derivatives with respect to the specified variables. In 1981, Hirota \cite{hirota1981discrete} introduced the following discrete KdV (dKdV) equation that arises from KdV by discretizing space and time:
	\begin{align}\label{eq:dKdV}
		y_{k}^{t} + \frac{\delta}{y^{t}_{i+1}} = \frac{\delta}{y_{k}^{t+1}} + y^{t+1}_{k+1}.
	\end{align} 
	A further discretization of the continuous box state in dKdV leads to the ultradiscrete KdV (udKdV) equation, which corresponds to the $\kappa=1$ BBS by Takahashi-Satsuma \cite{takahashi1990soliton}:
	\begin{align}
		U_{n}^{t+1} = \min\left( 1-U^{t}_{n}, \sum_{k=-\infty}^{n-1} (U_{k}^{t}-U_{k}^{t+1}) \right),
	\end{align}
	where $U_{k}^{t}$ denotes the number of balls at time $t$ in box $k$.
	
	The scaling limit of soliton numbers and lengths of various BBS with random initial configuration has been studied extensively \cite{levine2020phase, kuniba2020large, kuniba2018randomized}, including the present paper. Hence a natural open question is to generalize the similar program to the case of discrete KdV (as opposed to ultradiscrete). For instance, suppose we initialize dKdV \eqref{eq:dKdV} so that the first $n$ box states are independent $\textup{Exp}(1)$ random variables and evolve the system until solitons come out. 
	What is the scaling limit of the soliton lengths and numbers as $n\rightarrow \infty$? Can we at least obtain estimates on their expectation? These are much harder questions for dKdV because not everything decomposes into solitons: just like in the usual KdV, there is chaotic ``radiation'' left behind.

	\vspace{0.3cm}
	\section*{Acknowledgments}
	JBL was supported in part by an ORAU Powe award and a grant from the Simons Foundation (634530). HL was partially supported by NSF grants DMS-2206296 and DMS-2010035. PP was supported by the NSF Career grant DMS 1351590 and NSF RTG grant DMS 1745638. AS was partially Simons Foundation MP-TSM-00002716. We are grateful to Emily Gunawan, Olivia Fugikawa, and David Zeng for spotting an error in our proof of Prop. \ref{prop:invariants_proof} \textbf{(ii)} in an earlier version and suggesting a possible fix. We are also grateful to Russ Williams for helpful discussions on SRBM.

	\vspace{0.3cm}
	
	\small{
		\bibliographystyle{amsalpha}
		\bibliography{mybib}
	}
	
\end{document}